\numberwithin{equation}{section}
\title{Long-time Dynamics of Classical Patlak-Keller-Segel Equation}
\author{Chia-Yu Hsieh \hspace{6pt} and \hspace{6pt} Yong Yu}
\date{}
\newtheorem{thm}{Theorem}[section]
\newtheorem{lem}[thm]{Lemma}
\newtheorem{prop}[thm]{Proposition}
\newtheorem{rmk}[thm]{Remark}
\newtheorem*{thm_a}{Theorem A}
\def\Xint#1{\mathchoice
  {\XXint\displaystyle\textstyle{#1}}%
  {\XXint\textstyle\scriptstyle{#1}}%
  {\XXint\scriptstyle\scriptscriptstyle{#1}}%
  {\XXint\scriptscriptstyle\scriptscriptstyle{#1}}%
  \!\int}
\def\XXint#1#2#3{{\setbox0=\hbox{$#1{#2#3}{\int}$}
  \vcenter{\hbox{$#2#3$}}\kern-.5\wd0}}
\def\dashint{\Xint-}
\newcommand\mystrut{\rule{0pt}{7pt}}
\newcommand\mynewstrut{\rule{0pt}{9pt}}
\begin{document}
\maketitle

\begin{abstract}
\noindent \textbf{Abstract:}  When the spatial dimension $n =2$, it has been well-known that a global mild solution to classical Patlak-Keller-Segel equation (PKS equation for short) exists if and only if its initial total mass is not in supercritical regime. However, to study long-time behavior of a global mild solution to $2$D PKS equation usually requires additional assumptions on initial data. These additional assumptions include a finite-free-energy assumption and a finite-second-moment assumption. Moreover, $u_0 \log u_0$ should be $L^1$-integrable over $\mathbb{R}^2$, where $u_0$ is the initial mass density of cells. In earlier works, within subcritical regime and with these additional assumptions, a global mild solution to $2$D PKS equation is shown to approach a self-similar profile when time is large. It is unclear whether these additional assumptions are necessary to characterize the long-time behavior of global mild solutions to PKS equation in 2D. There are probably two difficulties. Firstly, $L^1$\hspace{0.5pt}-\hspace{0.5pt}integrability of the initial data is not strong enough to induce the finiteness of free energy. Entropy method cannot be applied. Secondly, the gradient of chemoattractant concentration does not satisfy the Carlen-Loss condition for the viscously damped conservation law. Therefore, we have no Carlen-Loss type estimate for PKS equation. In this article, we introduce a novel argument to push and stretch a space-time strip. By this way, we gain $L^1$\hspace{0.5pt}-\hspace{0.5pt}compactness of PKS equation expressed under similarity variables. As a consequence, we obtain global dynamics of 2D PKS equation in subcritical regime with no additional assumptions. As for the higher dimensional case in which the spatial dimension $n \geq 3$, we also characterize the long-time asymptotics of global mild solutions to PKS equation. With a finite-total-mass assumption on density of cells, any global mild solution to PKS equation will approach a self-similar profile when time is large, provided that there is a sequence of time going to infinity on which $L^\infty$\hspace{0.5pt}-\hspace{0.5pt}norm of the density of cells converges to zero. The self-similar profile in the higher dimensional case is given by the function $M \hspace{1pt}\mathcal{G}_n$, where  $M$ is the total mass of cells and $\mathcal{G}_n$ denotes the standard $n$-dimensional Gaussian probability density. Convergence rates to self-similar profiles are also discussed in any dimensions. Particularly in the higher dimensional case, the general  convergence rate for $L^1$\hspace{0.5pt}-\hspace{0.5pt}initial data can be improved if the initial data has a finite second moment. In fact, when time is large and $n \geq 3$, we provide, in an optimal way, a higher-order approximation of global mild solutions to PKS equation if the initial density has a finite second moment. All convergence rates studied in this article are under the $L^p$-norm with $p \in [\hspace{1pt}1, \infty\hspace{1pt}]$.
\end{abstract}

\section{\large Introduction}\label{sec_1}\vspace{0.5pc}

General Patlak-Keller-Segel equation was introduced in \cite{P53} and \cite{KS70} as a fundamental model for chemotaxis in the cell population. In its simplest formulation,  the equation is given by 
\begin{eqnarray}\label{eqn_0_0_0}
\left\{\begin{array}{lcl}
\hspace{4.6pt} \partial_{\hspace{0.5pt}t} u \hspace{1.5pt}= \Delta u - \nabla \cdot \left( u \nabla v \right) \qquad &&\text{on $\mathbb{R}^{n+1}_+$}; \\[3mm]
-\Delta v = u  && \text{on $\mathbb{R}^{n+1}_+$}.
\end{array} \right.
\end{eqnarray}
Here $u$ denotes the mass density of cells. $v$ is the chemoattractant concentration. $\mathbb{R}^{n+1}_+ = \mathbb{R}^n \times \mathbb{R}_+$ with $\mathbb{R}_+ = (0, \infty)$. The equation (\ref{eqn_0_0_0}) is usually referred to as classical Patlak-Keller-Segel equation. This equation is also used in astrophysics  to describe gravitationally interacting massive particles (see e.\hspace{0.5pt}g.\hspace{1.5pt}\cite{B95,B95.2,Chan42}\hspace{1pt}).  Denote by $E_n$ the fundamental solution of $-\Delta$ on $\mathbb{R}^n$ and suppose that $u$ decays sufficiently fast at spatial infinity. The equation (\ref{eqn_0_0_0}) can be written in a non-local form as follows:
\begin{eqnarray}\label{eqn_0_0_1} \partial_{\hspace{0.5pt}t} u \hspace{2pt}= \Delta u - \nabla \cdot \big( u \nabla \big(E_n* u \big) \big) \qquad \text{on $ \mathbb{R}^{n +1}_+$.}
\end{eqnarray}
Here $*$ is the standard convolution on $\mathbb{R}^n$.
\subsection{\normalsize Existing research works}\label{sec_1_1}\vspace{0.5pc} 

Since 1980s,  extensive research works have been devoted to studying  (\ref{eqn_0_0_0}) or (\ref{eqn_0_0_1}). In order to obtain global weak solutions or blow-up solutions to (\ref{eqn_0_0_0}), usually there should have some additional assumptions on the initial density $u_0$. For example, in 2D case, besides the finite-total-mass condition, $u_0$ is also assumed to have a finite free energy and a finite second moment in literatures. In addition, $u_0 \log u_0$ should be $L^1$-integrable over $\mathbb{R}^2$. With these additional assumptions, Blanchet-Dolbeault-Perthame \cite{BDP06} shows the existence of a critical threshold of total mass. If the initial total mass is below the threshold, then it lies in the subcritical regime. In this case there exists a non-negative global weak solution to (\ref{eqn_0_0_0}). If the initial total mass is above the threshold, then it lies in the supercritical regime. Solutions to (\ref{eqn_0_0_0}) with supercritical initial data must blow up in finite time. It is until recently that the additional assumptions used in \cite{BDP06} are dropped. See \cite{W18}. As is well-known, $E_2 * u$ exists if and only if $u \hspace{0.5pt}E_2$ is integrable on $\mathbb{R}^2 \setminus D_2$. Here $D_2$ is the disk in $\mathbb{R}^2$ with center $0$ and radius $2$. Merely with $L^1$-integrability of initial density, it is problematic to define the logarithmic potential $E_2 * u$ in (\ref{eqn_0_0_1}). Therefore in \cite{W18}, the author chooses to study mild solutions of the equation: \begin{align}\label{eqn_0_0_2} \partial_{\hspace{0.5pt}t} u \hspace{2pt}= \Delta u - \nabla \cdot \big( u \nabla E_2* u \big) \qquad \text{on $ \mathbb{R}^{2 +1}_+$.}
\end{align}The critical threshold phenomenon of (\ref{eqn_0_0_2}) purely depends on the initial total mass without any additional assumptions.  \vspace{0.3pc}

In 2D case, there are also many delicated works on specific properties of solutions to (\ref{eqn_0_0_2}). For example, some finite-time blow-up solutions have been found in the supercritical regime. See \cite{CGMN19,RS14b,V02,V04a,V04b}. As for the critical regime where the total mass equals to the critical threshold, solutions to (\ref{eqn_0_0_2}) is shown to exist globally in time (see \cite{BKLN06b} for the radial case and \cite{W18} for the general case). However, in this regime, there coexist both blow-up and non-blow-up solutions. An infinite time blow-up solution was found by Blanchet-Carrillo-Masmoudi  \cite{BCM08}. The solution has a finite free energy and a finite second moment. Moreover, it converges to a Dirac measure as time diverges to infinity. The blow-up rate problem has also been addressed  recently in \cite{DDDMW19} and \cite{GM18} by different methods. As for non-blow-up solutions, we refer readers to \cite{BCC12} where the authors prove the existence of properly dissipative weak solutions to the classical 2D Patlak-Keller-Segel equation. Their solutions have a finite free energy; however, the second moment is infinite. In \cite{BCC12} the authors also suggest that as time goes to infinity, the convergence rate problem of their solutions can be studied by some stability result  of Gagliardo-Nirenberg-Sobolev inequality. This program is finally completed by Carlen-Figalli in \cite{CF13}. In the subcritical regime, Blanchet-Dolbeault-Perthame \cite{BDP06} shows that a global solution to (\ref{eqn_0_0_2}) approaches a self-similar profile under $L^1$-norm as time goes to infinity. The problem on the convergence rate to the self-similar profile has been considered in \cite{BDEF08}, \cite{CD14} and \cite{FM16}. So far, in the subcritical regime, the long-time behaviors discussed in these works rely heavily on some technical assumptions. Therefore, part of our current work is to drop these assumptions and  provide a theory on global dynamics of (\ref{eqn_0_0_2}) in the subcritical regime. To finish our discussions on the 2D case, let us mention that there are also some local well-posedness results of (\ref{eqn_0_0_2}) with measure-valued initial data. See \cite{BM14, BZ15, SS02}. Of particular interest to us is the uniqueness result of Bedrossian-Masmoudi \cite{BM14}. Readers may refer to Sect.\hspace{0.5pt}1.3.3, where we will show an application of their uniqueness result in the study of global dynamics of (\ref{eqn_0_0_2}). \vspace{0.3pc} 

Compared to the $2$D case, many problems are left open if the spatial dimension $n \geq 3$. First of all, let us summarize some known well-posedness results  of (\ref{eqn_0_0_1}) in the higher dimensional case. Local existence of solutions to (\ref{eqn_0_0_1}) is established in \cite{B95,KSY12}, where initial data has $L^p$-integrability over $\mathbb{R}^n$. Here $p \geq n/2$. To obtain global solutions in some functional spaces, the scaling property of (\ref{eqn_0_0_1}) should be utilized. For any function $u$ on $\mathbb{R}_+^{n+1}$ and positive number $\mu$, we define the scaled function
\begin{align}\label{scaling}
u_\mu\left(x,t\right)  :=  \mu^2 u \left( \mu x , \mu^2 t \right) .
\end{align}
Under this transformation,  the equation (\ref{eqn_0_0_1}) is scaling invariant. It is also translational invariant. Therefore, (\ref{eqn_0_0_1}) may admit a global solution if the initial data is small in some translational and scaling invariant space. Here and in what follows (except otherwise stated), all functions in a Banach space $E$ are defined on $\mathbb{R}^n$. A Banach space $E$ equipped with norm $\| \cdot \|_E$ is called scaling invariant if for any $u \in E$ and $\mu > 0$, the scaled function \begin{align*} u_\mu\left(x \right) := \mu^2 u\left(\mu x\right) \end{align*}lies in $ E$ with $\| u_\mu \|_E = \| u \|_E$. In \cite{CPZ04, KSY12}, the authors show that (\ref{eqn_0_0_1}) has a global  weak solution, provided that the initial data is small in $L^{n/2}$-space. A similar global existence result has also been obtained in \cite{KS10} for weak-$L^{n/2}$ initial data. In general, it is shown in \cite{L13} that any Banach space $E$ of tempered distributions on $\mathbb{R}^n$ is embedded into the homogeneous Besov space $\dot{B}_{\infty,\infty}^{-2}$ if the norm of $E$ is translational and scaling invariant.  Therefore, to solve (\ref{eqn_0_0_1}) globally, it is natural to choose the initial data from  $\dot{B}_{\infty,\infty}^{-2}$. By \cite{L13}, a non-negative global mild solution to (\ref{eqn_0_0_1}) can be obtained with the initial data small in  $\dot{B}_{\infty,\infty}^{-2}$. However, similarly as in the work of Bourgain-Pavlovi\'{c} \cite{BP08} for the 3D incompressible Navier-Stokes equation, it shows in \cite{I11} that solutions to (\ref{eqn_0_0_1}) with the initial data in $\dot{B}_{\infty,\infty}^{-2}$ may not be continuously dependent on the initial data.  Besides the spaces mentioned above, some other translational and scaling invariant spaces have also been used to obtain global solutions of (\ref{eqn_0_0_1}). They include Triebel-Lizorkin space $\dot{F}_{\hspace{0.5pt}\infty, 2}^{-2}$, Besov type spaces $\dot{B}_{\hspace{0.5pt}p, \infty}^{-2 + \frac{n}{p}}$ and Morrey space $\dot{M}_{n/2}^1$.  These functional spaces satisfy the following inclusion relations:
\begin{align}\label{embedding}
L_+^{n/2}  \longhookrightarrow \dot{B}_{p,\infty; +}^{-2+ \frac{n}{p}}  \longhookrightarrow \dot{F}_{\infty,2; +}^{-2}  \longhookrightarrow \dot{M}_{n/2; +}^1  = \dot{B}_{\infty,\infty; +}^{-2}, \hspace{15pt} \text{for all $ p \geq n\big/2$.}
\end{align}Here for any functional space $E$, the notation $E_+$ denotes the subset of $E$ which contains all non-negative functions in $E$. Notice that although  (\ref{eqn_0_0_1}) is ill-posed in $\dot{B}_{\infty,\infty}^{-2}$, it is in fact well-posed in $\dot{F}_{\infty,2}^{-2}$. The following result is obtained by Iwabuchi-Nakamura in \cite{IN13}: 
\begin{thm_a}There exist positive constants $\epsilon_0$ and $c$ such that for any initial data $u_0$ with  $\|\hspace{0.5pt} u_0 \hspace{0.5pt}\|_{\dot{F}_{\infty,2}^{-2}} < \epsilon_0$, the equation (\ref{eqn_0_0_1}) has a unique global mild solution, denoted by $u$, on $\mathbb{R}_+^{n+1}$. The solution $u$ takes the initial data $u_0$ at $t = 0$. Moreover, it satisfies
\begin{align}
\sup_{t\hspace{0.5pt}>\hspace{0.5pt}0} \big\|\hspace{0.5pt} u(\cdot,t) \hspace{0.5pt}\big\|_{\dot{F}_{\infty,2}^{-2}} + \sup_{t\hspace{0.5pt}>\hspace{0.5pt}0} \hspace{1pt}t^{1/2} \big\|\hspace{0.5pt} u(\cdot,t) \hspace{0.5pt}\big\|_{\dot{F}_{\infty,2}^{-1}} \hspace{2pt}\leq \hspace{2pt}c\hspace{1pt}\epsilon_0.
\end{align}
\end{thm_a} 
\noindent Compared to the $2$D case where we can use $L^1$\hspace{0.5pt}-\hspace{0.5pt}norm of the non-negative initial data to characterize global solutions and blow-up solutions to (\ref{eqn_0_0_2}), the smallness assumption on $\dot{F}_{\infty,2}^{-2}$\hspace{0.5pt}-\hspace{0.5pt}norm of the initial data in Theorem A is far from being optimal for the study of a similar threshold problem in the higher dimensional case. Recently, radially symmetric global-in-time solutions to  (\ref{eqn_0_0_1}) have been obtained in \cite{BKP19}. In addition to some regularity assumptions on initial data, the authors in \cite{BKP19} require some $\dot{M}_{n/2}^1$\hspace{0.5pt}-\hspace{0.5pt}type norm of the initial data less than twice of the surface area of $\mathbb{S}^{n - 1}$. Although this constant is not small and might be the critical threshold for global and blow-up solutions to (\ref{eqn_0_0_1}) in higher dimensions, at least in the radially symmetric scenario, but so far the proof on the existence of this critical threshold is still open. The main difficulty is to understand the blow-up behavior of solutions at its possible singularity. Readers may refer to \cite{BZ19,BCKSV99,GMS11,N00,SW19}, where concentration and blow-up of solutions to (\ref{eqn_0_0_1}) are discussed for the $3$D case or the case with $n \geq 4$.

\subsection{\normalsize Main results}\label{sec_1_2}\vspace{0.5pc} 

\noindent In this section, we summarize our main results. The first result is concerned about the long-time behavior of global mild solutions to (\ref{eqn_0_0_2}) in which the spatial dimension $n = 2$. \begin{thm}\label{thm_1_2D}
Suppose that $u$ is a non-negative global mild solution to (\ref{eqn_0_0_2}) with the total mass $M$. Then the following three statements are equivalent:
\begin{enumerate}
\item[$\mathrm{1}.$] $M < 8\pi$;
\item[$\mathrm{2}.$] The $L^{\infty}$-norm of $u(\cdot, t)$ satisfies
\begin{eqnarray}\label{t -1 decay of solution} \limsup_{t \hspace{0.5pt}\rightarrow\hspace{0.5pt} \infty} \hspace{2pt} t \hspace{1pt} \big\| u(\cdot, t) \big\|_{\infty} < \infty.
\end{eqnarray}Here and throughout the article, $\| \cdot \|_p$ denotes the standard $L^p$-norm of a function on $\mathbb{R}^n$;
\item[$\mathrm{3}.$] The total mass $M$ is strictly less than $8\pi$. Moreover, it holds \vspace{0.2pc} $$\lim_{t \rightarrow \infty} t^{1 - \frac{1}{p}} \left\| u(\cdot, t) - \dfrac{1}{t} \hspace{1pt}G_M \left( \dfrac{\cdot}{\sqrt{t}} \right) \right\|_{p} = 0, \hspace{15pt}\text{ for any $1 \leq p \leq \infty$.}$$\vspace{0.2pc}\noindent In this limit, $G_M$  is a strictly positive function with the total mass $M$. Among all non-negative functions with the total mass $M$, the function $G_M$ is the unique solution to the following equation with finite free energy:  \begin{align}\label{stationary solution of U in 2d} \Delta_\xi U + U + \dfrac{1}{2}\hspace{1pt}\xi \cdot \nabla_\xi U = \nabla_\xi \cdot \big( U \hspace{1pt}\nabla_\xi E_2 * U \hspace{0.5pt} \big) \hspace{15pt} \text{on $\mathbb{R}^{2}$}.\end{align}In 2D case, the free energy associated with the above equation is given by \begin{align}\label{free energy in 2D}\mathcal{F}\left[ w \right] := \int_{\mathbb{R}^2} w\left(\xi\right)\log w\left(\xi\right)\mathrm{d} \xi + \dfrac{1}{4} \int_{\mathbb{R}^2}w\left(\xi\right)|\hspace{0.5pt}\xi\hspace{0.5pt}|^2 \hspace{2pt}\mathrm{d} \xi - \dfrac{1}{2} \int_{\mathbb{R}^2} w\left(\xi\right)\hspace{1pt} E_2 * w \left(\xi\right)\hspace{2pt}\mathrm{d} \xi.
\end{align}
\end{enumerate}
\end{thm}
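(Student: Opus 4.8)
The plan is to establish the cycle $\mathrm{1}\Rightarrow\mathrm{2}\Rightarrow\mathrm{3}\Rightarrow\mathrm{1}$. The implication $\mathrm{3}\Rightarrow\mathrm{1}$ is immediate because statement $\mathrm{3}$ literally asserts $M<8\pi$, and $\mathrm{3}\Rightarrow\mathrm{2}$ is an equally cheap consistency check: taking $p=\infty$ in statement $\mathrm{3}$ and using $\|t^{-1}G_M(\cdot/\sqrt t)\|_\infty = t^{-1}\|G_M\|_\infty$ gives $\limsup_{t\to\infty} t\,\|u(\cdot,t)\|_\infty \leq \|G_M\|_\infty <\infty$. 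So the genuine content lies in the two implications $\mathrm{1}\Rightarrow\mathrm{2}$ and $\mathrm{2}\Rightarrow\mathrm{3}$.

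For $\mathrm{1}\Rightarrow\mathrm{2}$ I would prove the subcritical hypercontractive decay $\sup_{t\geq 1} t^{1-1/p}\|u(\cdot,t)\|_p<\infty$ for every $p\in(1,\infty]$, which contains (\ref{t -1 decay of solution}) as its $p=\infty$ endpoint. The starting point is the $L^p$ energy identity obtained by differentiating $\|u(\cdot,t)\|_p^p$ along (\ref{eqn_0_0_2}) and using $-\Delta v = u$, namely $\frac{d}{dt}\|u\|_p^p = -\frac{4(p-1)}{p}\|\nabla u^{p/2}\|_2^2 + (p-1)\int_{\mathbb{R}^2} u^{p+1}$; the supercritical term is absorbed through the two-dimensional Gagliardo-Nirenberg-Sobolev inequality together with the conservation $\|u(\cdot,t)\|_1 = M<8\pi$, which is exactly where the sharp logarithmic Hardy-Littlewood-Sobolev constant enters and keeps $t^{1-1/p}\|u(\cdot,t)\|_p$ bounded. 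One then bootstraps $p$ upward and finishes with a routine $L^1$-$L^\infty$ smoothing estimate; alternatively, one identifies $u$ with the globally bounded subcritical mild solution built in earlier works, via the uniqueness theorem of Bedrossian-Masmoudi, and transfers its bounds to $u$.

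The heart of the theorem is $\mathrm{2}\Rightarrow\mathrm{3}$, where similarity variables and the new compactness enter. Set $u_\mu(x,t) := \mu^2 u(\mu x,\mu^2 t)$; each $u_\mu$ is again a non-negative global mild solution of (\ref{eqn_0_0_2}) of total mass $M$, with initial datum $\mu^2 u_0(\mu\,\cdot)\rightharpoonup M\delta_0$ as $\mu\to\infty$, and $(u_\mu)_\lambda = u_{\lambda\mu}$. Statement $\mathrm{2}$ says precisely that $\{u_\mu(\cdot,t)\}_{\mu\geq 1}$ is bounded in $L^\infty(\mathbb{R}^2)$ for each fixed $t>0$, hence, with mass conservation and interpolation, in every $L^q$ with $1\leq q\leq\infty$. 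The crucial step is to upgrade this to precompactness of $\{u_\mu\}$ in $C_{\mathrm{loc}}((0,\infty);L^1(\mathbb{R}^2))$ with no loss of mass; equicontinuity in time follows from interior parabolic smoothing once one stays away from $t=0$ (there the drift $\nabla E_2 * u_\mu$ is bounded), so the real issue is tightness, i.e. $\lim_{R\to\infty}\sup_{\mu\geq 1}\int_{|x|>R} u_\mu(x,t)\,dx = 0$ locally uniformly in $t$. This is exactly where earlier works used the finite-free-energy and finite-second-moment hypotheses; lacking them, I would exploit the confining structure that the parabolic rescaling produces --- in similarity variables the transport field acquires the inward term $-\frac12\xi$ --- by testing the equation against a cutoff that grows like $|x|^2$ up to a large scale $R$ and is flat beyond it, and running this localized virial estimate over a long ("stretched") time window $[T,e^L T]$ while pushing $T\to\infty$ away from the singular initial time. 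On such a window the conserved mass and the inward drift dominate the bounded error terms coming from $\nabla E_2 * u$, forcing the mass outside scale $R$ to become uniformly small and, as a byproduct of the same virial bookkeeping, forbidding the mass from piling up at the origin when $M\geq 8\pi$, which yields $M<8\pi$. This pushing and stretching of a space-time strip is the announced new ingredient, and it is the main obstacle.

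With the compactness and $M<8\pi$ available, the identification is routine. The self-similar solution $U_M(x,t):= t^{-1}G_M(x/\sqrt t)$ is itself a mild solution of (\ref{eqn_0_0_2}) with initial datum $M\delta_0$, and by the Bedrossian-Masmoudi uniqueness for subcritical measure data it is the only one; here $G_M$ is strictly positive and is the unique non-negative finite-free-energy solution of (\ref{stationary solution of U in 2d}) of mass $M$ with the free energy (\ref{free energy in 2D}), these being the known properties of the subcritical self-similar profile. Any subsequential limit $u_\infty$ of $\{u_\mu\}$ in $C_{\mathrm{loc}}((0,\infty);L^1)$ is again a mild solution of (\ref{eqn_0_0_2}), since the $L^1$-compactness controls the nonlinear term in Duhamel's formula, and the no-mass-loss tightness identifies its initial datum as $M\delta_0$, so $u_\infty = U_M$. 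Because every subsequential limit coincides, $u_\mu\to U_M$ in $C_{\mathrm{loc}}((0,\infty);L^1)$; evaluating at $t=1$ and undoing the scaling gives $t^{1-\frac1p}\|u(\cdot,t) - t^{-1}G_M(\cdot/\sqrt t)\|_p\to 0$, first for $p=1$, then for $1<p<\infty$ by interpolating against the uniform $L^\infty$ bound of statement $\mathrm{2}$, and finally for $p=\infty$ via interior parabolic estimates (which also yield uniform spatial decay of the rescaled solutions out of the $L^1$-tightness). I note in passing that $\mathrm{2}\Rightarrow\mathrm{1}$ in isolation is already contained in the byproduct of the virial step, which is where the critical threshold $8\pi$ is detected.
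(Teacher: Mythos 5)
Your overall architecture (the cycle $1\Rightarrow2\Rightarrow3\Rightarrow1$, rescaling/similarity variables, $L^1$-tightness, identification of the limit through Bedrossian--Masmoudi uniqueness with datum $M\delta_0$, and a second-moment argument to detect $8\pi$) is close in spirit to the paper's Section 5, but both of your substantive implications have genuine gaps. For $1\Rightarrow2$, the $L^p$ identity $\frac{d}{dt}\|u\|_p^p=-\frac{4(p-1)}{p}\|\nabla u^{p/2}\|_2^2+(p-1)\int u^{p+1}$ absorbed by the 2D Gagliardo--Nirenberg inequality $\int u^{p+1}\lesssim_p\|u\|_1\|\nabla u^{p/2}\|_2^2$ only works when $M$ is below a threshold dictated by the sharp GNS constant, which is strictly smaller than $8\pi$; the sharp $8\pi$ threshold is seen by the logarithmic HLS inequality acting on the free energy/entropy, and that route needs $u_0\log u_0\in L^1$ and a finite second moment (and, for the $t^{-1}$ rate, the confined functional in self-similar variables) --- precisely the hypotheses this theorem removes, as the paper stresses ("entropy method cannot be applied"). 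Your fallback of transferring bounds from "globally bounded subcritical solutions built in earlier works" via uniqueness is circular: no prior work gives $\sup_t t\,\|u(\cdot,t)\|_\infty<\infty$ for all $M<8\pi$ with bare $L^1$ data (the decay in the literature, e.g.\ Blanchet et al., needs extra assumptions); that bound is one of the new results here. The paper instead proves $1\Rightarrow2$ (Proposition \ref{prop_3}) by a Struwe-type point selection and rescaling, a local maximum estimate reducing everything to $\int_{P_1}\widehat u$, and the monotonicity of the Gaussian-weighted density $\Phi_{z_1}$ (Lemma \ref{monomonicity of phi function}), whose ODE inequality carries the factor $1-\frac{M}{8\pi}$; that is where the sharp threshold enters.

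For $2\Rightarrow3$, the tightness step is the crux and your "localized virial over a stretched window" does not close as described. With only $L^1$ data the second moment of $u$ (equivalently of $U$) may be infinite for all times, so the genuine virial identity is unavailable; once you truncate the weight at scale $R$, the drift term $\int U\,\nabla E_2*U\cdot\nabla\phi_R$ is of size $O(R)$ and the confining term only yields $-\frac12\int U\,\xi\cdot\nabla\phi_R\le -\int U\phi_R+CR^2\int_{|\xi|>R}U$, i.e.\ it reintroduces exactly the exterior mass you are trying to bound, and the resulting differential inequality is circular and gives no uniform-in-$\tau$ smallness. The paper's replacement is the bootstrap on $\tau_0(R)$ applied to the Duhamel representation of $U_2$ (Lemma \ref{lem_5_2}): split the time integral at $\tau_0-\log(1+t_\epsilon)$ and the space integral at radius $R+R_{1,\epsilon}$, use the potential estimate together with the defining property of $\tau_0$ to make $\nabla E_2*U$ of size $\epsilon^{1/6}$ outside a large ball, and use the geometric bound $\big|\xi-e^{-\frac{\tau_0-s}{2}}\eta\big|\ge\sigma_\epsilon|\xi|$ on the pushed-down window; none of this appears in your sketch. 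Likewise, attributing $M<8\pi$ to the virial bookkeeping on $u_\mu$ itself is misplaced: the paper first gets $M\le8\pi$ from Wei's theorem applied to the limiting mild solution $u_\star$ with initial datum $M\delta_0$, and then excludes $M=8\pi$ by the second-moment estimate $\int u_\star|x|^2\le C(M+M^2)\,t$, which is legitimate there precisely because $u_\star$ emanates from a Dirac mass (zero initial moment), whereas $u$ itself need not have any finite moment. Your endgame --- uniqueness for measure data, single-element $\omega$-limit set, interpolation, and $L^\infty$ convergence from kernel/parabolic estimates --- does coincide with the paper's Sections 5.2--5.3.
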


A consequence of Theorem \ref{thm_1_2D} is that the supremum norm of a non-negative global mild solution to (\ref{eqn_0_0_2}) satisfies the decay estimate (\ref{t -1 decay of solution}) if and only if the total mass of this solution is strictly less than $8 \pi$. Moreover, the long-time behavior of a non-negative global mild solution to (\ref{eqn_0_0_2}) is uniquely governed by the self-similar profile  \text{\scriptsize $\dfrac{1}{t}G_M\left(\dfrac{\cdot}{ \sqrt{t}}\right)$} if the total mass $M$ of this solution is strictly less than $8 \pi$. \vspace{0.3pc} 

In the next, we introduce our results on the higher dimensional case.

\begin{thm}\label{thm_1}
Suppose that the spatial dimension $n \geq 3$. There exists a small positive constant $\epsilon_0$, which depends only on the spatial dimension $n$, so that the following four statements are equivalent for any non-negative mild solution $u$ of the equation (\ref{eqn_0_0_1}):
\begin{enumerate}
\item[$\mathrm{1}.$] On the existence time interval of $u$, denoted by $[\hspace{1pt}0, T_u\hspace{1pt})$, there is a time $T \in (\hspace{1pt}0, T_u\hspace{1pt})$ so that  $$\big\|u\left(\cdot, T\right)\big\|_{\dot{F}_{\infty,2}^{-2}} < \epsilon_0;$$
\item[$\mathrm{2}.$] The maximal existence time interval of $u$ equals to $[\hspace{1pt}0, \infty \hspace{0.5pt})$. Moreover, the $L^{\infty}$-norm of $u(\cdot, t)$  satisfies
\begin{eqnarray*} \limsup_{t \hspace{0.5pt}\rightarrow\hspace{0.5pt} \infty} \hspace{2pt} t \hspace{1pt} \big\| u(\cdot, t) \big\|_{\infty} < \infty;
\end{eqnarray*}
\item[$\mathrm{3}.$]  The maximal existence time interval of $u$ equals to $[\hspace{1pt}0, \infty \hspace{0.5pt})$. Moreover, the $L^{\infty}$-norm of $u(\cdot, t)$  satisfies
\begin{eqnarray*} \limsup_{t \rightarrow \infty} \hspace{2pt} t^{\frac{n}{2}} \hspace{1pt} \big\| u(\cdot, t) \big\|_{\infty} < \infty;
\end{eqnarray*}
\item[$\mathrm{4}.$] The maximal existence time interval of $u$ equals to $[\hspace{1pt}0, \infty \hspace{0.5pt})$.  In addition, it holds \vspace{0.2pc}$$\lim_{t \rightarrow \infty} t^{\frac{n}{2}\left( 1 - \frac{1}{p}\right)} \big\|\hspace{1pt} u(\cdot, t) - M\hspace{1pt}\Gamma_t\left(\cdot\right) \hspace{1pt}\big\|_p = 0, \hspace{15pt}\text{for any $1 \leq p \leq \infty$.}$$\vspace{0.2pc}\noindent Here $\Gamma_t\left(\cdot\right)$ is the fundamental solution of the heat equation $\partial_t w = \Delta w$ on $\mathbb{R}^n$. 
\end{enumerate}
\end{thm}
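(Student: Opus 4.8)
My plan is to prove the cyclic chain $1 \Rightarrow 4 \Rightarrow 3 \Rightarrow 2 \Rightarrow 1$, in which the implication $1 \Rightarrow 4$ carries essentially all of the analysis and the remaining three are soft. (If $M = 0$ then $u \equiv 0$ and every statement is immediate, so I will assume $M > 0$.) For $4 \Rightarrow 3$: taking $p = \infty$ in statement $4$ and using $\|M\Gamma_t\|_\infty = M(4\pi t)^{-n/2}$, the triangle inequality yields $\limsup_{t \to \infty} t^{n/2}\|u(\cdot,t)\|_\infty \leq M(4\pi)^{-n/2} < \infty$, and global existence is shared by statements $2$, $3$ and $4$. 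For $3 \Rightarrow 2$: since $n \geq 3$ we have $1 - \tfrac n2 < 0$, so $t\,\|u(\cdot,t)\|_\infty = t^{1 - n/2}\bigl(t^{n/2}\|u(\cdot,t)\|_\infty\bigr) \to 0$, whence statement $2$.

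For $2 \Rightarrow 1$ I would interpolate. Mass conservation gives $\|u(\cdot,t)\|_1 \equiv M$ and statement $2$ gives $\|u(\cdot,t)\|_\infty \lesssim t^{-1}$ for large $t$, so
\[
\|u(\cdot,t)\|_{n/2} \;\leq\; \|u(\cdot,t)\|_1^{2/n}\,\|u(\cdot,t)\|_\infty^{1-2/n} \;\lesssim\; M^{2/n}\,t^{-(n-2)/n},
\]
which tends to $0$ as $t \to \infty$ because $n \geq 3$. By the embedding $L_+^{n/2} \hookrightarrow \dot{F}_{\infty,2}^{-2}$ recorded in (\ref{embedding}), $\|u(\cdot,T)\|_{\dot{F}_{\infty,2}^{-2}} \lesssim \|u(\cdot,T)\|_{n/2} < \epsilon_0$ once $T$ is large enough, and such a $T$ lies in $(0,T_u) = (0,\infty)$; this is statement $1$.

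The substance is $1 \Rightarrow 4$. After translating time so that $T$ becomes the origin, we have $\|u_0\|_{\dot{F}_{\infty,2}^{-2}} < \epsilon_0$. First, for global existence: Theorem A produces a global mild solution with data $u_0$, satisfying in particular $\sup_{t > 0}\|u(\cdot,t)\|_{\dot{F}_{\infty,2}^{-2}} \lesssim \epsilon_0$; by the uniqueness part of Theorem A together with a continuation argument this solution coincides with $u$, forcing $T_u = \infty$. Next, for decay: starting from the Duhamel representation
\[
u(\cdot,t) \;=\; e^{t\Delta}u_0 \;-\; \int_0^t \nabla \cdot e^{(t-s)\Delta}\bigl(u\,\nabla E_n * u\bigr)(\cdot,s)\,\mathrm{d}s,
\]
and combining the heat smoothing bounds $\|\nabla e^{\tau\Delta}f\|_p \lesssim \tau^{-\frac12 - \frac n2(\frac1q - \frac1p)}\|f\|_q$ with the Hardy--Littlewood--Sobolev estimate $\|\nabla E_n * u\|_\infty \lesssim \|u\|_1^{1/n}\|u\|_\infty^{1-1/n}$ and mass conservation $\|u(\cdot,t)\|_1 \equiv M$, one first reads off $\|u(\cdot,t)\|_\infty \lesssim t^{-1}$ for all $t > 0$ (from $\|u(\cdot,t)\|_\infty = \|e^{(t/2)\Delta}u(\cdot,t/2)\|_\infty \lesssim t^{-1}\|u(\cdot,t/2)\|_{\dot{F}_{\infty,2}^{-2}}$), and then a finite bootstrap on the decay exponent — splitting the Duhamel integral over $s \sim 0$ and $s \sim t$ and controlling the quadratic term there in $L^1$, resp.\ $L^\infty$ — improves this to the sharp rate $\|u(\cdot,t)\|_\infty \lesssim t^{-n/2}$; interpolation with the mass then gives $\|u(\cdot,t)\|_p \lesssim t^{-\frac n2(1-\frac1p)}$ for every $p \in [1,\infty]$. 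Finally, for the asymptotics: in the splitting
\[
u(\cdot,t) - M\Gamma_t \;=\; \bigl(e^{t\Delta}u_0 - M\Gamma_t\bigr) \;-\; \int_0^t \nabla \cdot e^{(t-s)\Delta}\bigl(u\,\nabla E_n * u\bigr)(\cdot,s)\,\mathrm{d}s,
\]
the first term is $o\bigl(t^{-\frac n2(1-\frac1p)}\bigr)$ by the classical fact that $\|e^{(t/2)\Delta}u_0 - M\Gamma_{t/2}\|_1 \to 0$ for $u_0 \in L^1$, followed by one more application of heat smoothing; for the second term one inserts the decay bounds just obtained — giving $\|u\,\nabla E_n * u(\cdot,s)\|_\infty \lesssim s^{-n+1/2}$ near $s = t$ and $\|u\,\nabla E_n * u(\cdot,s)\|_1 \lesssim s^{-(n-1)/2}$ near $s = 0$ — and checks that the resulting time integral is $o\bigl(t^{-\frac n2(1-\frac1p)}\bigr)$. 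The hypothesis $n \geq 3$ enters decisively here: it is precisely the condition that makes the nonlinear contribution, of size $\sim t^{1-n}$, strictly lower order than the Gaussian rate $t^{-n/2}$, which is why the long-time profile is the linear heat profile $M\Gamma_t$ rather than a genuinely nonlinear one (in contrast with the profile $G_M$ of Theorem \ref{thm_1_2D}).

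The step I expect to be the main obstacle is the bootstrap to the optimal decay $\|u(\cdot,t)\|_\infty \lesssim t^{-n/2}$: the first pass yields only $t^{-1}$, the near-$s=t$ portion of the Duhamel term is not integrable after one applies $\nabla e^{(t-s)\Delta}$ in $L^1$, and so one must iterate — alternating $L^1$ and $L^\infty$ control of the quadratic term at the two ends of the time interval — while verifying at each stage that the decay exponent strictly increases (rather than stagnating at the repelling fixed point $\tfrac{n}{2(n-1)} < 1$ of the resulting recursion) and that all the intermediate integrals converge. By comparison the global-existence step is an immediate appeal to Theorem A, the scattering step is routine once the sharp decay is in hand, and the implications $4 \Rightarrow 3 \Rightarrow 2 \Rightarrow 1$ closing the cycle are elementary.
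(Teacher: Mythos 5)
Your soft implications $4\Rightarrow 3\Rightarrow 2\Rightarrow 1$ are fine and agree with the paper's own observation (mass conservation plus interpolation gives $\|u(\cdot,t)\|_{n/2}\rightarrow 0$, then the embedding (\ref{embedding})), and your bootstrap from $t^{-1}$ to $t^{-n/2}$ together with the Duhamel computation of the asymptotics is essentially the paper's Lemma \ref{lem_2_2} and Proposition \ref{asymptotics of u L1 n greater than 3} (the latter written in similarity variables rather than in the original variables), up to minor bookkeeping: your near-$s=0$ bound $\|u\nabla E_n*u(\cdot,s)\|_1\lesssim s^{-(n-1)/2}$ is not integrable at $s=0$ for $n\geq 3$, so there you must use the short-time rate $t^{-1}$, exactly as the paper splits $[0,1]$, $[1,t/2]$, $[t/2,t]$.

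The genuine gap is the very first analytic step of $1\Rightarrow 4$: the claim that one can ``read off'' $\|u(\cdot,t)\|_\infty\lesssim t^{-1}$ from the relation $\|u(\cdot,t)\|_\infty=\|e^{(t/2)\Delta}u(\cdot,t/2)\|_\infty\lesssim t^{-1}\|u(\cdot,t/2)\|_{\dot F^{-2}_{\infty,2}}$. The equality is false: $u(\cdot,t)$ equals the free heat evolution of $u(\cdot,t/2)$ \emph{plus} the Duhamel integral of $\nabla\cdot(u\nabla E_n*u)$ over $[t/2,t]$, and estimating that integral already requires $L^\infty$-type control of $u$, so the argument is circular. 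Theorem A does not supply such control either: it bounds only the negative-regularity quantities $\|u(\cdot,t)\|_{\dot F^{-2}_{\infty,2}}$ and $t^{1/2}\|u(\cdot,t)\|_{\dot F^{-1}_{\infty,2}}$, which do not dominate $\|u(\cdot,t)\|_\infty$; and identifying the given arbitrary non-negative mild solution with the Theorem A (or Lemari\'e-Rieusset) solution ``by uniqueness and continuation'' presupposes that $u$ lies in the relevant uniqueness class, which is not known from the hypotheses. This missing step is precisely the core of the paper's proof and occupies all of Section 2 (Proposition \ref{prop_2}): one argues by contradiction through a Struwe-type point selection and rescaling, controls $\nabla_\xi E_n*\widehat u$ via Gotoh's BMO estimate for the Newtonian potential in terms of the Morrey norm $\dot M^1_{n/2}$ (itself controlled through Theorem A and (\ref{embedding})), applies the local maximum estimate for parabolic equations, and then makes the resulting space-time integral small using the density function $\Phi_{z_1}$ and the heat-kernel characterization of the $\dot B^{-2}_{\infty,\infty}$-norm, finally removing the auxiliary assumption $u_0\in L^\infty$ by a truncation and limiting argument. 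Your outline replaces this entire mechanism by an incorrect identity, so the implication $1\Rightarrow 2$ (hence $1\Rightarrow 4$) is not established as written.
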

\noindent The decay estimate in Statement 4 of Theorem \ref{thm_1} can be improved if the initial density has a finite second moment. In fact, we have the following higher-order approximation of a global mild solution to (\ref{eqn_0_0_1}) near $t = \infty$:
 \begin{thm}\label{thm_2}
In addition to all the assumptions in Theorem \ref{thm_1}, we also assume that the initial density, denoted by $u_0$, of $u$ has a finite second moment. Then Theorem \ref{thm_1} still holds with Statement 4 replaced by the results given below:\vspace{0.5pc}\\
$\mathrm{5.}$ The maximal existence time interval of $u$ equals to $[\hspace{1pt}0, \infty \hspace{0.5pt})$.  Moreover, \begin{enumerate}
\item[$\mathrm{(1)}.$]  if $n \geq 5$, then it holds\begin{eqnarray*}t^{\frac{n}{2}\left(1 - \frac{1}{p}\right)} \hspace{2pt}\big\| u(\cdot, t) - M\hspace{1pt}\Gamma_t + B_0 \cdot \nabla \Gamma_t\hspace{1pt} \big\|_{p} = \mathrm{O}\big(t^{-1}\big), \hspace{15pt} \text{for any $p \in [\hspace{1pt}1, \infty\hspace{0.5pt}]$ and $t \geq 1$.}
\end{eqnarray*}Here $B_0$ is the center of mass defined by \begin{align}\label{center of mass} B_0 := \int_{\mathbb{R}^n} x \hspace{0.5pt}u_0\left(x\right)\mathrm{d} x;
\end{align}
\item[$\mathrm{(2)}.$] if $n = 3$, then for any $p \in [\hspace{1pt}1, \infty\hspace{0.5pt}]$ and $t \geq 1$, it holds  
\begin{eqnarray*}t^{\frac{3}{2}\left(1 - \frac{1}{p}\right)} \hspace{2pt}\left\| u(\cdot, t) - M\hspace{1pt}\Gamma_t + B_0 \cdot \nabla \Gamma_t + M^2 \hspace{0.5pt}\mathcal{W}\left(\cdot, t\right)  + c_1 t^{-\frac{5}{2}}\log t \left( \frac{1}{2} - \frac{|\hspace{0.5pt}x\hspace{0.5pt}|^2}{12t} \right) e^{-\frac{|\hspace{0.5pt}x\hspace{0.5pt}|^2}{4t}}\right\|_{p} = \mathrm{O}\big(t^{-1}\big).\end{eqnarray*}The function $\mathcal{W}$ is the unique global solution to the following initial value problem: \begin{eqnarray*}
\left\{\begin{array}{lcl}
\hspace{4.6pt} \partial_{\hspace{0.5pt}t} \mathcal{W} \hspace{1.5pt}= \Delta \mathcal{W} + \mathrm{div} \big(\hspace{1pt} \Gamma_t \nabla E_3 * \Gamma_t \hspace{1pt}\big) \qquad &&\text{on $\mathbb{R}^{3+1}_+$}; \\[3mm]
\hspace{15.5pt}\mathcal{W} = 0  && \text{on $\mathbb{R}^3 \times \big\{0\big\}$}.
\end{array} \right.
\end{eqnarray*}More explicitly, $\mathcal{W}$ can be represented by \begin{align}\label{representation of function cal W}\mathcal{W}\left(x, t\right) = t^{ - 2} \int_0^\infty e^{\frac{s}{2}} \hspace{1pt}S_3(s) \Big[ \hspace{1pt}\mathrm{div}\big(\mathcal{G}_3 \nabla \mathcal{V}_3\big)\Big] \hspace{2pt}\mathrm{d} s \hspace{2pt}\bigg|_{\frac{x}{\sqrt{t}}}.
\end{align}Here and throughout the remainder of this  article, for any dimension $n$, we use $\mathcal{G}_n$ to denote the following Gaussian probability density: \begin{align}\label{gaussian density def}\mathcal{G}_n\big(\xi\big) := \left(4 \pi\right)^{-\frac{n}{2}} e^{ - \frac{ | \hspace{0.5pt}\xi \hspace{0.5pt}|^2}{4}}, \hspace{15pt}\text{for all $\xi \in \mathbb{R}^n$.}
\end{align}The function $\mathcal{V}_n = E_n * \mathcal{G}_n$. $S_n\left(\tau\right)$ is the semi-group generated by the operator $L_n := \Delta_\xi + \frac{1}{2} \hspace{1pt} \xi \cdot \nabla_\xi + \frac{n}{2}$. The constant $c_1$ is given by \begin{align*} c_1 := (4\pi)^{-\frac{3}{2}} M \int_{\mathbb{R}^3} |z|^2 \hspace{1pt} \mathrm{div} \left( \mathcal{G}_3 \nabla \mathcal{V}_3^{(1)} + \mathcal{G}_3^{(1)} \nabla \mathcal{V}_3 \right) \mathrm{d}z,
\end{align*}where\begin{align*}
\mathcal{G}_3^{(1)} := B_0 \cdot \nabla \mathcal{G}_3 + M^2 \int_0^\infty e^{\frac{s}{2}} S_3(s) \hspace{1pt}\Big[ \mathrm{div}\Big(\hspace{1pt} \mathcal{G}_3 \nabla \mathcal{V}_3\Big) \hspace{1pt}\Big] \hspace{1pt}\mathrm{d} s \quad \mbox{and} \quad \mathcal{V}_3^{(1)} := E_3 * \mathcal{G}_3^{(1)};
\end{align*} 
\item[$\mathrm{(3)}.$] if $n = 4$, then for any $p \in [\hspace{1pt}1, \infty\hspace{0.5pt}]$ and $t \geq 1$, it holds 
\begin{eqnarray}t^{2\left(1 - \frac{1}{p}\right)} \hspace{2pt}\left\| u(\cdot, t) - M\hspace{1pt}\Gamma_t + B_0 \cdot \nabla \Gamma_t   - c_2 \hspace{0.5pt} t^{- 3}\log t \left( \frac{1}{2} - \frac{|\hspace{0.5pt}x\hspace{0.5pt}|^2}{16t} \right) e^{-\frac{|\hspace{0.5pt}x\hspace{0.5pt}|^2}{4t}}\right\|_{p} = \mathrm{O}\big(t^{-1}\big).\end{eqnarray}The constant $c_2$ is given as follows: \begin{align*} c_2 := \left(\frac{M}{4\pi}\right)^2  \int_{\mathbb{R}^4} |z|^2 \hspace{1pt}\mathrm{div} \Big( \mathcal{G}_4 \nabla \mathcal{V}_4 \Big) \mathrm{d}z.
\end{align*} \end{enumerate}
\end{thm}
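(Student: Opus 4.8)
\emph{Self-similar variables and the linearized flow.} The plan is to pass to similarity coordinates $\xi = x/\sqrt{t}$, $\tau = \log t$, writing $u(x,t) = t^{-n/2}w(\xi,\tau)$. A direct computation turns (\ref{eqn_0_0_1}) into
\[
\partial_\tau w = L_n w - e^{(1-\frac n2)\tau}\,\nabla_\xi\!\cdot\!\big(w\,\nabla_\xi E_n * w\big),
\]
so for $n\ge 3$ the rescaled equation is an exponentially small perturbation of the linear Fokker--Planck flow $\partial_\tau w = L_n w$. I will use the spectral picture of $L_n$ on a suitable polynomially weighted space: its spectrum is $\{-k/2:k\ge 0\}$, the eigenspace for $-k/2$ is spanned by $\{\partial^\alpha\mathcal G_n:|\alpha|=k\}$, and the projections $P_{\ge k}$ onto eigenvalues $\le -k/2$ satisfy $\|S_n(\tau)P_{\ge k}\|\lesssim e^{-k\tau/2}$. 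By Theorem~\ref{thm_1} we already know $\|w(\tau)-M\mathcal G_n\|\to 0$; parabolic smoothing together with the $L^\infty$-decay and the finite-second-moment hypothesis (which, via $\frac{d}{dt}\int|x|^2u = 2nM + (2-n)\int u\,E_n*u$, propagates for all $t$ and forces $\int|\xi|^2 w(\tau)\,d\xi\to 2nM$) will place $w(\tau)-M\mathcal G_n$ in that weighted space for all large $\tau$.

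\emph{Peeling off the conserved modes.} Mass conservation gives $\int w(\tau)=M$, and conservation of the center of mass, $\int x\,u(x,t)\,dx = B_0$, gives $\int\xi\,w(\xi,\tau)\,d\xi = e^{-\tau/2}B_0$. Hence, with $g:=w-M\mathcal G_n$, the $P_0$- and $P_1$-components of $g$ are pinned down, and I write $g = -e^{-\tau/2}B_0\!\cdot\!\nabla\mathcal G_n + h$ with $h$ in the range of $P_{\ge 2}$. Expanding the nonlinearity about $M\mathcal G_n$ and using $E_n*\mathcal G_n = \mathcal V_n$, one gets $\partial_\tau h = L_n h - M^2 e^{(1-\frac n2)\tau}\nabla\!\cdot\!(\mathcal G_n\nabla\mathcal V_n) - e^{(1-\frac n2)\tau}\nabla\!\cdot\!\mathcal R[w]$, where $\mathcal R[w]=w\,\nabla E_n*w-M^2\mathcal G_n\nabla\mathcal V_n$ is at least linear in $g$. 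Two structural facts do the work: (i) $\nabla\!\cdot\!(\mathcal G_n\nabla\mathcal V_n)$ is radial, is a divergence, and --- since $\mathcal G_n\nabla\mathcal V_n$ is an odd vector field --- has vanishing first moments, so it lies in the range of $P_{\ge 2}$ and its $P_2$-component is a multiple of $\big(\int|z|^2\nabla\!\cdot\!(\mathcal G_n\nabla\mathcal V_n)\,dz\big)\Delta\mathcal G_n$; (ii) the antisymmetry $\int f\,\nabla E_n*g = -\int g\,\nabla E_n*f$ forces $\int\nabla\!\cdot\!\mathcal R[w] = 0$ and $\int\xi\,\nabla\!\cdot\!\mathcal R[w] = 0$, so the whole source of the $h$-equation is $P_{\ge 2}$-valued. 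A first Duhamel estimate from a large time $\tau_0$ --- using $\|S_n(\tau)P_{\ge 2}\|\lesssim e^{-\tau}$, the $e^{(1-\frac n2)\tau}$ gain, and $\|\mathcal R[w]\|\lesssim\|g\|(1+\|g\|)$ via Hardy--Littlewood--Sobolev bounds for $\nabla E_n*\cdot$ --- yields $\|h(\tau)\|=\mathrm{O}(e^{-\tau/2})$, hence $\|g(\tau)\|=\mathrm{O}(e^{-\tau/2})$.

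\emph{Dimension-by-dimension extraction.} For $n\ge 5$, $e^{(1-\frac n2)\tau}\le e^{-3\tau/2}$, so after convolution with $S_n(\tau-\cdot)P_{\ge 2}$ every source term (even a resonance with the $-(n-2)/2$ eigenspace, which contributes only $\tau e^{-(n-2)\tau/2}=o(e^{-\tau})$) is $\mathrm{O}(e^{-\tau})$, as is $S_n(\tau-\tau_0)h(\tau_0)$; thus $h=\mathrm{O}(e^{-\tau})$, which after undoing the change of variables is Statement~5(1). For $n=4$, $e^{(1-\frac n2)\tau}=e^{-\tau}$ resonates with the $-1$ eigenspace, so the Duhamel integral against the $P_2$-part of $-M^2\nabla\!\cdot\!(\mathcal G_4\nabla\mathcal V_4)$ produces a secular term proportional to $\tau e^{-\tau}\Delta\mathcal G_4$, the remainder being $\mathrm{O}(e^{-\tau})$; rescaling turns $\tau e^{-\tau}\Delta\mathcal G_4$ into a multiple of $t^{-3}\log t\,\big(\tfrac12-\tfrac{|x|^2}{16t}\big)e^{-|x|^2/4t}$ and the constant bookkeeping produces $c_2$, giving Statement~5(3). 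For $n=3$, $e^{(1-\frac n2)\tau}=e^{-\tau/2}$ resonates only with the $-1/2$ eigenspace, to which $\nabla\!\cdot\!(\mathcal G_3\nabla\mathcal V_3)$ is orthogonal, so there is no logarithm at this order; instead Duhamel converts the leading source into $-M^2e^{-\tau/2}\int_0^\infty e^{\sigma/2}S_3(\sigma)\big[\nabla\!\cdot\!(\mathcal G_3\nabla\mathcal V_3)\big]\,d\sigma$ --- the integral converges precisely because the vanishing first moments make the integrand $\mathrm{O}(e^{-\sigma/2})$ --- plus $\mathrm{O}(e^{-\tau})$; this is exactly $-M^2\mathcal W$ after rescaling, via (\ref{representation of function cal W}). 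Setting $\mathcal G_3^{(1)} = B_0\!\cdot\!\nabla\mathcal G_3 + M^2\int_0^\infty e^{\sigma/2}S_3(\sigma)\big[\nabla\!\cdot\!(\mathcal G_3\nabla\mathcal V_3)\big]\,d\sigma$ so that $g=-e^{-\tau/2}\mathcal G_3^{(1)}+\rho$ with $\rho=\mathrm{O}(e^{-\tau/2})$, and re-expanding $\mathcal R[w]$ about $-e^{-\tau/2}\mathcal G_3^{(1)}$, the cross term produces a genuine $e^{-\tau}$-rate source $M e^{-\tau}\nabla\!\cdot\!(\mathcal G_3\nabla\mathcal V_3^{(1)}+\mathcal G_3^{(1)}\nabla\mathcal V_3)$ with $\mathcal V_3^{(1)}=E_3*\mathcal G_3^{(1)}$, which resonates with the $-1$ eigenspace and yields the secular term $\propto\tau e^{-\tau}\Delta\mathcal G_3$ whose $P_2$-coefficient is a multiple of $\int|z|^2\nabla\!\cdot\!(\mathcal G_3\nabla\mathcal V_3^{(1)}+\mathcal G_3^{(1)}\nabla\mathcal V_3)\,dz$, i.e. $c_1$; all remaining source terms (quadratic in $\rho$, the $e^{-3\tau/2}$-rate piece, and the $\rho$-linear perturbation) contribute $\mathrm{O}(e^{-\tau})$, and a second bootstrap closes $\rho = (\text{this }\tau e^{-\tau}\text{ term}) + \mathrm{O}(e^{-\tau})$. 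Undoing the change of variables and the $L^p$-scaling --- under which $\mathrm{O}(e^{-\tau})$ in $w$ becomes $\mathrm{O}(t^{-n/2-1})$ in $L^\infty$, i.e. $\mathrm{O}(t^{-1})$ after the factor $t^{\frac n2(1-\frac1p)}$, uniformly in $p$ --- gives Statement~5(2). Finally Statement~5 trivially implies Statement~4 (each extra term is $o(t^{-\frac n2(1-\frac1p)})$ in $L^p$), so together with Theorem~\ref{thm_1} the equivalence of Statements~1, 2, 3, 5 follows.

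\emph{Main obstacle.} The crux is the uniform-in-time nonlinear analysis needed to make the successive Duhamel iterations both rigorous and sharp: proving Riesz-potential / Hardy--Littlewood--Sobolev estimates $\|\nabla E_n*f\|_Y\lesssim\|f\|_X$ in polynomially weighted spaces compatible with the spectral-gap bounds for $S_n$; propagating the required weighted control of $w(\tau)$ forward from Theorem~\ref{thm_1} and the finite-second-moment hypothesis (and, in the $n=3$ case, an additional finite-third-moment-type decay that must be tracked through the $\rho$-equation); and carrying the $\rho$-dependent linear perturbation $e^{(1-\frac n2)\tau}\nabla\!\cdot\!(\mathcal G_n\nabla E_n*\rho + \rho\nabla\mathcal V_n)$ through the iteration without degrading the rate. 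The spectral/resonance bookkeeping that pins down the explicit constants $c_1$ and $c_2$ is, by comparison, routine.
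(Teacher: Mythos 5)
Your proposal follows essentially the same strategy as the paper: Duhamel iteration in similarity variables with successive moment cancellations, a first bootstrap to rate $e^{-\tau/2}$, then re-expansion of the nonlinearity about the improved profile to extract the $\mathrm{O}(e^{-\tau})$ remainder together with the resonant $\tau e^{-\tau}$ secular terms in dimensions $3$ and $4$. The paper packages this via explicit Taylor expansions of the integral kernel of $S_n(\tau)$ and null-moment conditions rather than abstract spectral projections, and as a consequence the ``finite-third-moment-type'' concern you flag for $n=3$ is unnecessary: the approximating profiles $\mathcal{G}_3^{(1)}$ and $\mathcal{W}_\star$ are shown to carry all finite weighted norms by direct computation, so the solution itself enters only through its propagated second moment.
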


We would like put some remarks on our results in Theorems \ref{thm_1} and \ref{thm_2}. Suppose that $n \geq 3$ and $u$ is a non-negative global mild solution to (\ref{eqn_0_0_1}). If it satisfies
\begin{align}\label{liminf_condition}
\liminf_{t\rightarrow\infty} \big\| u(\cdot, t) \big\|_{\infty} = 0,
\end{align}
then there is a sequence $\big\{ t_k \big\}$, which diverges to $\infty$ as $k \rightarrow \infty$, such that the $L^\infty$-norm of $u\left(\cdot, t_k\right)$ converges to $0$ as $k \rightarrow \infty$. Since the equation (\ref{eqn_0_0_1}) preserves the total mass, the $L^{n/2}$-norm of $u\left(\cdot, t_k\right)$ also converges to $0$ as $k \rightarrow \infty$. In light of the inclusion relations (\ref{embedding}), Statement 1 in Theorem \ref{thm_1} holds. These arguments induce that the condition (\ref{liminf_condition}) is in fact equivalent to all the statements  in Theorem \ref{thm_1}. Hence for any non-negative global mild solution to (\ref{eqn_0_0_1}), either it decays uniformly to $0$ as $t \rightarrow \infty$, or its $L^\infty$-norm has a strictly positive lower bound near $t = \infty$. In 2D case, when the total mass is strictly less than $8 \pi$, we have solutions uniformly decaying to $0$ as time goes to infinity. However, when the total mass equals to $8\pi$, there exists a global solution which converges to a non-zero stationary solution of (\ref{eqn_0_0_2}) when $t \rightarrow \infty$. See \cite{CF13}. As for the higher dimensional case, little is known about smooth stationary solutions to (\ref{eqn_0_0_1}). Our results in Theorems \ref{thm_1} and \ref{thm_2} describe the long-time behavior of global mild solutions to (\ref{eqn_0_0_1}) which decay to $0$ uniformly as $t \rightarrow \infty$.\vspace{0.3pc}

We also want to point out that in \cite{BKP19}, Biler-Karch-Pilarczyk have obtained some $L^p$\hspace{0.5pt}-\hspace{0.5pt}decay estimate for global solutions to (\ref{eqn_0_0_1}). Their results work for all spatial dimensions $n \geq 3$. However, they need some regularity assumptions on the initial data. And their results are for radially symmetric solutions only.  Our result in Statement 4 of Theorem \ref{thm_1} drops their symmetry assumption on the initial data. More than theirs, our decay rate is optimal and we also show long-time asymptotics of non-negative global mild solutions to (\ref{eqn_0_0_1}) when time is large.  


\subsection{\normalsize Mothodology and structure of the article}\label{sec_1_3}\vspace{0.5pc} 

We briefly discuss the ideas behind the proofs of our main results. In what follows, $u$ always denotes a non-negative global mild solution to either (\ref{eqn_0_0_1}) or (\ref{eqn_0_0_2}). $M$ is the total mass of $u$.

\subsubsection{Temporal decay estimate of $\| \hspace{1pt}u\left(\cdot, t\right)\|_\infty$} \vspace{0.5pc}

With an approximation argument, we can assume our initial density $u_0 \in L^1_+\left(\mathbb{R}^n\right)\cap L^\infty\left(\mathbb{R}^n\right)$. For any positive constant $K'$, we define \begin{align*}
T_0 := \sup \left\{ T > 0 : \big\|\hspace{0.5pt} u(\cdot,t) \hspace{0.5pt}\big\|_{\infty} \hspace{1.5pt}\leq\hspace{1.5pt} \dfrac{K'}{t}  \hspace{3pt} \mbox{ for all } t \in (\hspace{0.5pt}0,T\hspace{0.5pt}] \hspace{2pt}\right\}.
\end{align*}The time $T_0$ must be strictly positive or $\infty$. Therefore, to obtain Statement 2 in Theorems \ref{thm_1_2D} and \ref{thm_1}, we should show $T_0 = \infty$ for some $K'$ suitably chosen. Suppose that $T_0 < \infty$. By Struwe's arguments for heat flow of harmonic maps, we can translate and scale the solution $u$ to a new function $\widehat{u}$ so that the supremum norm of $\widehat{u}$ is bounded from above by $5$ on $\mathbb{R}^n \times [-1, 0\hspace{1pt}]$. The equations (\ref{eqn_0_0_1}) and (\ref{eqn_0_0_2}) are translational invariant. Moreover, they are also invariant under the scaling transformation (\ref{scaling}). Hence, the function $\widehat{u}$ satisfies \begin{align}\label{scaled equation for hat u}
\partial_\tau \widehat{u} - \Delta_\xi \widehat{u} + \big(\nabla_\xi E_n * \widehat{u}\hspace{1pt} \big) \cdot \nabla_\xi \widehat{u} - \widehat{u}^2 = 0 \qquad \mbox{on } P_1 := B_1 \times [-1, 0\hspace{1pt}].
\end{align}In (\ref{scaled equation for hat u}), $B_1$ is the unit ball in $\mathbb{R}^n$ with center $0$ and $(\xi, \tau) \in \mathbb{R}^n \times \mathbb{R}_+$ are new space-time variables. To apply local maximum estimate for parabolic equations to (\ref{scaled equation for hat u}), we need control the coefficients of $\nabla_\xi \widehat{u}$ in (\ref{scaled equation for hat u}). By a potential estimate, it satisfies \begin{align}\label{supre norm est of potential}\big\|\hspace{1pt} \nabla_\xi E_n * \widehat{u} \hspace{1pt}\big\|_\infty \hspace{2pt}\lesssim\hspace{2pt} \left\|\hspace{1pt} \widehat{u} \hspace{1pt}\right\|_1^{\frac{1}{n}} \left\|\hspace{1pt} \widehat{u}\hspace{1pt} \right\|_{\infty}^{1- \frac{1}{n}}, \hspace{15pt}\text{for all $n \geq 2$.}
\end{align}Here and in what follows, $A \lesssim B$ denotes $A \leq c B$, where $c > 0$ is a universal constant depending only on $n$.  In 2D case, the total mass is scaling and translational invariant. In conjunction with the supremum norm estimate of $\widehat{u}$, the estimate (\ref{supre norm est of potential}) induces the supremum norm estimate of $\nabla_\xi E_2 * \widehat{u}$ on $\mathbb{R}^2 \times [-1, 0\hspace{1pt}]$.  However, when the spatial dimension $n \geq 3$, this method fails. The reason is that $L^1$-norm is no longer scaling invariant and we have no control on the scaling used in the definition of $\widehat{u}$. It is also because of this reason that we need to estimate the supremum norm of $\nabla_\xi E_n * \widehat{u}$ in terms of some scaling and translational invariant norm. Notice that in higher dimensional case the Newtonian potential $\widehat{v} := E_n * \widehat{u}$ is well-defined if $\widehat{u}$ has enough integrability. Moreover, it also satisfies \begin{align*} - \Delta_\xi \widehat{v} = \widehat{u} \hspace{15pt}\text{on $\mathbb{R}^n \times [-1, 0\hspace{1pt}]$.}
\end{align*}With Morrey's inequality, standard $W^{2, p}$-estimate for elliptic equations and John-Nirenberg inequality, we can imply from the above equation that \begin{align*}\big\|\hspace{0.5pt} \nabla_\xi \widehat{v}\left(\cdot,\tau\right)\hspace{0.5pt} \big\|_{\infty} \hspace{2pt}&\lesssim\hspace{3pt} \big\|\hspace{0.5pt}\widehat{u}(\cdot, \tau)\hspace{0.5pt}\big\|_{\infty}  + \big\|\hspace{1pt} \widehat{v}(\cdot,\tau) \hspace{0.5pt}\big\|_{\mathrm{BMO}}, \hspace{10pt}\text{for all $\tau \in [-1, 0\hspace{1pt}]$.}\end{align*}As we can see, the BMO-norm of $\widehat{v}$ plays a key role in the estimate of the supremum norm of $\nabla_\xi \widehat{v}$, particularly in the higher dimensional case. If we can control $\mathrm{BMO}$-norm of $\widehat{v}$, then in conjunction with the supremum norm estimate of $\widehat{u}$, we can obtain the supremum norm estimate of $\nabla_\xi \widehat{v}$ on $\mathbb{R}^n \times [-1, 0\hspace{1pt}]$. At this stage, we should mention the work of Gotoh \cite{G87} on the BMO property of Newtoniam potential. Indeed, Gotoh shows that the BMO-norm of $\widehat{v}$ can be estimated in terms of some Morrey norm of $\widehat{u}$. More precisely, we have from Gotoh's result that $$\big\|\hspace{1pt} \widehat{v}\left(\cdot, \tau\right)\hspace{1pt}\big\|_\mathrm{BMO} \hspace{2pt}\lesssim\hspace{2pt} \big\|\hspace{1pt} \widehat{u}\left(\cdot, \tau\right)\hspace{1pt}\big\|_{\dot{M}^1_{n/2}}, \hspace{15pt}\text{for all $\tau \in [-1, 0\hspace{1pt}]$.}$$ Since the $\dot{M}^1_{n/2}$-norm is scaling and translational invariant, by Theorem A and (\ref{embedding}), the above estimate yields the uniform boundedness of $\| \hspace{1pt}\widehat{v}\left(\cdot, \tau\right)\hspace{1pt}\|_{\mathrm{BMO}}$ on $[ -1, 0\hspace{1pt}]$, which infers the uniform bound of the supremum norm of $\nabla_\xi \widehat{v}$ on $\mathbb{R}^n \times [-1, 0\hspace{1pt}]$. Now we can apply local maximum estimate for parabolic equations to (\ref{scaled equation for hat u}) and obtain \begin{align}\label{contradiction to get result}
1 = \widehat{u}\left(0,0\right) \hspace{2pt}\lesssim &\hspace{2pt} \int_{P_{1}} \widehat{u}  \hspace{2pt}\lesssim\hspace{2pt} \int_{e_0^{-1/2}}^{\sqrt{2} \hspace{0.5pt}e_0^{-1/2}} \dfrac{\Phi_{z_1}(\rho)}{\rho} \hspace{2pt}\mathrm{d}\rho.
\end{align}In the above estimates, the first equality holds by our construction. $e_0$ is a positive number. $z_1 = (y_0, s_1)$ is a point in spacetime. The density function $\Phi_{z_1}$ is given by
\begin{align*}
\Phi_{z_1}(\rho) := \left(4 \pi\right)^{- \frac{n}{2}} \rho^{2-n} \int_{\mathbb{R}^n}  \widetilde{u}\left(y,s_1 - \rho^2\right)   \hspace{1pt}e^{- \frac{\left|y-y_0 \right|^2}{4 \rho^2 }} \hspace{2pt} \mathrm{d}y,
\end{align*}where $\widetilde{u}$ is another function obtained by scaling and translating the original solution $u$. We note that various density functions have already been used in the study of heat flow of harmonic maps (see \cite{S88}) and mean curvature flow (see \cite{W05}). It is the first time that we apply a similar concept of density function to study regularity of mild solutions to the classical Patlak-Keller-Segel equation.\vspace{0.3pc}

To contradict the assumption $T_0 < \infty$, we are left to prove smallness of the last integral in (\ref{contradiction to get result}). Here our arguments are also different in 2D and higher dimensions. In 2D case, it shows in Lemma \ref{monomonicity of phi function} that  the density function $\Phi_{z_1}$ satisfies the ODE inequality:\begin{align*}
\dfrac{\mathrm{d}}{\mathrm{d}\rho} \Phi_{z_1}(\rho) \geq \left( 1 - \dfrac{M}{8\pi} \right) \dfrac{2}{\rho} \hspace{2pt}\Phi_{z_1} (\rho).
\end{align*}Due to this inequality, if $M < 8\pi$, then $\Phi_{z_1}(\rho)$ decays to $0$ as $\rho \rightarrow 0$. The decay rate  is of the order $\mathrm{O}\hspace{1pt}\big(\rho^{2 - \frac{M}{4 \pi}}\big)$ as $\rho \rightarrow 0$. Therefore, we can obtain the smallness of $\Phi_{z_1}$ on \text{\footnotesize $\left[ e_0^{- 1/2}, \sqrt{2}\hspace{1pt}e_0^{- 1/2}\right]$}, provided that  $e_0$ is large. The largeness of $e_0$ can be attained if $K'$ is large.  For the higher dimensional case,  we use the fact that $\dot{B}_{\infty, \infty}^{-2}$-norm of a function $w$ can be characterized by the superemum norm of $t \hspace{1pt}e^{t \Delta} w$ on $\mathbb{R}_+^{n+1}$. The density function $\Phi_{z_1}$ defined above immediately satisfies\begin{align*}\Phi_{z_1}\left(\rho\right) \hspace{2pt}\leq\hspace{2pt} \left\|\hspace{0.5pt} \widetilde{u}\left(\cdot,s_1-\rho^2\right) \right\|_{\dot{B}_{\infty,\infty}^{-2}} \hspace{2pt}\lesssim\hspace{2pt}   \epsilon_0.
\end{align*}The last estimate above is a consequence of Theorem A and (\ref{embedding}). In any case, we have smallness of the integral on the most-right-hand side of  (\ref{contradiction to get result}), which by our previous arguments, infers the $t^{-1}$-decay rate of the supremum norm of $u$ as $t \rightarrow \infty$. Readers may refer to Sect.\hspace{1pt}2 for the details of the proof. Notice that, for the classical 2D Patlak-Keller-Segel equation, the same $t^{-1}$-decay rate has been obtained in \cite{BDEF08} with several additional assumptions on the initial data. Our method here only depends on the finite-total-mass condition. Moreover, with smallness assumption on the $\dot{F}_{\infty, 2}^{-2}$-norm of initial data, our method can be applied to the higher dimensional case. We also want to point out that near $t = \infty$, the temporal decay rate of the supremum norm of $u$ can be improved from $t^{-1}$ to $t^{- \frac{n}{2}}$ if the spatial dimension $n \geq 3$. This result is attained in Sect.\hspace{3pt}3 by a bootstrap argument. Compared to the linear part of the equation (\ref{eqn_0_0_1}) which is governed by the heat equation on $\mathbb{R}^n$, our $t^{-\frac{n}{2}}$-decay rate is optimal. 

\subsubsection{Long-time asymptotic behavior in higher dimensions}\vspace{0.5pc}

In light of the works \cite{BDEF08, BDP06, CD14, FM16} on the classical 2D Patlak-Keller-Segel equation and the works \cite{GW02, GW05} by Gallay-Wayne on the 2D incompressible Navier-Stokes equation, to study the long-time behavior of global mild solutions to (\ref{eqn_0_0_1}) or (\ref{eqn_0_0_2}), it is convenient to write the equation under similarity variables. For any spatial dimension $n$ and function $u$, we define a new function $U$ by
\begin{align}\label{def_U}
U(\xi,\tau) := e^{\frac{n}{2} \tau} u \left( e^{\frac{\tau}{2}} \xi , e^{\tau} \right), \hspace{15pt}\text{where $(\xi, \tau) \in \mathbb{R}^n \times \mathbb{R}_+$.}
\end{align}
If $u$ is a solution of \hspace{1pt}(\ref{eqn_0_0_1}) or (\ref{eqn_0_0_2}), then $U$ satisfies the equation:
\begin{align}\label{eqn_1}
\partial_{\hspace{0.5pt}\tau}U  + f_n \nabla_\xi \cdot \big( U \nabla_\xi E_n * U  \big) = L_n U,  \hspace{15pt}\text{on $\mathbb{R}_+^{n+1}$, where $L_n := \Delta_\xi + \dfrac{1}{2}\hspace{1pt} \xi \cdot \nabla_\xi + \dfrac{n}{2}$.}
\end{align}
In (\ref{eqn_1}), the function $f_n$ is given by
\begin{eqnarray}\label{def_f_n}\label{f_n}f_n = f_n(\tau) := \exp\left\{ \left( 1 - \frac{n}{2}\right)\tau\right\}, \qquad \text{for all $\tau \in \mathbb{R}$.}\end{eqnarray}

To prove Statement 4 in Theorem \ref{thm_1}, we equivalently need to study  the strong $L^p$-convergence of the flow $\big\{U\left(\cdot, \tau\right)\big\}$ as $\tau \rightarrow \infty$. Here $p \in [1, \infty]$. The temporal decay rate that we have discussed in Sect.\hspace{1pt}1.3.1 are now crucial  for the sake of obtaining  the strong $L^p$-compactness of $U$. In light of (\ref{def_U}), a direct consequence of the $t^{- \frac{n}{2}}$-temporal decay rate of $u$ is the uniform boundedness of the rescaled solution $U$ on  $\mathbb{R}_+^{n + 1}$. Due to this uniform boundedness,  the rescaled solution $U\left(\cdot, \tau\right)$ and all its spatial derivatives are precompact in $L_{\mathrm{loc}}^\infty\left(\mathbb{R}^n\right)$ as $\tau \rightarrow \infty$. The proof is a standard application of regularity theory for parabolic equations and Arzel\` {a}-Ascoli theorem. Therefore, we only need to show that the flow $\big\{ U\left(\cdot, \tau\right)\big\}$ is spatially localized in $L^p\left(\mathbb{R}^n\right)$ with the spatial localization uniform for all $\tau$ large. Then, the strong $L^p$-convergence of the flow $\big\{U\left(\cdot, \tau\right)\big\}$ follows. By Duhamel principle, we decompose $U$ into $$U = U_1 + U_2, \hspace{15pt}\text{where $U_1(\tau) := S_n(\tau) \hspace{1pt}\big[\hspace{0.5pt}U\left(\cdot, 0\right)\big]$.}$$ In this decomposition, $S_n(\tau)$ is the semi-group generated by $L_n$. It can be shown that the flow $\big\{U_1\left(\cdot, \tau\right)\big\}$ is strongly $L^p$-compact for large $\tau$. Moreover, it converges to $M\hspace{1pt}\mathcal{G}_n$ strongly in $L^p\left(\mathbb{R}^n\right)$ as $\tau \rightarrow \infty$. We are therefore guided to study strong $L^p$-compactness of $U_2$. By an interpolation argument,  the $L^1$ and $L^\infty$-compactness of $U_2$ are of most importance.  One of our main estimates in Sect.\hspace{0.5pt}4.1 is to control the $L^1$-norm of $U_2$ as follows: \begin{align}\label{important est on L1 of U2}\int_{\mathbb{R}^n} \big|\hspace{1pt} U_2\left(\xi,\tau\right)\big|  \hspace{2pt} \mathrm{d}\xi \hspace{2pt}&\lesssim \hspace{2pt}   M^{1+\frac{1}{n}} \left[\hspace{1.5pt}\sup_{t \geq 0} \left( 1 + t \right)^{\frac{n}{2}}\big\|\hspace{1pt}u\left(\cdot, t\right)\big\|_{\infty}\right]^{1-\frac{1}{n}} \int_0^\tau \dfrac{f_n(s) e^{-\frac{\tau-s}{2}}}{ \left( 1 - e^{- (\tau -s )}\right)^{\frac{1}{2}}} \mathrm{d}s.
\end{align}As one can see, the $t^{- \frac{n}{2}}$-temporal decay rate of $u$ also plays a key role in the above estimate. Now we would like to emphasize that  when $n = 2$, the last integral on the right-hand side of (\ref{important est on L1 of U2}) is only uniformly bounded for all $\tau$ large. However, if the spatial dimension $n \geq 3$, then this integral indeed converges to $0$ as $\tau \rightarrow \infty$.  It is this difference that makes the long-time behaviors of the classical Patlak-Keller-Segel equation different in $2$D and higher dimensions. The assumption  $n \geq 3$ is also used in our study of the strong $L^\infty$-convergence of $U_2$ to $0$ as $\tau \rightarrow \infty$. Due to our previous arguments in this section, Statement 4 in Theorem \ref{thm_1} follows. Readers may refer to Sect.\hspace{0.5pt}4.1 for more details.\vspace{0.3pc}

If the initial density has a finite second moment, then the global solution to (\ref{eqn_1}) with this initial data may have a better convergence rate as $\tau \rightarrow \infty$ than the case when the initial density is merely $L^1$-integrable.  Generally a better convergence rate can be obtained via the entropy method. Readers may refer to \cite{AMTU01,BD00, GW05} for various applications of the entropy method on drift-diffusion type equations and $2$D incompressible Navier-Stokes equation. In the following, we briefly explain an application of the entropy method on the classical Patlak-Keller-Segel equation. Given a positive function $w$ on $\mathbb{R}^{n}$ with the total mass $M$, we define the free energy of $w$ associated with (\ref{eqn_1}) as follows:  \begin{align*}
\mathcal{H}\left[\hspace{0.5pt}w\hspace{0.5pt}\right] := \int_{\mathbb{R}^n} w \log \left(\dfrac{w}{\mathcal{G}_n}\right)  + \dfrac{1}{2} \hspace{1pt}f_n\hspace{1pt} \big| \nabla_\xi E_n * w \big|^2 \hspace{2pt}\mathrm{d}\xi - M \log M.
\end{align*}By carefully checking the validity of several integrations by parts, our solution $U$ to (\ref{eqn_1}) with a finite second moment satisfies the free energy identity: \begin{align*}\dfrac{\mathrm{d}}{\mathrm{d} \tau} \mathcal{H}   +  \int_{\mathbb{R}^n} \dfrac{n - 2}{4}  f_n \big| \nabla_\xi V \big|^2 + \dfrac{\mathcal{G}_n^{\hspace{0.5pt}2}}{U} \left| \nabla_\xi \left( \dfrac{U}{\mathcal{G}_n} \right) \right|^2 =   \int_{\mathbb{R}^n} f_n^{\hspace{1pt}2} \hspace{1pt}U \big| \nabla_\xi V \big|^2.
\end{align*}Here $\mathcal{H} = \mathcal{H}\left(\tau\right) = \mathcal{H}\left[\hspace{0.5pt}U\hspace{0.5pt}\right]$, $V = E_n * U$ and $n \geq 3$. The Gaussian probability density $\mathcal{G}_n$ is given in (\ref{gaussian density def}). Although $\mathcal{H}\left(\tau\right)$ is not a Lyapunov function since in general it is not monotonically decreasing with respect to $\tau$, we can still apply Gross' logarithmic Sobolev inequality in \cite{G75} and Csisz\'{a}r-Kullback inequality in \cite{C67,K67} to control the $L^1$ distance between $U$ and $M \mathcal{G}_n$. When $n \geq 4$, this method can give us the following optimal approximation of $U$ up to the order $\mathrm{O}(1)$: \begin{align*} U = M \hspace{0.5pt}\mathcal{G}_n + \mathrm{O}\big(e^{ - \frac{\tau}{2}}\big) \hspace{15pt}\text{near $\tau = \infty$ and under $L^1$-norm.}
\end{align*} However, for $n = 3$, the $L^1$-convergence rate of $U$ to $M \mathcal{G}_n$ obtained from this entropy method is only $e^{ - \frac{\tau}{4}}$ when $\tau \rightarrow \infty$. It is not optimal. This is one drawback of the entropy method. Another drawback is that we can only control the distance of $U$ with its $0$th order approximation via the entropy method. But as is well-known, if the second moment of initial data is finite, the solution to heat equation under similarity variables can be expanded near $\tau = \infty$ up to the order $e^{- \frac{\tau}{2}}$ with error bounded by $e^{- \tau}$. We expect a similar expansion to be true for the classical Patlak-Keller-Segel equation with error also bounded by $e^{- \tau}$.  Therefore, instead of using the entropy method, in Sect.\hspace{0.5pt}4.2, we utilize an integral representation of $U$, by which we provide a higher-order approximation of  the solution $U$ to (\ref{eqn_1}) with error also bounded by $e^{- \tau}$. See Propositions \ref{estimate of U1 up to second order} and \ref{optimal decay rate U2}. This approximation is optimal under the finite-second-moment assumption of the initial density. With the higher-order approximation of $U$, by (\ref{def_U}), Theorem \ref{thm_2} follows. We remark here that the higher-order  approximations of $u$ shown in Theorem \ref{thm_2} are different between the 3D case, the 4D case  and the  higher dimensional case with $n \geq 5$. The reason is due to   the advection term in (\ref{eqn_1}). For example, when $n = 3$, the advection term is of order $e^{ - \frac{\tau}{2}}$. Therefore, the effect from the advection term must be included in the approximation of $U$ when we expand it up to the order $e^{- \frac{\tau}{2}}$. But when $n \geq 4$, the advection term is a higher-order term. Its contribution is minor when we only expand $U$ up to the order $e^{- \frac{\tau}{2}}$. We also want to emphasize  two null structures associated with the classical Patlak-Keller-Segel equation. One is \begin{align*} \int_{\mathbb{R}^n} u \hspace{1pt}\partial_j v = 0, \hspace{15pt}\text{for all $j = 1, ..., n$.}
\end{align*} Here $v$ satisfies $- \Delta v = u$ on $\mathbb{R}^n$ with $u$ having sufficiently fast decay at spatial infinity. Another null structure is \begin{align*} \int_{\mathbb{R}^n} u_1 \hspace{1pt}\partial_j v_2 + u_2 \hspace{1pt}\partial_j v_1 = 0, \hspace{15pt}\text{for all $j = 1, ..., n$.}
\end{align*} Here for $k = 1, 2$, $v_k$ satisfies the Poisson equation $- \Delta v_k = u_k$  on $\mathbb{R}^n$. Meanwhile, $u_1$ and $u_2$ also have sufficiently fast decays at spatial infinity. With these two null structures obeyed by suitable functions, enough temporal decays can be gained near the temporal infinity which are crucial in our proof of Theorem \ref{thm_2}. Readers may refer to Sect.\hspace{0.5pt}4.2 for the details of the proof of Theorem \ref{thm_2}.

\subsubsection{Long-time asymptotic behavior in 2D}\vspace{0.5pc}

Compared to the higher dimensional case, it is difficult to study global dynamics of solutions to (\ref{eqn_0_0_2}) in 2D. The reason is that in 2D case, the last integral in (\ref{important est on L1 of U2}) is only uniformly bounded in $L^1$ for all $\tau \geq 0$. The compactness result of $S_2(\tau)$ at $\tau = \infty$ cannot be straightforwardly applied to obtain the strong $L^1$-compactness of a bounded global solution $U$ to \begin{align*}\partial_\tau U + \nabla_\xi \cdot\big(U \nabla_\xi E_2 * U\big) = L_2 U\hspace{15pt}\text{on $\mathbb{R}_+^{2+1}$}.\end{align*} By standard parabolic regularity result and Arzel\` {a}-Ascoli theorem, the uniform boundedness of $U$ can imply locally uniform convergence of $U$ as $\tau \rightarrow \infty$. To obtain the strong  $L^1$-compactness of $U$, we need \begin{align}\label{uniform smallness of integral of U} \sup_{\tau \hspace{1pt}\geq \hspace{1pt}0} \int_{B\mystrut^c_{R_\epsilon}} U\left(\cdot, \tau\right) < \epsilon,
\end{align}for arbitrary $\epsilon > 0$ and some $R_\epsilon > 0$ depending on $\epsilon$. In \cite{GW05}, a Carlen-Loss type estimate (see \cite{CL95}) has been used to study the global dynamics of 2D incompressible Navier-Stokes equation. In this work, the incompressibility condition of velocity satisfies Carlen-Loss type condition for the viscously damped conservation law and plays a key role in order to obtain a similar estimate as (\ref{uniform smallness of integral of U}) for vorticity.   However, the vector field $\nabla_\xi E_2* U$ in the classical $2$D Patlak-Keller-Segel equation satisfies \begin{align*} - \mathrm{div}_\xi \big(\nabla_\xi E_2* U \big) =  U \hspace{15pt}\text{on $\mathbb{R}^2$,}
\end{align*} which does not fulfill any Carlen-Loss type condition for the viscously damped conservation law if $U > 0$. The result in \cite{CL95} cannot be applied. The second approach to obtain a similar Carlen-Loss type estimate for parabolic type equations is to employ the Gaussian estimate of Aronson \cite{A68} for  fundamental solutions of parabolic equations. But Aronson's upper bound works only locally in time. One can extend Aronson's result globally in time. But in general (see \cite{K15}), a fundamental solution, denoted by $p(s, x; t, y)$, to a parabolic equation with bounded coefficients satisfies \begin{align*} p(s, x; t, y) \hspace{2pt}\lesssim\hspace{2pt} \dfrac{e^{c\hspace{0.5pt}\left(t - s\right)}}{(t - s)^{\frac{n}{2}}} e^{- \frac{\gamma  \left| x - y \right|^2}{t - s}}.
\end{align*}Here $c$ and $\gamma$ are positive constants. The exponential function $e^{c\left(t - s\right)}$ is harmful while we study long-time behavior of solutions to parabolic equations.\vspace{0.3pc}

To overcome the difficulty mentioned above, in Sect.\hspace{0.5pt}5.1, we introduce the quantity \begin{align}\label{def of tau 0 in introduction}
\tau_0\left(R\right) := \sup \left\{ T > 0 : \int_{B\mystrut^c_R} U\left(\xi,\tau\right) \mathrm{d}\xi  \hspace{2pt}<\hspace{2pt} \epsilon, \mbox{ for all } \tau \in [\hspace{1pt}0,T\hspace{1pt}] \right\}.
\end{align}Our goal is to show $\tau_0\left(R\right) = \infty$, provided that $\epsilon$ is small and $R$ is large. In the following arguments, we simply use $\tau_0$ to denote $\tau_0(R)$. Note that in the decomposition $U = U_1 + U_2$, where $U_1 = S_2\left(\tau\right) U_0$, the component $U_1$ is uniformly concentrated under the $L^1$-norm for all $\tau \geq 0$. The main arguments in Sect.\hspace{0.5pt}5.1 are devoted to estimating the $L^1$-norm of $U_2$ outside a large ball $B_R$.   Sufficiently we are led to estimate \begin{align}\label{est need to do in sect 5.1} \int_{B\mystrut^c_R} \mathrm{d} \xi \int_0^{\tau_0} \mathrm{d} s \int_{\mathbb{R}^2} \Omega\left(\xi, \eta, s\right) \hspace{2pt}\mathrm{d} \eta,\end{align}where\begin{align*} \Omega\left(\xi, \eta, s\right) :=  \dfrac{e^{-\frac{\tau_0 - s}{2}}}{ \left( 1 - e^{ - (\tau_0 - s)}\right)^2}  \hspace{1pt} U\left(\eta,s\right) \Big|\nabla_\eta E_2 * U \hspace{1pt}\Big| \left(\eta,s\right)  \left|\hspace{1pt}\xi -  e^{- \frac{\tau_0 - s }{2}} \eta \hspace{1pt}\right| e^{ -\frac{\big| \xi - e^{-\frac{\tau_0 - s}{2}} \eta \hspace{1pt} \big|^2}{4  \left(1 - e^{- (\tau_0-s)}\right)}}. 
\end{align*}Fixing an arbitrary $R_{1, \epsilon} > 0$, we decompose the integral domain of $\eta$ variable, i.e. $\mathbb{R}^2$, into $\omega_{1}^\epsilon \cup \omega_{2}^\epsilon$. Here $\omega_{1}^\epsilon$ denotes the ball $B_{R + R_{1, \epsilon}}$, while $\omega_{2}^\epsilon$ is the complement set of $\omega_{1}^\epsilon$ in $\mathbb{R}^2$. By (\ref{def of tau 0 in introduction}), the integral of $U(\cdot, s)$ on $\omega_2^\epsilon$ is of order at most $\epsilon$ for any $R_{1, \epsilon} > 0$ and $s \in [\hspace{1pt}0, \tau_0\hspace{1pt}]$. Utilizing the potential estimate, we can pick up $R_{1, \epsilon}$ sufficiently large, so that the $L^\infty$-norm of $\nabla E_2 * U$ on $\omega_2^\epsilon$ is of order at most $\epsilon^{1/6}$. Therefore, the integral \begin{align*} \int_{B\mystrut^c_R} \mathrm{d} \xi \int_0^{\tau_0}  \mathrm{d} s \int_{\omega_2^\epsilon} \Omega\left(\xi, \eta, s\right) \mathrm{d}\eta
\end{align*}is of order at most $\epsilon^{7/6}$, which is a higher-order term of $\epsilon$. When we integrate $\Omega\left(\xi, \eta, s\right)$ with respect to the variable $\eta$ on $\omega_1^\epsilon$, it is generally impossible to obtain any smallness from $U$ and $\nabla E_2 * U$ since in this case the radii $R$ and $R_{1, \epsilon}$ are large. Instead, we have to use the $\xi$ variable, more precisely the function \begin{align*}\left|\hspace{1pt}\xi -  e^{- \frac{\tau_0 - s }{2}} \eta \hspace{1pt}\right| e^{ -\frac{\big| \xi - e^{-\frac{\tau_0 - s}{2}} \eta \hspace{1pt} \big|^2}{4 \left(1 - e^{- (\tau_0-s)}\right)}}
\end{align*} in the integrand $\Omega\left(\xi, \eta, s\right)$. If there were  some positive constant $\sigma_\epsilon$ so that
\begin{align}\label{preferable estimate of xi}
\left|\hspace{1pt} \xi - e^{-\frac{\tau_0 - s}{2}} \eta \hspace{1pt}\right| \hspace{2pt}\geq\hspace{2pt} \sigma_\epsilon\hspace{1pt} |\hspace{0.5pt}\xi\hspace{0.5pt}| \hspace{15pt} \mbox{for $\xi \in B\mynewstrut^c_R$, $\eta \in \omega_1^\epsilon$ and $s \in [\hspace{1pt}0, \tau_0\hspace{1pt}]$,}
\end{align} then we could take $R$ large enough to obtain \begin{align*} \int_{B\mystrut^c_R} \mathrm{d} \xi \int_0^{\tau_0}  \mathrm{d} s \int_{\omega_1^\epsilon} \Omega\left(\xi, \eta, s\right) \mathrm{d}\eta < \epsilon^2.
\end{align*}However, (\ref{preferable estimate of xi}) is obviously false at $s = \tau_0$ in that the domain of $\xi $ variable and $\omega_1^\epsilon$ have non-empty intersection. But if we fix a $t_\epsilon > 0$ and push down the upper limit of $s$ variable in (\ref{est need to do in sect 5.1}) from $\tau_0$ to $\tau_0 - \log\left( 1 + t_\epsilon \right)$, then we can find a $R$ large enough so that the inequality in  (\ref{preferable estimate of xi}) holds for some $\sigma_\epsilon > 0$, all $\left(\xi, \eta\right) \in B\mynewstrut^c_R \times \omega_1^\epsilon$ and all $s \in [\hspace{1pt}0, \tau_0 - \log\left(1 + t_\epsilon\right)]$. See (\ref{id_7_12}). By this way, we can keep taking $R$ large enough such that  \begin{align*} \int_{B\mystrut^c_R} \mathrm{d} \xi \int_0^{\tau_0 - \log \left( 1 + t_\epsilon\right)}  \mathrm{d} s \int_{\omega_1^\epsilon} \Omega\left(\xi, \eta, s\right) \mathrm{d}\eta 
\end{align*}is a higher-order term of $\epsilon$. Our arguments are finally accomplished by choosing carefully a constant $t_\epsilon$ sufficiently small with which the integral \begin{align*} \int_{B\mystrut^c_R} \mathrm{d} \xi \int_{\tau_0 - \log\left(1 + t_\epsilon\right)}^{\tau_0 }  \mathrm{d} s \int_{\omega_1^\epsilon} \Omega\left(\xi, \eta, s\right) \mathrm{d}\eta 
\end{align*}is also a higher-order term of $\epsilon$. In summary, the $L^1$-compactness of $U$ near $\tau = \infty$ then follows by a bootstrap argument and this pushing-and-stretching spacetime approach.\vspace{0.3pc}

The next step to study the global dynamics of (\ref{eqn_0_0_2}) is to determine the $\omega$-limit set of $U$. To answer this question, the 2D case is also different from the higher dimensional case. For the higher dimensional case, since $U_2$ converges to $0$ strongly in $L^1$ as $\tau \rightarrow \infty$, we can easily infer from the decomposition $U = U_1 + U_2$ that the function $M \mathcal{G}_n$ is the unique element contained in the $\omega$-limit set of $U$. But in the 2D case, generally the $\omega$-limit set of $U$ may contain multiple elements. Associated with a sequence $\big\{\tau_k\big\}$ which diverges to $\infty$ as $k \rightarrow \infty$, the limit of $\big\{ U\left(\cdot, \tau_k + \tau\right)\big\}$ as $k \rightarrow \infty$, denoted by $U_\star\left(\tau\right)$, should be a function depending on the given variable $\tau \in \mathbb{R}$. Rewriting $U_\star$ under the $(x, t)$-variables by setting \begin{align*}u_\star\left(x, t\right) := \dfrac{1}{t} \hspace{1pt}U_\star\left(\dfrac{x}{\sqrt{t}}, \hspace{1pt}\log t \right),
\end{align*}in Sect.\hspace{0.5pt}5.2, we show that $u_\star$ solves the classical Patlak-Keller-Segel equation (\ref{eqn_0_0_2}) smoothly on $\mathbb{R}_+^{2+1}$. Meanwhile, $u_\star\left(\cdot, t\right)$ converges to $M \delta_0$ under the weak-$\mathrm{*}$ topology as $t \rightarrow 0$. Here, $\delta_0$ is the standard delta measure concentrated at $0$. Therefore, if $M < 8\pi$, then by the uniqueness result of Bedrossian-Masmoudi \cite{BM14} on the classical 2D Patlak-Keller-Segel equation with  measure-valued initial data, we obtain \begin{align*} u_\star\left(x, t\right) = \dfrac{1}{t} \hspace{1pt}G_M\left(\dfrac{x}{\sqrt{t}}\right),
\end{align*}where $G_M$ is the unique solution to (\ref{stationary solution of U in 2d}) with the total mass $M$ and a finite free energy. Equivalently, this shows that $U_\star = G_M$ if $M \in (0, 8\pi)$. We are left to show $M < 8 \pi$ if $U$ is uniformly bounded on $\mathbb{R}_+^{2+1}$. Our strategy is as follows. Firstly, we show that $u_\star$ has finite second moments for all $t > 0$. Moreover, since $u_\star\left(\cdot, t\right)$ converges to $M \delta_0$ under the weak-$\mathrm{*}$ topology as $t \rightarrow 0$, we can further induce \begin{align}\label{seco mom es of u star} \int_{\mathbb{R}^2} u_\star\left(x, t\right) |\hspace{1pt}x\hspace{1pt}|^2\hspace{2pt}\mathrm{d} x \hspace{2pt}\lesssim\hspace{2pt}\big( M + M^2\big)\hspace{0.5pt}t, \hspace{15pt}\text{for all $t > 0$.}
\end{align}If   the total mass $M = 8\pi$, then obviously 
\begin{align*}
\dfrac{\mathrm{d}}{\mathrm{d} t} \int_{\mathbb{R}^2}  u_\star(x,t) \hspace{1pt}|\hspace{1pt}x\hspace{1pt}|^2 \hspace{2pt}\mathrm{d}x = 4M \left( 1 - \frac{M}{8\pi} \right) = 0, \hspace{15pt} \mbox{for all } t > 0.
\end{align*}Hence, the left-hand side of (\ref{seco mom es of u star}) must be a constant, which in fact should be $0$ if we take $t \rightarrow 0$ on both sides of (\ref{seco mom es of u star}). By this way, we obtain a contradiction since $u_\star \equiv 0$ on $\mathbb{R}_+^{2 + 1}$. The total mass of $u_\star$ cannot be $8 \pi$. Finally we finish the proof of Theorem \ref{thm_1_2D} in Sect.\hspace{0.5pt}5.3 by showing the strong $L^\infty$-convergence of $U\left(\cdot, \tau\right)$ to $G_M$ as $\tau \rightarrow \infty$. 
\begin{rmk}In \cite{BM14, BZ15, SS02}, the classical 2D Patlak-Keller-Segel equation is shown to admit a local-in-time weak/mild solution if the initial data is a non-negative finite measure with all its atoms of mass strictly less than $8 \pi$. If atom of mass equals to $8 \pi$ at some location, our arguments above show that there can have no local-in-time solution smooth near $t = 0$ to the classical 2D Patlak-Keller-Segel equation. 
\end{rmk}

\subsection{\normalsize Notations}\label{sec_1_4}\vspace{0.5pc} 
\begin{itemize}
\item In this article, we simply use $X$ to denote a functional space $X\left(\mathbb{R}^n\right)$ in which all functions are defined on $\mathbb{R}^n$. The norm in $X$-space is denoted by $\| \cdot \|_X$;
\item Given $\Omega \subseteq \mathbb{R}^n$ and $p \geq 1$, the norm of $L^p\big(\hspace{1pt}\Omega; \hspace{1pt}\mathrm{d} x\hspace{1pt}\big)$ is denoted by $\| \cdot \|_{p, \mathrm{d} x; \hspace{1pt}\Omega}$, or simply by $\| \cdot \|_{p; \hspace{1pt}\Omega}$ when there is no confusion. Notice that we have also introduced the notation $\| \cdot \|_p$ which in fact is the norm $\| \cdot \|_{p; \hspace{1pt}\mathbb{R}^n}$;
\item In this article, $B_R(x)$ is the ball in $\mathbb{R}^n$ with center $x$ and radius $R$. $B_R$ is a simple notation for $B_R(0)$. Given $\rho > 0$ and $(x_0, t_0) \in \mathbb{R}_+^{n+1}$, we use $P_\rho(x_0,t_0)$ to denote the closed parabolic cylinder:
\begin{align*}
P_\rho(x_0,t_0) := \Big\{  \left(x,t\right) : |\hspace{0.5pt}x-x_0\hspace{0.5pt}| \hspace{1pt}\leq\hspace{1pt} \rho,\,\hspace{2pt} t_0 - \rho^2 \hspace{0.5pt}\leq\hspace{0.5pt} t \hspace{0.5pt}\leq\hspace{0.5pt} t_0 \Big\}.
\end{align*}For simplicity, we also use $P_\rho$ to denote $P_\rho(0, 0)$ in the following arguments. Aside from the closed parabolic cylinder $P_\rho(x_0, t_0)$, another closed cylinder $Q_R$ will be used in Sect.\hspace{0.5pt}5.2. It is defined as follows: \begin{align*} Q_R := \Big\{ (x, t) \in \mathbb{R}^{2 + 1} : | \hspace{0.5pt}x \hspace{0.5pt}| \leq R  \hspace{5pt}\text{and}\hspace{5pt} | \hspace{1pt}t \hspace{1pt}| \leq R^2 \Big\};
\end{align*}
\item We use $\Gamma_t\left(x\right)$ to denote the fundamental solution of the heat equation $\partial_t w = \Delta w$ on $\mathbb{R}^n$. On occasions that we need to substitute different values into the $t$-variable, we also use $\Gamma_n\left(x, t\right)$ to denote the heat kernel $\Gamma_t\left(x\right)$ interchangeably;
\item The notation $A \lesssim B$ means $A \leq c B$, where $c > 0$ is a universal constant depending only on the spatial dimension $n$. If there is a set of parameters, for example $M_1$, ..., $M_k$, then we use the notation $A \lesssim_{\hspace{1pt}M_1, ..., M_k} B$ to denote  $A \leq CB$, where $C > 0$ a constant depending on $M_1$, ..., $M_k$. Note that the constant $C$ can also depend on the spatial dimension $n$.
\end{itemize}

\section{\large Temporal decay estimate of global mild solutions}\label{sec_2}\vspace{0.5pc}

In this section, if $n \geq 3$, then we assume that the initial density $u_0$ has  small $\dot{F}_{\infty,2}^{-2} $-norm. If $n = 2$, then the total mass of $u_0$ is assumed to be in the interval $(0, 8\pi)$. Under these assmptions, we prove Statement 2 in Theorems \ref{thm_1_2D} and \ref{thm_1}.  Firstly, we estimate the supremum norm of $\nabla E_n * u$ in terms of $u$.

\begin{lem}\label{lem_2_1}
Suppose that $u \in L^1 \cap L^\infty$. Then it holds \begin{align}\label{estimate_grad_v}
\big\|\hspace{1pt} \nabla E_n * u \hspace{1pt}\big\|_\infty \hspace{2pt}\lesssim\hspace{2pt} \|\hspace{0.5pt} u \hspace{0.5pt}\|_1^{\frac{1}{n}} \hspace{1pt} \|\hspace{0.5pt} u \hspace{0.5pt}\|_{\infty}^{1- \frac{1}{n}}, \hspace{15pt}\text{for all $n \geq 2$.}
\end{align}
\end{lem}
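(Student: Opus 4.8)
The plan is to split the convolution kernel $\nabla E_n$ at a radius $r>0$ to be optimized at the end. Recall that $\nabla E_n(x) = c_n\, x/|x|^n$ for a dimensional constant $c_n$, so $|\nabla E_n(x)| \lesssim |x|^{1-n}$. Writing
\begin{align*}
\big|\nabla E_n * u\,\big|(x) \hspace{2pt}\lesssim\hspace{2pt} \int_{|y| \leq r} \frac{|u(x-y)|}{|y|^{n-1}}\,\mathrm{d}y + \int_{|y| > r} \frac{|u(x-y)|}{|y|^{n-1}}\,\mathrm{d}y \hspace{2pt}=:\hspace{2pt} I_1 + I_2,
\end{align*}
I would bound $I_1$ by pulling out $\|u\|_\infty$ and using that $\int_{|y|\leq r} |y|^{1-n}\,\mathrm{d}y \lesssim r$ (the singularity $|y|^{1-n}$ is integrable near the origin in $\mathbb{R}^n$ for $n\geq 2$, with the integral in polar coordinates reducing to $\int_0^r \rho^{1-n}\rho^{n-1}\,\mathrm{d}\rho = r$). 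This gives $I_1 \lesssim r\,\|u\|_\infty$. For $I_2$, on the region $|y| > r$ the kernel is bounded by $r^{1-n}$, so $I_2 \lesssim r^{1-n}\,\|u\|_1$.

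Combining the two bounds yields, uniformly in $x$,
\begin{align*}
\big\|\nabla E_n * u\,\big\|_\infty \hspace{2pt}\lesssim\hspace{2pt} r\,\|u\|_\infty + r^{1-n}\,\|u\|_1, \hspace{15pt}\text{for every } r > 0.
\end{align*}
I would then optimize over $r$: the right-hand side is minimized (up to a constant) by balancing the two terms, i.e. choosing $r$ so that $r\,\|u\|_\infty \sim r^{1-n}\,\|u\|_1$, which gives $r = \big(\|u\|_1/\|u\|_\infty\big)^{1/n}$. Substituting this value back in, both terms become $\|u\|_1^{1/n}\,\|u\|_\infty^{1-1/n}$, which is exactly (\ref{estimate_grad_v}). (One should note the degenerate cases $\|u\|_\infty = 0$, where $u = 0$ a.e.\ and the estimate is trivial, and otherwise $\|u\|_1, \|u\|_\infty \in (0,\infty)$ by the hypothesis $u \in L^1 \cap L^\infty$, so $r$ is a well-defined positive number.)

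There is no real obstacle here — this is the standard splitting argument for Riesz-type potentials. The only points requiring minor care are: confirming that $\nabla E_n$ indeed has the pointwise bound $|\nabla E_n(x)| \lesssim |x|^{1-n}$ (immediate from the explicit formula for the fundamental solution of $-\Delta$, valid for all $n \geq 2$, including the logarithmic case $n=2$ where $E_2(x) = -\tfrac{1}{2\pi}\log|x|$ and $\nabla E_2(x) = -\tfrac{1}{2\pi} x/|x|^2$); justifying that the convolution and the pointwise estimates are legitimate for $u \in L^1 \cap L^\infty$ (both integrals $I_1, I_2$ converge absolutely by the bounds just derived); and tracking that the implied constant depends only on $n$. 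The $L^1 \cap L^\infty$ hypothesis is used precisely so that $I_1$ is controlled by the $L^\infty$ bound near the singularity and $I_2$ by the $L^1$ bound at infinity.
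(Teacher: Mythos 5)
Your proof is correct and follows essentially the same approach as the paper: split the potential at a radius $r$, bound the near part by $r\,\|u\|_\infty$ and the far part using the $L^1$ norm, then choose $r=(\|u\|_1/\|u\|_\infty)^{1/n}$ to balance the two terms. The only (minor) difference is in the tail: the paper estimates the far part via H\"older with the $L^{3/2}$--$L^{3}$ pairing (which then needs the interpolation $\|u\|_{3/2}\le \|u\|_1^{2/3}\|u\|_\infty^{1/3}$), whereas you bound the kernel by $r^{1-n}$ and use $\|u\|_1$ directly --- a slightly simpler route to the same conclusion.
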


\begin{proof}[\bf Proof] We assume that $u \not \equiv 0$ on $\mathbb{R}^n$. Otherwise, (\ref{estimate_grad_v}) holds trivially. For any $x \in \mathbb{R}^n$ and $R > 0$, we can estimate $\nabla E_n * u$ pointwisely as follows: 
\begin{align*}
\big|\hspace{1pt} \nabla E_n * u \hspace{1pt}\big|\left(x\right)  &\hspace{2pt}\lesssim\hspace{2pt} \int_{B_R(x)} \dfrac{\big|\hspace{0.5pt}u(z)\hspace{0.5pt}\big|}{|x - z|^{n-1}}   \hspace{2pt}\mathrm{d} z +  \int_{B\mystrut^c_R(x)} \dfrac{\big|\hspace{0.5pt}u(z) \hspace{0.5pt}\big|}{| x - z|^{n-1}} \hspace{2pt}\mathrm{ d} z \notag\\[2mm]
&\hspace{2pt}\lesssim\hspace{2pt} \| \hspace{0.5pt} u \hspace{0.5pt} \|_{\infty} \hspace{1pt} \big\| | x - z|^{1-n} \big\|_{1, \hspace{0.5pt}  \mathrm{d} z; B_R(x)}  +   \|\hspace{0.5pt} u \hspace{0.5pt}  \|_{\frac{3}{2}} \hspace{1pt} \big\|  | x - z|^{1-n} \big\|_{3, \hspace{0.5pt}\mathrm{d} z;  B\mystrut^c_R(x)}. \end{align*}Let  $R = \| u \|_1^{\frac{1}{n}} \|\hspace{0.5pt} u \hspace{0.5pt}\|_{\infty}^{- \frac{1}{n}}$ and take supreme over $x \in \mathbb{R}^n$ in the last estimate. It turns out \begin{align*}
 \big\| \nabla E_n * u \big\|_\infty \hspace{2pt}\lesssim\hspace{2pt} R \hspace{1pt}   \|\hspace{0.5pt} u \hspace{0.5pt} \|_{\infty} + R^{1- \frac{2}{3} n}  \|\hspace{0.5pt}u\hspace{0.5pt} \|_1^{\frac{2}{3}}\hspace{1pt}  \|\hspace{0.5pt} u \hspace{0.5pt} \|_{\infty}^{\frac{1}{3}}  \hspace{2pt}\lesssim\hspace{2pt} \| u \|_1^{\frac{1}{n}} \hspace{1pt}\| u \|_{\infty}^{1- \frac{1}{n}}.
\end{align*}The proof finishes.
\end{proof}

Now we consider the higher dimensional case with the additional assumption that $u_0 \in L^\infty$.
\begin{prop}\label{prop_2}
Suppose that the spatial dimension $n \geq 3$. There exists a positive constant $\epsilon_0 = \epsilon_0(n) $ such that for any  $u_0 \in L_+^1 \cap L^\infty$, if it satisfies \begin{align}\label{small assumption on initial}\big\| u_0 \big\|_{\dot{F}_{\infty,2}^{-2}} < \epsilon_0,\end{align} then (\ref{eqn_0_0_1}) has a unique global mild solution $u$ with initial data $u_0$ at $t = 0$. Moreover, we have
\begin{align}\label{estimate_prop_2}
\big\|\hspace{0.5pt} u(\cdot,t) \hspace{0.5pt}\big\|_{\infty} \hspace{1.5pt}\leq\hspace{1.5pt} \dfrac{5}{t}, \hspace{15pt}\text{for all $t > 0 $.}
\end{align}
\end{prop}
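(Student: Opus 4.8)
The existence of a unique global mild solution $u$ with initial data $u_0$ is immediate from the hypothesis (\ref{small assumption on initial}) and Theorem~A; since in addition $u_0 \in L^1_+ \cap L^\infty$, parabolic smoothing makes $u$ smooth, non-negative and strictly positive on $\mathbb{R}^n \times (0,\infty)$, so all the pointwise manipulations below are legitimate. The real content is the bound (\ref{estimate_prop_2}), and the plan is a continuity/contradiction argument. Fixing $K' = 5$, I set
\[
T_0 := \sup\Big\{ T > 0 : \|u(\cdot,t)\|_\infty \leq 5/t \text{ for all } t \in (0,T\hspace{0.5pt}] \Big\},
\]
which is positive because the local theory gives $\|u(\cdot,t)\|_\infty \lesssim \|u_0\|_\infty$ for small $t$, so $t\|u(\cdot,t)\|_\infty \to 0$ as $t \to 0^+$. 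I then assume $T_0 < \infty$ and aim to contradict this; once the contradiction is obtained, $T_0 = \infty$ and (\ref{estimate_prop_2}) follows.

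Assuming $T_0 < \infty$, the first step is a Struwe-type rescaling. Following the point-selection/covering argument used by Struwe for the harmonic map heat flow, I would extract a spacetime point $z_1 = (y_0, s_1)$ and a dyadic scale so that an associated parabolically rescaled-and-translated copy $\widehat{u}$ of $u$ (still non-negative) satisfies $\|\widehat{u}\|_{L^\infty(\mathbb{R}^n\times[-1,0])} \leq 5$ and $\widehat{u}(0,0) = 1$, and solves (\ref{scaled equation for hat u}) on $P_1$. The second step is to control the drift coefficient $\nabla_\xi\widehat{v}$, where $\widehat{v} := E_n * \widehat{u}$ solves $-\Delta_\xi\widehat{v} = \widehat{u}$: combining Morrey's inequality, interior $W^{2,p}$-estimates for the Poisson equation and the John--Nirenberg inequality yields $\|\nabla_\xi\widehat{v}(\cdot,\tau)\|_\infty \lesssim \|\widehat{u}(\cdot,\tau)\|_\infty + \|\widehat{v}(\cdot,\tau)\|_{\mathrm{BMO}}$, and Gotoh's theorem gives $\|\widehat{v}(\cdot,\tau)\|_{\mathrm{BMO}} \lesssim \|\widehat{u}(\cdot,\tau)\|_{\dot{M}_{n/2}^1}$. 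Since $\dot{M}_{n/2}^1 = \dot{B}_{\infty,\infty}^{-2}$ with scaling- and translation-invariant norm and $\widehat{u}(\cdot,\tau)$ is a scaled copy of a time-slice of $u$, Theorem~A and (\ref{embedding}) give $\sup_{\tau\in[-1,0]}\|\widehat{u}(\cdot,\tau)\|_{\dot{M}_{n/2}^1} \lesssim \epsilon_0$, hence $\|\nabla_\xi\widehat{v}\|_{L^\infty(\mathbb{R}^n\times[-1,0])} \lesssim 1$.

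With the drift bounded and the zeroth-order term treated as $\widehat{u}^2 = \widehat{u}\cdot\widehat{u}$, i.e. a coefficient bounded by $5$ times $\widehat{u}$, equation (\ref{scaled equation for hat u}) becomes a uniformly parabolic linear equation for the non-negative function $\widehat{u}$ with bounded coefficients, so the parabolic local maximum estimate on $P_1$ yields the first inequality $1 = \widehat{u}(0,0) \lesssim \int_{P_1}\widehat{u}$ of (\ref{contradiction to get result}). To finish I must show that $\int_{P_1}\widehat{u}$ is small. I would bound it, as in (\ref{contradiction to get result}), by $\int_{e_0^{-1/2}}^{\sqrt{2}\,e_0^{-1/2}}\Phi_{z_1}(\rho)\,\rho^{-1}\,\mathrm{d}\rho$ with
\[
\Phi_{z_1}(\rho) = (4\pi)^{-\frac{n}{2}}\rho^{2-n}\int_{\mathbb{R}^n}\widetilde{u}(y,s_1-\rho^2)\,e^{-\frac{|y-y_0|^2}{4\rho^2}}\,\mathrm{d}y = \rho^2\,\big[e^{\rho^2\Delta}\widetilde{u}(\cdot,s_1-\rho^2)\big](y_0),
\]
$\widetilde{u}$ being again a rescaled-and-translated copy of $u$. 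The characterization $\|w\|_{\dot{B}_{\infty,\infty}^{-2}} \approx \sup_{t>0} t\|e^{t\Delta}w\|_\infty$, the inclusion $\dot{F}_{\infty,2}^{-2}\hookrightarrow\dot{B}_{\infty,\infty}^{-2}$ from (\ref{embedding}), the scaling/translation invariance of these norms and Theorem~A together give $\Phi_{z_1}(\rho) \leq \|\widetilde{u}(\cdot,s_1-\rho^2)\|_{\dot{B}_{\infty,\infty}^{-2}} \lesssim \epsilon_0$, so $\int_{P_1}\widehat{u} \lesssim \epsilon_0\log 2$. Chaining everything, $1 \lesssim \epsilon_0$ with an implicit constant depending only on $n$; choosing $\epsilon_0 = \epsilon_0(n)$ small enough yields the contradiction, and therefore $T_0 = \infty$.

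I expect the Struwe-type rescaling of the first step to be the main obstacle: one has to run the point-selection argument so as to land on a full backward parabolic cylinder on which the rescaled solution is genuinely bounded by $5$ while remaining of unit size at its center, and one has to be careful that the two distinct rescalings defining $\widehat{u}$ and $\widetilde{u}$ are both honest scaled-and-translated copies of $u$, so that every scale-invariant bound ($\mathrm{BMO}$ via $\dot{M}_{n/2}^1$, and the heat density via $\dot{B}_{\infty,\infty}^{-2}$) transfers from $u$ through Theorem~A. The remaining pieces --- elliptic regularity, Gotoh's $\mathrm{BMO}$ estimate, parabolic Moser iteration and the Besov characterization of $\dot{B}_{\infty,\infty}^{-2}$ --- enter only as standard black boxes.
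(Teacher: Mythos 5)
Your proposal follows the paper's proof essentially step for step: the same definition of $T_0$ with $K'=5$, a Struwe-type rescaling to a normalized solution of (\ref{scaled equation for hat u}), the Gotoh/BMO--Morrey--$W^{2,p}$--John--Nirenberg chain for $\nabla_\xi \widehat{v}$ fed by Theorem A and (\ref{embedding}), the local maximum estimate giving $1=\widehat{u}(0,0)\lesssim\int_{P_1}\widehat{u}$, and the backward-heat-kernel density $\Phi_{z_1}$ bounded by the $\dot{B}_{\infty,\infty}^{-2}$-norm and hence by $\epsilon_0$. The only place where you record an IOU rather than an argument is exactly where the paper spends most of its effort: producing $\widehat{u}$ with $\widehat{u}(0,0)=1$ and $\sup_{\mathbb{R}^n\times[-1,0]}\widehat{u}\le 5$. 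The paper does this through an almost-maximizer selection for the weighted quantity $(\sqrt{t}-\rho)^2\sup_{P_\rho(x,t)}u$ in (\ref{id_3_05}), a two-step rescaling, and it uses the smallness of $\epsilon_0$ only to establish the intermediate claim (\ref{claim of upper bound of e_0}); the contradiction with $T_0<\infty$ is then drawn from $T_0\sup_x u(x,T_0)\le 4<5$, not directly from $1\lesssim\epsilon_0$ as in your plan. Your more direct organization does close, and in fact more easily than you fear: since $t\mapsto\|u(\cdot,t)\|_\infty$ is continuous for $t>0$ and $T_0$ is a supremum, one has $T_0\|u(\cdot,T_0)\|_\infty=5$; picking $x_0$ with $u(x_0,T_0)\ge(5-\delta)/T_0$ and rescaling parabolically by $e:=u(x_0,T_0)$, the window $\tau\in[-1,0]$ corresponds to $t\in[T_0-e^{-1},T_0]\subset(0,T_0]$, where the a priori bound $u\le 5/t$ gives $\sup\widehat{u}\le 5/(4-\delta)\le 5$ while $\widehat{u}(0,0)=1$, and all the scale-invariant norms transfer exactly as you describe. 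So no step of your outline fails, but as written the normalization is asserted rather than proved; you should either carry out this direct selection at $t=T_0$ (noting the continuity of the sup-norm, which the paper's final contradiction also implicitly uses) or reproduce the paper's almost-maximizer argument.
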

\begin{proof}[\bf Proof]
In light that $u_0$ satisfies (\ref{small assumption on initial}),  if we take $\epsilon_0$ suitably small, then Theorem A in the introduction yields the existence of a unique global mild solution, denoted by $u$, to (\ref{eqn_0_0_1}) with the initial data $u_0$ at $t = 0$. We assume that $u_0 \not\equiv 0$ on $\mathbb{R}^n$. Otherwise, $u \equiv 0$ on $\mathbb{R}_+^{n +1}$. The estimate (\ref{estimate_prop_2}) holds trivially. Notice that locally in time near $t = 0$, the $L^\infty$\hspace{0.5pt}-\hspace{0.5pt}norm of $u$ is finite in that $u_0 \in L^\infty$. Therefore, we can define 
\begin{align*}
T_0 := \sup \left\{ T > 0 : \big\|\hspace{0.5pt} u(\cdot,t) \hspace{0.5pt}\big\|_{\infty} \hspace{1.5pt}\leq\hspace{1.5pt} \dfrac{5}{t} \hspace{3pt} \mbox{ for all } t \in (\hspace{0.5pt}0,T\hspace{0.5pt}] \hspace{2pt}\right\}.
\end{align*}$T_0$ must be strictly positive or $\infty$. If $T_0 < \infty$, then for any $\delta \in (0, 1)$, there exists $\left(x_0,t_0\right) \in \mathbb{R}^n \times (\hspace{0.5pt}0,T_0\hspace{0.5pt}]$ depending on $\delta$ such that
\begin{align}\label{id_3_05}
\sup_{\rho \hspace{1pt} \in \hspace{1pt} \left[\hspace{0.5pt}0,\sqrt{t_0}\hspace{0.5pt}\right]} \left( \sqrt{t_0} - \rho \right)^2 \sup_{P_\rho\left(x_0,t_0 \right)} u \hspace{2pt}\geq \hspace{2pt} \sup_{\left(x, t \right) \hspace{0.5pt}\in\hspace{0.5pt} \mathbb{R}^n\times \left[\hspace{0.5pt}0, T_0 \hspace{0.5pt}\right]} \hspace{1pt}\sup_{\rho \hspace{1pt}\in \hspace{1pt} \left[\hspace{0.5pt}0,\sqrt{t}\hspace{0.5pt}\right]}  \big( \sqrt{t} - \rho \hspace{1pt}\big)^2 \sup_{P_\rho(x,t)} u - \delta.
\end{align}
 With $\left(x_0, t_0\right)$ in (\ref{id_3_05}), we translate and rescale $(u, v) = (u, E_n *u)$ by defining
\begin{align*}
&\widetilde{u}\left(y,s\right) := t_0 \hspace{1pt} u \left(x_0 + \sqrt{t_0} \hspace{1pt}y , t_0 + t_0 \hspace{0.5pt}s\right),  \\[1mm]
&\widetilde{v} \left(y,s\right)  := \hspace{10pt}v \left(x_0 + \sqrt{t_0} \hspace{1pt}y , t_0 + t_0 \hspace{0.5pt}s\right), \hspace{15pt}\text{for all $(y, s) \in \mathbb{R}^n \times [ -1, 0\hspace{0.5pt}]$.}
\end{align*}
Therefore, the left-hand side of (\ref{id_3_05}) satisfies
\begin{align}\label{id_2_06}
\sup_{\rho \hspace{1pt}\in\hspace{1pt} \left[\hspace{0.5pt}0,1\right]} (1 - \rho)^2 \sup_{P_\rho } \widetilde{u} \hspace{2pt}= \sup_{\rho \hspace{1pt}\in\hspace{1pt} \left[\hspace{0.5pt}0,\sqrt{t_0}\right]} \left( \sqrt{t_0} - \rho \hspace{0.5pt}\right)^2 \sup_{P_\rho(x_0,t_0)} u.
\end{align}In the remainder of the proof, we denote by $\rho_0$ a number in $[\hspace{0.5pt}0, 1)$ so that 
\begin{align}\label{id_2_07}
\sup_{\rho \hspace{1pt}\in\hspace{1pt} \left[\hspace{0.5pt}0,1\hspace{0.5pt}\right]} (1 - \rho)^2 \sup_{P_\rho} \widetilde{u} \hspace{2pt} \hspace{2pt}=\hspace{2pt} \left(1 - \rho_0\right)^2 e_0 \hspace{2pt}:=\hspace{2pt} (1 - \rho_0)^2 \sup_{P_{\rho_0}} \widetilde{u}.
\end{align}
Moreover, we let $\left(y_0,s_0\right)$ be a point in $P_{\rho_0}$ such that   $e_0 = \widetilde{u} \left(y_0,s_0\right)$. We claim that \begin{align}\label{claim of upper bound of e_0} e_0 \hspace{2pt}<\hspace{2pt} \dfrac{4}{\left(1 - \rho_0\right)^2}.\end{align} If on the contrary that (\ref{claim of upper bound of e_0}) fails, then  it holds \begin{align}\label{basic_ineq_relation} - \dfrac{(1 + \rho_0)^2}{4} \leq - \rho_0^2 - \dfrac{(1 - \rho_0)^2}{4} \leq  s_0 - \dfrac{1}{e_0} \leq s_0 + \dfrac{\tau}{e_0} \leq 0, \qquad \text{for any $\tau \in [-1, - e_0 \hspace{0.5pt}s_0\hspace{0.5pt}]$}.\end{align} Therefore, we can keep translating and rescaling $\left(\widetilde{u}, \widetilde{v}\right)$ by defining
\begin{align*}
&\widehat{u} \left(\xi,\tau\right) := \dfrac{1}{e_0} \widetilde{u} \left( y_0 + \frac{\xi}{\sqrt{e_0}}, \hspace{1pt}s_0 + \dfrac{\tau}{e_0} \right), \\[1mm]
&\widehat{v} \left(\xi,\tau\right) := \hspace{13pt}\widetilde{v} \left( y_0 + \frac{\xi}{\sqrt{e_0}} , s_0 + \frac{\tau}{e_0} \right), \hspace{15pt}\text{for all $\left(\xi, \tau\right) \in \mathbb{R}^n \times [ -1, - e_0 \hspace{0.5pt}s_0 \hspace{0.5pt}]$.}
\end{align*}
With the definitions of $\widehat{u}$ and $\widetilde{u}$, it follows   from (\ref{basic_ineq_relation}) that
\begin{align*}
\sup_{\mathbb{R}^n \times [-1, - e_0\hspace{0.5pt} s_0\hspace{0.5pt}]} \widehat{u} \hspace{3pt} \leq \hspace{3pt}  \dfrac{4}{e_0 (1 - \rho_0)^2} \sup_{x \hspace{1pt}\in\hspace{1pt}\mathbb{R}^n} \left( \sqrt{t_0} - \sqrt{t_0}\hspace{1pt} \dfrac{1 + \rho_0}{2} \right)^2 \sup_{P_{\sqrt{t_0} \hspace{1pt}\frac{1 + \rho_0}{2}}\left(x, \hspace{1pt}t_0\right)} u.
\end{align*}
In view of (\ref{id_3_05})--(\ref{id_2_07}), this estimate can be reduced to \begin{align}\label{id_2_08}
\sup_{\mathbb{R}^n \times [-1, - e_0\hspace{0.5pt} s_0\hspace{0.5pt}]} \widehat{u} &\hspace{3pt} \leq \hspace{3pt}  \dfrac{4}{e_0 (1 - \rho_0)^2} \sup_{\left(x, t \right) \hspace{0.5pt}\in\hspace{0.5pt} \mathbb{R}^n\times \left[\hspace{0.5pt}0, T_0 \hspace{0.5pt}\right]} \hspace{1pt}\sup_{\rho \hspace{1pt}\in \hspace{1pt} \left[\hspace{0.5pt}0,\sqrt{t}\hspace{0.5pt}\right]}  \big( \sqrt{t} - \rho \hspace{1pt}\big)^2 \sup_{P_\rho(x,t)} u \notag\\[2mm]
&\hspace{3pt}\leq \hspace{3pt}\dfrac{4}{e_0 (1 - \rho_0)^2}  \hspace{1pt}\big[\hspace{1pt}e_0 (1 - \rho_0)^2 + \delta\hspace{2pt}\big] \hspace{3pt}\leq\hspace{3pt}5.
\end{align} 
The last inequality above have used the assumption that (\ref{claim of upper bound of e_0}) fails and $\delta < 1$.\vspace{0.3pc}

By the invariance of the $\dot{F}_{\infty,\hspace{1pt}2}^{-2}$\hspace{0.5pt}-\hspace{0.5pt}norm under the scaling transformation (\ref{scaling}), Theorem A infers 
\begin{align*}
\sup_{\tau \hspace{1pt}\in\hspace{1pt} \left[-1, - e_0 \hspace{0.5pt}s_0\right]} \|\hspace{1pt} \widehat{u}\left(\cdot,\tau\right) \|_{\dot{F}_{\infty,2}^{-2}}  \hspace{2pt}\leq\hspace{2pt}\sup_{t \hspace{1pt}\in\hspace{1pt} [\hspace{0.5pt}0,T_0]} \|\hspace{0.5pt} u\left(\cdot,t\right) \hspace{0.5pt}\|_{\dot{F}_{\infty,2}^{-2}} \hspace{2pt}\leq \hspace{2pt} c\hspace{1pt}\epsilon_0.
\end{align*} 
In conjunction with the last two relationships in (\ref{embedding}), the above estimate induces
\begin{align}\label{id_2_08_1}
\sup_{\tau \hspace{1pt}\in\hspace{1pt} \left[-1,- e_0 \hspace{0.5pt}s_0\right]}  \|\hspace{1pt} \widehat{u}\left(\cdot,\tau\right) \|_{\dot{M}^1_{n/2}} \hspace{2pt}\lesssim\hspace{2pt}\epsilon_0,
\end{align}
where for any locally integrable function $w$ in $\dot{M}^1_{n/2}$, its $\dot{M}^1_{n/2}$\hspace{0.5pt}-\hspace{0.5pt}norm is given by
\begin{align*}
\| w \|_{\dot{M}^1_{n/2}} := \sup_{(x, r)\hspace{0.5pt}\in\hspace{0.5pt}\mathbb{R}_+^{n + 1}} r^{2-n} \int_{B_r(x)} |\hspace{0.5pt}w \hspace{0.5pt}|.
\end{align*} Since $\widehat{v} = E_n * \widehat{u}$, it follows from (\ref{id_2_08_1}) and the BMO property of the Newtonian potentials  in  \cite{G87}  that
\begin{align*}
\sup_{\tau \hspace{1pt}\in\hspace{1pt} \left[-1, -e_0 \hspace{0.5pt} s_0 \right]} \big\|\hspace{0.5pt} \widehat{v}\left(\cdot,\tau\right) \big\|_{\mathrm{BMO}} \hspace{2pt}\lesssim\hspace{2pt} \sup_{\tau \hspace{1pt}\in\hspace{1pt} \left[-1, - e_0 \hspace{0.5pt}s_0\right]} \big\|\hspace{0.5pt} \widehat{u}\left(\cdot,\tau\right) \big\|_{\dot{M}^1_{n/2}} \hspace{2pt}\lesssim\hspace{2pt} \epsilon_0.
\end{align*}
Hence, for any fixed $\xi \in \mathbb{R}^n$ and $\tau \in \left[-1,-e_0 \hspace{0.5pt}s_0\right]$, by using Morrey's ineqaulity, standard $W^{2, p}$-estimate for elliptic equations, John-Nirenberg inequality, (\ref{id_2_08}) and the last estimate on the BMO\hspace{0.5pt}-\hspace{0.5pt}norm of $\widehat{v}$, we can bound the gradient of $\widehat{v}$ as follows:
\begin{align*}
\left\|\hspace{0.5pt} \nabla_\xi \widehat{v}\left(\cdot,\tau\right)\hspace{0.5pt} \right\|_{\infty; \hspace{1pt}B_1(\xi)} \hspace{2pt}&\lesssim\hspace{3pt} \left\| \hspace{0.5pt}\nabla_\xi^2\hspace{0.5pt} \widehat{v}\left(\cdot, \tau\right) \hspace{0.5pt}\right\|_{2n; \hspace{1pt}B_1(\xi)} + \left\| \hspace{0.5pt}\nabla_\xi \widehat{v}\left(\cdot, \tau\right) \hspace{0.5pt}\right\|_{2n; \hspace{1pt}B_1(\xi)}\\[2mm]
&\lesssim \hspace{2pt}\big\|\hspace{0.5pt}\widehat{u}(\cdot, \tau)\hspace{0.5pt}\big\|_{2n; \hspace{1pt}B_2(\xi)} + \left\|\hspace{0.5pt} \widehat{v}\left(\cdot,\tau\right) - \dashint_{B_2(\xi)} \widehat{v} \hspace{1pt} \right\|_{2n; \hspace{1pt}B_2(\xi)}\\[2mm]
&\lesssim \hspace{2pt}1 + \left\|\hspace{1pt} \widehat{v}(\cdot,\tau) \hspace{0.5pt}\right\|_{\mathrm{BMO}}  \hspace{2pt}\lesssim\hspace{2pt}1.
\end{align*}
Here $\displaystyle \dashint_{B_2(\xi)} \widehat{v} $ denotes the average of $\widehat{v}$ on the ball $B_2(\xi)$. Since $(\xi,\tau)$ is arbitrary, we conclude that
\begin{align}\label{id_2_09}
\sup_{\tau \hspace{1pt}\in\hspace{1pt} \left[-1,-e_0 \hspace{0.5pt}s_0\right]} \big\|\hspace{0.5pt} \nabla_\xi \widehat{v}\left(\cdot,\tau\right) \big\|_{\infty} \hspace{2pt}\lesssim\hspace{2pt}1.
\end{align}

Notice that $P_1$ is contained in $\mathbb{R}^n \times \big[-1, -e_0 \hspace{0.5pt}s_0\big]$ in that $s_0 \leq 0$. By the equation of $u$ and the definition of $\widehat{u}$, it turns out
\begin{align}\label{id_2_10}
\partial_\tau \widehat{u} - \Delta_\xi \widehat{u} + \nabla_\xi \widehat{v} \cdot \nabla_\xi \widehat{u} - \widehat{u}^2 = 0 \qquad \mbox{on } P_1.
\end{align}
In light of (\ref{id_2_08}) and (\ref{id_2_09}), we can apply Theorem 7.36 in \cite{L05} to $(\ref{id_2_10})$ and  obtain
\begin{align}\label{id_2_11}
1 = \widehat{u}\left(0,0\right) \hspace{2pt}\lesssim &\hspace{2pt} \int_{P_{1}} \widehat{u}.
\end{align} 
On the other hand, we let $z_1 := \left(\hspace{0.5pt}y_0,s_1\right) := \left(\hspace{0.5pt}y_0,s_0 + e_0^{-1}\right)$ and define the backward heat kernel
\begin{align*}
\widetilde{\Gamma}_{n; \hspace{1pt}z_1}\big(y, s \big)  := \dfrac{1}{\left( 4\pi \left(s_1 - s\right) \right)^{\frac{n}{2}}} \hspace{1pt}e^{- \frac{\left|y-y_0 \right|^2}{4 \left(s_1 - s \right)}} \hspace{20pt}\text{for $s < s_1$.}
\end{align*}
Then the definition of $\widehat{u}$ yields
\begin{align}\label{id_2_15}
\int_{P_1} \widehat{u} &\hspace{2pt}=\hspace{2pt}e_0^{n/2}\int_{P_{e_0^{-1/2}}\left(y_0, s_0\right)} \widetilde{u} \hspace{2pt}\lesssim\hspace{2pt} \int_{P_{e_0^{-1/2}}\left(y_0,s_0\right)} \widetilde{u} \hspace{2pt} \widetilde{\Gamma}_{n; \hspace{1pt}z_1}   \hspace{2pt}\lesssim\hspace{2pt} \int_{e_0^{-1/2}}^{\sqrt{2} \hspace{0.5pt}e_0^{-1/2}} \dfrac{\Phi_{z_1}(\rho)}{\rho} \hspace{2pt}\mathrm{d}\rho,
\end{align}
where in the last estimate,
\begin{align}\label{def_phi_z_1}
\Phi_{z_1}(\rho) := \rho^2 \int_{\mathbb{R}^n} \widetilde{u}\left(\cdot,s\right) \hspace{1pt} \widetilde{\Gamma}_{n; \hspace{1pt}z_1}\left(\cdot,s\right)\hspace{2pt} \bigg|_{s = s_1 - \rho^2}.
\end{align}
Notice that the norm in $\dot{B}_{\infty,\infty}^{-2}$ can be characterized by the heat kernel as follows:
\begin{align*}
\|\hspace{0.5pt} w \hspace{0.5pt}\|_{\dot{B}_{\infty,\infty}^{-2}} = \hspace{2pt}\sup_{t>0} \hspace{2pt}t \left\|\hspace{1pt} e^{t\Delta} w \hspace{1pt}\right\|_{\infty}, \qquad \text{for any $w \in \dot{B}_{\infty,\infty}^{-2}$.}
\end{align*}
Therefore, by (\ref{id_2_08_1}) and the scaling invariance of $\dot{B}_{\infty, \infty}^{-2}$\hspace{0.5pt}-\hspace{0.5pt}norm, it satisfies
\begin{align}\label{id_2_16}
\Phi_{z_1}\left(\rho\right) \hspace{2pt}\leq\hspace{2pt} \left\|\hspace{0.5pt} \widetilde{u}\left(\cdot,s_1-\rho^2\right) \right\|_{\dot{B}_{\infty,\infty}^{-2}} \hspace{2pt}\lesssim\hspace{2pt}   \epsilon_0, \qquad \text{for any $\rho \in \left[ e_0^{-1/2} , \sqrt{2} \hspace{1pt}e_0^{-1/2} \right]$.}
\end{align}
Applying (\ref{id_2_16}) to the most-right-hand side of (\ref{id_2_15}) infers
\begin{align}\label{id_2_17}
\int_{P_1} \widehat{u} \hspace{2pt}\lesssim\hspace{2pt}\epsilon_0.
\end{align}
Combining (\ref{id_2_11}) and (\ref{id_2_17}) yields $\epsilon_0 \gtrsim 1$. However, this is impossible if we choose  $\epsilon_0$ suitably small. Hence, (\ref{claim of upper bound of e_0}) follows, provided that $\epsilon_0$ is small.
In light of (\ref{id_3_05})--(\ref{claim of upper bound of e_0}), it holds
\begin{align*}
T_0 \sup_{x \hspace{1pt}\in\hspace{1pt}\mathbb{R}^n} u\left(x,T_0\right) \hspace{1.5pt}\leq\hspace{1.5pt} (1 - \rho_0)^2 e_0 + \delta \hspace{1.5pt}<\hspace{1.5pt} 4 + \delta.
\end{align*}
Since $\delta$ is an arbitrary number in $(0, 1)$, the last estimate further implies \begin{align*}
T_0 \sup_{x \hspace{1pt}\in\hspace{1pt}\mathbb{R}^n} u \left(x, T_0\right) \hspace{1.5pt}\leq\hspace{1.5pt} 4.
\end{align*}
This is a contradiction to our definition of $T_0$. Therefore, $T_0 = \infty$  and the proof is completed.
\end{proof}

In the next, we consider the $2$D case with the additional assumption that $u_0 \in L^\infty $.

\begin{prop}\label{prop_3}
Suppose that $n = 2$. For any $u_0 \in L_+^1  \cap L^\infty$ with $M  = \| u_0\|_1  \in (0, 8\pi)$, the equation (\ref{eqn_0_0_2})  has a unique global mild solution $u$ with initial data $u_0$ at $t = 0$. Moreover, there exists a constant $K$ depending only on $M$ such that the $L^\infty$\hspace{0.5pt}-\hspace{0.5pt}norm of $u$ satisfies
\begin{align}\label{estimate_prop_3}
\big\|\hspace{0.5pt} u(\cdot,t) \hspace{0.5pt}\big\|_{\infty} \hspace{1.5pt}\leq\hspace{1.5pt} \dfrac{K}{t}, \qquad \text{for all $t > 0 $.}
\end{align}The constant $K = K(M)$ is monotonically increasing with respect to the total mass $M \in (0, 8 \pi)$. It diverges to $\infty$ as $M$ approaches $8 \pi$.
\end{prop}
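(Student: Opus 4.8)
The plan is to follow the template of the proof of Proposition~\ref{prop_2}, with Theorem~A and the smallness of $\epsilon_0$ replaced by subcriticality of the mass and the monotonicity formula of Lemma~\ref{monomonicity of phi function}. The existence and uniqueness of the global mild solution $u$, the finiteness of $\|u(\cdot,t)\|_\infty$ for $t>0$, and its boundedness as $t\to0^+$ (which holds because $u_0\in L^\infty$), all come from \cite{W18}. Thus, for a large constant $K=K(M)$ to be fixed at the very end, the time
\begin{align*}
T_0:=\sup\Big\{\,T>0:\ \big\|u(\cdot,t)\big\|_\infty\le K/t\ \text{ for all }t\in(0,T\hspace{1pt}]\,\Big\}
\end{align*}
is strictly positive; I would argue by contradiction and suppose $T_0<\infty$, so that continuity of $t\mapsto t\|u(\cdot,t)\|_\infty$ forces $T_0\|u(\cdot,T_0)\|_\infty=K$.

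First I would run Struwe's point selection as in \eqref{id_3_05}: pick $(x_0,t_0)\in\mathbb{R}^2\times(0,T_0\hspace{1pt}]$ (depending on the auxiliary parameter $\delta\in(0,1)$ of the selection) almost realising the supremum there, rescale by $t_0$ to get $\widetilde u$, and rescale once more by $e_0:=\sup_{P_{\rho_0}}\widetilde u$ — attained at $(y_0,s_0)\in P_{\rho_0}$ — to get $\widehat u$ with $\widehat u(0,0)=1$. The point of the $2$D case is that, \emph{because $K$ is large}, the selection automatically gives $(1-\rho_0)^2e_0\ge K-\delta$, so $e_0\ge(K-\delta)/(1-\rho_0)^2$ is large; the time-interval bookkeeping of \eqref{basic_ineq_relation} then goes through, and the computation of \eqref{id_2_08} yields a \emph{universal} bound $\sup_{P_1}\widehat u\lesssim1$. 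Here is where the two dimensions help: the total mass is scaling- and translation-invariant, so $\|\widehat u\|_1=M<8\pi$, and Lemma~\ref{lem_2_1} gives $\|\nabla_\xi E_2*\widehat u\|_\infty\lesssim M^{1/2}\big(\sup_{P_1}\widehat u\big)^{1/2}\lesssim1$, with no need for the BMO/Gotoh estimates used when $n\ge3$. Hence $\widehat u$ solves \eqref{id_2_10} on $P_1$ with coefficients bounded by universal constants, and the local maximum estimate for parabolic equations, exactly as in \eqref{id_2_11}, gives $1=\widehat u(0,0)\lesssim\int_{P_1}\widehat u$ with a \emph{universal} implied constant.

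Next, as in \eqref{id_2_15}, I would bound $\int_{P_1}\widehat u\lesssim\int_{e_0^{-1/2}}^{\sqrt2\,e_0^{-1/2}}\rho^{-1}\Phi_{z_1}(\rho)\,\mathrm{d}\rho$, where $z_1:=(y_0,s_1)$ with $s_1:=s_0+e_0^{-1}$ and $\Phi_{z_1}$ is the density function \eqref{def_phi_z_1}. The pointwise bound $\widetilde\Gamma_{2;\hspace{0.5pt}z_1}\le(4\pi\rho^2)^{-1}$ together with mass conservation gives $\Phi_{z_1}(\rho)\le M/(4\pi)$ throughout the admissible range $\rho\in(0,\sqrt{s_1+1})$, while Lemma~\ref{monomonicity of phi function}, available because $M<8\pi$, says that $\rho^{\frac{M}{4\pi}-2}\Phi_{z_1}(\rho)$ is nondecreasing there. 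Comparing the scale $\rho\sim e_0^{-1/2}$ with the reference scale $\sqrt{s_1+1}\ge\sqrt{1-\rho_0^2}$ and using $e_0(1-\rho_0^2)\ge e_0(1-\rho_0)^2\ge K-\delta$, this yields
\begin{align*}
\Phi_{z_1}(\rho)\ \lesssim\ \Big(\tfrac{1}{K-1}\Big)^{1-\frac{M}{8\pi}}\,M\qquad\text{for all }\ \rho\in\big[\,e_0^{-1/2},\,\sqrt2\,e_0^{-1/2}\,\big],
\end{align*}
whence $\int_{P_1}\widehat u\lesssim\big(\tfrac{1}{K-1}\big)^{1-\frac{M}{8\pi}}M$. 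Combined with the universal lower bound $1\lesssim\int_{P_1}\widehat u$ from the previous step, this forces $1\lesssim\big(\tfrac{1}{K-1}\big)^{1-\frac{M}{8\pi}}M$, which is false once $K=K(M)$ is chosen large enough in terms of $M$. This contradiction gives $T_0=\infty$, i.e.\ \eqref{estimate_prop_3}. Finally, reading off the same inequality, the least admissible $K$ is increasing in $M$ and diverges as $M\uparrow8\pi$, since the exponent $1-M/(8\pi)$ then tends to $0$ (and $M$ tends to $8\pi$).

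The hard part is the third step: extracting genuine smallness of $\int_{P_1}\widehat u$ from the bare inequality $M<8\pi$. The monotonicity formula only gives the power decay $\Phi_{z_1}(\rho)=\mathrm{O}\big(\rho^{2-\frac{M}{4\pi}}\big)$ as $\rho\to0$, with a prefactor $M/(4\pi)<2$ that is \emph{not} small; what saves the argument is that the relevant scale $e_0^{-1/2}$ is itself small, which is true only because the defining constant $K$ of $T_0$ is large. This feedback — large $K$ forces $e_0$ large, which forces $\Phi_{z_1}$ small on the relevant scale, which forces a contradiction provided $K$ was large enough to start with — is what must be balanced quantitatively, and the key to balancing it is that the rescaled $\widehat u$ keeps a \emph{universal} $L^\infty$ bound (it does not grow with $K$), so that the parabolic regularity constant in $1\lesssim\int_{P_1}\widehat u$ cannot devour the gain.
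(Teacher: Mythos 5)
Your proposal is correct, and it uses the same core machinery as the paper's proof of Proposition \ref{prop_3}: Struwe-type point selection, the double rescaling to $\widetilde u$ and $\widehat u$, the scale-invariance of the $2$D mass together with Lemma \ref{lem_2_1} to bound $\nabla E_2*\widehat u$, the local maximum estimate giving $1\lesssim\int_{P_1}\widehat u$, and the monotonicity of the density $\Phi_{z_1}$ from Lemma \ref{monomonicity of phi function}. Where you genuinely deviate is the logical architecture. The paper runs a nested contradiction: it first proves the auxiliary claim \eqref{id_5_04} (an upper bound on $e_0$) by assuming its failure, and only then contradicts the definition of $T_0$; moreover it modifies the selection weights to $(\sqrt t/2-\rho)^2$ as in \eqref{id_5_01}, which forces $\rho_0<1/2$ and hence the universal lower bound $\rho_*\geq\sqrt3/2$ used in \eqref{id_5_12}. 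You instead observe that, once $T_0<\infty$, continuity of $t\mapsto\|u(\cdot,t)\|_\infty$ gives $T_0\|u(\cdot,T_0)\|_\infty=K$, so the selected point automatically satisfies $(1-\rho_0)^2e_0\geq K-\delta$; this yields largeness of $e_0$ and the bound $\sup\widehat u\leq5$ in one stroke, and you compensate for the possibly small $\rho_*$ by estimating the ratio $\rho^2/\rho_*^2\leq 2e_0^{-1}/(1-\rho_0^2)\leq 2/(K-\delta)$, which reproduces the same smallness $\Phi_{z_1}(\rho)\lesssim M\,K^{-(1-\frac{M}{8\pi})}$ as \eqref{id_5_13}. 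Both routes land on the same inequality $1\lesssim M\,K^{-(1-\frac{M}{8\pi})}$ and the same choice of $K(M)$; your version is slightly leaner (no inner claim, no $\sqrt t/2$ trick), while the paper's version avoids invoking continuity of the sup-norm at $T_0$ at this stage (though it relies on essentially the same continuity in its final step, when it concludes from $T_0\sup_x u(x,T_0)\leq K/2$ that the definition of $T_0$ is contradicted). Two small points to tidy up: the bound fed into Lemma \ref{lem_2_1} must be the spatially global one, $\sup_{\mathbb{R}^2\times[-1,-e_0s_0]}\widehat u\leq5$, not merely $\sup_{P_1}\widehat u$ — the \eqref{id_2_08}-type computation does deliver the global bound, so this is only a wording slip — and you should state explicitly that $u$ is bounded on $\mathbb{R}^2\times[0,T_0]$ (local $L^\infty$ theory near $t=0$ plus the $K/t$ bound up to $T_0$) so that the selection supremum is finite.
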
\vspace{0.1pc}

\begin{proof}[\bf Proof]
By $M < 8\pi$, the equation (\ref{eqn_0_0_2}) admits a unique global mild solution $u$ with the initial data $u_0$ at $t = 0$. See \cite{W18}. In light that $u_0 \in L^\infty$, locally in time near $t = 0$, the $L^\infty$\hspace{0.5pt}-\hspace{0.5pt}norm of $u$ is finite. Therefore, we can define 
\begin{align*}
T_0 := \sup \left\{ T > 0 : \big\|\hspace{0.5pt} u(\cdot,t) \hspace{0.5pt}\big\|_{\infty} \hspace{1.5pt}\leq\hspace{1.5pt} \dfrac{K}{t} \hspace{3pt} \mbox{ for all } t \in (\hspace{0.5pt}0,T\hspace{0.5pt}] \hspace{2pt}\right\},
\end{align*}
where $K \geq 32$ is a constant to be determined later. $T_0$ must be positive or $\infty$. If $T_0 < \infty$, then similarly as in the proof of Proposition \ref{prop_2}, for any $\delta \in (0, 1)$, there exists $\left(x_0,t_0\right) \in \mathbb{R}^2 \times (\hspace{0.5pt}0,T_0\hspace{0.5pt}]$ depending on $\delta$ such that
\begin{align}\label{id_5_01}
\sup_{\rho \hspace{1pt} \in \hspace{1pt} \left[\hspace{0.5pt}0,\sqrt{t_0}/2\hspace{0.5pt}\right]} \left( \sqrt{t_0}/2 - \rho \right)^2 \sup_{P_\rho\left(x_0,t_0 \right)} u \hspace{2pt}\geq\hspace{2pt} \sup_{\left(x, t\right) \hspace{1pt}\in\hspace{1pt} \mathbb{R}^2 \times \left[\hspace{0.5pt}0, T_0 \hspace{0.5pt}\right]} \hspace{1pt}\sup_{\rho \hspace{1pt}\in \hspace{1pt} \left[\hspace{0.5pt}0,\sqrt{t}/2\hspace{0.5pt}\right]}  \big( \sqrt{t}/2 - \rho \hspace{1pt}\big)^2 \sup_{P_\rho(x,t)} u - \delta.
\end{align}
With $\left(x_0, t_0\right)$ in (\ref{id_5_01}), we translate and rescale $u$ as follows: 
\begin{align*}
\widetilde{u}\left(y,s\right) := t_0 \hspace{1pt} u \left(x_0 + \sqrt{t_0} \hspace{1pt}y , t_0 + t_0 \hspace{0.5pt}s\right), \quad\text{for all $(y, s) \in \mathbb{R}^2 \times [-1, 0\hspace{0.5pt}]$.}
\end{align*}
Therefore, the left-hand side of (\ref{id_5_01}) satisfies
\begin{align}\label{id_5_02}
\sup_{\rho \hspace{1pt}\in\hspace{1pt} \left[\hspace{0.5pt}0,1/2\right]} (1/2 - \rho)^2 \sup_{P_\rho } \widetilde{u} \hspace{2pt}= \sup_{\rho \hspace{1pt}\in\hspace{1pt} \left[\hspace{0.5pt}0,\sqrt{t_0}/2\right]} \left( \sqrt{t_0}/2 - \rho \hspace{0.5pt}\right)^2 \sup_{P_\rho(x_0,t_0)} u.
\end{align}
We then denote by $\rho_0$ a number in $[\hspace{0.5pt}0, 1/2)$ so that 
\begin{align}\label{id_5_03}
\sup_{\rho \hspace{1pt}\in\hspace{1pt} \left[\hspace{0.5pt}0,1/2\hspace{0.5pt}\right]} (1/2 - \rho)^2 \sup_{P_\rho} \widetilde{u} \hspace{2pt} \hspace{2pt}=\hspace{2pt} \left(1/2 - \rho_0\right)^2 e_0 \hspace{2pt}:=\hspace{2pt} (1/2 - \rho_0)^2 \sup_{P_{\rho_0}} \widetilde{u}.
\end{align}
Moreover, we let $\left(y_0,s_0\right)$ be a point in $P_{\rho_0}$ such that   $e_0 = \widetilde{u} \left(y_0,s_0\right)$. We claim that
\begin{align}\label{id_5_04}
e_0 \hspace{2pt}<\hspace{2pt} \dfrac{K}{2\left(1  - 2 \rho_0\right)^2}, \hspace{10pt}\text{provided that $K$ is suitably large.}
\end{align}
If on the contrary that (\ref{id_5_04}) fails, then by $K \geq 32$, it holds
\begin{align}\label{id_5_05}
- \dfrac{(1/2 + \rho_0)^2}{4} \leq - \rho_0^2 - \dfrac{(1/2 - \rho_0)^2}{4} \leq - \rho_0^2 - \dfrac{2\left(1  - 2 \rho_0\right)^2}{K} \leq s_0 - \dfrac{1}{e_0} \leq s_0 + \dfrac{\tau}{e_0} \leq 0,
\end{align}
for any $\tau \in [-1, - e_0 \hspace{0.5pt}s_0\hspace{0.5pt}]$. Then we further translate and rescale $\widetilde{u}$ by defining
\begin{align*}
\widehat{u} \left(\xi,\tau\right) := \dfrac{1}{e_0} \widetilde{u} \left( y_0 + \frac{\xi}{\sqrt{e_0}}, \hspace{1pt}s_0 + \dfrac{\tau}{e_0} \right), \hspace{5pt}\qquad \text{for all $\left(\xi, \tau\right) \in \mathbb{R}^2 \times [ -1, - e_0 \hspace{0.5pt}s_0 \hspace{0.5pt}]$.}
\end{align*}
By the definition of $\widehat{u}$ and $\widetilde{u}$, it follows from (\ref{id_5_05}) that
\begin{align*}
\sup_{\mathbb{R}^2 \times [-1, - e_0\hspace{0.5pt} s_0\hspace{0.5pt}]} \widehat{u} \hspace{3pt} \leq \hspace{3pt}  \dfrac{4}{e_0 (1/2 - \rho_0)^2} \sup_{x \hspace{1pt}\in\hspace{1pt}\mathbb{R}^2} \left( \frac{\sqrt{t_0}}{2} - \sqrt{t_0}\hspace{1pt} \dfrac{1/2 + \rho_0}{2} \right)^2 \sup_{P_{\sqrt{t_0} \hspace{1pt}\frac{1/2 + \rho_0}{2}}\left(x, \hspace{1pt}t_0\right)} u.
\end{align*}
In view of (\ref{id_5_01})--(\ref{id_5_03}), this estimate can be reduced to \begin{align}\label{id_5_06}
\sup_{\mathbb{R}^2 \times [-1, - e_0\hspace{0.5pt} s_0\hspace{0.5pt}]} \widehat{u} &\hspace{3pt} \leq \hspace{3pt}  \dfrac{4}{e_0 (1/2 - \rho_0)^2} \sup_{\left(x, t\right) \hspace{1pt}\in\hspace{1pt} \mathbb{R}^2 \times \left[\hspace{0.5pt}0, T_0 \hspace{0.5pt}\right]} \hspace{1pt}\sup_{\rho \hspace{1pt}\in \hspace{1pt} \left[\hspace{0.5pt}0,\sqrt{t}/2\hspace{0.5pt}\right]}  \big( \sqrt{t}/2 - \rho \hspace{1pt}\big)^2 \sup_{P_\rho(x,t)} u \notag\\[2mm]
&\hspace{3pt}\leq \hspace{3pt}\dfrac{4}{e_0 (1/2 - \rho_0)^2}\hspace{1pt}\big[\hspace{1pt} \left(1/2 - \rho_0\right)^2 e_0 + \delta \hspace{2pt}\big] \hspace{3pt}\leq\hspace{3pt}5.
\end{align}The last inequality above have used the assumption that (\ref{id_5_04}) fails, $K \geq 32$ and $\delta < 1$.
Using (\ref{id_5_06}) and the scaling invariance of the $L^1$-norm in 2D, we can deduce from (\ref{estimate_grad_v}) that 
\begin{align}\label{id_5_07}
\sup_{\tau \hspace{1pt}\in\hspace{1pt} \left[-1,-e_0 \hspace{0.5pt}s_0\right]} \big\|\hspace{0.5pt} \nabla E_2 * \widehat{u}\left(\cdot,\tau\right) \big\|_{\infty} \hspace{2pt}\lesssim\hspace{2pt}1.
\end{align}
By the equation of $u$, it turns out
\begin{align}\label{id_5_08}
\partial_\tau \widehat{u} - \Delta_\xi \widehat{u} + \big(\hspace{1pt}\nabla_\xi E_2 * \widehat{u} \hspace{1pt}\big)\cdot \nabla_\xi \widehat{u} - \widehat{u}^2 = 0 \quad\quad \mbox{on } P_1.
\end{align}
In view of (\ref{id_5_06})--(\ref{id_5_08}), the local maximum estimate yields
\begin{align}\label{id_5_09}
1 = \widehat{u}\left(0,0\right) \hspace{2pt}\lesssim &\hspace{2pt} \int_{P_{1}} \widehat{u}.
\end{align}
Similarly as in (\ref{id_2_15}), we can  let $z_1 := \left(\hspace{0.5pt}y_0,s_1\right) := \left(\hspace{0.5pt}y_0,s_0 + e_0^{-1}\right)$ and obtain
\begin{align}\label{id_5_10}
\int_{P_1} \widehat{u} \hspace{2pt}\lesssim\hspace{2pt} \int_{e_0^{-1/2}}^{\sqrt{2} \hspace{0.5pt}e_0^{-1/2}} \dfrac{\Phi_{z_1}(\rho)}{\rho} \hspace{2pt}\mathrm{d}\rho.
\end{align}Here $\Phi_{z_1}(\rho)$ is  same as (\ref{def_phi_z_1}) with $n = 2$ here. Direct calculations (see Lemma \ref{monomonicity of phi function}) induce
\begin{align}\label{id_5_11}
\dfrac{\mathrm{d}}{\mathrm{d}\rho} \Phi_{z_1}(\rho) \geq \left( 1 - \dfrac{M}{8\pi} \right) \dfrac{2}{\rho} \hspace{2pt}\Phi_{z_1} (\rho) \quad \mbox{for all} \hspace{4pt} \rho \in (0,\rho_*). \hspace{5pt}\text{Here $\rho_* := \left( 1 + s_0 + e_0^{-1} \right)^{\frac{1}{2}}$.}
\end{align}Equivalently it follows
\begin{align*}
\dfrac{1}{\Phi_{z_1}(\rho)} \dfrac{\mathrm{d}}{\mathrm{d}\rho} \Phi_{z_1}(\rho) \hspace{2pt}\geq\hspace{2pt} \left( 1 - \dfrac{M}{8\pi} \right) \dfrac{2}{\rho}.
\end{align*}
Integrating the above inequality on both sides over the interval $[\hspace{0.5pt}\rho,\rho_*\hspace{0.5pt}]$, we get
\begin{align*}
\log \dfrac{\Phi_{z_1}(\rho_*)}{\Phi_{z_1}(\rho)} \hspace{2pt}\geq\hspace{2pt} 2\left( 1 - \dfrac{M}{8\pi} \right) \log \dfrac{\rho_*}{\rho}, \hspace{10pt} \text{for all $\rho \leq \rho_*$,}
\end{align*}
which furthermore implies
\begin{align}\label{id_5_12}
\Phi_{z_1}(\rho) \hspace{2pt}\leq\hspace{2pt} \Phi_{z_1}(\rho_*) \left( \dfrac{\rho}{\rho_*}\right)^{2\left( 1 - \frac{M}{8\pi} \right)} \hspace{2pt}\lesssim\hspace{2pt} M \rho^{2\left( 1 - \frac{M}{8\pi} \right)}, \hspace{10pt} \text{for all $\rho \leq \rho_*$.}
\end{align}
In the last estimate, we have used 
\begin{align*}
\rho_* = \left( 1 + s_0 + e_0^{-1} \right)^{\frac{1}{2}} \geq \big( 1 - \rho_0^2 \hspace{1pt}\big)^{\frac{1}{2}} \geq \dfrac{\sqrt{3}}{2}
\end{align*}
and the bound
\begin{align*}
\Phi_{z_1}(\rho_*) = \dfrac{1}{4\pi} \int_{\mathbb{R}^2} \widetilde{u} \left( y , -1 \right) e^{-\frac{|y-y_0|^2}{4 \rho_*^2}} \mathrm{d}y \hspace{2pt}\leq\hspace{2pt} \dfrac{M}{4\pi}.
\end{align*}

Applying (\ref{id_5_12}) to the most-right-hand side of (\ref{id_5_10}) infers
\begin{align}\label{id_5_13}
\int_{P_1} \widehat{u} \hspace{2pt} \lesssim \hspace{2pt} M \int_{e_0^{-1/2}}^{\sqrt{2} \hspace{0.5pt}e_0^{-1/2}} \rho^{1 - \frac{M}{4\pi}}\hspace{1pt}\mathrm{d}\rho \hspace{2pt}\lesssim\hspace{2pt} \dfrac{M}{2 - \frac{M}{4\pi}} \left( \dfrac{1}{e_0}\right)^{1 - \frac{M}{8\pi}} \hspace{2pt}\lesssim\hspace{2pt} \dfrac{M}{2 - \frac{M}{4\pi}} \left( \dfrac{1}{K}\right)^{1 - \frac{M}{8\pi}}.
\end{align}
By choosing $K$ depending on $M$ sufficiently large, (\ref{id_5_09}) and (\ref{id_5_13}) then yield a contradiction. Therefore, (\ref{id_5_04}) holds. In light of (\ref{id_5_01})--(\ref{id_5_04}), we have
\begin{align*}
T_0 \sup_{x \hspace{1pt}\in\hspace{1pt}\mathbb{R}^2} u\left(x,T_0\right) \hspace{1.5pt}\leq\hspace{1.5pt} (1  - 2\rho_0)^2 e_0  + 4\delta \hspace{1.5pt}<\hspace{1.5pt} \frac{K}{2} + 4\delta.
\end{align*}Since $\delta$ is an arbitrary number in $(0, 1)$, the above estimate further implies \begin{align*} T_0 \sup_{x \hspace{1pt}\in\hspace{1pt}\mathbb{R}^2} u\left(x,T_0\right) \hspace{1.5pt}\leq\hspace{1.5pt} \dfrac{K}{2}.
\end{align*}
This estimate contradicts our definition of $T_0$. Therefore, $T_0 = \infty$ and the proof is completed.
\end{proof}

In the next, we drop the finiteness assumption on the $L^\infty$-norm of  $u_0$.\vspace{0.3pc}

\begin{proof}[\bf Proof of $1 \Rightarrow 2$ in Theorems \ref{thm_1_2D} and \ref{thm_1}]\ \vspace{0.8pc}
\\  
Fixing a $L > 0$, we denote by $u_{0; L} $ the function $\min \big\{ u_0 , L \big\}$.  By Propositions \ref{prop_2} and \ref{prop_3}, there is a unique global mild solution, denoted by $u_L$,  to either (\ref{eqn_0_0_1}) for the higher dimensional case or (\ref{eqn_0_0_2}) for the 2D case. The solution $u_L$ takes the initial data $u_{0; L}$ at $t = 0$. Moreover, by the decay estimates in (\ref{estimate_prop_2}) and (\ref{estimate_prop_3}), the solution $u_L$ satisfies
\begin{align}\label{id_2_18}
\big\|\hspace{0.5pt}u_L\left(\cdot, t\right)\hspace{1pt}\big\|_\infty \hspace{2pt}\leq\hspace{2pt} 
\dfrac{K'}{t}, \hspace{15pt}\text{for all $t > 0$.}
\end{align}
Here $K' = 5$ if $n \geq 3$. Note that the total mass of $u_{0; L}$ cannot be greater than the total mass of $u_0$. If $n = 2$, by the monotonicity of the constant $K$ in Proposition \ref{prop_3} with respect to the total mass of initial density, the constant $K'$ can be independent of $L$ and equals to $K(M)$ with $M$ being the total mass of $u_0$.  Using (\ref{id_2_18}) and standard regularity theories for parabolic and elliptic equations, we can extract a subsequence, still denoted by $L$, such that $$\big(u_L, \nabla E_n *  u_L\hspace{1pt}\big) \longrightarrow \big(u, \nabla E_n * u\hspace{1pt}\big), \hspace{15pt}\text{ pointwisely on $\mathbb{R}^{n+1}_+$ as $L \rightarrow \infty$.}$$ The function $u$ is a non-negative function on $\mathbb{R}_+^{n+1}$.  Moreover, it is in fact a smooth solution to either (\ref{eqn_0_0_1}) or (\ref{eqn_0_0_2}) on $\mathbb{R}^{n+1}_+$. In the next, we show that $u$ is the unique global mild solution to (\ref{eqn_0_0_1}) or (\ref{eqn_0_0_2}) with initial data $u_0$. \vspace{0.3pc}

 Since $u_L$ is a global mild solution to either (\ref{eqn_0_0_1}) or (\ref{eqn_0_0_2}), for any $t > 0$ and $\epsilon \in (0, t/2)$, it satisfies  \begin{align}\label{rep of error epsilon}
\Big|\hspace{1pt} \mathrm{Err}_\epsilon\big[ u_L; u_{0; L}\big]\hspace{1pt} \Big|  =  \dfrac{1}{2} \left| \int_0^\epsilon \int_{\mathbb{R}^n} u_L(z, s) \hspace{1pt} \Gamma_n(x - z, t - s) \hspace{1pt} \nabla E_n * u_L \hspace{1pt}\bigg|_{ (z ,s)} \cdot \dfrac{z - x}{ t - s }  \hspace{2pt}\mathrm{d} z \hspace{1pt} \mathrm{d} s \right|. 
\end{align}Here $\Gamma_n\left(x, t\right) := \Gamma_t\left(x\right)$ is the standard heat kernel on $\mathbb{R}^n$. Given $\epsilon \geq 0$ and two functions $w$ and $w_0$, the notation $\mathrm{Err}_\epsilon[w; w_0]$ is defined as follows: \begin{align*}\mathrm{Err}_\epsilon\hspace{0.5pt}\big[\hspace{0.5pt} w; w_0\hspace{0.5pt}\big] &\hspace{2pt}:=\hspace{2pt} w(x,t) - \int_{\mathbb{R}^n} w_0(z) \hspace{1pt}\Gamma_n(x - z, t)  \hspace{2pt}\mathrm{d} z \\[2mm]
&\hspace{2pt}+\hspace{2pt}  \dfrac{1}{2} \int_\epsilon^t \int_{\mathbb{R}^n} w(z, s) \hspace{1pt} \Gamma_n(x - z, t - s) \hspace{1pt} \nabla E_n * w \hspace{1pt}\bigg|_{ (z ,s)} \cdot \dfrac{z - x}{ t - s }  \hspace{2pt}\mathrm{d} z \hspace{1pt} \mathrm{d} s.
\end{align*} Taking $L \rightarrow \infty$ on both sides of (\ref{rep of error epsilon}), we obtain  \begin{align*}
\Big|\hspace{1pt}\mathrm{Err}_\epsilon \big[ u; u_0 \big]\hspace{1pt}\Big|  \hspace{2pt}\leq \hspace{2pt}\liminf_{L \rightarrow \infty}\left| \int_0^\epsilon \int_{\mathbb{R}^n} u_L(z, s) \hspace{1pt} \Gamma_n(x - z, t - s) \hspace{1pt} \nabla E_n * u_L \hspace{1pt}\bigg|_{(z ,s)} \cdot \dfrac{z - x}{ t - s }  \hspace{2pt}\mathrm{d} z \hspace{1pt} \mathrm{d} s \right|,
\end{align*}which furthermore infers  \begin{align*}
\Big|\hspace{1pt}\mathrm{Err}_0 \big[ u; u_0 \big]\hspace{1pt}\Big| &\hspace{2pt}\leq \hspace{3pt}\left| \int_0^\epsilon \int_{\mathbb{R}^n} u(z, s) \hspace{1pt} \Gamma_n(x - z, t - s) \hspace{1pt} \nabla E_n * u \hspace{1pt}\bigg|_{(z ,s)} \cdot \dfrac{z - x}{ t - s }  \hspace{2pt}\mathrm{d} z \hspace{1pt} \mathrm{d} s \right| \notag\\[2mm]
&\hspace{2pt}+ \hspace{3pt}\liminf_{L \rightarrow \infty}\left| \int_0^\epsilon \int_{\mathbb{R}^n} u_L(z, s) \hspace{1pt} \Gamma_n(x - z, t - s) \hspace{1pt} \nabla E_n * u_L \hspace{1pt}\bigg|_{(z ,s)} \cdot \dfrac{z - x}{ t - s }  \hspace{2pt}\mathrm{d} z \hspace{1pt} \mathrm{d} s \right|. 
\end{align*}Now we integrate both sides of the above estimate on $\mathbb{R}^n$. By Fatou's lemma, it turns out \begin{align*}
\int_{\mathbb{R}^n}\Big| \mathrm{Err}_0 \big[u; u_0\big] \Big| \hspace{2pt}\mathrm{d} x &\hspace{2pt}\leq \hspace{3pt}\int_{\mathbb{R}^n}\left| \int_0^\epsilon \int_{\mathbb{R}^n} u(z, s) \hspace{1pt} \Gamma_n(x - z, t - s) \hspace{1pt} \nabla E_n * u \hspace{1pt}\bigg|_{ (z ,s)} \cdot \dfrac{z - x}{ t - s }  \hspace{2pt}\mathrm{d} z \hspace{1pt} \mathrm{d} s \right| \hspace{2pt}\mathrm{d} x \notag\\[2mm]
&\hspace{2pt}+ \hspace{3pt}\liminf_{L \rightarrow \infty}\int_{\mathbb{R}^n}\left| \int_0^\epsilon \int_{\mathbb{R}^n} u_L(z, s) \hspace{1pt} \Gamma_n(x - z, t - s) \hspace{1pt} \nabla E_n * u_L \hspace{1pt}\bigg|_{(z ,s)} \cdot \dfrac{z - x}{ t - s }  \hspace{2pt}\mathrm{d} z \hspace{1pt} \mathrm{d} s \right| \hspace{2pt}\mathrm{d} x. 
\end{align*}In light of (\ref{estimate_grad_v}) and (\ref{id_2_18}), we can apply Fubini's theorem  to the right-hand side of the above estimate  and obtain \begin{align*}\int_{\mathbb{R}^n}\Big| \hspace{1pt}\mathrm{Err}_0\big[u; u_0\big]\hspace{1pt} \Big| \hspace{2pt}\mathrm{d} x &\hspace{2pt}\lesssim \hspace{3pt} M^{1 + \frac{1}{n}} \left(K'\right)^{1 - \frac{1}{n}} \int_0^\epsilon s^{- 1 + \frac{1}{n}}\left(t - s \right)^{- \frac{1}{2}} \hspace{2pt}\mathrm{d} s\\[2mm]
&\hspace{2pt}\lesssim\hspace{2pt}M^{1 + \frac{1}{n}}  \left(K'\right)^{1 - \frac{1}{n}}  t^{- \frac{1}{2}} \epsilon^{\frac{1}{n}} \longrightarrow 0 \qquad \text{as $\epsilon \rightarrow 0$.}
\end{align*}Hence, $\mathrm{Err}_0\big[u; u_0\big] \equiv 0$ on $\mathbb{R}_+^{n+1}$. Then $u$ is a global mild solution to either (\ref{eqn_0_0_1}) or (\ref{eqn_0_0_2}) with initial data $u_0$ at $t = 0$. The proof is completed since $u$ satisfies the same estimate as (\ref{id_2_18}) for the approximating solutions $u_L$.
\end{proof}

In the end, we complete this section by showing the proof of (\ref{id_5_11}). \begin{lem}\label{monomonicity of phi function} Suppose that $n = 2$ and $\widetilde{u}$ is the function given in the proof of Proposition \ref{prop_3}. Then (\ref{id_5_11}) holds for the density function $\Phi_{z_1}\left(\cdot\right)$ defined in (\ref{def_phi_z_1}).
\end{lem}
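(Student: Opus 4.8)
I would prove (\ref{id_5_11}) as a differential identity for $\Phi_{z_1}$, in the spirit of Struwe's monotonicity formula: differentiate in $\rho$, use the equation for $\widetilde u$, and absorb the contribution of the chemotactic drift by a symmetrization argument exploiting the radial monotonicity of the Gaussian. By the translation and scaling invariance of (\ref{eqn_0_0_2}) used in the proof of Proposition \ref{prop_3}, the function $\widetilde u$ is a classical solution of
\begin{align*}
\partial_s\widetilde u \;=\; \Delta_y\widetilde u - \nabla_y\cdot\big(\widetilde u\,\nabla_y E_2*\widetilde u\,\big) \qquad\text{on }\mathbb{R}^2\times(-1, 0\hspace{0.5pt}],
\end{align*}
with $\|\widetilde u(\cdot,s)\|_1 = M$ for every $s$, with $\widetilde u(\cdot,s)$ smooth and in $L^1\cap L^\infty$, and with $\big\|\nabla_y E_2*\widetilde u(\cdot,s)\big\|_\infty<\infty$ by Lemma \ref{lem_2_1}. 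Since $\rho<\rho_*=(1+s_0+e_0^{-1})^{1/2}$ forces $s_1-\rho^2\in(-1,s_1\hspace{0.5pt}]$, everything below is evaluated at times where $\widetilde u$ is smooth. After translating in space-time so that $z_1=(y_0,s_1)=(0,0)$, and writing $\Gamma_\sigma(y):=(4\pi\sigma)^{-1}\exp(-|y|^2/(4\sigma))$ for the heat kernel, one has $\Phi_{z_1}(\rho)=\rho^2\int_{\mathbb{R}^2}\widetilde u(y,-\rho^2)\,\Gamma_{\rho^2}(y)\,\mathrm{d}y$.

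Next I would set $\sigma:=\rho^2$ and $J(\sigma):=\int_{\mathbb{R}^2}\widetilde u(y,-\sigma)\,\Gamma_\sigma(y)\,\mathrm{d}y$, so that $\Phi_{z_1}=\sigma J(\sigma)$. Differentiating in $\sigma$ with $\partial_\sigma\Gamma_\sigma=\Delta_y\Gamma_\sigma$, using the equation for $\widetilde u$ and two integrations by parts, the diffusion terms cancel and, since $\nabla_y\Gamma_\sigma=-\frac{y}{2\sigma}\Gamma_\sigma$, one obtains
\begin{align*}
\frac{\mathrm{d}J}{\mathrm{d}\sigma} \;=\; -\int_{\mathbb{R}^2}\widetilde u\,\big(\nabla_y E_2*\widetilde u\big)\cdot\nabla_y\Gamma_\sigma\,\mathrm{d}y \;=\; \frac{1}{2\sigma}\int_{\mathbb{R}^2}\widetilde u\,\big(y\cdot\nabla_y E_2*\widetilde u\big)\,\Gamma_\sigma\,\mathrm{d}y,
\end{align*}
and therefore
\begin{align*}
\frac{\mathrm{d}}{\mathrm{d}\rho}\,\Phi_{z_1}(\rho) \;=\; 2\rho\,J + 2\rho\sigma\,\frac{\mathrm{d}J}{\mathrm{d}\sigma} \;=\; \frac{2}{\rho}\,\Phi_{z_1}(\rho) \;+\; \rho\int_{\mathbb{R}^2}\widetilde u\,\big(y\cdot\nabla_y E_2*\widetilde u\big)\,\Gamma_{\rho^2}\,\mathrm{d}y.
\end{align*}
So it remains to show the drift term obeys $\int_{\mathbb{R}^2}\widetilde u\,\big(y\cdot\nabla_y E_2*\widetilde u\big)\,\Gamma_{\rho^2}\,\mathrm{d}y\geq-\frac{M}{4\pi}\int_{\mathbb{R}^2}\widetilde u\,\Gamma_{\rho^2}\,\mathrm{d}y=-\frac{M}{4\pi\rho^2}\Phi_{z_1}(\rho)$, for then $\frac{\mathrm{d}}{\mathrm{d}\rho}\Phi_{z_1}\geq\big(1-\frac{M}{8\pi}\big)\frac{2}{\rho}\Phi_{z_1}$, which is (\ref{id_5_11}).

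For that lower bound I would insert $\nabla_y E_2*\widetilde u(y)=-\frac{1}{2\pi}\int\frac{y-y'}{|y-y'|^2}\widetilde u(y')\,\mathrm{d}y'$ and symmetrize in $y\leftrightarrow y'$, which gives
\begin{align*}
\int_{\mathbb{R}^2}\widetilde u\,\big(y\cdot\nabla_y E_2*\widetilde u\big)\,\Gamma_{\rho^2}\,\mathrm{d}y \;=\; -\frac{1}{4\pi}\iint\widetilde u(y)\,\widetilde u(y')\,\frac{\big[\Gamma_{\rho^2}(y)\,y-\Gamma_{\rho^2}(y')\,y'\big]\cdot(y-y')}{|y-y'|^2}\,\mathrm{d}y\,\mathrm{d}y'.
\end{align*}
Expanding the numerator as $\Gamma_{\rho^2}(y)|y|^2+\Gamma_{\rho^2}(y')|y'|^2-\big(\Gamma_{\rho^2}(y)+\Gamma_{\rho^2}(y')\big)\,y\cdot y'$ and clearing $|y-y'|^2=|y|^2+|y'|^2-2y\cdot y'$, the pointwise inequality
\begin{align*}
\frac{\big[\Gamma_{\rho^2}(y)\,y-\Gamma_{\rho^2}(y')\,y'\big]\cdot(y-y')}{|y-y'|^2} \;\leq\; \frac{\Gamma_{\rho^2}(y)+\Gamma_{\rho^2}(y')}{2}
\end{align*}
reduces to $\big(\Gamma_{\rho^2}(y)-\Gamma_{\rho^2}(y')\big)\big(|y|^2-|y'|^2\big)\leq 0$, which holds because $\Gamma_{\rho^2}$ is strictly decreasing in $|y|$. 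Combined with $\widetilde u\geq0$ and $\iint\widetilde u(y)\widetilde u(y')\Gamma_{\rho^2}(y)\,\mathrm{d}y\,\mathrm{d}y'=M\int_{\mathbb{R}^2}\widetilde u\,\Gamma_{\rho^2}$ (and the symmetric identity with $y'$ in place of $y$), this yields exactly the required estimate.

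The step I expect to demand the most care is not the algebra but the rigorous justification of the manipulations: differentiation under the integral sign for $\Phi_{z_1}$ and $J$, the two integrations by parts used to compute $\mathrm{d}J/\mathrm{d}\sigma$ (where the boundary terms at spatial infinity must be shown to vanish), and the absolute convergence of the integral defining the drift term. All of these follow from the super-polynomial decay of $\Gamma_\sigma$ and its derivatives together with the local smoothness of $\widetilde u$ and $E_2*\widetilde u$ on $\mathbb{R}^2\times(-1,0\hspace{0.5pt}]$, the bounds $\|\widetilde u(\cdot,s)\|_1=M$ and $\|\widetilde u(\cdot,s)\|_\infty<\infty$, and the bound $\big\|\nabla_y E_2*\widetilde u(\cdot,s)\big\|_\infty<\infty$ from Lemma \ref{lem_2_1} (which in particular makes $\int_{\mathbb{R}^2}\widetilde u\,(y\cdot\nabla_y E_2*\widetilde u)\,\Gamma_{\rho^2}\,\mathrm{d}y$ absolutely convergent, so the symmetrization is just Fubini's theorem).
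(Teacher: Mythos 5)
Your proposal is correct and follows essentially the same route as the paper: differentiate $\Phi_{z_1}$ using the equation for $\widetilde u$, integrate by parts so the diffusion terms reproduce the $\frac{2}{\rho}\Phi_{z_1}$ term, then symmetrize the drift integral via the explicit kernel of $\nabla E_2$ and use the radial monotonicity of the Gaussian (your pointwise inequality is exactly the paper's splitting of $\big[e^{-|z|^2/4}z - e^{-|z'|^2/4}z'\big]\cdot\frac{z-z'}{|z-z'|^2}$) together with $\|\widetilde u(\cdot,s)\|_1 = M$ to get the lower bound $-\frac{M}{4\pi\rho}\Phi_{z_1}$. The reformulation through $J(\sigma)$ with $\sigma=\rho^2$ and your remarks on justifying the differentiation and integrations by parts are only cosmetic differences from the paper's direct computation.
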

\begin{proof}[\bf Proof] By the definition of $\Phi_{z_1}$,
\begin{align*}
\Phi_{z_1}(\rho) = \dfrac{1}{4\pi} \int_{\mathbb{R}^2} \widetilde{u} \left( y , s_1 - \rho^2 \right) e^{-\frac{|y-y_0|^2}{4\rho^2}} \mathrm{d}y = \dfrac{\rho^2}{4\pi} \int_{\mathbb{R}^2} \widetilde{u} \left( y_0 + \rho z , s_1 - \rho^2 \right) e^{-\frac{|z|^2}{4}} \mathrm{d}z.
\end{align*}
Using the equation of $\widetilde{u}$, i.\hspace{0.5pt}e.
\begin{align*}
\partial_s \widetilde{u} = \Delta_y \widetilde{u} - \nabla_y \cdot \left( \widetilde{u} \hspace{2pt}\widetilde{V} \right), \hspace{15pt}\text{where $\widetilde{V}\left(y, s\right) := \nabla  E_2 * \widetilde{u} \hspace{2pt} \bigg|_{(y ,s)}$,}
\end{align*}
we have
\begin{align*}
\dfrac{\mathrm{d}}{\mathrm{d}\rho} \Phi_{z_1}(\rho) &= \dfrac{1}{4\pi} \int_{\mathbb{R}^2} \left( -2\rho \hspace{1pt} \partial_s \widetilde{u} + \dfrac{|y-y_0|^2}{2\rho^3} \widetilde{u} \right) e^{-\frac{|y-y_0|^2}{4\rho^2}} \mathrm{d}y \\[1mm]
&= \dfrac{1}{4\pi} \int_{\mathbb{R}^2} \left( -2\rho \hspace{1pt} \Delta_y \widetilde{u} + 2 \rho \hspace{2pt} \nabla_y \cdot \left( \widetilde{u} \hspace{2pt}\widetilde{V} \right)  + \dfrac{|y-y_0|^2}{2\rho^3} \widetilde{u} \right) e^{-\frac{|y-y_0|^2}{4\rho^2}} \mathrm{d}y.
\end{align*}
Here $\widetilde{u}$ and $\widetilde{V}$ are evaluated at $s_1 - \rho^2$. Through integration by parts, it turns out
\begin{align*}
\int_{\mathbb{R}^2} \Delta_y \widetilde{u} \hspace{2pt}e^{-\frac{|y-y_0|^2}{4\rho^2}} \mathrm{d}y = \int_{\mathbb{R}^2} \widetilde{u} \hspace{2pt} \left( \dfrac{|y-y_0|^2}{4\rho^4} - \dfrac{1}{\rho^2} \right) e^{-\frac{|y-y_0|^2}{4\rho^2}} \mathrm{d}y 
\end{align*}
and
\begin{align*}
\int_{\mathbb{R}^2} \nabla_y \cdot \big( \widetilde{u} \hspace{2pt} \widetilde{V} \big)\hspace{2pt}e^{-\frac{|y-y_0|^2}{4\rho^2}} \mathrm{d}y = \int_{\mathbb{R}^2} \widetilde{u} \hspace{2pt}  \widetilde{V}  \cdot \dfrac{y-y_0}{2\rho^2} \hspace{1.5pt}e^{-\frac{|y-y_0|^2}{4\rho^2}} \hspace{2pt} \mathrm{d}y.
\end{align*}
Combining the last three identities induces
\begin{align}\label{id_6_01}
\dfrac{\mathrm{d}}{\mathrm{d}\rho} \Phi_{z_1}(\rho) &= \dfrac{1}{4\pi} \int_{\mathbb{R}^2} \left( \dfrac{2}{\rho} + \dfrac{y-y_0}{\rho} \cdot  \widetilde{V}  \right) \widetilde{u}  \hspace{2pt} e^{-\frac{|y-y_0|^2}{4\rho^2}} \hspace{2pt} \mathrm{d}y \notag\\[1mm]
&= \dfrac{2}{\rho}\hspace{1pt} \Phi_{z_1}(\rho) + \dfrac{1}{4\pi\rho} \int_{\mathbb{R}^2} \Big(\left( y-y_0 \right) \cdot  \widetilde{V}\Big) \hspace{2pt}\widetilde{u} \hspace{2pt} e^{-\frac{|y-y_0|^2}{4\rho^2}} \hspace{2pt} \mathrm{d}y \notag\\[1mm]
&= \dfrac{2}{\rho} \hspace{1pt}\Phi_{z_1}(\rho) + \dfrac{\rho^2}{4\pi} \int_{\mathbb{R}^2} \widetilde{u} \left( y_0 + \rho z , s_1 - \rho^2 \right) z \cdot  \widetilde{V} \hspace{1.5pt} \bigg|_{y = y_0 + \rho z} e^{-\frac{|z|^2}{4}} \mathrm{d}z.
\end{align}
The last integral in  (\ref{id_6_01}) can be further computed as follows:
\begin{align}\label{id_6_02}
& \dfrac{\rho^2}{4\pi} \int_{\mathbb{R}^2} \widetilde{u} \left( y_0 + \rho z , s_1 - \rho^2 \right) z \cdot  \widetilde{V} \hspace{1.5pt}\bigg|_{ y =  y_0 + \rho z} e^{-\frac{|z|^2}{4}} \mathrm{d}z 
\\[1mm]
&\hspace{25pt} = \hspace{2pt} -\dfrac{\rho^2}{8\pi^2} \int_{\mathbb{R}^2} \int_{\mathbb{R}^2}  z \cdot \dfrac{y_0 + \rho z - y'}{\big|\hspace{1pt}y_0 + \rho z - y' \hspace{1pt}|^2} \hspace{2pt}\widetilde{u} \left( y_0 + \rho z , s_1 - \rho^2 \right) \widetilde{u} \left( y' , s_1 - \rho^2 \right) e^{-\frac{|z|^2}{4}} \hspace{2pt}\mathrm{d} y' \hspace{1.5pt}\mathrm{d}z  \notag\\[1mm]
&\hspace{25pt}= \hspace{2pt} -\dfrac{\rho^3}{8\pi^2} \int_{\mathbb{R}^2} \int_{\mathbb{R}^2} z \cdot \dfrac{z - z'}{|z - z'|^2} \hspace{1.5pt}  \widetilde{u} \left( y_0 + \rho z , s_1 - \rho^2 \right) \widetilde{u} \left( y_0 + \rho z' , s_1 - \rho^2 \right) e^{-\frac{|z|^2}{4}} \hspace{2pt}\mathrm{d}z' \hspace{2pt}\mathrm{d}z \notag\\[1mm]
&\hspace{25pt}=\hspace{2pt} -\dfrac{\rho^3}{16\pi^2} \int_{\mathbb{R}^2} \int_{\mathbb{R}^2} \widetilde{u} \left( y_0 + \rho z , s_1 - \rho^2 \right) \widetilde{u} \left( y_0 + \rho z' , s_1 - \rho^2 \right) \left[ e^{-\frac{|z|^2}{4}} z - e^{-\frac{|z'|^2}{4}} z' \right] \cdot \dfrac{z - z'}{|z - z'|^2} \hspace{2pt} \mathrm{d}z' \hspace{2pt} \mathrm{d}z. \notag
\end{align}
Notice that for any $z, z' \in \mathbb{R}^2$,
\begin{align*}
\left[ e^{-\frac{|z|^2}{4}} z - e^{-\frac{|z'|^2}{4}} z' \right] \cdot \dfrac{z - z'}{|z - z'|^2} = \dfrac{1}{2} \left[ e^{-\frac{|z|^2}{4}} + e^{-\frac{|z'|^2}{4}} \right] + \dfrac{1}{2} \left[ e^{-\frac{|z|^2}{4}} - e^{-\frac{|z'|^2}{4}} \right] \dfrac{|z|^2 - |z'|^2}{|z - z'|^2}.
\end{align*}
Thus (\ref{id_6_02}) implies
\begin{align*}
& \dfrac{\rho^2}{4\pi} \int_{\mathbb{R}^2} \widetilde{u} \left( y_0 + \rho z , s_1 - \rho^2 \right) z \cdot  \widetilde{V} \hspace{1.5pt}\bigg|_{y =  y_0 + \rho z} e^{-\frac{|z|^2}{4}} \hspace{2pt} \mathrm{d}z \notag\\[1mm]
&\hspace{25pt}\geq\hspace{2pt} -\dfrac{\rho^3}{32\pi^2} \int_{\mathbb{R}^2} \int_{\mathbb{R}^2} \widetilde{u} \left( y_0 + \rho z , s_1 - \rho^2 \right) \widetilde{u} \left( y_0 + \rho z' , s_1 - \rho^2 \right) \left[ e^{-\frac{|z|^2}{4}} + e^{-\frac{|z'|^2}{4}} \right] \hspace{2pt} \mathrm{d}z' \hspace{2pt}\mathrm{d}z \hspace{2pt}=\hspace{2pt} -\frac{M}{4\pi\rho} \Phi_{z_1}(\rho).
\end{align*}
Putting the last inequality into (\ref{id_6_01}), we conclude (\ref{id_5_11}).
\end{proof}\vspace{1pc}


\section{\large Optimal time decay estimate}\label{sec_3}\vspace{0.5pc}
In this section, we focus on the higher dimensional case with $n \geq 3$. The main result is to improve the time decay rate obtained in Proposition \ref{prop_2} from $t^{-1}$ to $t^{- \frac{n}{2}}$. Our proof is a bootstrap argument which relies on the following lemma.

\begin{lem}\label{lem_2_2}
Let $n \geq 3$ and $u$ be a global mild solution to (\ref{eqn_0_0_1}) with initial data $u_0$ and the total mass $M$. If it satisfies
\begin{align}\label{id_lem_2_2_1}
\big\| \hspace{1pt}u\big(\cdot, t\big) \hspace{1pt}\big\|_\infty \leq   c_0 \hspace{1pt}t^{-1}  \hspace{15pt}\mbox{on }\hspace{4pt} 0 < t \leq 1 \hspace{20pt}\text{and}\hspace{20pt}\big\| \hspace{1pt}u\big(\cdot, t\big) \hspace{1pt}\big\|_\infty \leq  c_0 \hspace{1pt}t^{-\alpha} \hspace{15pt} \mbox{on} \hspace{6pt}  t > 1,
\end{align}
for some constants $c_0 > 0$ and $\alpha \in \left[ \hspace{0.5pt} 1 \hspace{0.5pt} , \frac{n}{2} \hspace{0.5pt}\right)$, then we have
\begin{align}\label{id_lem_2_2_0}
\big\| \hspace{1pt}u\big(\cdot, t\big) \hspace{1pt}\big\|_\infty \leq c_0^* \hspace{1pt} t^{-\beta}, \hspace{15pt}\mbox{for all } t > 0.
\end{align}
Here $\beta = \min \left\{ \dfrac{n}{2} , \left( 2 - \dfrac{1}{n} \right) \alpha - \dfrac{1}{2} \right\}$.  $c_0^*$ is a positive constant depending only on $n$, $c_0$ and $M$.
\end{lem}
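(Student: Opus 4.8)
The plan is a Duhamel bootstrap in which the mild solution is restarted at time $t/2$, so that the temporal singularity at the origin is avoided. For $t \geq 2$ one has the representation
$$u(\cdot, t) = e^{\frac{t}{2}\Delta} u\big(\cdot, \tfrac{t}{2}\big) - \int_{t/2}^{t} \nabla \cdot e^{(t-s)\Delta}\Big( u(\cdot, s)\,\nabla E_n * u(\cdot, s)\Big)\,\mathrm{d}s,$$
which follows from the defining mild formulation together with the semigroup identity $e^{(t-s)\Delta} = e^{(t-\frac{t}{2})\Delta}\, e^{(\frac{t}{2}-s)\Delta}$ and Fubini; the hypotheses below make every integral absolutely convergent, so this manipulation is legitimate. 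Three tools are used: the heat smoothing bounds $\|e^{\tau\Delta} f\|_\infty \lesssim \tau^{-n/2}\|f\|_1$ and $\|\nabla e^{\tau\Delta} f\|_\infty \lesssim \tau^{-1/2}\|f\|_\infty$; Lemma \ref{lem_2_1}, which (since $u(\cdot,s)\in L^1\cap L^\infty$) gives $\|\nabla E_n * u(\cdot,s)\|_\infty \lesssim M^{1/n}\|u(\cdot,s)\|_\infty^{1-1/n}$; and conservation of mass, $\|u(\cdot,s)\|_1 = M$ for all $s>0$.

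First I would bound the linear term: because $u(\cdot,t/2)$ has mass $M$, the $L^1\!\to\! L^\infty$ estimate gives $\|e^{\frac{t}{2}\Delta}u(\cdot,t/2)\|_\infty \lesssim M\,t^{-n/2}$ (this is the step where exploiting the mass, rather than the crude bound $\|u(\cdot,t/2)\|_\infty\lesssim t^{-\alpha}$, is essential, since $\alpha<n/2$). For the nonlinear term, combining Lemma \ref{lem_2_1} with the pointwise factorization yields
$$\big\| u(\cdot,s)\,\nabla E_n * u(\cdot,s)\big\|_\infty \ \lesssim\ M^{1/n}\,\|u(\cdot,s)\|_\infty^{\,2 - \frac{1}{n}}\ \leq\ M^{1/n}\,c_0^{\,2-\frac1n}\,s^{-(2-\frac1n)\alpha},$$
valid for $s\geq t/2\geq 1$ by (\ref{id_lem_2_2_1}). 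Since $s^{-(2-\frac1n)\alpha}\lesssim t^{-(2-\frac1n)\alpha}$ on $[t/2,t]$ and $\int_{t/2}^t (t-s)^{-1/2}\,\mathrm{d}s\lesssim t^{1/2}$, the gradient smoothing estimate bounds the nonlinear term by $\lesssim M^{1/n}c_0^{\,2-\frac1n}\,t^{\frac12 - (2-\frac1n)\alpha}$. Adding the two contributions gives, for $t\geq 2$,
$$\big\| u(\cdot, t)\big\|_\infty \ \lesssim_{\,n,\,M,\,c_0}\ t^{-\frac{n}{2}} + t^{-[(2-\frac1n)\alpha - \frac12]}\ \lesssim_{\,n,\,M,\,c_0}\ t^{-\beta},$$
because for $t\geq 1$ the slower-decaying of the two powers is precisely $t^{-\beta}$. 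On $0<t<2$ nothing is needed: one checks from $n\geq 3$ and $\alpha\geq 1$ that $\beta>1$, so $\|u(\cdot,t)\|_\infty\leq c_0 t^{-1}\leq c_0 t^{-\beta}$ on $(0,1]$ and $\|u(\cdot,t)\|_\infty\leq c_0 t^{-\alpha}\leq 2^\beta c_0\, t^{-\beta}$ on $(1,2)$. Taking $c_0^*$ to be the maximum of the finitely many constants produced above completes the proof.

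The one point requiring care is the choice of norms fed into the Duhamel integral. If one estimates the nonlinearity in $L^1$ via $\|u\nabla E_n*u\|_1\lesssim M^{1+\frac1n}\|u\|_\infty^{1-\frac1n}$ together with $\|\nabla e^{\tau\Delta}f\|_\infty\lesssim \tau^{-\frac12-\frac n2}\|f\|_1$, one encounters the time kernel $(t-s)^{-\frac12-\frac n2}$, which is not integrable near $s=t$, so that route collapses; the gain comes from measuring the nonlinearity in $L^\infty$, where the kernel $(t-s)^{-1/2}$ is integrable, while still extracting the factor $M^{1/n}$ from the mass bound in Lemma \ref{lem_2_1}. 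It is exactly this interplay that upgrades the naive self-improvement exponent $2\alpha-\tfrac12$ to the sharper $(2-\tfrac1n)\alpha-\tfrac12$, and this is the heart of the argument. The remaining verifications — absolute convergence of the integrals and the validity of the restart identity — are immediate from (\ref{id_lem_2_2_1}) and Lemma \ref{lem_2_1}.
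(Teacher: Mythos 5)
Your proof is correct, and it reaches the same two leading bounds as the paper ($M\,t^{-n/2}$ for the linear part and $t^{\frac12-(2-\frac1n)\alpha}$ for the near-field nonlinearity, both via Lemma \ref{lem_2_1}, mass conservation and heat-kernel smoothing), but it organizes the Duhamel term differently. The paper keeps the mild formulation anchored at $t=0$ and splits the integral into $[0,1]$, $[1,t/2]$ and $[t/2,t]$: on the first two intervals it measures $u\,\nabla E_n*u$ in $L^1$ and pays the kernel $(t-s)^{-\frac{n+1}{2}}$, which is harmless there since $t-s\geq t/2$, at the cost of a small case analysis on $\alpha$ versus $n/(n-1)$ (estimates (\ref{id_3_06})--(\ref{id_3_07})). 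You instead restart the mild formulation at $t/2$, so the whole contribution of $[0,t/2]$ is packaged into $e^{\frac{t}{2}\Delta}u(\cdot,t/2)$ and controlled purely by the mass; this removes the far-field estimates and the case analysis entirely, at the price of justifying the restart identity, which, as you note, is an algebraic consequence of the mild formulation, the semigroup property and Fubini, with absolute convergence supplied by (\ref{id_lem_2_2_1}) and Lemma \ref{lem_2_1}) (near $s=0$ the $L^1$ bound $\|u\nabla E_n*u\|_1\lesssim M^{1+\frac1n}s^{-(1-\frac1n)}$ is integrable). Your treatment of $[t/2,t]$ is the same as the paper's (3.5), and your small-time check using $\beta>1$ matches the paper's remark that only $t>2$ needs work. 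Your closing observation about why the nonlinearity must be measured in $L^\infty$ on $[t/2,t]$ is exactly the mechanism behind the exponent $(2-\frac1n)\alpha-\frac12$ in both arguments.
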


\begin{proof}[\bf Proof]
Since $\beta > \alpha$, we only need to show (\ref{id_lem_2_2_0}) for $t > 2$. Notice that
\begin{align}\label{id_3_04_0}
u(x,t) &\hspace{2pt} = \hspace{2pt}  \int_{\mathbb{R}^n} u_0 (z) \hspace{2pt}\Gamma_n \left(x - z, t \right) \mathrm{d}z \notag \\[2mm]
&\hspace{2pt} - \hspace{2pt} \frac{1}{2}\int_0^t \int_{\mathbb{R}^n} u(z,s) \hspace{2pt} \Gamma_n\left(x - z, t - s \right) \hspace{1pt}\nabla E_n * u \hspace{1pt}\bigg|_{ (z,s)} \cdot \dfrac{z-x}{t-s} \hspace{2pt} \mathrm{d}z \hspace{1pt} \mathrm{d}s. 
\end{align}Here $\Gamma_n$ is the heat kernel on $\mathbb{R}^n$. The first integral on the right-hand side of (\ref{id_3_04_0}) can be estimated by
\begin{align}\label{id_3_04}
\left| \hspace{2pt} \int_{\mathbb{R}^n} u_0 (z) \hspace{1pt}\Gamma_n\left(x - z, t \right) \mathrm{d}z \hspace{2pt}\right| \hspace{2pt}\leq\hspace{2pt} \frac{1}{(4\pi t)^{\frac{n}{2}}} \int_{\mathbb{R}^n} u_0  \hspace{2pt}\leq\hspace{2pt} M\hspace{1pt}t^{- \frac{n}{2}}.
\end{align}
As for the last integral in (\ref{id_3_04_0}), we divide it into three parts. By using (\ref{estimate_grad_v}) and (\ref{id_lem_2_2_1}),
\begin{align}\label{id_3_05_1}
&\ \left| \int_{t/2}^t \int_{\mathbb{R}^n} u(z,s) \hspace{2pt}\Gamma_n\left(x - z, t - s \right) \nabla E_n * u \hspace{1pt}\bigg|_{ (z,s)} \cdot \dfrac{z-x}{t-s} \hspace{2pt} \mathrm{d}z \hspace{1pt}\mathrm{d}s \right| \notag\\[1mm]
&\hspace{35pt}\lesssim_{\hspace{0.5pt}M,\hspace{0.5pt}c_0} \hspace{2pt}  \int_{t/2}^t \frac{\mathrm{d}s}{s^{\alpha\left(2- \frac{1}{n}\right)} (t-s)^{\frac{1}{2}}}\int_{\mathbb{R}^n} \Gamma_n\left(x - z, t - s \right) \dfrac{|z-x|}{(t-s)^{\frac{1}{2}}} \mathrm{d}z \notag\\[2mm]
&\hspace{35pt}\lesssim_{\hspace{0.5pt}M,\hspace{0.5pt}c_0}\hspace{2pt} t\mystrut^{\hspace{0.5pt}\frac{1}{2} - \alpha \left(2-\frac{1}{n}\right) }.
\end{align}
Still using (\ref{estimate_grad_v}) and (\ref{id_lem_2_2_1}), we have
\begin{align}\label{id_3_06}
&\ \left| \int_1^{t/2} \int_{\mathbb{R}^n} u(z,s) \hspace{2pt}\Gamma_n\left(x - z, t - s \right) \nabla E_n * u \hspace{1pt}\bigg|_{ (z,s)} \cdot \dfrac{z-x}{t-s} \hspace{2pt} \mathrm{d}z \hspace{2pt} \mathrm{d}s \right| \notag\\[1mm]
&\hspace{35pt}\lesssim_{\hspace{0.5pt}M,\hspace{0.5pt}c_0} \hspace{2pt}  \int_1^{t/2} \dfrac{\mathrm{d}s}{(t-s)^{\frac{n+1}{2}} s^{\alpha \left(1-\frac{1}{n}\right)}}  \notag\\[1mm]
&\hspace{35pt}\lesssim_{\hspace{0.5pt}M,\hspace{0.5pt}c_0} \hspace{2pt}  t^{- \frac{n+1}{2}}  \int_1^{t/2} \dfrac{\mathrm{d} s}{s^{\alpha \left(1-\frac{1}{n}\right)}}   \hspace{4pt}\lesssim_{\hspace{0.5pt}M,\hspace{0.5pt}c_0}\hspace{4pt}
\begin{cases}
 t^{ \frac{1-n}{2} - \alpha \left(1- \frac{1}{n}\right)} \hspace{15pt} & \mbox{if } \alpha < \dfrac{n}{n-1}; \\[3mm]
t^{- \frac{n+1}{2}} \log \dfrac{t}{2} \hspace{15pt} & \mbox{if } \alpha = \dfrac{n}{n-1}; \\[3mm]
t^{- \frac{n+1}{2}} \hspace{15pt} & \mbox{if } \alpha > \dfrac{n}{n-1}.
\end{cases}
\end{align}
In the last, the integration on the time interval $[\hspace{0.5pt}0, 1]$ is estimated as follows:
\begin{align}\label{id_3_07}
&\ \left| \int_0^1 \int_{\mathbb{R}^n} u(z, s) \hspace{2pt}\Gamma_n\left(x - z, t - s \right) \nabla E_n * u \hspace{1pt}\bigg|_{ (z,s)} \cdot \dfrac{z-x}{t-s} \hspace{2pt} \mathrm{d}z \hspace{2pt} \mathrm{d}s \right| \notag\\[1mm]
&\hspace{50pt}\lesssim_{\hspace{0.5pt}M,\hspace{0.5pt}c_0} \hspace{2pt} \int_0^1 \dfrac{1}{(t-s)^{\frac{n+1}{2}} s^{1-\frac{1}{n}}} \hspace{2pt} \mathrm{d}s \notag\\[1mm]
&\hspace{50pt}\lesssim_{\hspace{0.5pt}M,\hspace{0.5pt}c_0} \hspace{4pt} t^{- \frac{n+1}{2}}.
\end{align}
Here we have also used (\ref{estimate_grad_v}) and (\ref{id_lem_2_2_1}). Therefore, we obtain (\ref{id_lem_2_2_0}) by (\ref{id_3_04_0})--(\ref{id_3_07}).
\end{proof}

As a direct consequence of Lemma \ref{lem_2_2}, we have

\begin{proof}[\bf Proof of $2 \Rightarrow 3$ in Theorem \ref{thm_1}]\
\\[2mm]
Assuming the $t^{-1}$-decay rate in Statement 2 of Theorem \ref{thm_1}, we can repeatedly apply Lemma \ref{lem_2_2} finitely many times by initially setting $\alpha = 1$. Statement 3 in Theorem \ref{thm_1} then follows.
\end{proof}\vspace{0.2pc}


\section{\large Long-time stability: higher dimensional case}\label{sec_4}\vspace{0.5pc}

We are concerned about in this section the long-time asymptotics of global mild solutions to (\ref{eqn_0_0_1}). Particularly, the $L^\infty$-norms of global mild solutions are assumed to satisfy the $t^{- \frac{n}{2}}$-temporal decay rate as $t$ goes to infinity. Throughout the section, the spatial dimension $n \geq 3$.

\subsection{\normalsize Proof of $3 \Rightarrow 4$ in Theorem \ref{thm_1}}\label{sec_4_1}\vspace{0.5pc}

Our main result in this section is \begin{prop}\label{asymptotics of u L1 n greater than 3}
Let $n \geq 3$ and suppose that $u$ is a global mild solution to (\ref{eqn_0_0_1}). Moreover, $u$ is assumed to have the finite total mass $M$ and satisfy \begin{align}\label{unif_bound_u_near t infity}
[\hspace{1pt} u \hspace{1pt}]_\infty := \sup_{t \hspace{0.5pt}\geq \hspace{0.5pt}0} \big( 1 + t \big)^{\frac{n}{2}}\big\| \hspace{1pt}u\big(\cdot, t\big) \hspace{1pt}\big\|_\infty < \infty.
\end{align}
Then we have \begin{align}\label{general p_L1_n=3}
\lim_{t \rightarrow \infty} t^{\frac{n}{2} \left( 1 - \frac{1}{p}\right)} \hspace{2pt}\big\|\hspace{1pt} u(\cdot, t) - M\hspace{1pt} \Gamma_t\left(\cdot\right) \big\|_p = 0, \hspace{15pt} \text{for any $1 \leq p \leq \infty$.}
\end{align}
Here $\Gamma_t$ is the fundamental solution of the heat equation $\partial_t w = \Delta w$ on $\mathbb{R}^n$.
\end{prop}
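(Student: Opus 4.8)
The plan is to pass to the similarity variables (\ref{def_U}) and prove that the rescaled flow $\{U(\cdot,\tau)\}$ converges strongly in every $L^p$, $1\le p\le\infty$, to $M\,\mathcal G_n$ as $\tau\to\infty$. Since $\Gamma_t(y)=t^{-n/2}\mathcal G_n(y/\sqrt t)$ and, inverting (\ref{def_U}), $u(y,t)=t^{-n/2}U(y/\sqrt t,\log t)$, the behaviour of the $L^p$-norm under dilations gives the identity
\begin{align*}
t^{\frac n2\left(1-\frac1p\right)}\big\|\,u(\cdot,t)-M\,\Gamma_t\,\big\|_p=\big\|\,U(\cdot,\log t)-M\,\mathcal G_n\,\big\|_p ,
\end{align*}
so (\ref{general p_L1_n=3}) is equivalent to $\|U(\cdot,\tau)-M\mathcal G_n\|_p\to0$ as $\tau\to\infty$. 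Hypothesis (\ref{unif_bound_u_near t infity}) becomes $\sup_{\tau\ge0}\|U(\cdot,\tau)\|_\infty\le[u]_\infty<\infty$, and conservation of mass for (\ref{eqn_0_0_1}) gives $\|U(\cdot,\tau)\|_1=M$ for all $\tau$; moreover $u$, hence $U$, is smooth on $\mathbb{R}^{n+1}_+$ (as in the proof of $1\Rightarrow2$), which together with parabolic regularity justifies the manipulations below.

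Next I would use the Duhamel decomposition $U=U_1+U_2$ for (\ref{eqn_1}), with
\begin{align*}
U_1(\tau):=S_n(\tau)\big[U(\cdot,0)\big],\qquad U_2(\tau):=-\int_0^\tau f_n(s)\,S_n(\tau-s)\Big[\,\mathrm{div}_\xi\!\big(U\,\nabla_\xi E_n*U\big)(\cdot,s)\,\Big]\,\mathrm{d} s ,
\end{align*}
where $S_n(\tau)$, the semigroup generated by $L_n$, is given by the Mehler-type kernel $K_\tau(\xi,\eta)=\big(4\pi(1-e^{-\tau})\big)^{-n/2}\exp\!\big(-|\xi-e^{-\tau/2}\eta|^2/(4(1-e^{-\tau}))\big)$, and $f_n(s)=e^{(1-n/2)s}$ as in (\ref{f_n}). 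For $U_1$: the initial datum $U(\cdot,0)\in L^1_+\cap L^\infty$ has mass $M$ and $S_n(\tau)[\mathcal G_n]=\mathcal G_n$, so $U_1(\tau)-M\mathcal G_n=S_n(\tau)\big[U(\cdot,0)-M\mathcal G_n\big]$; since $U(\cdot,0)-M\mathcal G_n$ is an $L^1$-function of zero mean, $\|U_1(\tau)-M\mathcal G_n\|_1\to0$ (approximate in $L^1$ by smooth compactly supported data and use the explicit kernel), while splitting $S_n(\tau)=S_n(\tfrac\tau2)\circ S_n(\tfrac\tau2)$ and using the smoothing bound $\|S_n(\sigma)w\|_\infty\lesssim(1-e^{-\sigma})^{-n/2}\|w\|_1$ yields $\|U_1(\tau)-M\mathcal G_n\|_\infty\lesssim(1-e^{-\tau/2})^{-n/2}\|S_n(\tfrac\tau2)[U(\cdot,0)-M\mathcal G_n]\|_1\to0$. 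Interpolating, $U_1(\tau)\to M\mathcal G_n$ in every $L^p$.

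The heart of the proof is to show $\|U_2(\tau)\|_1\to0$ and $\|U_2(\tau)\|_\infty\to0$. Moving the divergence onto the kernel gives $\nabla_\eta K_\sigma(\xi,\eta)=K_\sigma(\xi,\eta)\,\tfrac{e^{-\sigma/2}(\xi-e^{-\sigma/2}\eta)}{2(1-e^{-\sigma})}$, from which one reads off $\|S_n(\sigma)\,\mathrm{div}_\xi G\|_1\lesssim \tfrac{e^{-\sigma/2}}{(1-e^{-\sigma})^{1/2}}\|G\|_1$ (crucially without amplification factor, because $K_\sigma(\cdot,\eta)$ has unit $\xi$-integral) and $\|S_n(\sigma)\,\mathrm{div}_\xi G\|_\infty\lesssim \tfrac{e^{(n-1)\sigma/2}}{(1-e^{-\sigma})^{1/2}}\|G\|_\infty$. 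Taking $G=U\,\nabla_\xi E_n*U$ and bounding the nonlinearity by the potential estimate of Lemma \ref{lem_2_1}, namely $\|\nabla_\xi E_n*U\|_\infty\lesssim M^{1/n}\|U\|_\infty^{1-1/n}\lesssim M^{1/n}[u]_\infty^{1-1/n}$, we obtain the uniform-in-$s$ bounds $\|G(s)\|_1\lesssim M^{1+1/n}[u]_\infty^{1-1/n}$ and $\|G(s)\|_\infty\lesssim M^{1/n}[u]_\infty^{2-1/n}$, so that (recovering (\ref{important est on L1 of U2}))
\begin{align*}
\|U_2(\tau)\|_1\ \lesssim_{M,[u]_\infty}\ \int_0^\tau f_n(s)\,\frac{e^{-(\tau-s)/2}}{\big(1-e^{-(\tau-s)}\big)^{1/2}}\,\mathrm{d} s .
\end{align*}
For $n\ge3$ one has $f_n(s)=e^{(1-n/2)s}\le e^{-s/2}$; splitting the integral at $s=\tau/2$ (on $[0,\tau/2]$ the factor $e^{-(\tau-s)/2}\le e^{-\tau/4}$ dominates while $\int_0^\infty f_n<\infty$; on $[\tau/2,\tau]$ the factor $f_n(s)\le e^{-(n/2-1)\tau/2}$ dominates while $\int_0^\infty e^{-\sigma/2}(1-e^{-\sigma})^{-1/2}\,\mathrm{d}\sigma<\infty$) shows the integral tends to $0$, whence $\|U_2(\tau)\|_1\to0$. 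To upgrade this to $L^\infty$, for $\tau\ge1$ I would write $U_2(\tau)=S_n(1)\big[U_2(\tau-1)\big]-\int_{\tau-1}^\tau f_n(s)S_n(\tau-s)\big[\mathrm{div}_\xi G(\cdot,s)\big]\,\mathrm{d} s$; the first term is $\lesssim(1-e^{-1})^{-n/2}\|U_2(\tau-1)\|_1\to0$, and in the second term $\sigma=\tau-s\in(0,1]$ makes $e^{(n-1)\sigma/2}$ bounded and $(1-e^{-\sigma})^{-1/2}$ integrable, while $f_n(s)\le e^{(1-n/2)(\tau-1)}\to0$ since $n\ge3$; hence $\|U_2(\tau)\|_\infty\to0$. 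Therefore $\|U_2(\tau)\|_p\to0$ for all $p\in[1,\infty]$, and combining with the previous paragraph, $U(\cdot,\tau)\to M\mathcal G_n$ in every $L^p$, which by the first paragraph is (\ref{general p_L1_n=3}).

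I expect the decisive step to be the control of $U_2$. The delicate points there are: obtaining the $L^1$-to-$L^1$ bound for $S_n(\sigma)$ applied to a divergence with the sharp, time-integrable singularity $(1-e^{-\sigma})^{-1/2}$; the uniform-in-time bound $\|U\,\nabla_\xi E_n*U\|_1\lesssim M^{1+1/n}[u]_\infty^{1-1/n}$ coming from the potential estimate of Lemma \ref{lem_2_1}, which is exactly where the $t^{-n/2}$-decay of Statement 3 is consumed; and the bootstrap turning $L^1$-smallness of $U_2$ into $L^\infty$-smallness. The role of the hypothesis $n\ge3$ is that the weight $f_n(s)=e^{(1-n/2)s}$ is then integrable and decaying, which converts the otherwise merely bounded time integral into one that vanishes as $\tau\to\infty$ — precisely the feature absent in the two-dimensional case.
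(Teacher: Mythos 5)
Your proposal is correct, and its backbone coincides with the paper's: similarity variables (\ref{def_U}), the Duhamel splitting $U=U_1+U_2$ with the kernel (\ref{semi group of U variable}), the potential estimate of Lemma \ref{lem_2_1} feeding the uniform bound $\|U\nabla_\xi E_n*U\|_1\lesssim M^{1+1/n}[u]_\infty^{1-1/n}$, the resulting bound (\ref{important est on L1 of U2}) whose time integral vanishes precisely because $n\ge3$, and an interpolation step at the end. Where you genuinely diverge is in how the linear part and the passage to strong convergence are handled. The paper only gets pointwise convergence of $U_1$ to $M\mathcal G_n$ and therefore builds the strong $L^1$-compactness Lemma \ref{lem_3_1} (uniform tail estimates (\ref{id_7_04_1}), (\ref{L1 estimate outside the ball}), parabolic regularity giving (\ref{Holder norm est of U}), Arzel\`a--Ascoli), and for $L^\infty$ it combines local uniform convergence with the tail bound (\ref{id_7_24}) and the pointwise estimate (\ref{U2 uniform estimate}) split at $3\tau/4$. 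You bypass all of that: using $S_n(\tau)\mathcal G_n=\mathcal G_n$, the classical $L^1$-asymptotics of the heat flow for zero-mean data, and the $L^1\to L^\infty$ smoothing bound via $S_n(\tau)=S_n(\tau/2)\circ S_n(\tau/2)$, you get strong $L^1$ and $L^\infty$ convergence of $U_1$ directly; and your identity $U_2(\tau)=S_n(1)U_2(\tau-1)-\int_{\tau-1}^{\tau}f_n(s)S_n(\tau-s)[\mathrm{div}\,G(\cdot,s)]\,\mathrm ds$, with the kernel bounds you state (which are correct), turns $\|U_2\|_1\to0$ into $\|U_2\|_\infty\to0$ without any compactness or regularity theory. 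This is cleaner and more self-contained for the present proposition; what the paper's heavier route buys is reusability, since the tail-estimate/compactness template of Lemma \ref{lem_3_1} is exactly what gets extended in Section 5 to the 2D case, where the Duhamel integral is merely bounded and your direct argument would not close. Two small points you should make explicit when writing this up: the fact that $\|S_n(\tau)w\|_1\to0$ for zero-mean $w\in L^1$ is the standard heat-kernel asymptotic $\|e^{t\Delta}w\|_1\to0$ (prove it, e.g., by approximating $w$ in $L^1$ by zero-mean data with finite first moment and using $\|\Gamma_t(\cdot-y)-\Gamma_t\|_1\lesssim |y|\,t^{-1/2}$, taking care to preserve the zero mean in the approximation), and the verification that the mild formulation of (\ref{eqn_0_0_1}) transcribes, under (\ref{def_U}), into exactly the Duhamel identity with $S_n$ that you use (this is the paper's formula (\ref{def_U_2})).
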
 
\noindent The proof of $3 \Rightarrow 4$ in Theorem \ref{thm_1} follows easily from Proposition \ref{asymptotics of u L1 n greater than 3} above.\vspace{0.5pc}

In light of the change of variables (\ref{def_U}), the uniform bound in (\ref{unif_bound_u_near t infity}) induces 
\begin{align}\label{unif_bound_U}
\big\| \hspace{1pt}U (\cdot, \tau) \hspace{1pt}\big\|_{\infty} \leq [ \hspace{1pt}u \hspace{1pt}]_\infty < \infty, \qquad \text{for all $\tau \in \mathbb{R}$.}
\end{align}Moreover, the total mass of $U$ is conserved in the sense that
\begin{align}\label{mass_conservation}
\big\| U(\cdot,\tau) \big\|_{1} = \big\| u\left(\cdot,e^\tau\right) \big\|_{1} = M, \qquad \text{for all $\tau \in \mathbb{R}$.}
\end{align}To prove Proposition \ref{asymptotics of u L1 n greater than 3}, we need to study the strong $L^1$-compactness of the flow $\big\{U\left(\cdot, \tau\right)\big\}$ as $\tau \rightarrow \infty$.
\begin{lem}\label{lem_3_1}
Let $n \geq 3$ and suppose that $U$ is given in (\ref{def_U}) with $u \geq 0$ a classical global mild solution to (\ref{eqn_0_0_1}). If $U$ satisfies (\ref{unif_bound_U})--(\ref{mass_conservation}), then for any sequence $\big\{\tau_k\big\}$ which diverges to $\infty$ as $k \rightarrow \infty$, there is a subsequence, still denoted by $\big\{\tau_k\big\}$, such that  $ \big\{ U\left(\cdot, \tau_k\right)\big\}$ converges strongly in $L^1$ as $k \rightarrow \infty$.
\end{lem}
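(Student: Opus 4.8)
The plan is to split $U$ by Duhamel's formula against the linear semigroup, $U = U_1 + U_2$ with
\[
U_1(\tau) := S_n(\tau)\big[\hspace{1pt}U(\cdot,0)\hspace{1pt}\big], \qquad U_2(\tau) := -\int_0^\tau S_n(\tau - s)\Big[\hspace{1pt}f_n(s)\,\nabla_\xi\cdot\big(U\,\nabla_\xi E_n * U\big)(s)\hspace{1pt}\Big]\,\mathrm{d}s,
\]
and to show that $U_1$ converges in $L^1$ to $M\,\mathcal{G}_n$ while $U_2$ converges in $L^1$ to $0$; this in fact yields the stronger, non-subsequential statement, so the only real content is controlling the nonlinear Duhamel term $U_2$. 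First I would observe that $U(\cdot,0) = u(\cdot,1) \in L^1 \cap L^\infty$ by (\ref{unif_bound_U})--(\ref{mass_conservation}), and that $S_n(\tau)$ is exactly the heat semigroup written in the similarity variables (\ref{def_U}); the standard self-similar asymptotics of the heat equation then give $U_1(\cdot,\tau) \to \big(\int_{\mathbb{R}^n} U(\cdot,0)\big)\mathcal{G}_n = M\,\mathcal{G}_n$ in $L^1$ as $\tau \to \infty$. In particular $\{U_1(\cdot,\tau)\}_{\tau \ge 0}$ is strongly $L^1$-precompact, and uniformly tight at spatial infinity.

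The crux is the $L^1$-estimate for $U_2$. By Lemma~\ref{lem_2_1} together with (\ref{unif_bound_U})--(\ref{mass_conservation}), the drift obeys $\big\|\nabla_\xi E_n * U(\cdot,s)\big\|_\infty \lesssim \|U(\cdot,s)\|_1^{1/n}\|U(\cdot,s)\|_\infty^{1-1/n} \lesssim_{\,M,\,[\hspace{1pt}u\hspace{1pt}]_\infty} 1$, so $\big\|(U\,\nabla_\xi E_n * U)(\cdot,s)\big\|_1 \lesssim_{\,M,\,[\hspace{1pt}u\hspace{1pt}]_\infty} 1$ uniformly in $s$. Moving the divergence onto the $S_n$-kernel and using its explicit Gaussian form produces the integrable singular factor $(1 - e^{-(\tau-s)})^{-1/2}$ and the time weight $e^{-(\tau-s)/2}$, whence
\[
\big\| U_2(\cdot,\tau)\big\|_1 \ \lesssim_{\,M,\,[\hspace{1pt}u\hspace{1pt}]_\infty}\ \int_0^\tau \frac{f_n(s)\,e^{-\frac{\tau-s}{2}}}{\big(1 - e^{-(\tau-s)}\big)^{1/2}}\,\mathrm{d}s.
\]
Since $n \ge 3$ forces $f_n(s) = \exp\{(1 - \tfrac{n}{2})s\}$ to be integrable on $[\hspace{1pt}0,\infty)$ (and in particular $f_n(\tau/2) \to 0$), splitting this integral at $s = \tau/2$ shows the right-hand side tends to $0$ as $\tau \to \infty$. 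Hence $\|U_2(\cdot,\tau)\|_1 \to 0$.

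Combining the two pieces, $U(\cdot,\tau) = U_1(\cdot,\tau) + U_2(\cdot,\tau) \to M\,\mathcal{G}_n$ strongly in $L^1(\mathbb{R}^n)$ as $\tau \to \infty$, which is stronger than the asserted subsequential convergence. If one prefers to decouple the identification of the limit from the compactness, the alternative route is: the uniform bound (\ref{unif_bound_U}) plus interior parabolic regularity for (\ref{eqn_1}) — whose coefficients $\tfrac12\xi - f_n\nabla_\xi E_n * U$ and $\tfrac{n}{2} + f_n U$ are locally bounded by the previous estimate — give, via $W^{2,p}$/Schauder bounds and a bootstrap, $k$-uniform bounds on $U(\cdot,\tau_k)$ and its spatial derivatives on each ball, so that Arzel\`a--Ascoli and a diagonal argument extract a subsequence with $U(\cdot,\tau_k) \to U_\infty$ in $L^1_{\mathrm{loc}}$; then the tightness furnished by the $U_1$-convergence and $\|U_2\|_1 \to 0$, together with the uniform non-concentration $\int_A U(\cdot,\tau) \le [\hspace{1pt}u\hspace{1pt}]_\infty\,|A|$, upgrades this to $L^1(\mathbb{R}^n)$-convergence by the Fr\'echet--Kolmogorov / Vitali criterion. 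The genuine obstacle is the displayed $L^1$-bound for $U_2$: integrating the nonlinear Duhamel term in $L^1$ despite the gradient falling on the heat-type kernel and the slowly decaying advection weight $f_n$. This is also the precise point at which $n \ge 3$ is used — for $n = 2$ the time integral is merely bounded rather than small, which is why the planar case needs the separate pushing-and-stretching argument of Sect.~5.
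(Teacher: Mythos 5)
Your proof is correct and in fact proves more than the lemma asserts. You use the same Duhamel splitting $U = U_1 + U_2$ and derive the same key $L^1$-bound for $U_2$, namely (\ref{conv of U2 in L1}), whose vanishing as $\tau \to \infty$ for $n \ge 3$ is the crux. The difference is in the $U_1$-piece: you invoke the classical $L^1$-asymptotics of the heat semigroup (as in \cite{V17}) to get $\|\hspace{1pt}S_n(\tau)U_0 - M\mathcal{G}_n\hspace{1pt}\|_1 \to 0$ directly, whence $U(\cdot,\tau) \to M\mathcal{G}_n$ strongly in $L^1$ with no need to pass to a subsequence. The paper instead uses only the tightness bound $\sup_{\tau}\int_{B^c_{R_\epsilon}}U_1 < \epsilon/2$ from (\ref{id_7_04_1}), then invokes interior parabolic regularity (Theorem 7.22 of \cite{L05}), Arzel\`a--Ascoli and a diagonal argument for $L^\infty_{\mathrm{loc}}$-precompactness, and assembles $L^1$-precompactness from tightness, local compactness and Fatou's lemma, deferring the identification of the limit as $M\mathcal{G}_n$ to the proof of Proposition \ref{asymptotics of u L1 n greater than 3}. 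Your ``alternative route'' at the end of the proposal is essentially the paper's argument. Your primary route is shorter and bypasses parabolic regularity at the cost of citing heat-kernel $L^1$-asymptotics; both are sound, and as you correctly identify, both hinge on the integral $\int_0^\tau f_n(s)\,e^{-(\tau-s)/2}\,\big(1-e^{-(\tau-s)}\big)^{-1/2}\,\mathrm{d}s$ tending to zero, which is precisely where $n \ge 3$ enters and where the $2$D case breaks down.
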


\begin{proof}[\bf Proof]The operator $L_n$ in (\ref{eqn_1}) generates the following semi-group: \begin{align}\label{semi group of U variable}
 S_n(\tau) \hspace{0.5pt}f :=  \big(4\pi \hspace{0.5pt} a(\tau)\big)^{ -\frac{n}{2}} \int_{\mathbb{R}^n} f(\eta)\hspace{1pt}e^{-\frac{\big|\hspace{0.5pt} \cdot - e^{- \frac{\tau}{2}} \hspace{0.5pt} \eta \hspace{0.5pt}\big|^2}{4a(\tau)} }  \mathrm{d}\eta, \qquad\text{where $a(\tau) := 1 - e^{-\tau}$.}
\end{align}
Using this linear semi-group, we decompose $U$ into
\begin{align}\label{id_7_01}
U = U_1 + U_2, \hspace{15pt}\text{where $U_1 := S_n(\tau) \hspace{1pt}U_0$.}
\end{align}
In (\ref{id_7_01}),  $U_0\left(\cdot\right) := U\left( \cdot, 0 \right)$. The function $U_2$ is given by
\begin{align}\label{def_U_2}
 U_2\left(\xi, \tau\right) := \dfrac{1}{2\left(4\pi \right)^{\frac{n}{2}}}  \int_0^{\tau} \dfrac{f_n\left(s\right) e^{- \frac{\tau - s}{2}}}{ a\left(\tau - s\right)^{1 + \frac{n}{2}}} \hspace{2pt} \mathrm{d} s\int_{\mathbb{R}^n}  U\left(\eta,s\right) \nabla_\eta V\left(\eta,s\right) \cdot \left(\xi -  e^{- \frac{\tau - s }{2}} \eta  \right) e^{ -\frac{\big| \xi - e^{-\frac{\tau - s}{2}} \eta \hspace{1pt} \big|^2}{4a\left(\tau-s\right)}} \mathrm{d} \eta.
\end{align}Here we use $V$ to denote the Newtonian potential $E_n * U$.
\vspace{0.3pc} 

To estimate $U_1$, we fix an arbitrary $R > 0$ and define
\begin{align}\label{def W1 W2}
W_1\left(\xi, \tau\right) := a\left(\tau\right)^{-\frac{n}{2}} \int_{B_{R / 2}} U_0\left(\eta\right)e^{-\frac{\big|\hspace{0.5pt} \xi - e^{- \frac{\tau}{2}} \hspace{0.5pt} \eta \hspace{0.5pt}\big|^2}{4\hspace{0.5pt}a(\tau)}} \mathrm{d}\eta,  \hspace{20pt}W_2 := \big(4 \pi\big)^{\frac{n}{2}}U_1 - W_1.
\end{align}
Since $2\left| \xi - e^{-\frac{\tau}{2}} \eta \right| \geq \big|\hspace{0.5pt}\xi\hspace{0.5pt}\big|$ for any $\xi \in B\mynewstrut^c_R$, $\eta \in B_{R/2}$ and $\tau \geq 0$, it turns out
\begin{align}\label{est of W1}
W_1\left(\xi, \tau\right) \leq M a\left(\tau\right)^{-\frac{n}{2}} e^{-\frac{|\xi|^2}{16\hspace{0.5pt}a(\tau)}}, \qquad \mbox{for all}\hspace{3pt} \tau > 0 \hspace{3pt}\text{ and }\hspace{3pt} \xi \in B\mynewstrut^c_R.
\end{align}
This estimate induces
\begin{align*}
\int_{B\mystrut^c_R} W_1\left(\xi, \tau\right) \mathrm{d}\xi \hspace{2pt} \leq \hspace{2pt} M \int_{B\mystrut^c_{R\hspace{0.5pt}a(\tau)^{- 1/2}}} e^{-\frac{|\xi|^2}{16}} \mathrm{d}\xi \hspace{2pt}\leq\hspace{2pt} M \int_{B\mystrut^c_R} e^{-\frac{|\xi|^2}{16}} \mathrm{d}\xi, \hspace{15pt}\text{for all $\tau > 0$.}
\end{align*}
On the other hand, by Fubini's theorem, it turns out
\begin{align*}
\int_{B\mystrut^c_R} W_2\left(\xi, \tau\right) \mathrm{d}\xi \hspace{2pt}\lesssim\hspace{2pt} \int_{B\mystrut^c_{R/2}} U_0, \hspace{15pt}\text{for all $\tau > 0$.}
\end{align*}
By the last two estimates and $U_1 = (4 \pi)^{-\frac{n}{2}}\left(W_1 + W_2\right)$, for any $\epsilon > 0$, there is a $R_\epsilon > 0$ such that   
\begin{align}\label{id_7_04_1}
\sup_{\tau \hspace{0.5pt}> \hspace{0.5pt} 0} \hspace{1.5pt}\int_{B\mystrut^c_{R_\epsilon}} U_1\left(\xi, \tau\right) \mathrm{d}\xi  \hspace{2pt}< \hspace{2pt} \dfrac{\epsilon}{2}.
\end{align}

In light of Lemma \ref{lem_2_1}  and (\ref{unif_bound_U})--(\ref{mass_conservation}), the $L^\infty$-norm of $\nabla_\xi V$ satisfies
\begin{align}\label{L inf est of nabla V}
\big\| \nabla_\xi V \big\|_\infty \hspace{2pt}\lesssim\hspace{2pt} M^{\frac{1}{n}} \big\| U  \big\|_\infty^{1 - \frac{1}{n}} \lesssim M^{\frac{1}{n}}  [ \hspace{1pt}u \hspace{1pt}]_\infty^{1 - \frac{1}{n}}, \hspace{15pt}\text{for all $\tau \geq 0$.}
\end{align}Utilizing the above estimate, (\ref{mass_conservation}) and  Fubini's theorem, we have
\begin{align}\label{conv of U2 in L1}
\int_{\mathbb{R}^n} \big|\hspace{1pt} U_2\left(\xi,\tau\right)\big|  \hspace{2pt} \mathrm{d}\xi \hspace{2pt}&\lesssim \hspace{2pt} \int_0^\tau \dfrac{f_n(s) e^{-\frac{\tau-s}{2}}}{ a\left(\tau - s\right)^{\frac{1}{2}}}  \hspace{2pt}\mathrm{d} s \int_{\mathbb{R}^n} U\left(\eta,s\right) \Big|\nabla_\eta V \left(\eta,s\right) \Big| \mathrm{d}\eta \notag\\[2mm]
&\lesssim_{\hspace{0.5pt}M, \hspace{1pt}[\hspace{0.5pt}u\hspace{0.5pt}]_{\infty}}  \int_0^\tau \dfrac{f_n(s) e^{-\frac{\tau-s}{2}}}{ a\left(\tau - s\right)^{\frac{1}{2}}} \mathrm{d}s \longrightarrow 0 \qquad \mbox{as } \tau \rightarrow \infty.
\end{align}  Here we have used the assumption $n \geq 3$ so that the convergence in (\ref{conv of U2 in L1}) holds. By (\ref{conv of U2 in L1}), for any $\epsilon > 0$, there is $\tau_\epsilon > 0$ such that
\begin{align}\label{id_7_06}
\sup_{\tau \hspace{0.5pt}\geq\hspace{0.5pt} \tau_\epsilon} \int_{\mathbb{R}^n} \big| \hspace{1pt} U_2\left(\xi, \tau\right) \big| \hspace{2pt} \mathrm{d}\xi \hspace{2pt} < \hspace{2pt} \dfrac{\epsilon}{2}.
\end{align}
Combining (\ref{id_7_01}), (\ref{id_7_04_1}) and (\ref{id_7_06}), for any $\epsilon > 0,$ we can find $R_\epsilon > 0$ and $\tau_\epsilon > 0$ so that \begin{align}\label{L1 estimate outside the ball}\sup_{\tau \hspace{0.5pt}\geq\hspace{0.5pt} \tau_\epsilon} \int_{B\mystrut^c_{R_\epsilon}} U\left(\xi, \tau\right)  \hspace{2pt} \mathrm{d}\xi \hspace{2pt} < \hspace{2pt} \epsilon.
\end{align}

Now we rewrite (\ref{eqn_1}) as follows: \begin{align*} - \partial_\tau U + \Delta_\xi U + \left( \dfrac{1}{2} \xi - f_n \nabla_\xi V\right) \cdot \nabla_\xi U + \left(\dfrac{n}{2} + f_n U \right) U = 0.
\end{align*}In light of (\ref{unif_bound_U}) and (\ref{L inf est of nabla V}), we can apply Theorem 7.22 in \cite{L05} to the above equation and obtain \begin{align*} \big\|\hspace{1pt} D_\xi^2 U \hspace{1pt}\big\|_{2n, P_{R}\left(0, T\right)} + \big\|\hspace{1pt} \partial_\tau U \hspace{1pt}\big\|_{2n, P_{R}\left(0, T\right)} \hspace{2pt}\leq\hspace{2pt}C, \hspace{15pt}\text{for all $T \geq 10 R^2$.}
\end{align*}Here $R > 0$ is arbitrarily given. The constant $C$ depends on $n$, $R$, $M$ and $[u]_\infty$. By a standard interpolation inequality, the last estimate yields \begin{align*} \big\|\hspace{1pt} D_\xi U \hspace{1pt}\big\|_{2n, P_{R}\left(0, T\right)} + \big\|\hspace{1pt} \partial_\tau U \hspace{1pt}\big\|_{2n, P_{R}\left(0, T\right)} \hspace{2pt}\leq\hspace{2pt}C, \hspace{15pt}\text{for all $T \geq 10 R^2$.}
\end{align*}Therefore, by Morrey's inequality, the last estimate induces \begin{align}\label{Holder norm est of U} \big\|\hspace{1pt} U \hspace{1pt}\big\|_{C^{0, \frac{n-1}{2n}}\left(P_{R}\left(0, T\right)\right)} \leq C, \hspace{15pt}\text{for all $T \geq 10 R^2$.}
\end{align}Given a sequence $\big\{\tau_k\big\}$ with $\tau_k \rightarrow \infty$ as $k \rightarrow \infty$, by (\ref{Holder norm est of U}), $U\left(\cdot, \tau_k\right)$ is uniformly bounded in $C^{0, \frac{n-1}{2n}}\left(B_{R}\right)$, provided that $k$ is large. Utilizing Arzel\` {a}-Ascoli theorem, we then can extract  a subsequence, still denoted by $\big\{\tau_k\big\}$, such that $U(\cdot, \tau_k ) \longrightarrow \overline{U}$ in $L^\infty\left( B_R \right)$, as $k \rightarrow \infty$. Since $R$ is arbitrary, the definition of $\overline{U}$ can be extended to $\mathbb{R}^n$ by a diagonal argument. Moreover, we can also extract a subsequence, still denoted by $\big\{ \tau_k \big\}$, so that \begin{align}\label{loc unif conv of U}
U\left(\cdot, \tau_k\right) \longrightarrow \overline{U} \hspace{15pt}\text{in $L_{\mathrm{loc}}^\infty\left(\mathbb{R}^n\right)$, as $k \rightarrow \infty$.}
\end{align}In light of (\ref{unif_bound_U})--(\ref{mass_conservation}) and Fatou's lemma, it holds $\overline{U} \in L_+^1 \cap L^\infty $. Then for any $\epsilon > 0$, we can choose $R_* > R_\epsilon$ sufficiently large such that
\begin{align}\label{id_7_20}
\int_{B\mystrut^c_{R_*}} \overline{U} (\xi) \hspace{2pt} \mathrm{d}\xi < \epsilon.
\end{align}
Here $R_\epsilon$ is given in (\ref{L1 estimate outside the ball}). On the other hand, by Lebesgue's dominated convergence theorem and (\ref{loc unif conv of U}), it follows
\begin{align*}
\big\| U(\cdot,\tau_k) - \overline{U} \big\|_{1, B_{R_*}} \longrightarrow 0 \qquad \mbox{as } k \rightarrow \infty.
\end{align*}The proof is then completed by the last convergence, (\ref{L1 estimate outside the ball}) and (\ref{id_7_20}).
\end{proof}
In the remainder of this section, we prove Proposition \ref{asymptotics of u L1 n greater than 3}.

\begin{proof}[\bf Proof of Proposition \ref{asymptotics of u L1 n greater than 3}] \vspace{0.2pc}
Recall the decomposition (\ref{id_7_01}). By Lebesgue's dominated convergence theorem, $U_1$ satisfies
\begin{align*}
\lim_{\tau\rightarrow\infty} U_1(\xi,\tau) = \lim_{\tau\rightarrow\infty}\left(4\pi\right)^{-\frac{n}{2}} a\left(\tau\right)^{-\frac{n}{2}} \int_{\mathbb{R}^n} U_0\left(\eta\right)e^{-\frac{\big|\hspace{0.5pt} \xi - e^{- \frac{\tau}{2}} \hspace{0.5pt} \eta \hspace{0.5pt}\big|^2}{4a(\tau)} } \mathrm{d}\eta = M \mathcal{G}_n(\xi).
\end{align*}Here  $\mathcal{G}_n$ is the $n$-dimensional Gaussian probability density given in (\ref{gaussian density def}). Combining this convergence with (\ref{conv of U2 in L1}), we have $U\left(\cdot, \tau\right) \longrightarrow M \mathcal{G}_n$ almost everywhere, as $\tau \rightarrow \infty$. The $L^1$-compactness result in Lemma \ref{lem_3_1} then induces 
\begin{align}\label{prop L1 case}
\lim_{\tau \rightarrow \infty} \big\| U(\cdot,\tau) - M \mathcal{G}_n \big\|_1 = 0.
\end{align}
Equivalently, (\ref{general p_L1_n=3}) holds for $p=1$. Here we have used the change of variables in (\ref{def_U}). \vspace{0.3pc}

Now we fix an arbitrary $R > 0$ and still divide $U_1$ into $U_1 = (4\pi)^{-\frac{n}{2}} \left( W_1 + W_2 \right)$. The functions  $W_1$ and $W_2$ have been defined in (\ref{def W1 W2}). For $W_2$, it can be bounded from above pointwisely as follows:
\begin{align*}
W_2\left(\xi, \tau\right) \hspace{2pt}\lesssim\hspace{2pt} \int_{B\mystrut^c_{R/2}} U_0  \hspace{15pt} \mbox{on } \mathbb{R}^n \times [1,\infty).
\end{align*}This estimate and (\ref{est of W1}) then infer 
\begin{align}\label{id_7_24}
\big\| U_1 \big\|_{\infty;\hspace{1pt}B\mystrut^c_R \times [\hspace{0.5pt}1,\infty)} \longrightarrow 0 \qquad \mbox{as } R \rightarrow \infty.
\end{align} 
In light of (\ref{unif_bound_U})--(\ref{mass_conservation}) and (\ref{L inf est of nabla V}), $U_2$ can also be estimated pointwisely by
\begin{align}\label{U2 uniform estimate}
\big|\hspace{1pt}U_2(\xi,\tau) \hspace{1pt}\big| \hspace{2pt}&\lesssim \hspace{2pt}\int_0^{\frac{3\tau}{4}}\dfrac{f_n\left(s\right) e^{- \frac{\tau - s}{2}}}{ a\left(\tau - s\right)^{1 + \frac{n}{2}}} \hspace{2pt} \mathrm{d} s \int_{\mathbb{R}^n}  U\left(\eta,s\right) \Big| \nabla_\eta V\left(\eta,s\right) \Big|  \left|\xi -  e^{- \frac{\tau - s }{2}} \eta  \right| e^{ -\frac{\big| \xi - e^{-\frac{\tau - s}{2}} \eta \hspace{1pt} \big|^2}{4a\left(\tau-s\right)}} \mathrm{d} \eta  \notag\\[2mm]
&+ \hspace{4pt}\int_{\frac{3\tau}{4}}^\tau \hspace{2pt}\dfrac{f_n\left(s\right) e^{- \frac{\tau - s}{2}}}{ a\left(\tau - s\right)^{1 + \frac{n}{2}}} \hspace{3pt}  \mathrm{d} s\int_{\mathbb{R}^n} U\left(\eta,s\right) \hspace{0.5pt} \Big| \nabla_\eta V\left(\eta,s\right) \Big|   \left|\xi -  e^{- \frac{\tau - s }{2}} \eta  \right| e^{ -\frac{\big| \xi - e^{-\frac{\tau - s}{2}} \eta \hspace{1pt} \big|^2}{4a\left(\tau-s\right)}} \mathrm{d} \eta \notag\\[2mm]
&\lesssim_{\hspace{1pt}M, \hspace{1pt}[\hspace{0.5pt}u\hspace{0.5pt}]_\infty}\hspace{2pt} \int_0^{\frac{3\tau}{4}} \dfrac{f_n\left(s\right) e^{- \frac{\tau - s}{2}}}{ a\left(\tau - s\right)^{\frac{n+1}{2}}} \hspace{2pt} \mathrm{d}s  + \int_{\frac{3\tau}{4}}^{\tau} \dfrac{f_n\left(s\right) e^{\frac{n-1}{2}(\tau - s)}}{a\left(\tau - s\right)^{\frac{1}{2}}} \hspace{2pt}\mathrm{d}s \notag\\[2mm]
&\lesssim_{\hspace{1pt}M, \hspace{1pt}[\hspace{0.5pt}u\hspace{0.5pt}]_\infty}\hspace{2pt}  e^{- \frac{\tau}{2}} \left(1-e^{- \frac{\tau}{4}}\right)^{- \frac{n+1}{2}}\int_0^{\frac{3\tau}{4}} e^{ \frac{3-n}{2} \hspace{0.5pt}s} \hspace{2pt}\mathrm{d}s  +  e^{ \frac{5 - 2n}{8} \hspace{0.5pt}\tau} \int_{\frac{3\tau}{4}}^\tau \dfrac{ \mathrm{d}s}{a\left(\tau - s\right)^{\frac{1}{2}}}.
\end{align}
Therefore, when $n \geq 3$, the above estimate induces
\begin{align}\label{id_7_26}
\big\| U_2(\cdot,\tau) \big\|_{\infty} \longrightarrow 0 \qquad \mbox{as } \tau \rightarrow \infty.
\end{align}Utilizing (\ref{id_7_01}), (\ref{loc unif conv of U}), (\ref{id_7_24}), (\ref{id_7_26}) and the fact that $\overline{U} = M \mathcal{G}_n$, we obtain
\begin{align}\label{prop infty case}
\lim_{\tau \rightarrow \infty} \big\| U(\cdot,\tau) - M \mathcal{G}_n \big\|_\infty = 0.
\end{align}
It is equivalent to (\ref{general p_L1_n=3}) with $p=\infty$. In light of (\ref{prop L1 case}) and (\ref{prop infty case}), the proof of (\ref{general p_L1_n=3}) for general $p \in (1,\infty)$ then follows by an interpolation argument.
\end{proof}

\subsection{\normalsize Proof of Theorem \ref{thm_2}}\label{sec_4_2}\vspace{0.5pc}

When the initial density has a finite second moment, many literatures have been devoted to studying the long-time asymptotics of global solutions to the classical Patlak-Keller-Segel equation (\ref{eqn_0_0_2}) on $\mathbb{R}^2$ (see e.g. \cite{BDP06, CD14, FM16}). Now, we focus on the higher dimensional case. Our main result in this section is
 \begin{prop}\label{weight asymptotics of u} Let $n \geq 3$ and suppose that $u$ is a classical global mild solution to (\ref{eqn_0_0_1}). If (\ref{unif_bound_u_near t infity}) holds for $u$ and meanwhile the non-negative initial data  $u_0(\cdot) = u(\cdot, 0)$ satisfies   \begin{align}\label{finite_mass_moment} \int_{\mathbb{R}^n} u_0(x) \hspace{1pt}\big(1 + | x |^2 \big) \hspace{1pt}\mathrm{d} x < \infty, \end{align} then  the followings hold: \begin{enumerate}
\item[$\mathrm{(1).}$] If $n \geq 5$, then \begin{eqnarray*}\label{p = 1} t^{\frac{n}{2}\left( 1 - \frac{1}{p}\right)}\big\|\hspace{1pt}u\left(\cdot, t\right) - M \Gamma_t + B_0 \cdot \nabla \Gamma_t\hspace{1pt}\big\|_p = \mathrm{O}\big(t^{-1}\big),\hspace{15pt}\text{for any $p \in [\hspace{0.5pt}1, \infty\hspace{0.5pt}]$ and $t \geq 1$.}
\end{eqnarray*}Here $B_0$ is the center of mass given in (\ref{center of mass}). $\Gamma_t$ is the fundamental solution of $\partial_t w = \Delta w$ on $\mathbb{R}^n$; 
\item[$\mathrm{(2).}$] If $n = 3$, then for any $p \in [\hspace{1pt}1, \infty\hspace{0.5pt}]$ and $t \geq 1$, it holds 
\begin{eqnarray*}t^{\frac{3}{2}\left(1 - \frac{1}{p}\right)} \hspace{2pt}\left\| u(\cdot, t) - M\hspace{1pt}\Gamma_t + B_0 \cdot \nabla \Gamma_t + M^2 \hspace{0.5pt}\mathcal{W}\left(\cdot, t\right)  + c_1 t^{-\frac{5}{2}}\log t \left( \frac{1}{2} - \frac{|\hspace{0.5pt}x\hspace{0.5pt}|^2}{12t} \right) e^{-\frac{|\hspace{0.5pt}x\hspace{0.5pt}|^2}{4t}}\right\|_{p} = \mathrm{O}\big(t^{-1}\big).\end{eqnarray*}The function $\mathcal{W}$ is defined in (\ref{representation of function cal W}). The constant $c_1$ is given in Theorem \ref{thm_2};
\item[$\mathrm{(3)}.$]  If $n = 4$, then for any $p \in [\hspace{1pt}1, \infty\hspace{0.5pt}]$ and $t \geq 1$, it holds 
\begin{eqnarray}t^{2\left(1 - \frac{1}{p}\right)} \hspace{2pt}\left\| u(\cdot, t) - M\hspace{1pt}\Gamma_t + B_0 \cdot \nabla \Gamma_t   - c_2 \hspace{0.5pt} t^{- 3}\log t \left( \frac{1}{2} - \frac{|\hspace{0.5pt}x\hspace{0.5pt}|^2}{16t} \right) e^{-\frac{|\hspace{0.5pt}x\hspace{0.5pt}|^2}{4t}}\right\|_{p} = \mathrm{O}\big(t^{-1}\big).\end{eqnarray}The constant $c_2$ is given in Theorem \ref{thm_2}. \end{enumerate}
\end{prop}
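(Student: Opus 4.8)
The plan is to work entirely in similarity variables, so that I prove an equivalent statement about the rescaled solution $U(\xi,\tau)$ defined in (\ref{def_U}), and then translate back to $(x,t)$-variables via (\ref{def_U}). The finite-second-moment hypothesis (\ref{finite_mass_moment}) translates into a finite-weighted-norm bound on $U_0=U(\cdot,0)$, namely $\int_{\mathbb{R}^n}(1+|\xi|^2)\,U_0(\xi)\,\mathrm{d}\xi<\infty$. Since the linear semigroup $S_n(\tau)$ generated by $L_n$ commutes with the heat-flow scaling, the key fact I want is the decay of the spectral projections of $L_n$: on the subspace of mass zero and vanishing first moments, $S_n(\tau)$ decays like $e^{-\tau}$, while the first-moment eigenspace (spanned by $\partial_j\mathcal{G}_n$) decays like $e^{-\tau/2}$. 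Thus for the linear part $U_1=S_n(\tau)U_0$ I expect an expansion $U_1=M\mathcal{G}_n+e^{-\tau/2}\,B_0\!\cdot\!\nabla_\xi\mathcal{G}_n+\mathrm{O}(e^{-\tau})$ in $L^p$ for $p\in[1,\infty]$, using the finite second moment to control the remainder; this should be Proposition \ref{estimate of U1 up to second order} referenced in the excerpt.

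The nonlinear part $U_2$, given by the Duhamel formula (\ref{def_U_2}), is where the dimension-dependence enters, through the factor $f_n(s)=e^{(1-n/2)s}$. First I would substitute the zeroth-order approximation $U\approx M\mathcal{G}_n$, $V\approx M\mathcal{V}_n$ into the integrand and measure the error. For $n\ge 5$ the prefactor $f_n$ decays fast enough that $\|U_2(\cdot,\tau)\|_p=\mathrm{O}(e^{-\tau})$ uniformly, so $U_2$ is already absorbed into the $\mathrm{O}(t^{-1})$ remainder and the expansion is just $M\mathcal{G}_n+e^{-\tau/2}B_0\!\cdot\!\nabla\mathcal{G}_n+\mathrm{O}(e^{-\tau})$, which is case (1). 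For $n=4$ the advection term is borderline ($f_4=e^{-\tau}$), producing a leading nonlinear contribution of size $\tau e^{-\tau}$ — a logarithmic correction in the original variables, giving the $t^{-3}\log t$ term with constant $c_2$; here I would extract the leading term of $\int_0^\tau f_4(s)\,S_4(\tau-s)[\mathrm{div}(\mathcal{G}_4\nabla\mathcal{V}_4)]\,\mathrm{d}s$ using that $\mathrm{div}(\mathcal{G}_4\nabla\mathcal{V}_4)$ has zero mass and zero first moments (the null structures $\int u\,\partial_j v=0$), so only the second-moment eigenvalue ($=e^{-\tau}$, resonant with $f_4$) survives and yields the $\tau e^{-\tau}$ growth, with $c_2$ the corresponding second-moment integral. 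For $n=3$ the advection term is of order $e^{-\tau/2}$, genuinely leading-order, so I must keep $M^2\mathcal{W}$ (the solution of the stated linear problem, equivalently the integral (\ref{representation of function cal W})) in the expansion, and then push the analysis one order further: iterating once more produces the resonant $\tau e^{-\tau}$-type term with the first-order corrections $\mathcal{G}_3^{(1)}$, $\mathcal{V}_3^{(1)}$, giving the $t^{-5/2}\log t$ term with constant $c_1$.

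The main obstacle I anticipate is controlling the remainder terms in the nonlinear iteration in the weighted/$L^p$ norms simultaneously for all $p\in[1,\infty]$. Concretely, after writing $U=M\mathcal{G}_n+e^{-\tau/2}B_0\!\cdot\!\nabla\mathcal{G}_n+(\text{nonlinear leading term})+R$, I need to bound $R$ and $\nabla E_n*R$; the Duhamel integral mixes $R$ with itself (quadratic) and with the explicit leading terms, and the singular kernel $a(\tau-s)^{-(1+n/2)/2}$ near $s=\tau$ plus the slowly-decaying prefactor near $s=0$ both demand care. I would use the uniform bound $[u]_\infty<\infty$ from (\ref{unif_bound_u_near t infity}) together with the potential estimate of Lemma \ref{lem_2_1} to get $L^\infty$ control, the finite second moment to get weighted $L^1$ control of the relevant pieces, and interpolate; a continuity/bootstrap argument on $\tau$ closes the estimate. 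Verifying that the two null structures ($\int u\,\partial_j v=0$ and $\int u_1\partial_j v_2+u_2\partial_j v_1=0$) kill exactly the right moments — so that the surviving eigenvalue matches $f_n$ and produces a clean $\log$ rather than an uncontrolled growth — is the delicate bookkeeping step, and it is what forces the three cases $n=3$, $n=4$, $n\ge5$ to look different. Finally, translating each expansion back through (\ref{def_U}) gives the three displayed formulas, completing the proof.
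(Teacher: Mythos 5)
Your outline reproduces the paper's strategy essentially step-for-step: similarity variables with the Duhamel split $U=U_1+U_2$, the second-order expansion of $U_1$ from the finite second moment (Proposition \ref{estimate of U1 up to second order}), and the dimension-dependent treatment of $U_2$ by substituting successive approximations into (\ref{def_U_2}) and matching the decay of $f_n$ against the semigroup eigenvalues on the null-moment subspace to produce the $\tau e^{-\tau}$ resonances for $n=3,4$ (Lemma \ref{temporal optima of U2} and Proposition \ref{optimal decay rate U2}). Two small corrections: the sign in the linear expansion should read $U_1=M\mathcal{G}_n-e^{-\tau/2}B_0\cdot\nabla\mathcal{G}_n+\mathrm{O}(e^{-\tau})$ as in (\ref{est of L1 and Linfty of heat equation}), and the paper implements the spectral bookkeeping not by an abstract projection but by an explicit Taylor expansion of the Duhamel kernel (Lemma \ref{taylor expansion estimate of gaussian}), supplemented by a uniform-in-$\tau$ second-moment bound on $U$ (\ref{estimate of weighted of U}) and weighted moment bounds on $\mathcal{W}_\star$ (Lemma \ref{wighted estimate of w star}) that make the $n=3$ remainder estimates close.
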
 
\noindent Theorem \ref{thm_2} follows easily from Proposition \ref{weight asymptotics of u}. \vspace{0.3pc}

The remainder of this section is devoted to proving Proposition \ref{weight asymptotics of u}. Our approach is based on a bootstrap argument.  Recall the decomposition of $U$ in (\ref{id_7_01}). By the standard estimate of heat equations (see \cite{V17}\hspace{0.5pt}), the first component $U_1(\cdot, \tau) = S_n(\tau) \hspace{1pt}U_0$ satisfies \begin{prop}\label{estimate of U1 up to second order}Suppose that $U_0$ has a finite second moment. Then there is a  constant $C > 0$ so that \begin{align}\label{est of L1 and Linfty of heat equation} \left\| \hspace{1pt}S_n(\tau)\hspace{1pt}U_0  - M \mathcal{G}_n + e^{-\frac{\tau}{2}}  \hspace{1pt}B_0\cdot \nabla \mathcal{G}_n \hspace{1pt}\right\|_p \hspace{2pt}\leq\hspace{2pt} C \hspace{0.5pt}e^{- \tau}, \hspace{15pt}\text{for any $\tau \geq 1$ and $p \in [1, \infty]$.}
\end{align}Here the vector $B_0$ denotes the following center of mass of $U_0$:\begin{align*} B_0 := \int_{\mathbb{R}^n} z \hspace{1pt}U_0(z)\hspace{2pt}\mathrm{d} z.
\end{align*}The constant $C$ depends on $n$, $M$ and the second moment of $U_0$. 
\end{prop}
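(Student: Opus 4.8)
The plan is to recognise $S_n(\tau)$ as the linear heat semigroup written in the self-similar variables (\ref{def_U}), and then to transplant the classical second-order intermediate asymptotics of the heat equation. First I would set $u^\flat(x,t) := t^{-n/2}\big(S_n(\log t)\hspace{0.5pt}U_0\big)\big(x/\sqrt t\hspace{1pt}\big)$ for $t \geq 1$. Since $S_n$ is generated by $L_n$ and the substitution (\ref{def_U}) intertwines $\partial_t-\Delta$ with $\partial_\tau - L_n$, the function $u^\flat$ solves $\partial_t u^\flat = \Delta u^\flat$ on $\mathbb{R}^n\times[\hspace{1pt}1,\infty)$ with $u^\flat(\cdot,1)=U_0$; by uniqueness $u^\flat(\cdot,t)=e^{(t-1)\Delta}U_0$, and conversely, with $t=e^\tau$,
\begin{align*}
\big(S_n(\tau)U_0\big)(\xi) \hspace{2pt}=\hspace{2pt} t^{n/2}\hspace{1pt}\big(e^{(t-1)\Delta}U_0\big)\big(\sqrt t\,\xi\big),\qquad \tau\geq 0.
\end{align*}
I would also record the exact identities $\mathcal{G}_n(\xi)=t^{n/2}\Gamma_t(\sqrt t\,\xi)$ and $\nabla\mathcal{G}_n(\xi)=t^{(n+1)/2}(\nabla\Gamma_t)(\sqrt t\,\xi)$, valid for every $t>0$ and immediate from (\ref{gaussian density def}).

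Next I would invoke the standard heat-equation estimate (cf.\ \cite{V17}; or reprove it by Taylor-expanding the kernel, $\Gamma_t(x-y)=\Gamma_t(x)-y\cdot\nabla\Gamma_t(x)+\int_0^1(1-s)\,y^\top(D^2\Gamma_t)(x-sy)\,y\,\mathrm ds$, and using Minkowski's inequality): if $w_0\in L^1$ has mass $m$, center of mass $b$ and finite second moment $m_2:=\int_{\mathbb{R}^n}|z|^2|w_0|\,\mathrm dz$, then, since $\|D^2\Gamma_t\|_p=t^{-1-\frac n2(1-1/p)}\|D^2\Gamma_1\|_p$ with $\|D^2\Gamma_1\|_p<\infty$ for all $p$,
\begin{align*}
\big\|\hspace{1pt}e^{t\Delta}w_0-m\,\Gamma_t+b\cdot\nabla\Gamma_t\hspace{1pt}\big\|_p \hspace{2pt}\lesssim\hspace{2pt} m_2\,t^{-1-\frac n2\left(1-\frac1p\right)},\qquad t>0,\ p\in[\hspace{1pt}1,\infty\hspace{1pt}].
\end{align*}
Applying this with $w_0=U_0$ at time $t-1$ — using that the heat flow preserves both the mass $M$ and the first moment $B_0$ of $U_0$ — together with the elementary bounds $\|\Gamma_{t-1}-\Gamma_t\|_p+\|\nabla\Gamma_{t-1}-\nabla\Gamma_t\|_p\lesssim t^{-1-\frac n2(1-1/p)}$ for $t\geq e$, yields
\begin{align*}
\big\|\hspace{1pt}e^{(t-1)\Delta}U_0-M\,\Gamma_t+B_0\cdot\nabla\Gamma_t\hspace{1pt}\big\|_p \hspace{2pt}\lesssim\hspace{2pt} \big(M+|B_0|+m_2\big)\,t^{-1-\frac n2\left(1-\frac1p\right)},\qquad t\geq e.
\end{align*}

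Finally I would undo the rescaling. Combining the displayed formula for $S_n(\tau)U_0$ with the identities for $\mathcal{G}_n,\nabla\mathcal{G}_n$ and the algebraic cancellation $e^{-\tau/2}t^{1/2}=1$ (with $t=e^\tau$), one gets pointwise in $\xi$ that $\big(S_n(\tau)U_0-M\mathcal{G}_n+e^{-\tau/2}B_0\cdot\nabla\mathcal{G}_n\big)(\xi)=t^{n/2}\big(e^{(t-1)\Delta}U_0-M\Gamma_t+B_0\cdot\nabla\Gamma_t\big)(\sqrt t\,\xi)$; then the substitution $x=\sqrt t\,\xi$ shows that the left-hand side of (\ref{est of L1 and Linfty of heat equation}) equals $t^{\frac n2(1-\frac1p)}\,\|e^{(t-1)\Delta}U_0-M\Gamma_t+B_0\cdot\nabla\Gamma_t\|_p$, which by the previous step is $\lesssim(M+|B_0|+m_2)\,t^{-1}=(M+|B_0|+m_2)\,e^{-\tau}$ for $\tau\geq1$. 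Since $|B_0|\leq M^{1/2}m_2^{1/2}$ by Cauchy--Schwarz, the constant depends only on $n$, $M$ and the second moment of $U_0$, which is precisely (\ref{est of L1 and Linfty of heat equation}). The only real obstacle is bookkeeping: one must verify that across the time shift $t\mapsto t-1$ and all the rescalings every error term is genuinely $\mathrm{O}(e^{-\tau})$ \emph{uniformly} in $p\in[\hspace{1pt}1,\infty\hspace{1pt}]$ and that no implied constant acquires a $p$-dependence — the exponent $\frac n2(1-\frac1p)$ must cancel exactly, which it does because the heat-kernel scaling and the similarity-variable scaling coincide.
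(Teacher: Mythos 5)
Your proposal is correct and is essentially the paper's route: the paper simply invokes the standard second-order intermediate asymptotics of the heat equation (citing \cite{V17}), and your argument supplies exactly that standard proof — the Taylor expansion of the kernel with the mass/first-moment cancellations — together with the (correct) scaling identity $\big(S_n(\tau)U_0\big)(\xi)=e^{\frac{n\tau}{2}}\big(e^{(e^{\tau}-1)\Delta}U_0\big)\big(e^{\frac{\tau}{2}}\xi\big)$ that converts it into the self-similar statement. The bookkeeping you flag does check out: the factor $t^{\frac n2(1-\frac1p)}$ from rescaling cancels against the heat-kernel decay, the shift $t\mapsto t-1$ costs only $\mathrm{O}(t^{-1-\frac n2(1-\frac1p)})$ for $t\geq e$, and $|B_0|\leq M^{1/2}m_2^{1/2}$ keeps the constant depending only on $n$, $M$ and the second moment.
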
\noindent 
Notice that a solution $u$ to (\ref{eqn_0_0_1}) with a finite second moment preserves the center of mass. In light of (\ref{def_U}), we have \begin{align*} B_0 =  \int_{\mathbb{R}^n}z \hspace{1pt}U_0\left(z\right)\mathrm{d} z = \int_{\mathbb{R}^n}x \hspace{1pt}u_0\left(x\right)\mathrm{d} x.
\end{align*} The  estimate of $U_1$ in Proposition \ref{estimate of U1 up to second order}  is optimal if $U_0$ is only assumed to have a finite second moment. However, as for the supremum norm estimate of $U_2$, the temporal decay rate  in (\ref{U2 uniform estimate}) is still not satisfactory. 

\subsubsection{\normalsize Approximation of $U_2$ up to order $e^{- \frac{\tau}{2}}$} \label{sec_4_2_1}\vspace{0.5pc}
 
 The main result in this section is\begin{lem}\label{temporal optima of U2}Suppose that $u_0$ has a finite second moment. Then there exist a positive constant $C$ depending on $M$, $[u]_\infty$ and the second moment of $u_0$  and a positive constant $\beta_n$ depending on $n$ only so that   \begin{align*}  \left\| \hspace{1pt}U_2 + M^2  e^{- \frac{\tau}{2}}\hspace{1pt}\delta_{n, 3}\int_0^\infty e^{\frac{s}{2}} S_n(s) \hspace{1pt}\Big[ \mathrm{div}\Big(\hspace{1pt} \mathcal{G}_n \nabla \mathcal{V}_n\Big) \hspace{1pt}\Big] \hspace{1pt}\mathrm{d} s \hspace{1pt}\right\|_p \hspace{2pt}\leq\hspace{2pt} C e^{ - \left(\frac{1}{2} + \beta_n\right) \tau}, \hspace{10pt}\text{for any $p \in [1, \infty]$ and $\tau \geq 0$.}
\end{align*}Here $\mathcal{V}_n := E_n * \mathcal{G}_n$. $u$ is the global mild solution to (\ref{eqn_0_0_1}) with the initial data $u_0$ at $t = 0$. $U$ is the representation of $u$ under the similarity variables. The constant $\delta_{n, 3} = 1$ if $n = 3$. It equals to $0$ if $n \neq 3$.
\end{lem}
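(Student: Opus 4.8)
The plan is to work from the Duhamel identity behind $(\ref{def_U_2})$. Writing $V:=E_n*U$ and using that $U$ solves $(\ref{eqn_1})$, an integration by parts against the kernel of $S_n$ turns $(\ref{def_U_2})$ into
\begin{align*}
U_2(\tau)\;=\;-\int_0^\tau f_n(s)\,S_n(\tau-s)\Big[\,\mathrm{div}\big(U(\cdot,s)\,\nabla_\xi V(\cdot,s)\big)\Big]\,\mathrm{d}s .
\end{align*}
I would split the source as $\mathrm{div}(U\nabla_\xi V)=M^2 g+\mathrm{div}R$, with $g:=\mathrm{div}(\mathcal G_n\nabla_\xi\mathcal V_n)$ and $R(\cdot,s):=U(\cdot,s)\nabla_\xi V(\cdot,s)-M^2\mathcal G_n\nabla_\xi\mathcal V_n$, and estimate the two pieces separately.

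For the principal piece $-M^2\int_0^\tau f_n(s)S_n(\tau-s)[g]\,\mathrm{d}s$ the key is that $g$ lies in the orthogonal complement of the $0$- and $-\tfrac12$-eigenspaces of $L_n$: indeed $\int g=0$ since $g$ is a divergence, and $\int\xi_j g=-\int\mathcal G_n\,\partial_j\mathcal V_n=0$ for every $j$ by the first null structure (recall $-\Delta\mathcal V_n=\mathcal G_n$). As $g$ is Schwartz, the standard decay estimates for $S_n$ on weighted spaces (cf.\ \cite{GW02,GW05}) give $\|S_n(s)g\|_p\lesssim e^{-s}$ for $s\geq1$ and all $p\in[1,\infty]$. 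Substituting $r=\tau-s$, this piece becomes $-M^2\int_0^\tau f_n(\tau-r)S_n(r)[g]\,\mathrm{d}r$; for $n=3$ one has $f_3(\tau-r)=e^{-\tau/2}e^{r/2}$, the integrand decays like $e^{-r/2}$, and extending the integral to $[0,\infty)$ produces exactly $-M^2e^{-\tau/2}\int_0^\infty e^{s/2}S_3(s)[g]\,\mathrm{d}s$ with tail $\mathrm O(e^{-\tau})$; for $n\geq4$ the prefactor $f_n(\tau-r)=e^{(1-\frac n2)(\tau-r)}$ dominates the at most polynomially amplified integral, so the whole principal piece is $\mathrm O\big((1+\tau)e^{-\tau}\big)$. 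This is the origin of the symbol $\delta_{n,3}$.

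The remainder $-\int_0^\tau f_n(s)S_n(\tau-s)[\mathrm{div}R(\cdot,s)]\,\mathrm{d}s$ I would control by a bootstrap. A crude run uses $\|R(\cdot,s)\|_p\lesssim\|U(\cdot,s)-M\mathcal G_n\|_p$ (via Lemma \ref{lem_2_1} and $\|\nabla_\xi V\|_\infty\lesssim_{M,[u]_\infty}1$), the usual smoothing/decay bounds for $S_n\circ\mathrm{div}$ (integrable singularity as $r\to0$, exponential decay for $r\geq1$, improving to $e^{-r}$ on mean-zero arguments), and the elementary fact $f_n(s)e^{-(\tau-s)/2}=e^{-\tau/2}e^{(\frac32-\frac n2)s}\le e^{-\tau/2}$ for $n\geq3$; together with $(\ref{U2 uniform estimate})$ and Proposition \ref{asymptotics of u L1 n greater than 3} it gives $\|U_2(\cdot,\tau)\|_p\lesssim(1+\tau)e^{-\tau/2}$. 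I then set $\widetilde E(\tau):=U_2(\tau)+M^2e^{-\tau/2}\delta_{n,3}\int_0^\infty e^{s/2}S_n(s)[g]\,\mathrm{d}s$, so $\|\widetilde E(\cdot,\tau)\|_p\lesssim(1+\tau)e^{-\tau/2}$, and use Proposition \ref{estimate of U1 up to second order} to write $U(\cdot,s)-M\mathcal G_n=-e^{-s/2}\mathcal G_n^{(1)}+\widetilde E(\cdot,s)+\mathrm O(e^{-s})$, where $\mathcal G_n^{(1)}=B_0\cdot\nabla_\xi\mathcal G_n$ if $n\geq4$ and $\mathcal G_n^{(1)}$ is the profile in Theorem \ref{thm_2} if $n=3$. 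Substituting this into $R=M\mathcal G_n\nabla_\xi\widetilde V+M\widetilde U\nabla_\xi\mathcal V_n+\widetilde U\nabla_\xi\widetilde V$ (with $\widetilde U:=U-M\mathcal G_n$, $\widetilde V:=E_n*\widetilde U$ and $\mathcal V_n^{(1)}:=E_n*\mathcal G_n^{(1)}$) splits $\mathrm{div}R$ into: (a) $e^{-s/2}$ times the fixed profile $-M\,\mathrm{div}(\mathcal G_n\nabla_\xi\mathcal V_n^{(1)}+\mathcal G_n^{(1)}\nabla_\xi\mathcal V_n)$; (b) $M\,\mathrm{div}(\mathcal G_n\nabla_\xi(E_n*\widetilde E)+\widetilde E\,\nabla_\xi\mathcal V_n)$, linear in $\widetilde E$; (c) the quadratic term $\mathrm{div}(\widetilde U\nabla_\xi\widetilde V)$; and (d) an $\mathrm O(e^{-s})$ term. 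The second null structure makes the fields in (a) and (b) mean-zero, the first null structure makes the one in (c) mean-zero, so each of the corresponding sources is orthogonal to the $0$- and $-\tfrac12$-eigenspaces of $L_n$. Repeating the substitution argument then shows that (a) contributes $\mathrm O\big((1+\tau)e^{-\tau}\big)$, (c) and (d) contribute $\mathrm O(e^{-\tau})$, while (b) contributes $\mathrm O\big((1+\tau)e^{-(\frac{n-2}{2}+\gamma)\tau}\big)$ whenever $\|\widetilde E(\cdot,s)\|_p\lesssim e^{-\gamma s}$ — already $\lesssim(1+\tau)e^{-\tau}$ for $n\geq4$, and equal to $(1+\tau)e^{-(\frac12+\gamma)\tau}$ for $n=3$, a strict gain over $e^{-\tau/2}$. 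Feeding $\gamma$ back finitely many times improves $\|\widetilde E\|_p$ to $e^{-(1-\delta)\tau}$ for any $\delta>0$, which is the assertion with, say, $\beta_n=\tfrac14$.

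The main obstacle is piece (b): naively the remainder integral saturates at the rate $e^{-\tau/2}$ — the very order of the term being isolated — so a direct contraction in the $e^{-(\frac12+\beta)\tau}$-weighted norm does not close. Two ingredients remove this: (i) the explicit $e^{-\tau/2}$-profile is peeled off before the bootstrap, leaving $\widetilde E$ with a little decay to spare; and (ii) the dominant contribution of the $\widetilde E$-linear source comes from times $s\approx\tau$, where the kernel of $S_n\circ\mathrm{div}$ has only an integrable singularity and no exponential growth, so that $f_n(s)$ evaluated near $s=\tau$ donates the extra factor $e^{(1-\frac n2)\tau}\le e^{-\tau/2}$ on top of the self-improving $e^{-\gamma\tau}$, lifting the rate past $\tfrac12$. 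The rest is routine bookkeeping: the weighted-space decay of $S_n$ applied to $g$ and to the Gaussian profile in (a) is clear since they are Schwartz; the $L^p$ and auxiliary bounds for $R$, $\widetilde U$, $\widetilde V$ follow from Lemma \ref{lem_2_1}, the hypothesis $[u]_\infty<\infty$, and the decay of $\|U(\cdot,\tau)-M\mathcal G_n\|_p$ already supplied by Propositions \ref{asymptotics of u L1 n greater than 3} and \ref{estimate of U1 up to second order}, with the finite-second-moment hypothesis $(\ref{finite_mass_moment})$ entering only through the latter.
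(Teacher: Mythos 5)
Your treatment of the principal term agrees with the paper's: the null conditions $\int \mathrm{div}(\mathcal G_n\nabla\mathcal V_n)=\int \xi_j\,\mathrm{div}(\mathcal G_n\nabla\mathcal V_n)=0$ give $\|S_n(s)[\mathrm{div}(\mathcal G_n\nabla\mathcal V_n)]\|_p\lesssim e^{-s}$, and integrating the Duhamel formula produces exactly the $\delta_{n,3}$ profile with an $\mathrm{O}(e^{-\tau})$ tail. The problem is the remainder. Your bootstrap hinges on applying the improved decay $\|S_n(r)[\text{source}]\|_p\lesssim e^{-r}$ to the \emph{time-dependent} sources (b) and (c) for $r=\tau-s\geq 1$. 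That improvement is not a consequence of the algebraic orthogonality to the $0$- and $-\tfrac12$-eigenspaces alone: expanding the kernel of $S_n(r)$ (as in Lemma \ref{taylor expansion estimate of gaussian}) shows that after the zero-mass and zero-first-moment cancellations the error is controlled by \emph{moment-weighted} norms of the source, e.g.\ $\int(1+|z|)\,|F(z,s)|\,\mathrm d z$ for a divergence-form source $\mathrm{div}\,F$. For (b) and (c) this requires weighted control of $\widetilde E(\cdot,s)$ and of $U(\cdot,s)-M\mathcal G_n$ decaying at the rate $e^{-\gamma s}$ you plug in; what is actually available is only a uniform-in-$\tau$ bound on $\int|\xi|^2U(\xi,\tau)\,\mathrm d\xi$ (which, incidentally, you never establish — propagating the finite second moment of $u_0$ to all times is the cutoff computation (\ref{approx_2nd_moment})--(\ref{estimate of weighted of U}) in the paper and is itself part of the proof). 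Interpolating the unweighted decay against the bounded second moment loses a factor in the exponent, so your recursion ``(b) contributes $\mathrm O\big((1+\tau)e^{-(\frac{n-2}{2}+\gamma)\tau}\big)$'', the claim that (c) is $\mathrm O(e^{-\tau})$, and the final assertions ($\beta_n=\tfrac14$, improvement to $e^{-(1-\delta)\tau}$) are not justified as written. A further missing ingredient for $n=3$: piece (a) involves $\mathcal G_3^{(1)}$, which contains $\mathcal W_\star=\int_0^\infty e^{s/2}S_3(s)[\mathrm{div}(\mathcal G_3\nabla\mathcal V_3)]\,\mathrm d s$; applying the moment-zero improved decay to it needs weighted integrability of $\mathcal W_\star$ and $\nabla\mathcal W_\star$, which is \emph{not} ``clear since Schwartz'' — it is the content of the paper's Lemma \ref{wighted estimate of w star} and requires its own kernel-expansion proof.

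For comparison, the paper avoids the iteration entirely and needs much less: it splits the time integral at $s=\tau/2$. On $[\tau/2,\tau]$ no moment information is used — the crude a priori rate $\|U-M\mathcal G_n\|_p\lesssim e^{-\tau/16}$ (from Proposition \ref{estimate of U1 up to second order} together with (\ref{conv of U2 in L1}), (\ref{U2 uniform estimate})) already yields an $e^{-\beta_n\tau}$ gain because the lower limit $\tau/2$ is bounded away from $0$. On $[0,\tau/2]$ it uses only the single null structure $\int U\nabla V=0$, the $\nu$-derivative expansion of the kernel (\ref{estimate of differentiation}), and the uniform second-moment bound of $U$, gaining one factor $e^{-(\tau-s)/2}$, which suffices at the $e^{\tau/2}U_2$ level. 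If you want to salvage your route, you must (i) prove the uniform second-moment bound for $U$, (ii) track weighted ($|\xi|$- or $|\xi|^2$-weighted) norms of $\widetilde E$ through the bootstrap rather than only $L^p$ norms (accepting weaker exponents, which is fine since only some $\beta_n>0$ is claimed), and (iii) supply the weighted bounds for $\mathcal W_\star$; as it stands, these are genuine gaps rather than routine bookkeeping.
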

\begin{proof}[\bf Proof] We use $U^{(0)}$ and $V^{(0)}$ to denote $M \mathcal{G}_n$ and $M\mathcal{V}_n$, respectively. By the representation of $U_2$ in (\ref{def_U_2}),  \begin{align}\label{decomp of e to tau over 2 U2}e^{\frac{\tau}{2}} U_2  =\left(4 \pi\right)^{-\frac{n}{2}} \hspace{1pt}\big[\hspace{1pt}\mathrm{I} + \mathrm{II} + \mathrm{III}\hspace{1pt}\big],
\end{align}where the terms $\mathrm{I}$, $\mathrm{II}$ and $\mathrm{III}$ are given as follows: \begin{align*} &\mathrm{I} :=  \int_{\frac{\tau}{2}}^{\tau} \dfrac{f_n\left(s \right)e^{\frac{\tau}{2}}}{ a\left(\tau - s\right)^{ \frac{n}{2}}} \hspace{2pt} \mathrm{d} s\int_{\mathbb{R}^n} U^{(0)}(z) \nabla V^{(0)}(z) \cdot \nabla_z e^{ -\frac{\big| \xi - e^{-\frac{\tau - s}{2}} z \hspace{1pt} \big|^2}{4a\left(\tau-s\right)}}\mathrm{d} z;\\[2mm]
&\mathrm{II} := \int_{\frac{\tau}{2}}^{\tau} \dfrac{f_n\left(s\right) e^{\frac{\tau}{2}}}{ a\left(\tau - s\right)^{\frac{n}{2}}} \hspace{2pt} \mathrm{d} s\int_{\mathbb{R}^n} \Big[ U\left(z,s\right) \nabla V\left(z,s\right) - U^{(0)}(z) \nabla V^{(0)}(z)\Big] \cdot \nabla_ze^{ -\frac{\big| \xi - e^{-\frac{\tau - s}{2}} z \hspace{1pt} \big|^2}{4a\left(\tau-s\right)}} \mathrm{d} z;\\[2mm]
&\mathrm{III} := \int_0^{\frac{\tau}{2}} \dfrac{f_n\left(s\right) e^{\frac{\tau}{2}}}{ a\left(\tau - s\right)^{\frac{n}{2}}} \hspace{2pt} \mathrm{d} s\int_{\mathbb{R}^n}  U\left(z,s\right) \nabla V\left(z,s\right) \cdot \nabla_ze^{ -\frac{\big| \xi - e^{-\frac{\tau - s}{2}} z \hspace{1pt} \big|^2}{4a\left(\tau-s\right)}} \mathrm{d} z.
\end{align*}
In the next, we estimate the terms $\mathrm{I}$, $\mathrm{II}$ and $\mathrm{III}$. \vspace{0.5pc}\\
\textbf{Estimate of I.}   Replacing $U_0$ in (\ref{est of L1 and Linfty of heat equation}) by $\mathrm{div}\big(\mathcal{G}_n \nabla \mathcal{V}_n\big)$ and using the following null conditions: \begin{align*} \int_{\mathbb{R}^n} \mathrm{div}\Big(\hspace{1pt} \mathcal{G}_n \nabla \mathcal{V}_n\Big) = \int_{\mathbb{R}^n} \xi_j \hspace{1pt} \mathrm{div}\Big(\hspace{1pt} \mathcal{G}_n \nabla \mathcal{V}_n\Big) \hspace{2pt}\mathrm{d} \xi = 0, \hspace{15pt}\text{ $j = 1, ..., n$,}
\end{align*} we have \begin{align}\label{estimate of div of gn nabla vn pre}\left\| \hspace{1pt}S_n(s) \hspace{1pt}\Big[ \mathrm{div}\Big(\hspace{1pt} \mathcal{G}_n \nabla \mathcal{V}_n\Big) \hspace{1pt}\Big] \hspace{1pt}\right\|_p \hspace{2pt}\lesssim\hspace{2pt} e^{- s}, \hspace{15pt}\text{for any $s \geq 1$ and $p \in [1, \infty]$.}
\end{align}For $s \in [0, 1]$, the $L^p$ norm of $S_n(s)\hspace{0.5pt}\left[ \mathrm{div}\big( \mathcal{G}_n \nabla \mathcal{V}_n\big) \right]$ is uniformly bounded from above with the upper bound independent of $p$. This uniform upper bound, in conjunction with (\ref{estimate of div of gn nabla vn pre}), yields \begin{align}\label{estimate of div of gn nabla vn}\left\| \hspace{1pt}S_n(s) \hspace{1pt}\Big[ \mathrm{div}\Big(\hspace{1pt} \mathcal{G}_n \nabla \mathcal{V}_n\Big) \hspace{1pt}\Big] \hspace{1pt}\right\|_p \hspace{2pt}\lesssim\hspace{2pt} e^{- s}, \hspace{15pt}\text{for any $s \geq 0$ and $p \in [1, \infty]$.}
\end{align}

Change the variable  $s$ in term $\mathrm{I}$ to $\tau - s$ and integrate by parts with respect to the variable $z$. It holds \begin{align*} \mathrm{I} = - \left(4 \pi\right)^{\frac{n}{2}}  \int_0^{\frac{\tau}{2}} e^{\frac{s}{2}} \hspace{1pt}e^{\frac{3 - n}{2}(\tau - s)} S_n\left(s\right) \Big[ \mathrm{div} \left( U^{(0)} \nabla V^{(0)}\right)\Big]\hspace{1pt}  \mathrm{d} s.
\end{align*} By (\ref{estimate of div of gn nabla vn}), it follows \begin{align*} \big\| \hspace{1pt}\mathrm{I}\hspace{1pt} \big\|_p &\hspace{2pt}\lesssim\hspace{2pt} \int_0^{\frac{\tau}{2}} e^{\frac{s}{2}} \hspace{1pt}e^{\frac{3 - n}{2}(\tau - s)} \hspace{1pt}\left\| S_n\left(s\right) \Big[ \mathrm{div} \left( U^{(0)} \nabla V^{(0)} \right)\Big] \right\|_p \mathrm{d} s  \hspace{2pt}\lesssim\hspace{2pt}  M^2\int_0^{\frac{\tau}{2}} e^{- \frac{s}{2}}\hspace{1pt}e^{\frac{3 - n}{2}(\tau - s)}  \hspace{2pt} \mathrm{d} s.
\end{align*}Therefore, \begin{align*}\big\| \hspace{1pt}\mathrm{I}\hspace{1pt} \big\|_p &\hspace{2pt}\lesssim\hspace{2pt}M^2 \hspace{1pt}e^{ - \frac{\tau}{4}},  \hspace{10pt}\text{for all $p \in [1, \infty]$ and  $n \geq 4$.}
\end{align*}If $n = 3$, then similar derivations for the above estimate yield \begin{align*} \left\|\hspace{1pt} \hspace{1pt}\mathrm{I} + \left(4\pi\right)^{\frac{3}{2}}  \int_0^{\infty} e^{\frac{s}{2}} \hspace{1pt} S_3\left(s\right) \Big[ \mathrm{div}\left(U^{(0)} \nabla V^{(0)}\right)\Big] \mathrm{d} s  \hspace{1pt}\right\|_p   \hspace{2pt}\lesssim \hspace{2pt} M^2 e^{ - \frac{\tau}{4}}, \hspace{15pt}\text{for all $p \in [1, \infty]$.}
\end{align*} In light of the last two estimates, it follows \begin{align}\label{estimate of term I}\left\|\hspace{1pt} \hspace{1pt}\mathrm{I} + \left(4\pi\right)^{\frac{n}{2}} M^2 \delta_{n, 3} \int_0^{\infty} e^{\frac{s}{2}} \hspace{1pt} S_n\left(s\right) \Big[ \mathrm{div}\big(\mathcal{G}_n \nabla \mathcal{V}_n\big)\Big] \mathrm{d} s  \hspace{1pt}\right\|_p   \hspace{2pt}\lesssim\hspace{2pt} M^2 e^{-\frac{\tau}{4}}, \hspace{10pt}\text{for all $p \in [1, \infty]$ and $n \geq 3$.}\end{align}
 \vspace{0.3pc}\\
\noindent\textbf{Estimate of II.} Due to (\ref{conv of U2 in L1}), (\ref{U2 uniform estimate}) and (\ref{est of L1 and Linfty of heat equation}), there is a positive constant $C$  so that \begin{align}\label{estimate of U1 - MGn with exponential bound} \big\|\hspace{1pt}U - M \mathcal{G}_n\hspace{1pt}\big\|_p \hspace{2pt}\leq\hspace{2pt}C e^{- \frac{\tau}{16}}, \hspace{15pt}\text{for all $\tau \geq 0$ and $p \in [1, \infty]$.}
\end{align}Now we fix an $\alpha \in (1/2, 1)$. By (\ref{estimate of U1 - MGn with exponential bound}) and (\ref{estimate_grad_v}), it turns out  \begin{align}\label{estimate 000000}& \left| \int_{\frac{\tau}{2}}^{\alpha\tau} \dfrac{e^{\frac{3 - n}{2}s}}{ a\left(\tau - s\right)^{1 + \frac{n}{2}}} \hspace{2pt} \mathrm{d} s\int_{\mathbb{R}^n} \Big[ U\left(z,s\right) \nabla V\left(z,s\right) - U^{(0)}(z) \nabla V^{(0)}(z)\Big] \cdot \left(\xi -  e^{- \frac{\tau - s }{2}} z  \right) e^{ -\frac{\big| \xi - e^{-\frac{\tau - s}{2}} z \hspace{1pt} \big|^2}{4a\left(\tau-s\right)}} \mathrm{d} z \right| \notag\\[2mm]
&\hspace{60pt}\lesssim\hspace{2pt} \int_{\frac{\tau}{2}}^{\alpha\tau} \dfrac{e^{\frac{3 - n}{2}s}}{ a\big( \big(1 - \alpha \big) \tau\big)^{\frac{n+1}{2}}} \hspace{2pt} \mathrm{d} s\int_{\mathbb{R}^n} \Big|\hspace{1pt} U\left(z,s\right) \nabla V\left(z,s\right) - U^{(0)}(z) \nabla V^{(0)}(z) \hspace{1pt}\Big| \hspace{2pt}\mathrm{d} z   \notag\\[2mm]
&\hspace{60pt}\leq \hspace{2pt}C\hspace{1pt}\left( 1 - e^{- (1 - \alpha)\tau}\right)^{- \frac{n+1}{2}} \int_{\frac{\tau}{2}}^{\infty} e^{\frac{23-8n}{16}s} \hspace{2pt}\mathrm{d} s.\end{align}Still by (\ref{estimate of U1 - MGn with exponential bound}) and (\ref{estimate_grad_v}), it satisfies \begin{align*}& \left| \int_{\alpha \tau}^{\tau} \dfrac{e^{\frac{3 - n}{2}s}}{ a\left(\tau - s\right)^{1 + \frac{n}{2}}} \hspace{2pt} \mathrm{d} s\int_{\mathbb{R}^n} \Big[ U\left(z,s\right) \nabla V\left(z,s\right) - U^{(0)}(z) \nabla V^{(0)}(z)\Big] \cdot \left(\xi -  e^{- \frac{\tau - s }{2}} z  \right) e^{ -\frac{\big| \xi - e^{-\frac{\tau - s}{2}} z \hspace{1pt} \big|^2}{4a\left(\tau-s\right)}} \mathrm{d} z \right| \notag\\[2mm]
&\hspace{80pt}\lesssim\hspace{2pt}  \int_{\alpha \tau}^{\tau} \dfrac{e^{\frac{3 - n}{2}s}e^{\frac{n}{2}\left(\tau - s \right)}}{ a\left(\tau - s\right)^{\frac{1}{2}}} \hspace{2pt}\Big\| U\left(\cdot, s\right) \nabla V\left(\cdot, s\right) - U^{(0)} \nabla  V^{(0)}\Big\|_\infty \hspace{2pt} \mathrm{d} s  \notag\\[2mm]
&\hspace{80pt}\leq \hspace{2pt}C\hspace{1pt} \int_{\alpha \tau}^\tau \dfrac{e^{\frac{23 - 8n}{16}s} e^{\frac{n}{2}(\tau - s )} }{ a\left(\tau - s\right)^{\frac{1}{2}}} \hspace{2pt} \mathrm{d} s \hspace{2pt}\leq\hspace{2pt}C \hspace{1pt}e^{\frac{23 - 8n}{16} \alpha \tau} e^{\frac{n}{2}(1 - \alpha ) \tau} \int_0^\tau \dfrac{\mathrm{d} s}{\sqrt{a(s)}}. \end{align*}Therefore, we can fix a $\alpha$ close to $1$ so that  \begin{align*} & \left| \int_{\alpha \tau}^{\tau} \dfrac{e^{\frac{3 - n}{2}s}}{ a\left(\tau - s\right)^{1 + \frac{n}{2}}} \hspace{2pt} \mathrm{d} s\int_{\mathbb{R}^n} \Big[ U\left(z,s\right) \nabla V\left(z,s\right) - U^{(0)}(z) \nabla  V^{(0)}(z)\Big] \cdot \left(\xi -  e^{- \frac{\tau - s }{2}} z  \right) e^{ -\frac{\big| \xi - e^{-\frac{\tau - s}{2}} z \hspace{1pt} \big|^2}{4a\left(\tau-s\right)}} \mathrm{d} z \right| \notag\\[2mm]
&\hspace{80pt}\leq \hspace{2pt} C \hspace{1pt}e^{- \mu_n\tau} \int_0^\tau \dfrac{\mathrm{d} s}{\sqrt{a(s)}}, \hspace{15pt}\text{for some constant $\mu_n > 0$ depending on $n$.} \end{align*}Note that  $\alpha$ can also be chosen to depend only on $n$. With the fixed $\alpha$, the last estimate and (\ref{estimate 000000}) induce 	 \begin{align}\label{estimate of times e tau 2 of U2}& \big\|\hspace{1pt}\mathrm{II}\hspace{1pt} \big\|_\infty \hspace{2pt}\leq\hspace{2pt} C \hspace{1pt}e^{- \beta_n\tau}, \hspace{15pt}\text{for some constant $\beta_n > 0$ depending only on $n$.}\end{align}As for $L^1$-estimate of $\mathrm{II}$, by Fubini's theorem, it satisfies\begin{align}\label{est of quantity in times in L1}&\big\|\hspace{1pt}\mathrm{II}\hspace{1pt} \big\|_1 \hspace{2pt}\leq \hspace{2pt}C \int_{\frac{\tau}{2}}^\tau \dfrac{e^{\frac{23-8n}{16}s}}{a\left(\tau - s\right)^{\frac{1}{2}}} \hspace{2pt}\mathrm{d}s \hspace{2pt}\leq\hspace{2pt}C e^{- \beta_n \tau},\hspace{15pt}\text{for some constant $\beta_n > 0$ depending only on $n$.}
\end{align}\\
\textbf{Estimate of III.} Before we estimate the term $\mathrm{III}$, we need to consider the uniform boundedness of the second moment of $U$.  Let $\eta$ be a smooth function compactly supported on $B_2$. Moreover, $\eta \equiv 1$ on $B_1$ and satisfies $0 \leq \eta \leq 1$ on $\mathbb{R}^n$. Fixing an arbitrary $R > 0$, we define \begin{align}  \eta_R(x) :=   | x |^2 \hspace{1pt}\eta\left(\frac{x}{R}\right), \hspace{15pt}\text{ for any $x \in \mathbb{R}^n$.}\end{align}Then we multiply $\eta_R$ on both sides of (\ref{eqn_0_0_1}) and integrate over $\mathbb{R}^n$. It turns out \begin{align}\label{approx_2nd_moment}\dfrac{\mathrm{d}}{\mathrm{d} t} \int_{\mathbb{R}^n} u \hspace{1pt}\eta_R = &\hspace{2pt} \int_{\mathbb{R}^n} u \hspace{1pt}\Delta \eta_R   -  \dfrac{1}{2\hspace{0.5pt} n \hspace{0.5pt} \omega_n} \int_{\mathbb{R}^n}\int_{\mathbb{R}^n} \dfrac{\big( \nabla \eta_R(x) - \nabla \eta_R(y)\big) \cdot (x - y)}{| x - y |^n} u(x, t) \hspace{1pt}u(y, t) \hspace{2pt}\mathrm{d} x \hspace{1pt}\mathrm{d} y.
\end{align}Here $\omega_n$ denotes the volume of the unit ball in $\mathbb{R}^n$. Since for any $R > 0$, it satisfies  \begin{align*} \sum_{i, j} \big| \partial_{ij} \eta_R \big| \leq c_\eta \hspace{10pt}\text{ on $\mathbb{R}^n$}, \end{align*} where $c_\eta$ is a positive constant  depending only on $\eta$, therefore (\ref{approx_2nd_moment}) can be reduced to 
\begin{align}\label{approx_2nd_moment_time}\dfrac{\mathrm{d}}{\mathrm{d} t} \int_{\mathbb{R}^n} u \hspace{1pt}\eta_R \hspace{3pt}\lesssim\hspace{3pt}& \int_{\mathbb{R}^n} u  +  \int_{\mathbb{R}^n}\int_{\mathbb{R}^n} \dfrac{u(x, t) \hspace{1pt}u(y, t) }{| x - y |^{n - 2}}  \hspace{3pt}\lesssim\hspace{3pt}M + \big\| u(\cdot, t ) \big\|_{\frac{2n}{n+2}}^2.
\end{align}The second estimate above used Hardy-Littlewood-Sobolev inequality. Note that  we can apply the conservation of the total mass of $u$ and (\ref{unif_bound_u_near t infity}) to bound $\| u \|_{\frac{2n}{n+2}}$. Then we integrate the variable $t$ in (\ref{approx_2nd_moment_time}) and obtain \begin{align*} \int_{B_R \times \{ t \}} u \hspace{1pt} |x|^2 \hspace{2pt}\mathrm{d} x \hspace{2pt}\leq\hspace{2pt} \int_{\mathbb{R}^n \times \{t\} } u \hspace{1pt}\eta_R \hspace{2pt}\leq\hspace{2pt} \int_{\mathbb{R}^n} u_0 \hspace{1pt}| x |^2 \hspace{2pt}\mathrm{d} x + c \hspace{1pt} t\left( M + M^{1 + \frac{2}{n}} [\hspace{1pt}u\hspace{1pt}]_\infty^{1 - \frac{2}{n}}\right).
\end{align*}Here $c > 0$ is an universal constant. This  estimate holds for any $R > 0$. By taking $R \rightarrow \infty$ in the above, it turns out  \begin{align}\label{estimate_weighted_u} \int_{\mathbb{R}^n \times \{ t \}} u \hspace{1pt} |x|^2 \hspace{2pt}\mathrm{d} x  \hspace{2pt}\leq\hspace{2pt} \int_{\mathbb{R}^n} u_0 \hspace{1pt}| x |^2 \hspace{2pt}\mathrm{d} x + c \hspace{1pt} t\left( M + M^{1 + \frac{2}{n}} [\hspace{1pt}u\hspace{1pt}]_\infty^{1 - \frac{2}{n}}\right), \hspace{10pt}\text{for all $t \geq 0$.}
\end{align}This estimate and the change of variables in (\ref{def_U}) infer \begin{align}\label{estimate of weighted of U}\int_{\mathbb{R}^n  } U (\xi, \tau ) \hspace{1.5pt} |\hspace{1pt}\xi \hspace{1pt}|^2 \hspace{2pt}\mathrm{d} \xi \hspace{3pt}\lesssim\hspace{3pt} \int_{\mathbb{R}^n} u_0 \hspace{1pt}| x |^2 \hspace{2pt}\mathrm{d} x +  M + M^{1 + \frac{2}{n}} [\hspace{1pt}u\hspace{1pt}]_\infty^{1 - \frac{2}{n}}, \hspace{15pt}\text{for all $\tau \geq -1$.}
\end{align}

We are ready to estimate the term $\mathrm{III}$. Notice that \begin{align*} \int_{\mathbb{R}^n} U\left(\cdot, s\right) \nabla V \left(\cdot, s \right) = 0, \hspace{15pt} \text{for all $s \in \mathbb{R}$.}
\end{align*}Therefore, \begin{align*} \mathrm{III} &=  \dfrac{1}{2}  \int_0^{\frac{\tau}{2}} \dfrac{ e^{ \frac{3 - n}{2}s}}{ a\left(\tau - s\right)^{1 + \frac{n}{2}}} \hspace{2pt} \mathrm{d} s\int_{\mathbb{R}^n}  U\left(z,s\right) \nabla V\left(z,s\right) \cdot \int_0^{e^{- \frac{\tau - s}{2}}} \dfrac{\mathrm{d}}{\mathrm{d} \nu} \left[ \big( \xi - \nu z \big)e^{ - \frac{\left|\hspace{0.5pt} \xi - \nu z \hspace{0.5pt}\right|^2}{4\hspace{0.5pt}\left(1 - \nu^2\right)}}\right] \mathrm{d} \nu \hspace{2pt}\mathrm{d} z.
\end{align*}Direct computations yield \begin{align}\label{estimate of differentiation} \left|\hspace{1pt} \dfrac{\mathrm{d}}{\mathrm{d} \nu} \left[ \big( \xi - \nu z \big)e^{ - \frac{\left|\hspace{0.5pt} \xi - \nu z \hspace{0.5pt}\right|^2}{4\hspace{0.5pt}\left(1 - \nu^2\right)}}\right]  \hspace{1pt}\right| \hspace{2pt}\lesssim\hspace{2pt} \left[ |\hspace{0.5pt} z \hspace{0.5pt}| + |\hspace{0.5pt} z \hspace{0.5pt}|\hspace{1pt} \dfrac{| \xi - \nu z |^2}{1 - \nu^2} + \dfrac{\nu}{\left(1 -\nu^2 \right)^2 }\hspace{1pt} | \xi - \nu z |^3 \right] e^{ - \frac{\left|\hspace{0.5pt} \xi - \nu z \hspace{0.5pt}\right|^2}{4\hspace{1pt}\left(1 - \nu^2\right)}}.
\end{align}Utilizing (\ref{estimate of weighted of U}) and (\ref{estimate of differentiation}), we obtain \begin{align}\label{L infinty estimate of III} \big\|\hspace{1pt}\mathrm{III}\hspace{1pt}\big\|_\infty\hspace{2pt}\leq\hspace{2pt}C e^{- \frac{\tau}{8}}, \hspace{15pt}\text{for any $\tau \geq 1$.}
\end{align}Here $C$ is a positive constant depending on $M$, $[u]_\infty$ and the second moment of $u_0$. In addition, by Fubini's theorem, the $L^1$-norm of $\mathrm{III}$ can be estimated as follows: \begin{align*} \big\|\hspace{1pt}\mathrm{III}\hspace{1pt}\big\|_1\hspace{2pt}\leq\hspace{2pt}C e^{- \frac{\tau}{8}}, \hspace{15pt}\text{for any $\tau \geq 1$.}
\end{align*}
The proof is then completed by (\ref{decomp of e to tau over 2 U2}), (\ref{estimate of term I}), (\ref{estimate of times e tau 2 of U2}), (\ref{est of quantity in times in L1}), (\ref{L infinty estimate of III}) and the last estimate.
\end{proof}

\subsubsection{\normalsize Optimal approximation of $U_2$} \label{sec_4_2_2}\vspace{0.5pc}

With the assumption that the initial density has a finite second moment, we give an optimal approximation of the component $U_2$ in this section. More precisely, we have

\begin{prop}\label{optimal decay rate U2}
Under the same assumptions as in Lemma \ref{temporal optima of U2}, there exists a constant $C > 0$ depending on $n$, $[u]_\infty$, $M$ and the second moment of $u_0$ so that the followings hold for any $p \in [1, \infty]$ and $\tau \geq 1$. 
\begin{enumerate}
\item[$\mathrm{1}.$] If $n = 3$, then \begin{align*}\left\| \hspace{1pt}U_2 + M^2 e^{- \frac{\tau}{2}} \int_0^\infty e^{\frac{s}{2}} S_3(s) \hspace{1pt}\Big[ \mathrm{div}\Big(\hspace{1pt} \mathcal{G}_3 \nabla \mathcal{V}_3\Big) \hspace{1pt}\Big] \hspace{1pt}\mathrm{d} s + c_1 \hspace{0.5pt}\tau \hspace{0.5pt} e^{-\tau} \left( \frac{1}{2} - \frac{|\hspace{0.5pt}\xi\hspace{0.5pt}|^2}{12} \right) e^{-\frac{|\hspace{0.5pt}\xi\hspace{0.5pt}|^2}{4}} \hspace{1pt}\right\|_p \hspace{2pt}\leq\hspace{2pt} C\hspace{0.5pt} e^{-\tau}.\end{align*}Here $c_1$ is defined in Theorem \ref{thm_2};
\item[$\mathrm{2}.$] If $n = 4$, then  \begin{align*} \left\| \hspace{1pt}U_2 - c_2\hspace{0.5pt} \tau \hspace{0.5pt} e^{-\tau} \left( \frac{1}{2} - \frac{|\hspace{0.5pt}\xi\hspace{0.5pt}|^2}{16} \right) e^{-\frac{|\hspace{0.5pt}\xi\hspace{0.5pt}|^2}{4}} \hspace{1pt}\right\|_p \hspace{2pt}\leq\hspace{2pt} C \hspace{0.5pt}e^{-\tau}.
\end{align*}Here $c_2$ is defined in Theorem \ref{thm_2};
\item[$\mathrm{3}.$] If $n \geq 5$, then \begin{align*}
\left\|\hspace{1pt}U_2\hspace{1pt}\right\|_p \hspace{2pt}\leq\hspace{2pt} C\hspace{0.5pt}e^{-\tau}.
\end{align*}\end{enumerate}
\end{prop}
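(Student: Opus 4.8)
The backbone is the Duhamel representation of $U_2$. Since $U=U_1+U_2$ with $U_1=S_n(\tau)U_0$ solves (\ref{eqn_1}), the component $U_2$ solves $\partial_\tau U_2=L_nU_2-f_n(\tau)\,\mathrm{div}(U\nabla V)$ with $U_2(0)=0$ and $V:=E_n*U$, so that
\begin{align*}
U_2(\tau)=-\int_0^\tau S_n(\tau-s)\Big[\,f_n(s)\,\mathrm{div}\big(U(\cdot,s)\nabla V(\cdot,s)\big)\Big]\,\mathrm{d}s ,
\end{align*}
which one checks coincides with (\ref{def_U_2}). The idea is to insert into this formula the first-order expansions of $U$ and $V$ coming from Proposition~\ref{estimate of U1 up to second order} and Lemma~\ref{temporal optima of U2}, and then read off the $e^{-\tau}$- and $\tau e^{-\tau}$-contributions. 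Two structural facts are used throughout. First, the source $N(s):=\mathrm{div}(U\nabla V)(s)$ satisfies the two null conditions $\int N(s)=0$ and $\int\xi_jN(s)=0$ (the second being the null structure $\int u\,\partial_jv=0$), and so do $G:=\mathrm{div}(\mathcal{G}_n\nabla\mathcal{V}_n)$ and, via the second null structure $\int u_1\partial_jv_2+u_2\partial_jv_1=0$, the first-order correction $H:=\mathrm{div}(\mathcal{G}_n\nabla\mathcal{V}_n^{(1)}+\mathcal{G}_n^{(1)}\nabla\mathcal{V}_n)$ relevant for $n=3$ and the dipole correction $B_0\!\cdot\!\nabla G$ relevant for $n=4$. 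Second, for $h$ with $\|h\|_1+\|h\|_\infty+\|\,|\xi|^2h\,\|_1<\infty$ obeying both null conditions one has $\|S_n(\tau)h\|_p\lesssim e^{-\tau}\big(\|h\|_1+\|h\|_\infty+\|\,|\xi|^2h\,\|_1\big)$ for $\tau\ge1$, and, for the smooth Gaussian-built sources $G,H,B_0\!\cdot\!\nabla G$, more precisely $S_n(\tau)h=e^{-\tau}\,\Pi h+\mathrm{O}(e^{-(1+\mu_n)\tau})$ with $\mu_n>0$, where $\Pi$ is the spectral projection of $L_n$ onto its eigenvalue $-1$. Here $\Pi h$ is the unique element of $\mathrm{span}\{\partial_i\partial_j\mathcal{G}_n\}$ sharing the second moments of $h$, so that when $h$ has isotropic second moments $\Pi h=\tfrac1{2n}\big(\int|\xi|^2h\big)\Delta\mathcal{G}_n$, and $(4\pi)^{n/2}\Delta\mathcal{G}_n=-n\big(\tfrac12-\tfrac{|\xi|^2}{4n}\big)e^{-|\xi|^2/4}$ is exactly the profile appearing in the statement.

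I would then argue case by case. If $n\ge5$, then $f_n(s)=e^{-(\frac n2-1)s}$ decays faster than $e^{-s}$; using the uniform bounds $\|N(s)\|_1+\|N(s)\|_\infty+\|\,|\xi|^2N(s)\,\|_1\lesssim1$ (the moment bound coming from (\ref{estimate of weighted of U})) and $\|S_n(r)N(s)\|_p\lesssim\min\{1,e^{-r}\}(\cdots)$, one gets $\|U_2(\tau)\|_p\lesssim\int_0^\tau e^{-(\frac n2-1)s}\min\{1,e^{-(\tau-s)}\}\,\mathrm{d}s\lesssim e^{-\tau}$, since $\int_0^\tau e^{(2-\frac n2)s}\,\mathrm{d}s$ stays bounded. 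If $n=4$, then $f_4(s)=e^{-s}$; writing $N(s)=M^2G+E(s)$ with $E(s)=-Me^{-s/2}\,B_0\!\cdot\!\nabla G+E_1(s)$, where $E_1$ has size $\mathrm{O}(e^{-(\frac12+\beta')s})$ for some $\beta'>0$ and still obeys both null conditions (the key point where the second null structure is applied to the cross terms $\mathrm{div}(M\mathcal{G}_4\nabla(E_4*\rho)+M\rho\,\nabla\mathcal{V}_4)$, $\rho$ the error in the expansion of $U$), a change of variable gives $-M^2\int_0^\tau e^{-s}S_4(\tau-s)G\,\mathrm{d}s=-M^2e^{-\tau}\int_0^\tau e^{r}S_4(r)G\,\mathrm{d}r=-M^2\tau e^{-\tau}\Pi G+\mathrm{O}(e^{-\tau})$; the odd term $B_0\!\cdot\!\nabla G$ contributes only $\mathrm{O}(\tau e^{-3\tau/2})$ (its eigenvalue $-\tfrac12$ part is annihilated by the null condition), and the $E_1$-term contributes $\mathrm{O}(e^{-\tau})$ because $\int_0^\tau e^{-(\tau-s)}e^{-(\frac32+\beta')s}\,\mathrm{d}s=\mathrm{O}(e^{-\tau})$; matching $-M^2\Pi G$ to the profile above produces the stated $c_2$-term. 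If $n=3$, then $f_3(s)=e^{-s/2}$, and the relevant expansion is $U=M\mathcal{G}_3-e^{-s/2}\mathcal{G}_3^{(1)}+\mathrm{O}(e^{-(\frac12+\beta')s})$, $V=M\mathcal{V}_3-e^{-s/2}\mathcal{V}_3^{(1)}+E_3*\mathrm{O}(\cdots)$, giving $N(s)=M^2G-Me^{-s/2}H+E_1(s)$; then $-M^2\int_0^\tau e^{-s/2}S_3(\tau-s)G\,\mathrm{d}s=-M^2e^{-\tau/2}\int_0^\infty e^{r/2}S_3(r)G\,\mathrm{d}r+\mathrm{O}(e^{-\tau})$ (the tail $\int_\tau^\infty$ costing $\mathrm{O}(e^{-\tau/2})$ since $\|S_3(r)G\|_p\lesssim e^{-r}$), $+M\int_0^\tau e^{-s}S_3(\tau-s)H\,\mathrm{d}s=M\tau e^{-\tau}\Pi H+\mathrm{O}(e^{-\tau})$, and the $E_1$-term is $\mathrm{O}(e^{-\tau})$. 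The first of these is precisely the $\mathcal{W}$-term, equivalently $-M^2\mathcal{W}$ in Theorem~\ref{thm_2} after reverting to $u$-variables via (\ref{def_U}) with $\mathcal{W}$ given by (\ref{representation of function cal W}), and $M\Pi H$, a multiple of $\big(\tfrac12-\tfrac{|\xi|^2}{12}\big)e^{-|\xi|^2/4}$ with coefficient fixed by $\int|\xi|^2H$, is the stated $-c_1$-term. All $L^1$- and $L^\infty$-bounds are carried out simultaneously, and $p\in(1,\infty)$ follows by interpolation as elsewhere in the paper.

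The main obstacle is the sharp analysis of $S_n$ coupled with the bookkeeping of the null conditions. Proving $\|S_n(\tau)h\|_p\lesssim e^{-\tau}(\cdots)$ for every $p\in[1,\infty]$ and isolating the $e^{-\tau}$-leading term $\Pi h$ for $h$ that need not be radial requires a Hermite/biorthogonal decomposition of $L_n$ adapted to $L^p$ rather than to the natural weighted $L^2$; and, more delicately, one must verify that the various remainder sources produced after replacing $U,V$ by their expansions — in particular the cross terms involving the $\mathrm{O}(e^{-(\frac12+\beta')s})$ error $\rho$ — genuinely have vanishing mean and first moment. This is exactly where the second null structure $\int u_1\partial_jv_2+u_2\partial_jv_1=0$ is indispensable, and its validity in turn rests on the spatial decay of $U$ furnished by the finite-second-moment bound (\ref{estimate of weighted of U}) together with the temporal decay $[u]_\infty<\infty$. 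A secondary, purely arithmetic subtlety is pinning down the constants $c_1,c_2$, which amounts to tracking the normalization $\mathcal{G}_n=(4\pi)^{-n/2}e^{-|\xi|^2/4}$ through the projection formula and the change of variables (\ref{def_U}).
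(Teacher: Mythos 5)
Your plan mirrors the paper's argument in its essentials: decompose $U_2$ via the Duhamel representation, substitute the expansion $U(\cdot,s)=M\mathcal{G}_n-e^{-s/2}\mathcal{G}_n^{(1)}+\mathrm{O}(e^{-(\frac12+\beta_n)s})$ into the source $\mathrm{div}(U\nabla V)$, kill the zeroth and first moments by the two null structures, and read off the $\tau e^{-\tau}$ contribution from the eigenvalue $-1$ mode of $L_n$. Where you invoke an abstract spectral projection $\Pi$, the paper implements the same thing concretely via the Taylor expansion of the Mehler kernel of $S_n$ in the variable $e^{-s/2}$ (Lemma~\ref{taylor expansion estimate of gaussian}), whose remainder is controlled directly in $L^1$ and $L^\infty$ by (\ref{remainder_estimate})--(\ref{remainder_estimate_1}); this sidesteps the $L^p$ biorthogonal Hermite theory you flag as ``the main obstacle'' and automatically fixes the constants $c_1$, $c_2$. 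For the Gaussian-built sources $G$, $H$, $B_0\cdot\nabla G$ this expansion is exactly your $\Pi$, and Lemma~\ref{wighted estimate of w star} supplies the requisite moments.

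The genuine gap is in your treatment of the error source $E_1(s)$, the piece absorbing $\mathrm{div}\big(U\nabla V-U^{(1)}\nabla V^{(1)}\big)$, which is the paper's $U_{2,4}$. You assume the full decay $\|S_n(\tau-s)E_1(s)\|_p\lesssim e^{-(\tau-s)}\big(\|E_1\|_1+\|E_1\|_\infty+\|\,|\xi|^2E_1\|_1\big)$. But the $L^1$ remainder bound behind any such second-order expansion (cf.\ (\ref{remainder_estimate_1})) costs $(n+6)$-th moments of the source, and in any case even second moments of $E_1$ are not controlled: the only weighted information on $U$ is the second-moment bound (\ref{estimate of weighted of U}), and no weighted bound on $U-U^{(1)}$ is established anywhere. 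The paper avoids this in (\ref{id_8_04}) by using only the \emph{first} null condition $\int(U\nabla V-U^{(1)}\nabla V^{(1)})=0$: it rewrites the kernel as $\int_0^{e^{-\frac{\tau-s}{2}}}\frac{\mathrm{d}}{\mathrm{d}\nu}\big[(\xi-\nu z)e^{-|\xi-\nu z|^2/(4(1-\nu^2))}\big]\,\mathrm{d}\nu$, which yields one factor $e^{-\frac{\tau-s}{2}}$ at the price of a single moment in $z$ (see (\ref{estimate of differentiation})), the second $e^{-\frac{\tau-s}{2}}$ coming from the Duhamel prefactor. Crucially, that single moment is then paid not by a weighted bound on $U-U^{(1)}$ but by the splitting $\|\nabla V-\nabla V^{(1)}\|_\infty\int(1+|z|)U\,\mathrm{d}z+\|U-U^{(1)}\|_1\sup_z(1+|z|)|\nabla V^{(1)}|$, so only (\ref{estimate of weighted of U}) and the unweighted bootstrap rate (\ref{id_8_01}) are needed. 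To make your scheme airtight you would either have to prove weighted bounds on $U-U^{(1)}$, which are not available, or replace the full spectral decay on $E_1$ by this one-moment argument.
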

Proposition \ref{weight asymptotics of u} follows easily from this proposition and the change of variables in (\ref{def_U}). Before we prove Proposition \ref{optimal decay rate U2}, let us introduce two lemmas.
\begin{lem}\label{taylor expansion estimate of gaussian} For any $\xi, z \in \mathbb{R}^n$ and $s \geq 1$, it satisfies
\begin{align}\label{taylor_expansion}
\dfrac{1}{(1-e^{-s})^{\frac{n}{2}}} e^{-\frac{\left| \xi - e^{-\frac{s}{2}} z \right|^2}{4(1-e^{-s})}} = e^{-\frac{|\xi|^2}{4}} + \frac{\xi \cdot z}{2} e^{-\frac{|\xi|^2}{4}} e^{-\frac{s}{2}} + \left( n -\frac{|\hspace{0.5pt}\xi\hspace{0.5pt}|^2 + |\hspace{0.5pt}z\hspace{0.5pt}|^2}{2}  + \frac{(\xi \cdot z)^2}{4} \right) e^{-\frac{|\xi|^2}{4}} e^{-s} + \mathrm{Rem}.
\end{align}
The remainder term $\mathrm{Rem} = \mathrm{Rem}(n, s, \xi,z)$ is bounded from above by
\begin{align}\label{remainder_estimate}
\left|\hspace{0.5pt} \mathrm{Rem} \hspace{0.5pt}\right| \hspace{2pt}\lesssim\hspace{2pt} \left( 1 + |\hspace{0.5pt}\xi - r_0 \hspace{0.5pt}z\hspace{0.5pt}|^6 + |\hspace{0.5pt}z\hspace{0.5pt}|^6 \right)e^{-\frac{\left|\hspace{0.5pt} \xi - r_0 z \hspace{0.5pt}\right|^2}{4\left(1- r_0^2\right)}} e^{-\frac{3s}{2}}, \hspace{15pt}\text{for all $\xi, z \in \mathbb{R}^n$ and $s \geq 1$.}
\end{align}
Here $r_0 = r_0\left(n, s, \xi,z\right) \in \left(0,e^{- \frac{1}{2}}\right)$. Moreover, the $L^1$-norm of $\mathrm{Rem}$ can be estimated by
\begin{align}\label{remainder_estimate_1}
\int_{\mathbb{R}^n} \left| \mathrm{Rem} \right| \mathrm{d}\xi \hspace{2pt}\lesssim\hspace{2pt} e^{-\frac{3s}{2}} \left( 1 + |z|^{n+6} \right), \hspace{15pt}\text{for all $z \in \mathbb{R}^n$ and $s \geq 1$.}
\end{align}
\end{lem}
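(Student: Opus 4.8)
The statement is a standard-but-careful Taylor expansion of the rescaled heat kernel
\[
\Psi(s,\xi,z) := (1-e^{-s})^{-n/2}\,e^{-\frac{|\xi-e^{-s/2}z|^2}{4(1-e^{-s})}}
\]
in powers of the small parameter $\varepsilon := e^{-s/2}\in(0,e^{-1/2}]$ (valid for $s\ge 1$), with the error controlled by the third-order term. The plan is to expand $\Psi$ as a function of the single scalar variable $\varepsilon$ with $\xi,z$ held fixed, write $\Psi = P_2(\varepsilon) + \mathrm{Rem}$ where $P_2$ is the degree-$2$ Taylor polynomial at $\varepsilon = 0$, and then compute the coefficients of $\varepsilon^0, \varepsilon^1, \varepsilon^2$ and estimate $\mathrm{Rem}$ via the Lagrange form of the remainder.

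First I would fix $\xi,z$ and set $\phi(\varepsilon) := \Psi$ viewed as a function of $\varepsilon\in[0,e^{-1/2}]$ (noting $1-e^{-s} = 1-\varepsilon^2$, so $\phi(\varepsilon) = (1-\varepsilon^2)^{-n/2}e^{-|\xi-\varepsilon z|^2/(4(1-\varepsilon^2))}$ is smooth on a neighborhood of $[0,e^{-1/2}]$, in fact on $(-1,1)$). Then I compute $\phi(0) = e^{-|\xi|^2/4}$, and differentiate: writing $g(\varepsilon) := -\tfrac{n}{2}\log(1-\varepsilon^2) - \tfrac{|\xi-\varepsilon z|^2}{4(1-\varepsilon^2)}$ so $\phi = e^g$, one gets $g'(0) = \tfrac{\xi\cdot z}{2}$, hence $\phi'(0) = \tfrac{\xi\cdot z}{2}e^{-|\xi|^2/4}$; and $g''(0) = n - \tfrac{|z|^2}{2} - \tfrac{|\xi|^2}{2}$, so $\phi''(0) = (g''(0) + g'(0)^2)e^{-|\xi|^2/4} = \bigl(n - \tfrac{|\xi|^2+|z|^2}{2} + \tfrac{(\xi\cdot z)^2}{4}\bigr)e^{-|\xi|^2/4}$. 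Substituting into $\phi(\varepsilon) = \phi(0) + \phi'(0)\varepsilon + \tfrac12\phi''(0)\varepsilon^2 + \tfrac16\phi'''(\theta\varepsilon)\varepsilon^3$ and recalling $\varepsilon = e^{-s/2}$, $\varepsilon^2 = e^{-s}$, $\varepsilon^3 = e^{-3s/2}$ gives exactly (\ref{taylor_expansion}) with $\mathrm{Rem} = \tfrac16\phi'''(\theta\varepsilon)e^{-3s/2}$ for some $\theta\in(0,1)$; I would set $r_0 := \theta\varepsilon \in (0,e^{-1/2})$.

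Next comes the pointwise bound (\ref{remainder_estimate}): I must show $|\phi'''(r_0)|\lesssim (1+|\xi-r_0 z|^6 + |z|^6)\,e^{-|\xi-r_0 z|^2/(4(1-r_0^2))}$ uniformly over $r_0\in(0,e^{-1/2}]$. Since $1-r_0^2 \in [1-e^{-1}, 1)$ is bounded away from $0$ and from above, all factors $(1-r_0^2)^{-k}$ are harmless constants, so this reduces to: $\phi'''(r_0) = Q(\xi,z,r_0)\,\phi(r_0)$ where $Q$ is a polynomial in the entries of $\xi,z$ (with bounded coefficients for $r_0$ in the compact range) of degree at most $3$ in $(\xi-r_0 z)$-type monomials combined with up to degree $3$ in $z$; a crude total-degree count gives the majorant $(1+|\xi-r_0 z|^3 + |z|^3)^2 \lesssim 1 + |\xi - r_0 z|^6 + |z|^6$, and $\phi(r_0) = (1-r_0^2)^{-n/2}e^{-|\xi-r_0 z|^2/(4(1-r_0^2))}$ supplies the Gaussian factor (with the constant $(1-r_0^2)^{-n/2}$ absorbed into $\lesssim$). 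Finally (\ref{remainder_estimate_1}) follows by integrating (\ref{remainder_estimate}) in $\xi$: substitute $w = \xi - r_0 z$, use $|z|^6\int e^{-|w|^2/(4(1-r_0^2))}dw \lesssim |z|^6$ and $\int |w|^6 e^{-|w|^2/(4(1-r_0^2))}dw \lesssim 1$, so the $\xi$-integral is $\lesssim e^{-3s/2}(1+|z|^6)$; to get the stated power $|z|^{n+6}$ one only needs the trivially weaker bound $1 + |z|^6 \le 1 + |z|^{n+6}$ for $|z|\ge 1$ together with $1+|z|^6 \lesssim 1 \lesssim 1+|z|^{n+6}$ for $|z|\le 1$ — so the $n+6$ exponent is not sharp but is all that is claimed and all that is needed downstream.

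The only genuinely delicate point is bookkeeping: correctly computing $g'(0)$ and $g''(0)$ (it is easy to drop the $g'(0)^2$ contribution to $\phi''(0)$, or to mis-sign the $|\xi-\varepsilon z|^2$ term when differentiating), and then verifying that the third derivative $\phi'''$ really does factor as (polynomial of controlled degree)$\times \phi$ with all $r_0$-dependence confined to bounded quantities. Since $\phi = e^g$ with $g$ a ratio of polynomials whose denominator $1-\varepsilon^2$ never vanishes on the relevant range, $\phi^{(k)} = \bigl(\text{poly in }g',g'',g''',\dots\bigr)\phi$ and each $g^{(j)}(\varepsilon)$ is itself rational with denominator a power of $1-\varepsilon^2$; restricting to $\varepsilon = r_0 \le e^{-1/2}$ makes all these denominators bounded, so the polynomial-times-Gaussian structure is automatic and the degree count goes through. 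No compactness or PDE input is needed here — it is pure calculus — so I expect the write-up to be short once the derivative computations are pinned down.
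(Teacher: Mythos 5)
Your route for (\ref{taylor_expansion})--(\ref{remainder_estimate}) is the same as the paper's (Taylor expansion in $r=e^{-s/2}$ with a Lagrange remainder, and the factorization $\phi^{(k)}=(\text{polynomial in }g',g'',\dots)\,\phi$ giving the degree-$6$ majorant), and your derivative computations $g'(0)=\tfrac{\xi\cdot z}{2}$, $g''(0)=n-\tfrac{|\xi|^2+|z|^2}{2}$, $\phi''(0)=(g''(0)+g'(0)^2)e^{-|\xi|^2/4}$ are correct. But the bookkeeping you yourself flag as the delicate point is not closed: the quadratic Taylor term is $\tfrac12\phi''(0)e^{-s}=\tfrac12\bigl(n-\tfrac{|\xi|^2+|z|^2}{2}+\tfrac{(\xi\cdot z)^2}{4}\bigr)e^{-|\xi|^2/4}e^{-s}$, which is one half of the third term displayed in (\ref{taylor_expansion}), so your claim that substituting the coefficients ``gives exactly (\ref{taylor_expansion})'' is not literally true. (Test $\xi=z=0$, $n=1$: $(1-e^{-s})^{-1/2}=1+\tfrac12 e^{-s}+O(e^{-2s})$, consistent with $\tfrac12\phi''(0)$ and not with the printed coefficient; note that $\mathrm{Rem}$ is defined by the identity (\ref{taylor_expansion}), so the discrepancy matters for whether the $e^{-3s/2}$ bound can hold.) You must either carry the $\tfrac12$ explicitly or point out the mismatch with the displayed formula; asserting exact agreement is a gap.

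The more serious gap is in your proof of (\ref{remainder_estimate_1}). With the Lagrange form of the remainder, the intermediate point $r_0=r_0(n,s,\xi,z)$ depends on $\xi$, so $w=\xi-r_0 z$ is not a change of variables in the $\xi$-integral, and the bounds $\int|w|^6 e^{-|w|^2/(4(1-r_0^2))}\,\mathrm{d}w\lesssim 1$, $\int e^{-|w|^2/(4(1-r_0^2))}\,\mathrm{d}w\lesssim 1$ are not justified as written; in particular your sharper claim $\int|\mathrm{Rem}|\,\mathrm{d}\xi\lesssim e^{-3s/2}(1+|z|^6)$ does not follow from the pointwise bound (\ref{remainder_estimate}) by this route. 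The paper handles exactly this point differently: it absorbs the polynomial into a weaker Gaussian, $\bigl(1+|\xi-r_0z|^6+|z|^6\bigr)e^{-|\xi-r_0z|^2/(4(1-r_0^2))}\lesssim(1+|z|^6)\,e^{-|\xi-r_0z|^2/8}$, and then splits the integral over $B_{2|z|}$, where the integrand is at most $1$ and the volume contributes $|z|^n$, and over the complement of $B_{2|z|}$, where $r_0<1$ gives $|\xi-r_0z|\ge|\xi|-|z|\ge|\xi|/2$ and hence the integrable tail $e^{-|\xi|^2/32}$. This is precisely where the exponent $n+6$ in (\ref{remainder_estimate_1}) comes from; it is not a gratuitous weakening of a $(1+|z|^6)$ bound, because the latter is not available from a pointwise estimate whose center $r_0z$ moves with $\xi$. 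If you prefer to keep your cleaner computation, replace the Lagrange form by the integral form of the remainder, $\mathrm{Rem}=\tfrac12 e^{-\frac{3s}{2}}\int_0^1(1-\theta)^2\phi'''(\theta e^{-s/2})\,\mathrm{d}\theta$: for each fixed $\theta$ the parameter $\theta e^{-s/2}$ is independent of $\xi$, so Fubini and your substitution become legitimate and even recover the stronger $(1+|z|^6)$ bound.
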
\begin{proof}[\bf Proof] (\ref{taylor_expansion}) and (\ref{remainder_estimate}) are direct results of the Taylor expansion of the function on the left-hand side of (\ref{taylor_expansion}) with respect to $r = e^{ - \frac{s}{2}}$. In addition, by  (\ref{remainder_estimate}), we have, for any $z \in \mathbb{R}^n$, $s \geq 1$ and $n \geq 3$, that
\begin{align}\label{id_8_13}
\int_{\mathbb{R}^n} \left|\hspace{0.5pt} \mathrm{Rem} \hspace{0.5pt}\right| \mathrm{d}\xi \hspace{2pt}\lesssim\hspace{2pt} e^{-\frac{3s}{2}} \left( 1 + |z|^6 \right) \int_{\mathbb{R}^n} e^{-\frac{\left| \xi - r_0z \right|^2}{8}} \mathrm{d}\xi.
\end{align}Since  $r_0 \in \left(0,e^{- \frac{1}{2}}\right)$, it turns out
\begin{align}\label{last two estimates}
\int_{B_{2|z|}} e^{-\frac{\left| \xi - r_0 z \right|^2}{8}} \hspace{1pt}\mathrm{d}\xi \hspace{2pt}\lesssim\hspace{2pt} |z|^n \quad \mbox{and} \quad \int_{B\mystrut^c_{2|z|}} e^{-\frac{\left| \xi - r_0 z \right|^2}{8}} \mathrm{d}\xi \hspace{2pt}\lesssim\hspace{2pt} \int_{B\mystrut^c_{2|z|}} e^{-\frac{|\xi|^2}{32}} \mathrm{d}\xi \hspace{2pt}\lesssim\hspace{2pt} 1.
\end{align}
  Therefore, applying (\ref{last two estimates}) to (\ref{id_8_13}) yields (\ref{remainder_estimate_1}).
\end{proof}

\begin{lem}\label{wighted estimate of w star}
Let $n = 3$ and define
\begin{align*}
\mathcal{W}_\star :=  \int_0^\infty e^{\frac{s}{2}} S_3(s) \hspace{1pt}\Big[ \mathrm{div}\Big(\hspace{1pt} \mathcal{G}_3 \nabla \mathcal{V}_3\Big) \hspace{1pt}\Big] \hspace{1pt}\mathrm{d} s.
\end{align*}
It holds
\begin{align*}
\int_{\mathbb{R}^3} \left|\hspace{0.5pt}\mathcal{W}_\star\hspace{0.5pt}\right| \hspace{1pt}|\hspace{0.5pt}\xi\hspace{0.5pt}|^k  \hspace{2pt} \mathrm{d}\xi \hspace{2pt}<\hspace{2pt} \infty \quad \mbox{and} \quad \int_{\mathbb{R}^3}  \left|\hspace{0.5pt}\nabla_\xi \mathcal{W}_\star\hspace{0.5pt}\right| \hspace{1pt}|\hspace{0.5pt}\xi\hspace{0.5pt}|^k \hspace{1pt} \mathrm{d}\xi \hspace{2pt}<\hspace{2pt} \infty, \hspace{15pt} \mbox{for all $k \geq 0$.}
\end{align*}
\end{lem}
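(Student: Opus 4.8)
The plan is to reduce the statement to a single weighted bound for the semigroup $S_3$ and then integrate in $s$. Write $g:=\mathrm{div}\big(\mathcal{G}_3\nabla\mathcal{V}_3\big)$, so that $\mathcal{W}_\star=\int_0^\infty e^{s/2}S_3(s)g\,\mathrm{d}s$. Two structural facts drive the argument. First, since $-\Delta\mathcal{V}_3=\mathcal{G}_3$ one has $g=\nabla\mathcal{G}_3\cdot\nabla\mathcal{V}_3-\mathcal{G}_3^{\,2}$, which is smooth with Gaussian decay, so $\int_{\mathbb{R}^3}|\xi|^m\big(|g|+|\nabla g|\big)\,\mathrm{d}\xi<\infty$ for every $m\ge 0$. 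Second, the two null conditions already used in the proof of Lemma \ref{temporal optima of U2}, namely $\int g=\int \xi_j g\,\mathrm{d}\xi=0$, imply after one integration by parts that $\nabla g$ has vanishing moments up to order $2$: $\int\partial_i g=\int\xi_j\partial_i g\,\mathrm{d}\xi=\int\xi_j\xi_k\partial_i g\,\mathrm{d}\xi=0$. Thus $\nabla g$ carries one more vanishing moment than $g$, and this gain is exactly what makes the gradient estimate converge.

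The core step is: for every integer $k\ge 0$,
\begin{align*}
\int_{\mathbb{R}^3}|\xi|^k\,\big|S_3(s)g(\xi)\big|\,\mathrm{d}\xi\;\lesssim_k\;e^{-s},\qquad
\int_{\mathbb{R}^3}|\xi|^k\,\big|S_3(s)(\nabla g)(\xi)\big|\,\mathrm{d}\xi\;\lesssim_k\;e^{-3s/2},\qquad s\ge 1.
\end{align*}
Denote by $K_s(\xi,\eta)=(4\pi a(s))^{-3/2}e^{-|\xi-e^{-s/2}\eta|^2/(4a(s))}$ the kernel of $S_3(s)$ from (\ref{semi group of U variable}), with $a(s)=1-e^{-s}$. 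I would Taylor expand $K_s(\xi,\eta)$ in $\eta$ about $\eta=0$ to order $1$ when testing against $g$, and to order $2$ when testing against $\nabla g$; the polynomial part is annihilated by the vanishing moments, and the integral remainder involves $\partial_\eta^\alpha K_s$ with $|\alpha|=2$, resp. $|\alpha|=3$. Because $K_s$ is a function of $\xi-e^{-s/2}\eta$, each $\eta$-derivative brings a factor $e^{-s/2}$, which produces the prefactors $e^{-s}$, resp. $e^{-3s/2}$; for $s\ge 1$ one has $a(s)\asymp 1$, so the negative powers of $a(s)$ generated by the differentiations are harmless and the remaining $\xi$-integrals are elementary Gaussian moment integrals. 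Splitting the weight by $|\xi|^k\lesssim|\xi-e^{-s/2}\eta|^k+|\eta|^k$ trades them for finitely many of the weighted $L^1$-norms of $g$, resp. $\nabla g$, which are finite by the first paragraph. For $0<s\le 1$ no cancellation is needed: splitting the weight directly in $K_s$ gives $\int_{\mathbb{R}^3}|\xi|^k|S_3(s)g|\,\mathrm{d}\xi\lesssim_k\|g\|_1+\big\||\cdot|^k g\big\|_1$ uniformly, and likewise for $\nabla g$.

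To conclude, I would use $\nabla_\xi K_s(\xi,\eta)=-e^{s/2}\nabla_\eta K_s(\xi,\eta)$ together with an integration by parts in $\eta$ (justified by the rapid decay of $g$) to get $\nabla_\xi\big(S_3(s)g\big)=e^{s/2}S_3(s)(\nabla g)$, which also legitimizes differentiation under the $s$-integral, so that $\nabla_\xi\mathcal{W}_\star=\int_0^\infty e^{s}S_3(s)(\nabla g)\,\mathrm{d}s$. Then Fubini, the triangle inequality, and the two bounds above yield
\begin{align*}
\int_{\mathbb{R}^3}|\xi|^k\,|\mathcal{W}_\star|\,\mathrm{d}\xi\;\lesssim_k\;\int_0^1 e^{s/2}\,\mathrm{d}s+\int_1^\infty e^{-s/2}\,\mathrm{d}s\;<\;\infty,\qquad
\int_{\mathbb{R}^3}|\xi|^k\,|\nabla_\xi\mathcal{W}_\star|\,\mathrm{d}\xi\;\lesssim_k\;\int_0^1 e^{s}\,\mathrm{d}s+\int_1^\infty e^{-s/2}\,\mathrm{d}s\;<\;\infty,
\end{align*}
which is the assertion. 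The genuinely delicate point is the bookkeeping in the core step: one must check that exploiting all the available moment cancellations does not cost more negative powers of $a(s)$ than it gains in decay (hence the split at $s=1$), and — most importantly — that $\nabla g$ really has a vanishing first \emph{and} second moment, since without the resulting $e^{-3s/2}$ the integral defining $\nabla_\xi\mathcal{W}_\star$, weighted against $e^{s}$, would diverge.
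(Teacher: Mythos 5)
Your argument is correct, and for the first half (the weighted bound on $\mathcal{W}_\star$ itself) it is essentially the paper's proof: first-order expansion of the kernel of $S_3(s)$ in powers of $e^{-s/2}$, the two null conditions on $g=\mathrm{div}\big(\mathcal{G}_3\nabla\mathcal{V}_3\big)$ annihilating the polynomial part, and a split of the $s$-integral at $s=1$. For the gradient half the routes genuinely differ. The paper never commutes $\nabla_\xi$ with the semigroup: it works with the distributional gradient, differentiates the kernel in $\xi$, keeps the factor $\big(\xi-e^{-s/2}z\big)$ divided by $1-e^{-s}$, and expands this differentiated kernel only to first order in $e^{-s/2}$ (see (\ref{id_8_20})--(\ref{id_8_21})), so that the same two moments of $g$ suffice and the remainder already carries $e^{-s}$; the price is the mildly singular but integrable factor $a(s)^{-1/2}$ on $0<s\le 1$ in (\ref{id_8_23}). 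You instead use $\nabla_\xi S_3(s)g=e^{s/2}S_3(s)(\nabla g)$, pay the extra $e^{s/2}$, and recover it from the observation --- correct, and checked by one integration by parts against the null conditions of $g$ --- that $\nabla g$ has vanishing moments up to order two, so a second-order expansion (exactly Lemma \ref{taylor expansion estimate of gaussian}, whose weighted-in-$\xi$ remainder bound follows by the same splitting as in (\ref{id_8_17})--(\ref{id_8_18})) yields the $e^{-3s/2}$ decay and the same integrable $e^{-s/2}$ integrand. Your version buys a unified treatment of $g$ and $\nabla g$ through a single semigroup estimate and a cleaner small-$s$ bound with no $a(s)^{-1/2}$; the paper's version avoids the commutation and differentiation-under-the-integral step and needs no moment information about $\nabla g$ at all. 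Both arguments deliver the lemma.
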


\begin{proof}[\bf Proof]
Using (\ref{semi group of U variable}), we have
\begin{align*}
\mathcal{W}_\star\left(\xi\right) = (4\pi)^{-\frac{3}{2}} \int_0^\infty \int_{\mathbb{R}^3}\frac{e^{\frac{s}{2}}}{(1-e^{-s})^{\frac{3}{2}}} e^{-\frac{\left|\xi - e^{-\frac{s}{2}}z\right|^2}{4(1-e^{-s})}}\mathrm{div}\Big(\hspace{1pt} \mathcal{G}_3 \nabla \mathcal{V}_3\Big) \hspace{2pt} \mathrm{d}z \hspace{1pt}\mathrm{d}s.
\end{align*}
By Taylor's theorem, it holds
\begin{align}\label{id_8_14}
\dfrac{1}{(1-e^{-s})^{\frac{3}{2}}} e^{-\frac{\left| \xi - e^{-\frac{s}{2}} z \right|^2}{4(1-e^{-s})}} = e^{-\frac{|\xi|^2}{4}} + \frac{\xi \cdot z}{2} e^{-\frac{|\xi|^2}{4}} e^{-\frac{s}{2}} + \mathrm{Rem}_1, \hspace{15pt} \text{for all $\xi, z \in \mathbb{R}^3$ and $s \geq 1$.}
\end{align}
Here with some $r_1 = r_1\left(s, \xi,z\right) \in \left(0, e^{- \frac{1}{2}}\right)$, the remainder term $\mathrm{Rem}_1 = \mathrm{Rem}_1\left(s, \xi,z \right)$ satisfies
\begin{align}\label{id_8_15}
\left|\hspace{0.5pt} \mathrm{Rem}_1 \hspace{0.5pt} \right| \hspace{2pt}\lesssim\hspace{2pt} \left( 1 + |\xi - r_1 z|^4 + |z|^4 \right)e^{-\frac{\left| \xi - r_1 z \right|^2}{4(1-r_1^2)}} e^{-s}, \hspace{15pt}\text{ for all $\xi, z \in \mathbb{R}^3$ and $s \geq 1$.}
\end{align}
Since
\begin{align}\label{id_8_15_1}
\int_{\mathbb{R}^3} \mathrm{div} \big(\hspace{1pt} \mathcal{G}_3 \nabla \mathcal{V}_3 \hspace{1pt}\big) \mathrm{d}z = 0 \quad \mbox{and} \quad \int_{\mathbb{R}^3} z \hspace{1pt}\mathrm{div} \big(\hspace{1pt} \mathcal{G}_3 \nabla \mathcal{V}_3 \hspace{1pt}\big) \hspace{1pt} \mathrm{d}z = 0,
\end{align}
by using (\ref{id_8_14}), it turns out
\begin{align}\label{id_8_16}
(4\pi)^{\frac{3}{2}} \mathcal{W}_\star = \mathcal{W}_1 + \mathcal{W}_2,
\end{align}
where
\begin{align*}
&\mathcal{W}_1 := \int_1^\infty e^{\frac{s}{2}} \mathrm{d}s \int_{\mathbb{R}^3} \mathrm{Rem}_1 \cdot \mathrm{div}\Big(\hspace{1pt} \mathcal{G}_3 \nabla \mathcal{V}_3\Big) \mathrm{d}z; \hspace{5pt}\mathcal{W}_2 := \int_0^1 \int_{\mathbb{R}^3}\frac{e^{\frac{s}{2}}}{(1-e^{-s})^{\frac{3}{2}}} e^{-\frac{\left|\xi - e^{-\frac{s}{2}}z\right|^2}{4(1-e^{-s})}}\mathrm{div}\Big(\hspace{1pt} \mathcal{G}_3 \nabla \mathcal{V}_3\Big) \hspace{1pt}\mathrm{d}z \hspace{1pt}\mathrm{d}s.
\end{align*}
In view of (\ref{id_8_15}) and $r_1 = r_1(s, \xi,z) \in \left(0,e^{- \frac{1}{2}}\right)$, 
\begin{align}\label{id_8_17}
\int_{\mathbb{R}^3} |\hspace{0.5pt}\xi\hspace{0.5pt}|^k \left|\hspace{0.5pt}\mathcal{W}_1\hspace{0.5pt}\right| \hspace{2pt}\mathrm{d}\xi &\hspace{2pt}\lesssim\hspace{2pt} \int_1^\infty e^{-\frac{s}{2}} \mathrm{d}s \int_{\mathbb{R}^3} \int_{\mathbb{R}^3} |\hspace{0.5pt}\xi\hspace{0.5pt}|^k \left( 1 + |\xi - r_1 z|^4 + |z|^4 \right)e^{-\frac{\left| \xi - r_1 z \right|^2}{4(1-r_1^2)}} \left| \mathrm{div}\Big(\hspace{1pt} \mathcal{G}_3 \nabla \mathcal{V}_3\Big) \right| \mathrm{d}z \hspace{2pt}\mathrm{d}\xi \notag\\[2mm]
&\hspace{2pt}\lesssim\hspace{2pt} \int_1^\infty e^{-\frac{s}{2}} \mathrm{d}s \int_{\mathbb{R}^3} \int_{\mathbb{R}^3} |\hspace{0.5pt}\xi\hspace{0.5pt}|^k \left( 1 + |\hspace{0.5pt}z\hspace{0.5pt}|^4 \right)e^{-\frac{\left| \xi - r_1 z \right|^2}{8}} \left| \mathrm{div}\Big(\hspace{1pt} \mathcal{G}_3 \nabla \mathcal{V}_3\Big) \right| \mathrm{d}z \hspace{2pt} \mathrm{d}\xi.
\end{align}
As in the derivation of (\ref{remainder_estimate_1}), we have
\begin{align*}
\int_{|\hspace{0.5pt}\xi\hspace{0.5pt}| \hspace{0.5pt}\leq\hspace{0.5pt} 2|\hspace{0.5pt}z\hspace{0.5pt}|} |\hspace{0.5pt}\xi\hspace{0.5pt}|^k e^{-\frac{\left| \xi - r_1 z \right|^2}{8}}\mathrm{d}\xi \hspace{2pt}\leq\hspace{2pt} \int_{|\xi| \hspace{0.5pt}\leq\hspace{0.5pt} 2|z|} |\hspace{0.5pt}\xi\hspace{0.5pt}|^k \mathrm{d} \xi \hspace{2pt}\lesssim_k \hspace{2pt} |\hspace{0.5pt}z\hspace{0.5pt}|^{k+3}
\end{align*}
and
\begin{align*}
\int_{|\xi| \geq 2|z|} |\hspace{0.5pt}\xi\hspace{0.5pt}|^k e^{-\frac{\left| \xi - r_1 z \right|^2}{8}}\mathrm{d}\xi \hspace{2pt}\lesssim\hspace{2pt} \int_{|\hspace{0.5pt}\xi\hspace{0.5pt}| \hspace{0.5pt}\geq\hspace{0.5pt} 2|\hspace{0.5pt}z\hspace{0.5pt}|} |\hspace{0.5pt}\xi\hspace{0.5pt}|^k e^{-\frac{|\xi|^2}{32}} \mathrm{d}\xi \hspace{2pt}\lesssim_k\hspace{2pt} 1.
\end{align*}
Therefore, applying the last two estimates to  (\ref{id_8_17}) induces
\begin{align}\label{id_8_18}
\int_{\mathbb{R}^3} |\hspace{0.5pt}\xi\hspace{0.5pt}|^k \left|\hspace{0.5pt}\mathcal{W}_1\hspace{0.5pt}\right| \mathrm{d}\xi &\hspace{2pt}\lesssim_k\hspace{2pt} \int_1^\infty e^{-\frac{s}{2}} \mathrm{d}s \int_{\mathbb{R}^3} \left( 1 + |\hspace{0.5pt}z\hspace{0.5pt}|^{k+7} \right) \left| \mathrm{div}\Big(\hspace{1pt} \mathcal{G}_3 \nabla \mathcal{V}_3\Big) \right| \mathrm{d}z \hspace{2pt}<\hspace{2pt} \infty, \hspace{15pt} \mbox{for all $k \geq 0$.}
\end{align}
Furthermore, it holds
\begin{align}\label{id_8_19}
\int_{\mathbb{R}^3} |\hspace{0.5pt}\xi\hspace{0.5pt}|^k \left|\hspace{0.5pt}\mathcal{W}_2\hspace{0.5pt}\right| \mathrm{d}\xi &\hspace{2pt}\lesssim\hspace{2pt} \int_{\mathbb{R}^3} |\hspace{0.5pt}\xi\hspace{0.5pt}|^k \int_0^1 \int_{\mathbb{R}^3}\frac{e^{\frac{s}{2}}}{(1-e^{-s})^{\frac{3}{2}}} e^{-\frac{\left|\xi - e^{-\frac{s}{2}}z\right|^2}{4(1-e^{-s})}}\left|\mathrm{div}\Big(\hspace{1pt} \mathcal{G}_3 \nabla \mathcal{V}_3\Big)\right| \mathrm{d}z \hspace{2pt}\mathrm{d}s \hspace{2pt}\mathrm{d}\xi \notag\\[2mm]
&\hspace{2pt}\lesssim_k\hspace{2pt} \int_0^1 \frac{\mathrm{d} s}{(1-e^{-s})^{\frac{3}{2}}}   \int_{\mathbb{R}^3} \int_{\mathbb{R}^3} \left( \left| \xi - e^{-\frac{s}{2}} z \right|^k + e^{-\frac{ks}{2}} |\hspace{0.5pt}z\hspace{0.5pt}|^k \right) e^{-\frac{\left|\xi - e^{-\frac{s}{2}}z\right|^2}{4(1-e^{-s})}}\left|\mathrm{div}\Big(\hspace{1pt} \mathcal{G}_3 \nabla \mathcal{V}_3\Big)\right| \mathrm{d}z \hspace{2pt}\mathrm{d}\xi \notag\\[2mm]
&\hspace{2pt}\lesssim_k\hspace{2pt} \int_0^1 \mathrm{d}s \int_{\mathbb{R}^3} \left( \left(1 - e^{-s}\right)^{\frac{k}{2}} + e^{-\frac{ks}{2}} |\hspace{0.5pt}z\hspace{0.5pt}|^k \right) \left|\mathrm{div}\Big(\hspace{1pt} \mathcal{G}_3 \nabla \mathcal{V}_3\Big)\right| \mathrm{d}z < \infty, \hspace{15pt} \mbox{for all $k \geq 0$.}
\end{align}
Combining (\ref{id_8_16}), (\ref{id_8_18}) and (\ref{id_8_19}), we obtain
\begin{align*}
\int_{\mathbb{R}^3} |\hspace{0.5pt}\xi\hspace{0.5pt}|^k \left|\hspace{0.5pt}\mathcal{W}_\star\hspace{0.5pt}\right| \mathrm{d}\xi \hspace{2pt}<\hspace{2pt} \infty, \hspace{15pt} \mbox{for all $k \geq 0$.}
\end{align*}

In the sense of distribution,
\begin{align*}
\nabla_\xi \mathcal{W}_\star\left(\xi\right) = -\frac{1}{2} (4\pi)^{-\frac{3}{2}} \int_0^\infty \int_{\mathbb{R}^3}\frac{e^{\frac{s}{2}}}{(1-e^{-s})^{\frac{5}{2}}} e^{-\frac{\left|\xi - e^{-\frac{s}{2}}z\right|^2}{4(1-e^{-s})}}\mathrm{div}\Big(\hspace{1pt} \mathcal{G}_3 \nabla \mathcal{V}_3\Big) \big( \xi - e^{-\frac{s}{2}} z \big) \hspace{2pt} \mathrm{d}z \hspace{1pt}\mathrm{d}s.
\end{align*}
Still by Taylor's theorem, we have
\begin{align}\label{id_8_20}
\dfrac{1}{(1-e^{-s})^{\frac{5}{2}}} e^{-\frac{\left| \xi - e^{-\frac{s}{2}} z \right|^2}{4(1-e^{-s})}} \left( \xi - e^{-\frac{s}{2}} z \right) = e^{-\frac{|\xi|^2}{4}} \xi + e^{-\frac{|\xi|^2}{4}} \left( \frac{\xi \cdot z}{2} \xi - z \right) e^{-\frac{s}{2}} + \mathrm{Rem}_2,
\end{align}
for all $\xi, z \in \mathbb{R}^n$ and $s \geq 1$. Here for some $r_2 = r_2(s, \xi,z) \in \left(0,e^{- \frac{1}{2}}\right)$,  $\mathrm{Rem}_2 = \mathrm{Rem}_2(s, \xi,z )$ satisfies
\begin{align}\label{id_8_21}
\left|\hspace{0.5pt} \mathrm{Rem}_2 \hspace{0.5pt}\right| \hspace{2pt}\lesssim\hspace{2pt} \left( 1 + |\xi - r_2 z|^5 + |\hspace{0.5pt}z\hspace{0.5pt}|^5 \right)e^{-\frac{\left| \xi - r_2 z \right|^2}{4\left(1-r_2^2\right)}} e^{-s}, \hspace{15pt}\text{for all $\xi, z \in \mathbb{R}^3$ and $s \geq 1$.}
\end{align}
In light of (\ref{id_8_20}) and (\ref{id_8_15_1}), we have
\begin{align}\label{id_W3_W4}
-2(4\pi)^{\frac{3}{2}} \hspace{1pt} \nabla_\xi \mathcal{W}_\star\left(\xi\right) = \mathcal{W}_3 + \mathcal{W}_4,
\end{align}
where
\begin{align*} &\mathcal{W}_3 := \int_1^\infty e^{\frac{s}{2}} \mathrm{d}s \int_{\mathbb{R}^3} \mathrm{div}\Big(\hspace{1pt} \mathcal{G}_3 \nabla \mathcal{V}_3\Big) \mathrm{Rem}_2 \, \mathrm{d}z;  \\[2mm]
&\mathcal{W}_4 := \int_0^1 \int_{\mathbb{R}^3}\frac{e^{\frac{s}{2}}}{(1-e^{-s})^{\frac{5}{2}}} e^{-\frac{\left|\xi - e^{-\frac{s}{2}}z\right|^2}{4(1-e^{-s})}}\mathrm{div}\Big(\hspace{1pt} \mathcal{G}_3 \nabla \mathcal{V}_3\Big) \left( \xi - e^{-\frac{s}{2}} z \right) \mathrm{d}z \hspace{2pt}\mathrm{d}s.
\end{align*}
By (\ref{id_8_21}) and the fact that $r_2 = r_2(s, \xi,z) \in \left(0,e^{- \frac{1}{2}}\right)$,
\begin{align*}
\int_{\mathbb{R}^3} |\hspace{0.5pt}\xi\hspace{0.5pt}|^k \left|\hspace{0.5pt}\mathcal{W}_3\hspace{0.5pt}\right| \mathrm{d}\xi &\hspace{2pt}\lesssim\hspace{2pt} \int_1^\infty e^{-\frac{s}{2}} \mathrm{d}s \int_{\mathbb{R}^3} \int_{\mathbb{R}^3} |\hspace{0.5pt}\xi\hspace{0.5pt}|^k \left( 1 + |\xi - r_2 z|^5 + |\hspace{0.5pt}z\hspace{0.5pt}|^5 \right)e^{-\frac{\left| \xi - r_2 z \right|^2}{4\left(1-r_2^2\right)}} \left| \mathrm{div}\Big(\hspace{1pt} \mathcal{G}_3 \nabla \mathcal{V}_3\Big) \right| \mathrm{d}z \hspace{2pt}\mathrm{d}\xi \notag\\[2mm]
&\hspace{2pt}\lesssim\hspace{2pt} \int_1^\infty e^{-\frac{s}{2}} \mathrm{d}s \int_{\mathbb{R}^3} \int_{\mathbb{R}^3} |\hspace{0.5pt}\xi\hspace{0.5pt}|^k \left( 1 + |\hspace{0.5pt}z\hspace{0.5pt}|^5 \right)e^{-\frac{\left| \xi - r_2 z \right|^2}{8}} \left| \mathrm{div}\Big(\hspace{1pt} \mathcal{G}_3 \nabla \mathcal{V}_3\Big) \right| \mathrm{d}z \hspace{2pt}\mathrm{d}\xi.
\end{align*}
As in (\ref{id_8_18}), the last inequality infers
\begin{align}\label{id_8_22}
\int_{\mathbb{R}^3} |\hspace{0.5pt}\xi\hspace{0.5pt}|^k \left|\hspace{0.5pt}\mathcal{W}_3\hspace{0.5pt}\right| \mathrm{d}\xi \hspace{2pt}\lesssim_k\hspace{2pt} \int_1^\infty e^{-\frac{s}{2}} \mathrm{d}s \int_{\mathbb{R}^3} \left( 1 + |\hspace{0.5pt}z\hspace{0.5pt}|^{k+8} \right) \left| \mathrm{div}\Big(\hspace{1pt} \mathcal{G}_3 \nabla \mathcal{V}_3\Big) \right| \mathrm{d}z \hspace{2pt}<\hspace{2pt} \infty, \hspace{15pt} \mbox{for all $k \geq 0$.}
\end{align}
For $\mathcal{W}_4$, it can be estimated by
\begin{align}\label{id_8_23}
&\int_{\mathbb{R}^3} |\hspace{0.5pt}\xi\hspace{0.5pt}|^k \left|\hspace{0.5pt}\mathcal{W}_4\hspace{0.5pt}\right| \mathrm{d}\xi  \hspace{2pt}\lesssim \hspace{2pt}\int_{\mathbb{R}^3} |\hspace{0.5pt}\xi\hspace{0.5pt}|^k \int_0^1 \int_{\mathbb{R}^3}\frac{e^{\frac{s}{2}}}{(1-e^{-s})^{\frac{5}{2}}} \left|\xi - e^{-\frac{s}{2}}z\right| e^{-\frac{\left|\xi - e^{-\frac{s}{2}}z\right|^2}{4(1-e^{-s})}}\left|\mathrm{div}\Big(\hspace{1pt} \mathcal{G}_3 \nabla \mathcal{V}_3\Big)\right| \mathrm{d}z \hspace{2pt}\mathrm{d}s \hspace{2pt}\mathrm{d}\xi\hspace{2pt}\notag\\[2mm]
&\qquad\lesssim_k \hspace{2pt}\int_0^1 \frac{\mathrm{d}s}{(1-e^{-s})^{\frac{5}{2}}}  \int_{\mathbb{R}^3} \int_{\mathbb{R}^3} \left( \left| \xi - e^{-\frac{s}{2}} z \right|^k + e^{-\frac{ks}{2}} |\hspace{0.5pt}z\hspace{0.5pt}|^k \right) \left|\xi - e^{-\frac{s}{2}}z\right| e^{-\frac{\left|\xi - e^{-\frac{s}{2}}z\right|^2}{4(1-e^{-s})}}\left|\mathrm{div}\Big(\hspace{1pt} \mathcal{G}_3 \nabla \mathcal{V}_3\Big)\right| \mathrm{d}z \hspace{2pt} \mathrm{d}\xi \notag\\[2mm]
&\qquad\lesssim_k\hspace{2pt} \int_0^1 \frac{\mathrm{d} s}{(1-e^{-s})^{\frac{1}{2}}} \int_{\mathbb{R}^3} \left( \left(1 - e^{-s}\right)^{\frac{k}{2}} + e^{-\frac{ks}{2}} |\hspace{0.5pt}z\hspace{0.5pt}|^k \right) \left|\mathrm{div}\Big(\hspace{1pt} \mathcal{G}_3 \nabla \mathcal{V}_3\Big)\right| \mathrm{d}z < \infty, \hspace{15pt} \mbox{for all $k \geq 0$.}
\end{align}
Therefore, by (\ref{id_W3_W4})--(\ref{id_8_23}), we obtain
\begin{align*}
\int_{\mathbb{R}^3} |\hspace{0.5pt}\xi\hspace{0.5pt}|^k \left|\hspace{0.5pt}\nabla_\xi \mathcal{W}_\star\right| \mathrm{d}\xi \hspace{2pt}<\hspace{2pt} \infty, \hspace{15pt} \mbox{for all $k \geq 0$.}
\end{align*}The proof is completed.
\end{proof}Now we complete this section by proving Proposition \ref{optimal decay rate U2}.
\begin{proof}[\bf Proof of Proposition \ref{optimal decay rate U2}]
Throughout the proof,  $C$ is a positive constant depending on $n$, $[u]_\infty$, $M$ and the second moment of $u_0$. Using the representation of $U_2$ in (\ref{def_U_2}), we decompose $U_2$ into
\begin{align}\label{decomp of e to tau U2}
U_2 =\left(4 \pi\right)^{-\frac{n}{2}} \hspace{1pt}\big[\hspace{1pt}U_{2, 1} + U_{2, 2} + U_{2, 3} + U_{2, 4} \hspace{1pt}\big],
\end{align}
where
\begin{align*}
&U_{2, 1} := \int_0^{\tau} \dfrac{f_n\left(s \right)}{ a\left(\tau - s\right)^{ \frac{n}{2}}} \hspace{2pt} \mathrm{d} s\int_{\mathbb{R}^n} U^{(0)}(z) \nabla V^{(0)}(z) \cdot \nabla_z e^{ -\frac{\big| \xi - e^{-\frac{\tau - s}{2}} z \hspace{1pt} \big|^2}{4a\left(\tau-s\right)}}\mathrm{d} z;\\[2mm]
&U_{2, 2} := - \int_0^{\tau} \dfrac{f_n\left(s \right) e^{-\frac{s}{2}}}{ a\left(\tau - s\right)^{ \frac{n}{2}}} \hspace{2pt} \mathrm{d} s\int_{\mathbb{R}^n} \Big[ U^{(0)}(z) \nabla \mathcal{V}_n^{(1)}(z) + \mathcal{G}_n^{(1)}(z) \nabla V^{(0)}(z) \Big] \cdot \nabla_z e^{ -\frac{\big| \xi - e^{-\frac{\tau - s}{2}} z \hspace{1pt} \big|^2}{4a\left(\tau-s\right)}}\mathrm{d} z;\\[2mm]
&U_{2, 3} := \int_0^{\tau} \dfrac{f_n\left(s \right) e^{-s}}{ a\left(\tau - s\right)^{ \frac{n}{2}}} \hspace{2pt} \mathrm{d} s\int_{\mathbb{R}^n} \mathcal{G}_n^{(1)}(z) \nabla \mathcal{V}_n^{(1)}(z) \cdot \nabla_z e^{ -\frac{\big| \xi - e^{-\frac{\tau - s}{2}} z \hspace{1pt} \big|^2}{4a\left(\tau-s\right)}}\mathrm{d} z;\\[2mm]
&U_{2, 4} := \int_0^{\tau} \dfrac{f_n\left(s\right)}{ a\left(\tau - s\right)^{\frac{n}{2}}} \hspace{2pt} \mathrm{d} s\int_{\mathbb{R}^n} \Big[ U\left(z,s\right) \nabla V\left(z,s\right) - U^{(1)}(z,s) \nabla V^{(1)}(z,s)\Big] \cdot \nabla_ze^{ -\frac{\big| \xi - e^{-\frac{\tau - s}{2}} z \hspace{1pt} \big|^2}{4a\left(\tau-s\right)}} \mathrm{d} z.
\end{align*}
In the above definitions,  $U^{(0)}$ and $V^{(0)}$ are the same as in the proof of Lemma \ref{temporal optima of U2}. The functions $\mathcal{G}_n^{(1)}$ and $\mathcal{V}_n^{(1)}$ are given by 
\begin{align*}
&\mathcal{G}_n^{(1)} := B_0\cdot \nabla \mathcal{G}_n + M^2 \delta_{n,3} \int_0^\infty e^{\frac{s}{2}} S_n(s) \hspace{1pt}\Big[ \mathrm{div}\Big(\hspace{1pt} \mathcal{G}_n \nabla \mathcal{V}_n\Big) \hspace{1pt}\Big] \hspace{1pt}\mathrm{d} s  \qquad \text{and}\qquad \mathcal{V}_n^{(1)} = E_n * \mathcal{G}_n^{(1)}.\end{align*}
Moreover, we use $U^{(1)} $ and $ V^{(1)}$ to denote $U^{(0)} - e^{-\frac{\tau}{2}} \hspace{1pt} \mathcal{G}_n^{(1)}$ and $ V^{(0)} - e^{-\frac{\tau}{2}} \hspace{1pt}\mathcal{V}_n^{(1)}$, respectively. \vspace{0.5pc}

The remainder of the proof are devoted to estimating the terms $U_{2, 1}$, $U_{2, 2}$, $U_{2, 3}$ and $U_{2, 4}$.\vspace{0.5pc}\\
\textbf{Estimate of $U_{2, 1}$.} Integrating by parts with respect to $z$ and changing variable from $s$ to $\tau - s$, we have
\begin{align*}
U_{2, 1} = -(4\pi)^{\frac{n}{2}} \int_0^\tau e^{\frac{2-n}{2}(\tau-s)} S_n(s) \Big[ \mathrm{div} \left( U^{(0)} \nabla V^{(0)} \right)\Big] \hspace{1pt} \mathrm{d}s.
\end{align*}
Using (\ref{estimate of div of gn nabla vn}) then yields
\begin{align}\label{id_8_07}
\big\|\hspace{1pt}U_{2, 1}\hspace{1pt}\big\|_p & \hspace{2pt}\lesssim\hspace{2pt} e^{-\tau} \int_0^\tau e^{\frac{4-n}{2}(\tau-s)} e^s \left\| S_n\left(s\right) \Big[ \mathrm{div} \left( U^{(0)} \nabla V^{(0)} \right)\Big] \hspace{1pt}\right\|_p \mathrm{d} s  \hspace{2pt}\lesssim\hspace{2pt} M^2 e^{-\tau}, \hspace{15pt} \text{if  $n \geq 5$.}
\end{align}
The above estimate holds for all $p \in [1,\infty]$. Similarly, it can also be shown for all $p \in [1, \infty]$ that
\begin{align}\label{id_8_08}
\left\|\hspace{1pt} U_{2,1} + \left(4\pi\right)^{\frac{3}{2}} e^{-\frac{\tau}{2}} \int_0^{\infty} e^{\frac{s}{2}} \hspace{1pt} S_3\left(s\right) \Big[ \mathrm{div}\left(U^{(0)} \nabla V^{(0)}\right)\Big] \mathrm{d} s \hspace{1pt}\right\|_p \hspace{2pt} \lesssim\hspace{2pt} M^2 e^{-\tau}, \hspace{15pt}\text{if $n = 3$.}
\end{align}As for the case when $n = 4$, by (\ref{taylor_expansion}) in Lemma \ref{taylor expansion estimate of gaussian} and the null conditions
\begin{align*}
\int_{\mathbb{R}^4} \mathrm{div} \left( U^{(0)} \nabla V^{(0)} \right) \mathrm{d}z =  \int_{\mathbb{R}^4} z_j \hspace{1pt} \mathrm{div} \left( U^{(0)} \nabla V^{(0)} \right)  \mathrm{d}z = 0,\hspace{15pt}\text{ $j = 1, ..., 4$,}
\end{align*}
it turns out, for all $\tau \geq 1$, that
\begin{align*}
U_{2, 1} &= (\tau-1) \hspace{1pt}e^{-\tau} \hspace{1pt}e^{-\frac{|\hspace{0.5pt}\xi\hspace{0.5pt}|^2}{4}}\int_{\mathbb{R}^4} \left( \frac{|z|^2}{2} - \frac{(\xi \cdot z)^2}{4} \right) \mathrm{div} \left( U^{(0)} \nabla V^{(0)} \right)(z)\hspace{1.5pt} \mathrm{d}z   \\[2mm]
& - \int_1^\tau e^{s - \tau} \mathrm{d}s \int_{\mathbb{R}^4} \mathrm{Rem} \cdot \mathrm{div} \left( U^{(0)} \nabla V^{(0)} \right)(z)\hspace{1.5pt}\mathrm{d}z -(4\pi)^{2} \int_0^1 e^{s - \tau} S_4(s) \Big[ \mathrm{div} \left( U^{(0)} \nabla V^{(0)} \right)\Big] \mathrm{d}s.
\end{align*}
Due to the radial symmetry of $\mathrm{div} \left( U^{(0)} \nabla V^{(0)} \right)$, we have
\begin{align*}
\int_{\mathbb{R}^4} \left( \frac{|z|^2}{2} - \frac{(\xi \cdot z)^2}{4} \right)   \mathrm{div} \left( U^{(0)} \nabla V^{(0)} \right)(z)\hspace{1.5pt} \mathrm{d}z = \left( \frac{1}{2} - \frac{|\hspace{0.5pt}\xi\hspace{0.5pt}|^2}{16} \right)  \int_{\mathbb{R}^4} |z|^2 \mathrm{div} \left( U^{(0)} \nabla V^{(0)} \right)(z)\hspace{1.5pt} \mathrm{d}z.
\end{align*}
Therefore, in view of (\ref{remainder_estimate}), (\ref{remainder_estimate_1}) and the last two equalities, it follows
\begin{align}\label{id_8_09}
\left\|\hspace{1pt} U_{2,1} - \tau e^{-\tau} \left( \frac{1}{2} - \frac{|\hspace{0.5pt}\xi\hspace{0.5pt}|^2}{16} \right) e^{-\frac{|\xi|^2}{4}} \int_{\mathbb{R}^4} |z|^2 \mathrm{div} \left( U^{(0)} \nabla V^{(0)} \right) \mathrm{d}z \hspace{1pt}\right\|_p \hspace{2pt}\lesssim\hspace{2pt} M^2 e^{-\tau}, \hspace{15pt}\text{if $n = 4$ and $\tau \geq 1$.}
\end{align}
Here $p$ is also an arbitrary number in $[1, \infty]$.\vspace{0.5pc}\\
\noindent\textbf{Estimate of $U_{2,2}$.}
Still integrating by parts with respect to $z$ and changing the variable from $s$ to $\tau - s$, we obtain
\begin{align*}
U_{2, 2} = (4\pi)^{\frac{n}{2}} \int_0^\tau e^{\frac{1-n}{2}(\tau-s)} S_n(s) \Big[ \mathrm{div} \left( U^{(0)} \nabla \mathcal{V}_n^{(1)} + \mathcal{G}_n^{(1)} \nabla V^{(0)} \right)\Big] \mathrm{d}s.
\end{align*}
On the other hand, by $- \Delta V^{(0)} = U^{(0)}$ and $ - \Delta \mathcal{V}_n^{(1)} = \mathcal{G}_n^{(1)}$, we can also integrate by parts and get
\begin{align*}
  \int_{\mathbb{R}^n} z_j\hspace{0.5pt}\mathrm{div} \left( U^{(0)} \nabla \mathcal{V}_n^{(1)} + \mathcal{G}_n^{(1)} \nabla V^{(0)} \right)  \mathrm{d}z = 0, \hspace{15pt}j = 1, ..., n.
\end{align*}
In light of (\ref{taylor_expansion}) and the last two equalities, it follows
\begin{align*}
U_{2,2} &= - e^{\frac{1-n}{2}\tau} e^{-\frac{|\xi|^2}{4}} \int_1^\tau e^{\frac{n-3}{2}s} \mathrm{d}s \int_{\mathbb{R}^n} \left(\frac{|z|^2}{2} - \frac{(\xi \cdot z)^2}{4} \right) \mathrm{div} \left( U^{(0)} \nabla \mathcal{V}_n^{(1)} + \mathcal{G}_n^{(1)} \nabla V^{(0)} \right) \mathrm{d}z \\[2mm]
& + \int_1^\tau e^{\frac{1-n}{2}(\tau-s)} \mathrm{d}s \int_{\mathbb{R}^n} \mathrm{Rem} \cdot \mathrm{div} \left( U^{(0)} \nabla \mathcal{V}_n^{(1)} + \mathcal{G}_n^{(1)} \nabla V^{(0)} \right) \mathrm{d}z \\[2mm]
& + (4\pi)^{\frac{n}{2}} \int_0^1 e^{\frac{1-n}{2}(\tau-s)} S_n(s) \Big[ \mathrm{div} \left( U^{(0)} \nabla \mathcal{V}_n^{(1)} + \mathcal{G}_n^{(1)} \nabla V^{(0)} \right)\Big] \mathrm{d}s, \hspace{20pt}\text{for all $\tau \geq 1$.}
\end{align*}
Note that (\ref{remainder_estimate}) infers 
\begin{align*}
\left\| \int_1^\tau e^{\frac{1-n}{2}(\tau-s)} \mathrm{d}s \int_{\mathbb{R}^n} \mathrm{Rem} \cdot \mathrm{div} \left( U^{(0)} \nabla \mathcal{V}_n^{(1)} + \mathcal{G}_n^{(1)} \nabla V^{(0)} \right) \mathrm{d}z \right\|_p \lesssim_{M, B_0} e^{-\tau}, \hspace{5pt}\text{for all $p \in [1, \infty]$ and $n \geq 3$.}
\end{align*}
Therefore, we can obtain
\begin{align}\label{id_8_10}
\big\|\hspace{1pt}U_{2, 2}\hspace{1pt}\big\|_p \hspace{2pt}\lesssim_{M, B_0}\hspace{1pt} e^{-\tau}, \hspace{15pt}\text{for all $\tau \geq 1$, $p \in [1, \infty]$ and $n \geq 4$.}
\end{align}
When $n = 3$, as in the estimate of $U_{2,1}$ for the $n=4$ case, it turns out with an aid of Lemma \ref{wighted estimate of w star} that
\begin{align}\label{id_8_11}
\left\| U_{2, 2} + \tau e^{-\tau} \left( \frac{1}{2} - \frac{|\xi|^2}{12} \right) e^{-\frac{|\xi|^2}{4}} \int_{\mathbb{R}^3} |z|^2 \mathrm{div} \left( U^{(0)} \nabla \mathcal{V}_3^{(1)} + \mathcal{G}_3^{(1)} \nabla V^{(0)} \right) \mathrm{d}z \right\|_p \lesssim_{M, B_0} e^{-\tau},
\end{align}
for all $\tau \geq 1$ and $p \in [1, \infty]$.\vspace{0.5pc}\\
\noindent\textbf{Estimate of $U_{2, 3}$.} Similarly, we have
\begin{align*}
U_{2, 3} = -(4\pi)^{\frac{n}{2}} \int_0^\tau e^{-\frac{n}{2}(\tau-s)} S_n(s) \Big[ \mathrm{div} \left( \mathcal{G}_n^{(1)} \nabla \mathcal{V}_n^{(1)} \right)\Big] \mathrm{d}s.
\end{align*}Therefore, 
\begin{align}\label{id_8_12}
\big\|\hspace{1pt}U_{2, 3}\hspace{1pt}\big\|_p \hspace{2pt}\lesssim_{M, B_0}\hspace{2pt} e^{-\tau},\hspace{15pt} \text{for all $\tau \geq 1$, $p \in [1, \infty]$ and $n \geq 3$.}
\end{align}
\noindent\textbf{Estimate of $U_{2,4}$.} 
Combining Proposition \ref{estimate of U1 up to second order} and Lemma \ref{temporal optima of U2}, we can find a constant $\beta_n > 0$ depending only on $n$ so that 
\begin{align}\label{id_8_01}
\left\| \hspace{1pt} U  - U^{(1)} \hspace{1pt}\right\|_p \hspace{2pt}\leq\hspace{2pt} C \hspace{0.5pt}e^{-\left(\frac{1}{2} + \beta_n\right) \tau}, \hspace{15pt}\text{for any $\tau \geq 0$ and $p \in [1, \infty]$.}
\end{align}
By (\ref{id_8_01}) and (\ref{estimate_grad_v}), there is a  constant $\alpha_n \in (0, 1)$ depending on $n$ so that 
\begin{align}\label{id_8_03}
&\left| \int_{\alpha_n \tau}^{\tau} \dfrac{e^{\frac{2-n}{2}s} e^{-\frac{\tau-s}{2}}}{ a\left(\tau - s\right)^{1+\frac{n}{2}}} \hspace{2pt} \mathrm{d} s\int_{\mathbb{R}^n} \Big[ U \nabla V  - U^{(1)}  \nabla V^{(1)} \Big]\left(z,s\right) \cdot \left(\xi -  e^{- \frac{\tau - s }{2}} z  \right) e^{ -\frac{\big| \xi - e^{-\frac{\tau - s}{2}} z \hspace{1pt} \big|^2}{4a\left(\tau-s\right)}} \mathrm{d} z \right| \notag\\[2mm]
&\hspace{60pt}\lesssim\hspace{2pt} e^{-\frac{\tau}{2}}\int_{\alpha_n \tau}^{\tau} \dfrac{e^{\frac{3-n}{2}s} e^{\frac{n}{2}(\tau-s)}}{ a\left(\tau-s\right)^{\frac{1}{2}}} \left\| U\left(z,s\right) \nabla V\left(z,s\right) - U^{(1)}(z,s) \nabla V^{(1)}(z,s)\right\|_\infty \mathrm{d} s \notag\\[2mm]
&\hspace{60pt}\leq\hspace{2pt} C\hspace{1pt} e^{-\tau} \int_{\alpha_n \tau}^\tau \frac{e^{\left(\frac{3-n}{2} -\beta_n\right)s} e^{\frac{n+1}{2}(\tau-s)}}{a(\tau-s)^{\frac{1}{2}}} \mathrm{d}s \notag\\[2mm]
&\hspace{60pt}\leq\hspace{2pt} C\hspace{1pt} e^{-\tau} e^{\left(\frac{3-n}{2} -\beta_n\right)\alpha_n\tau} e^{\frac{n+1}{2}(1-\alpha_n)\tau} \int_0^\tau \frac{\mathrm{d}s}{\sqrt{a(s)}} \hspace{3pt}\leq\hspace{2pt} C \hspace{1pt}e^{-\tau}, \hspace{15pt} \text{for all $\tau \geq 0$.}
\end{align}Since
\begin{align*}
\int_{\mathbb{R}^n} U^{(1)}\left(\cdot, s\right) \nabla V^{(1)}\left(\cdot, s\right) = 0 \quad \mbox{and} \quad \int_{\mathbb{R}^n} U\left(\cdot, s\right) \nabla V \left(\cdot, s \right) = 0, \hspace{15pt} \text{for all $s \geq 0$},
\end{align*}
it turns out
\begin{align*}
&\int_0^{\alpha_n \tau} \dfrac{e^{\frac{2-n}{2}s} e^{-\frac{\tau-s}{2}}}{ a\left(\tau - s\right)^{1+\frac{n}{2}}} \hspace{2pt} \mathrm{d} s\int_{\mathbb{R}^n} \Big[ U  \nabla V  - U^{(1)}  \nabla V^{(1)} \Big]\left(z,s\right) \cdot \left(\xi -  e^{- \frac{\tau - s }{2}} z  \right) e^{ -\frac{\big| \xi - e^{-\frac{\tau - s}{2}} z \hspace{1pt} \big|^2}{4a\left(\tau-s\right)}} \mathrm{d} z \\[2mm]
&\hspace{15pt}= \int_0^{\alpha_n \tau} \dfrac{e^{\frac{3-n}{2}s} e^{-\frac{\tau}{2}}}{ a\left(\tau - s\right)^{1+\frac{n}{2}}} \hspace{2pt} \mathrm{d} s\int_{\mathbb{R}^n} \Big[ U \nabla V - U^{(1)} \nabla V^{(1)}\Big]\left(z,s\right)  \cdot \int_0^{e^{- \frac{\tau - s}{2}}} \dfrac{\mathrm{d}}{\mathrm{d} \nu} \left[ \big( \xi - \nu z \big)e^{ - \frac{\left|\hspace{0.5pt} \xi - \nu z \hspace{0.5pt}\right|^2}{4\hspace{0.5pt}\left(1 - \nu^2\right)}}\right] \mathrm{d} \nu \hspace{2pt}\mathrm{d} z.
\end{align*}
Thus, utilizing (\ref{id_8_01}), (\ref{estimate_grad_v}), (\ref{estimate of weighted of U}) and (\ref{estimate of differentiation}), we obtain
\begin{align}\label{id_8_04}
&\left| \int_0^{\alpha_n \tau} \dfrac{e^{\frac{3-n}{2}s} e^{-\frac{\tau}{2}}}{ a\left(\tau - s\right)^{1+\frac{n}{2}}} \hspace{2pt} \mathrm{d} s\int_{\mathbb{R}^n} \Big[ U \nabla V  - U^{(1)}  \nabla V^{(1)} \Big]\left(z,s\right) \cdot \left(\xi -  e^{- \frac{\tau - s }{2}} z  \right) e^{ -\frac{\big| \xi - e^{-\frac{\tau - s}{2}} z \hspace{1pt} \big|^2}{4a\left(\tau-s\right)}} \mathrm{d} z  \right| \notag\\[2mm]
&\hspace{30pt}\lesssim\hspace{2pt}\dfrac{e^{- \tau}}{ a\big(\left(1 - \alpha_n\right)\tau\big)^{ 1 + \frac{n}{2}}} \int_0^{\alpha_n \tau} e^{\frac{4 - n}{2} s } \left[ \hspace{2pt}\left\| \nabla V - \nabla V^{(1)} \right\|_\infty(s)\hspace{1pt} \int_{\mathbb{R}^n} \big(1 + |\hspace{0.5pt}z\hspace{0.5pt}|\hspace{1pt}\big) \hspace{1pt}U(z, s) \hspace{2pt} \mathrm{d} z \hspace{1pt}\right] \hspace{2pt}\mathrm{d} s \notag\\[2mm]
&\hspace{30pt}+ \dfrac{e^{- \tau}}{ a\big(\left(1 - \alpha_n\right)\tau\big)^{ 1 + \frac{n}{2}}} \int_0^{\alpha_n \tau} e^{\frac{4 - n}{2} s }  \left[\hspace{2pt} \left\|   U - U^{(1)} \right\|_1(s)\hspace{1pt} \sup_{z \hspace{0.5pt}\in\hspace{0.5pt} \mathbb{R}^n}  \big(1 + |\hspace{0.5pt}z\hspace{0.5pt}|\hspace{1pt}\big) \hspace{1pt}\left| \nabla V^{(1)} \right|(z, s)  \hspace{1pt}\right] \hspace{2pt}\mathrm{d} s \notag\\[2mm]
&\hspace{30pt}\leq \frac{C \hspace{1pt} e^{-\tau}}{a\big( (1-\alpha_n)\tau \big)^{1+\frac{n}{2}}} \int_0^{\alpha_n \tau} e^{\left( \frac{3-n}{2} - \beta_n \right) s} \mathrm{d} s \hspace{2pt}\leq\hspace{2pt} C\hspace{1pt} e^{-\tau}, \hspace{15pt}\text{for all $\tau \geq 1$.}
\end{align}
The estimates (\ref{id_8_03}) and (\ref{id_8_04}) then yield
\begin{align}\label{id_8_05}
\big\|\hspace{1pt}U_{2, 4}\hspace{1pt}\big\|_\infty\hspace{2pt}\leq\hspace{2pt} C\hspace{1pt} e^{-\tau}, \hspace{15pt} \text{for all $\tau \geq 1$ and $n \geq 3$.}
\end{align}
We can also integrate the left-hand side of (\ref{id_8_03}) and (\ref{id_8_04}) with respect to the $\xi$ variable. By Fubini's theorem and similar estimates as in  (\ref{id_8_03})--(\ref{id_8_04}), it turns out
\begin{align}\label{id_8_06}
\big\|\hspace{1pt}U_{2, 4}\hspace{1pt}\big\|_1\hspace{2pt}\leq\hspace{2pt} C\hspace{1pt} e^{-\tau}, \hspace{15pt} \text{for all $\tau \geq 1$ and $n \geq 3$.}
\end{align}

In light of (\ref{id_8_07})--(\ref{id_8_12}) and (\ref{id_8_05})--(\ref{id_8_06}), the proof is completed.
\end{proof}


\section{\large Long-time stability: $2$D case}\label{sec_5}\vspace{0.5pc}

This section is devoted to studying the long-time behavior of global mild solutions to (\ref{eqn_0_0_2}). Our main result is
\begin{prop}\label{weight asymptotics of u L1}
Suppose that $u$ is a non-negative classical global mild solution to (\ref{eqn_0_0_2}). If there is a constant $K > 0$ so that \begin{align}\label{decay of u at t infinty 2d}[\hspace{0.5pt}u\hspace{0.5pt}]_\infty := \left(1 + t \right) \big\| \hspace{1pt}u\left(\cdot, t\right)\big\|_{\infty}\hspace{2pt}\leq \hspace{2pt}K, \hspace{10pt}\text{for all $t > 0$,}\end{align}then the total mass $M$ of $u$ satisfies $M \in [ \hspace{1pt}0,8\pi)$. Moreover, it holds 
\begin{align}\label{general p_L1_n=2}
\lim_{t \rightarrow \infty} t^{ 1 - \frac{1}{p}} \hspace{2pt}\left\| u(\cdot, t) - \dfrac{1}{t} G_M\left(\dfrac{\cdot}{\sqrt{t}}\right) \right\|_p = 0, \qquad \text{for any $1 \leq p \leq \infty$.}
\end{align}
The function $G_M$ in (\ref{general p_L1_n=2}) is a strictly positive function with the total mass $M$. Among all non-negative functions with the total mass $M$, it is the unique stationary solution to the following equation with  finite free energy:
\begin{align}\label{eqn_1_2d}
\partial_{\hspace{0.5pt}\tau}U  + \nabla_\xi \cdot \big( U \nabla_\xi E_2 * U \big)  = L_2 \hspace{0.5pt}U  \hspace{15pt}\text{on $\mathbb{R}_+^{2+1}$, where $L_2 := \Delta_\xi + \dfrac{1}{2} \hspace{1pt}\xi \cdot \nabla_\xi + 1$.}
\end{align}The free energy associated with (\ref{eqn_1_2d}) is given in (\ref{free energy in 2D}).
\end{prop}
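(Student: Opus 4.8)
The plan is to convert the $t^{-1}$ sup-norm decay (already available from Proposition~\ref{prop_3} together with the $1\Rightarrow2$ step, i.e. hypothesis (\ref{decay of u at t infinty 2d}) with $n=2$) into statements about the rescaled flow $U(\xi,\tau)=e^{\tau}u(e^{\tau/2}\xi,e^{\tau})$. By (\ref{def_U}) the bound (\ref{decay of u at t infinty 2d}) gives $\|U(\cdot,\tau)\|_\infty\le[u]_\infty$ for all $\tau\in\mathbb{R}$, while mass is conserved, $\|U(\cdot,\tau)\|_1=M$. First I would run the pushing-and-stretching argument sketched in Sect.~1.3.3: defining $\tau_0(R)$ as in (\ref{def of tau 0 in introduction}), decompose $U=U_1+U_2$ with $U_1=S_2(\tau)U_0$, note $U_1$ is uniformly $L^1$-concentrated, and estimate the tail $\int_{B_R^c}|U_2|$ via the integral (\ref{est need to do in sect 5.1}), splitting the $\eta$-domain into $\omega_1^\epsilon=B_{R+R_{1,\epsilon}}$ and its complement, and splitting the $s$-integral at $\tau_0-\log(1+t_\epsilon)$. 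The potential estimate (Lemma~\ref{lem_2_1}) controls $\|\nabla E_2*U\|_\infty$ on $\omega_2^\epsilon$, and on $\omega_1^\epsilon$ one exploits the Gaussian factor $|\xi-e^{-(\tau_0-s)/2}\eta|e^{-|\cdots|^2/4a}$ together with the separation inequality (\ref{preferable estimate of xi}) valid once $s\le\tau_0-\log(1+t_\epsilon)$. A bootstrap then forces $\tau_0(R)=\infty$ for $R$ large, $\epsilon$ small, yielding the uniform tail bound $\sup_{\tau\ge0}\int_{B_R^c}U(\cdot,\tau)<\epsilon$, hence strong $L^1$-compactness of $\{U(\cdot,\tau)\}$ as $\tau\to\infty$ (combined with local uniform convergence from parabolic regularity and Arzel\`a--Ascoli, exactly as in Lemma~\ref{lem_3_1}).

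Next I would identify the $\omega$-limit set. Fix a sequence $\tau_k\to\infty$; by $L^1$-compactness and parabolic estimates, along a subsequence $U(\cdot,\tau_k+\tau)\to U_\star(\tau)$ locally uniformly and in $L^1$, where $U_\star$ solves (\ref{eqn_1_2d}) globally on $\mathbb{R}$ with $\|U_\star(\cdot,\tau)\|_1=M$. Setting $u_\star(x,t):=t^{-1}U_\star(x/\sqrt t,\log t)$, one checks $u_\star$ is a smooth solution of (\ref{eqn_0_0_2}) on $\mathbb{R}_+^{2+1}$ and that $u_\star(\cdot,t)\rightharpoonup M\delta_0$ weakly-$*$ as $t\to0$ (the mass is captured in the limit because of the uniform $L^1$-tightness transferred to $U_\star$). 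To invoke the Bedrossian--Masmoudi uniqueness \cite{BM14} I must also produce finite second moments: using the test-function computation (\ref{approx_2nd_moment})--(\ref{approx_2nd_moment_time}) adapted to $n=2$ (where the HLS exponent and the $t^{-1}$ decay still give an integrable right-hand side), I would show $\int u_\star|x|^2\le c(M+M^2)t$ for all $t>0$, as in (\ref{seco mom es of u star}). If $M<8\pi$, uniqueness then forces $u_\star(x,t)=t^{-1}G_M(x/\sqrt t)$, i.e. $U_\star\equiv G_M$, so the $\omega$-limit set is the single point $G_M$ and $U(\cdot,\tau)\to G_M$ in $L^1$.

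The step I expect to be the main obstacle is ruling out $M=8\pi$ under the hypothesis (\ref{decay of u at t infinty 2d}). The strategy is the virial identity: for the limit solution $u_\star$ with finite second moment, $\frac{d}{dt}\int u_\star|x|^2=4M(1-M/8\pi)$, which vanishes when $M=8\pi$; hence $\int u_\star|x|^2$ is constant in $t$, but (\ref{seco mom es of u star}) forces this constant to be $0$ on letting $t\to0$, so $u_\star\equiv0$, contradicting $\|u_\star(\cdot,t)\|_1=M=8\pi>0$. The delicate points here are: justifying the virial identity for the possibly merely-mild limit solution (which requires the smoothness of $u_\star$ on $\mathbb{R}_+^{2+1}$ and enough decay to validate the integrations by parts in (\ref{approx_2nd_moment})), and establishing the linear-in-$t$ second-moment bound (\ref{seco mom es of u star}) with a constant independent of the approximation, so that the $t\to0$ limit indeed yields $0$. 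Once $M\in[0,8\pi)$ is secured, upgrading the $L^1$-convergence to $L^p$ for all $p\in[1,\infty]$ proceeds as in Sect.~4.1: $L^\infty$-convergence $U(\cdot,\tau)\to G_M$ follows from the local uniform convergence together with the uniform smallness of $U_1$ and $U_2$ outside large balls (the pointwise bounds on $U_1$ via (\ref{def W1 W2})--(\ref{est of W1}) and on $U_2$ via the representation (\ref{def_U_2}) with the $t^{-1}$ decay), and then interpolation gives every intermediate $p$; translating back through (\ref{def_U}) produces (\ref{general p_L1_n=2}). The strict positivity of $G_M$ and its characterization as the unique finite-free-energy stationary solution are quoted from the existing literature on the self-similar profile, so no new work is needed there.
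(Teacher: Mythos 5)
Your proposal follows the paper's own proof essentially step by step: the pushing-and-stretching tail estimate giving strong $L^1$-compactness of $\{U(\cdot,\tau)\}$, passage to a limiting flow $U_\star$ rewritten as a mild solution $u_\star$ of (\ref{eqn_0_0_2}) with initial trace $M\delta_0$, the cutoff second-moment bound and virial argument excluding $M=8\pi$, the Bedrossian--Masmoudi uniqueness identifying $U_\star=G_M$, and the $L^\infty$ bound plus interpolation to get all $p\in[1,\infty]$. Two small touch-ups: the bound $M\le 8\pi$ must first be imported from the global-existence threshold of \cite{W18} (as the paper does) so that excluding $M=8\pi$ really gives $M<8\pi$, and in 2D the second-moment estimate needs no Hardy-Littlewood-Sobolev step since the symmetrized kernel $\big(\nabla\eta^*_R(x)-\nabla\eta^*_R(y)\big)\cdot(x-y)\,|x-y|^{-2}$ is bounded outright by $\|D^2\eta^*_R\|_\infty$.
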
 
\noindent The proof of $2 \Rightarrow 3$ in Theorem \ref{thm_1_2D} follows easily from Proposition \ref{weight asymptotics of u L1}.

\subsection{\normalsize Strong $L^1$-compactness of  $\big\{ U\left(\cdot, \tau\right)\big\}$ as $\tau \rightarrow \infty$}\label{sec_5_1}\vspace{0.5pc}

Similarly as in the higher dimensional case, to prove Proposition \ref{weight asymptotics of u L1}, we need to study the strong $L^1$-compactness of the flow $\big\{ U\left(\cdot, \tau\right)\big\}$ as $\tau \rightarrow \infty$. 

\begin{lem}\label{lem_5_2}
Let $n = 2$ and suppose that $U$ is given in (\ref{def_U}) with $u \geq 0$ a classical global mild solution to (\ref{eqn_0_0_2}). If $U$ satisfies (\ref{unif_bound_U})--(\ref{mass_conservation}), then  for any $\epsilon > 0$, there is positive radius  $r_{\epsilon}$ depending on $\epsilon$, $[\hspace{0.5pt}u\hspace{0.5pt}]_\infty$ and $M$ such that 
\begin{align}\label{unif small outside a unif ball}
\sup_{\tau \hspace{1pt}> \hspace{1pt}0} \int_{B\mystrut^c_{r_{\epsilon}}} U\left( \xi , \tau \right) \mathrm{d}\xi \hspace{2pt}\leq\hspace{2pt} \epsilon.
\end{align}
As a consequence, for any sequence $\big\{\tau_k\big\}$ which diverges to $\infty$ as $k \rightarrow \infty$, there is a subsequence, still denoted by $\big\{\tau_k\big\}$, such that  $ \big\{ U\left(\cdot, \tau_k\right)\big\}$ converges strongly in $L^1$ as $k \rightarrow \infty$.
\end{lem}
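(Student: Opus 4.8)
The plan is to establish the uniform-in-$\tau$ spatial localization estimate \eqref{unif small outside a unif ball} by introducing the threshold time
\begin{align*}
\tau_0(R) := \sup\left\{ T > 0 : \int_{B\mystrut^c_R} U(\xi,\tau)\,\mathrm{d}\xi < \epsilon \ \text{ for all } \tau \in [\,0,T\,]\right\},
\end{align*}
and showing that $\tau_0(R) = \infty$ once $R$ is chosen large enough (depending on $\epsilon$, $[\hspace{0.5pt}u\hspace{0.5pt}]_\infty$ and $M$). First I would decompose $U = U_1 + U_2$ with $U_1 = S_2(\tau)U_0$ exactly as in \eqref{id_7_01}--\eqref{def_U_2}. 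The linear part $U_1$ is handled as in the proof of Lemma \ref{lem_3_1}: splitting $U_1 = (4\pi)^{-1}(W_1 + W_2)$ as in \eqref{def W1 W2}, the Gaussian tail bound \eqref{est of W1} and Fubini give that $\sup_{\tau>0}\int_{B\mystrut^c_{R}} U_1 < \epsilon/2$ for $R$ large, uniformly in $\tau$ — this step does not use $n\geq 3$ and carries over verbatim to $n=2$. So the whole difficulty is to control $\int_{B\mystrut^c_R} |U_2(\cdot,\tau)|$ uniformly on $[\,0,\tau_0\,]$; recall that in $2$D the crude bound \eqref{conv of U2 in L1} only gives uniform boundedness, not decay, so a finer argument is needed.

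The core of the proof is the pushing-and-stretching argument sketched in Section 1.3.3. Writing out $\int_{B\mystrut^c_R}|U_2(\xi,\tau_0)|\,\mathrm{d}\xi$ via \eqref{def_U_2} with $n=2$, one is led to estimate
\begin{align*}
\int_{B\mystrut^c_R} \mathrm{d}\xi \int_0^{\tau_0}\mathrm{d}s \int_{\mathbb{R}^2} \Omega(\xi,\eta,s)\,\mathrm{d}\eta,
\end{align*}
with $\Omega$ as in the introduction. I would fix radii $R_{1,\epsilon}$ and a small time $t_\epsilon$, and split the $\eta$-integral over $\omega_1^\epsilon = B_{R+R_{1,\epsilon}}$ and $\omega_2^\epsilon = (\omega_1^\epsilon)^c$, and split the $s$-integral at $\tau_0 - \log(1+t_\epsilon)$. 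On $\omega_2^\epsilon$ the definition of $\tau_0$ forces $\int_{\omega_2^\epsilon} U(\cdot,s) \lesssim \epsilon$, and by the potential estimate (a localized version of Lemma \ref{lem_2_1}) one picks $R_{1,\epsilon}$ so that $\|\nabla E_2 * U\|_{\infty;\omega_2^\epsilon} \lesssim \epsilon^{1/6}$; the Gaussian factor in $\Omega$ integrates to $\mathrm{O}(1)$ after the $\xi$-integration, so that contribution is $\mathrm{O}(\epsilon^{7/6})$, a higher-order term. On $\omega_1^\epsilon$ for $s \le \tau_0 - \log(1+t_\epsilon)$, the key geometric observation is that $e^{-(\tau_0-s)/2}\eta$ stays inside a ball of radius $(R+R_{1,\epsilon})/\sqrt{1+t_\epsilon}$, so for $\xi \in B\mystrut^c_R$ with $R$ large relative to this, $|\xi - e^{-(\tau_0-s)/2}\eta| \ge \sigma_\epsilon|\xi|$ for some $\sigma_\epsilon > 0$ (this is \eqref{preferable estimate of xi} with the pushed-down upper limit); then $|\xi| e^{-\sigma_\epsilon^2|\xi|^2/(\cdots)}$ integrated over $B\mystrut^c_R$ is $< \epsilon^2$ for $R$ large, and $U, \nabla E_2 * U$ contribute only bounded factors via \eqref{estimate_grad_v} and the uniform bound $\|U\|_1 = M$. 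Finally, the short "new" slice $s \in [\tau_0 - \log(1+t_\epsilon), \tau_0]$ over $\omega_1^\epsilon$ is made a higher-order term of $\epsilon$ by choosing $t_\epsilon$ small, using the uniform $L^\infty$ bound \eqref{unif_bound_U} and $\|\nabla E_2 * U\|_\infty \lesssim M^{1/2}[\hspace{0.5pt}u\hspace{0.5pt}]_\infty^{1/2}$ together with the $L^1$-bound of the Gaussian kernel on that short time interval. Adding up, $\int_{B\mystrut^c_R}|U_2(\cdot,\tau_0)| < \epsilon/2$ strictly, contradicting the supremum definition of $\tau_0$ unless $\tau_0 = \infty$; hence \eqref{unif small outside a unif ball} holds.

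For the consequence, given $\{\tau_k\} \to \infty$, I would argue exactly as in Lemma \ref{lem_3_1}: from \eqref{unif_bound_U}, $\|U\|_1 = M$, and $\|\nabla E_2 * U\|_\infty \lesssim_{M,[\hspace{0.5pt}u\hspace{0.5pt}]_\infty} 1$, parabolic regularity (Theorems 7.22/7.36 of \cite{L05}), interpolation, Morrey's inequality and Arzelà–Ascoli give a subsequence along which $U(\cdot,\tau_k) \to \overline{U}$ in $L^\infty_{\mathrm{loc}}(\mathbb{R}^2)$ with $\overline{U} \in L^1_+ \cap L^\infty$ (Fatou), and $\int_{B\mystrut^c_{R_*}}\overline{U} < \epsilon$ for $R_* \ge r_\epsilon$ large; then $\|U(\cdot,\tau_k) - \overline{U}\|_{1,B_{R_*}} \to 0$ by dominated convergence, and combining with \eqref{unif small outside a unif ball} (applied with $\epsilon/3$ etc.) yields $\|U(\cdot,\tau_k) - \overline{U}\|_1 \to 0$. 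I expect the main obstacle to be the careful bookkeeping in the $\omega_1^\epsilon$ estimate over the pushed-down time interval — in particular verifying \eqref{preferable estimate of xi} quantitatively and tracking how $R$, $R_{1,\epsilon}$ and $t_\epsilon$ must be chosen in the right order so that each of the four pieces is genuinely a higher-order term of $\epsilon$ while no choice circularly depends on $\tau_0$; the bootstrap closing the argument must be set up so that $R$ and the thresholds are fixed \emph{before} invoking $\tau_0 < \infty$.
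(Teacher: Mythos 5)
Your proposal is correct and follows essentially the same route as the paper's proof: the same decomposition $U = U_1 + U_2$, the same threshold time $\tau_0(R)$, the choice of $t_\epsilon$ and $R_{1,\epsilon}$ depending only on $\epsilon$, $M$, $[\hspace{0.5pt}u\hspace{0.5pt}]_\infty$, the potential estimate giving $\epsilon^{1/6}$-smallness of $\nabla E_2 * U$ outside $B_{R+R_{1,\epsilon}}$, the geometric lower bound $|\xi - e^{-(\tau_0-s)/2}\eta| \geq \sigma_\epsilon|\xi|$ on the pushed-down time range, and the same parabolic-regularity/Arzelà--Ascoli argument for the $L^1$-compactness. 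The only (harmless) difference is bookkeeping: the paper absorbs the entire short time slice $s \in [\tau_0 - \log(1+t_\epsilon), \tau_0]$ over all $\eta \in \mathbb{R}^2$ into one term $I_1$, whereas you treat the far region $\omega_2^\epsilon$ uniformly in time and only the $\omega_1^\epsilon$ part of the short slice separately.
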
 

\begin{proof}[\bf Proof] We still use the decomposition $U = U_1 + U_2$ given in (\ref{id_7_01})--(\ref{def_U_2}). Here $n = 2$ and $f_2 \equiv 1$. Moreover, the function $U_1$ satisfies (\ref{id_7_04_1}) as well in the current proof. Letting $R > R_\epsilon$, where $R_\epsilon$ is given in (\ref{id_7_04_1}), we can define $\tau_0(R)$ as follows:
\begin{align}\label{def of tau 0}
\tau_0\left(R\right) := \sup \left\{ T > 0 : \int_{B\mystrut^c_R} U\left(\xi,\tau\right) \mathrm{d}\xi  \hspace{2pt}<\hspace{2pt} \epsilon, \mbox{ for all } \tau \in [\hspace{1pt}0,T\hspace{1pt}] \right\}.
\end{align}The $\tau_0(R)$ must be positive or $\infty$. In the next, we prove $\tau_0\left(R\right) = \infty$, provided that $\epsilon$ is small and $R$ is large. The smallness of $\epsilon$ depends on $M$ and $[\hspace{0.5pt}u\hspace{0.5pt}]_\infty$, while the largeness of $R$ depends on $\epsilon$, $M$ and $[\hspace{0.5pt}u\hspace{0.5pt}]_\infty$. \vspace{0.3pc}

To be simple, we use $\tau_0$ to denote $\tau_0(R)$. Suppose that $\tau_0 < \infty$. Then at $\tau_0$, it holds
\begin{align}\label{id_7_08}
\int_{B\mystrut^c_R} \big|\hspace{1pt}U_2\left(\xi,\tau_0\right) \big|\hspace{2pt} \mathrm{d}\xi \hspace{2pt}\lesssim\hspace{2pt}I, \end{align}where the term $I$ is defined as follows:
\begin{align*}I := \int_{B\mystrut^c_R} \mathrm{d} \xi \int_0^{\tau_0} \dfrac{e^{-\frac{\tau_0 - s}{2}}}{ a\left(\tau_0 - s\right)^2}  \hspace{1pt}\mathrm{d} s \int_{\mathbb{R}^2} U\left(\eta,s\right) \Big|\nabla_\eta E_2 * U \hspace{1pt}\Big| \left(\eta,s\right)  \left|\hspace{1pt}\xi -  e^{- \frac{\tau_0 - s }{2}} \eta \hspace{1pt}\right| e^{ -\frac{\big| \xi - e^{-\frac{\tau_0 - s}{2}} \eta \hspace{1pt} \big|^2}{4a\left(\tau_0-s\right)}} \mathrm{d}\eta. \end{align*}The main part of the proof is to estimate $I$. \vspace{0.3pc}\\ [2mm]
\textbf{Step 1.} For any $\epsilon > 0$,  there exists a unique $t_\epsilon > 0$ so that  \begin{align}\label{def of t epsilon}M^{\frac{3}{2}} [ \hspace{1pt}u \hspace{1pt}]_\infty^{\frac{1}{2}} \int_0^{\log \left(1 + t_\epsilon\right)} \dfrac{\mathrm{d}s}{\sqrt{a(s)}} \hspace{2pt}=\hspace{2pt} \epsilon^2.
\end{align}The function $a(s)$ is defined in (\ref{semi group of U variable}). By Fubini's theorem, (\ref{estimate_grad_v}) and (\ref{unif_bound_U})--(\ref{mass_conservation}), it turns out
\begin{align}\label{less than log 1 + tepsilon}
I &\hspace{2pt}\leq\hspace{2pt} \int_{\mathbb{R}^2} \mathrm{d} \xi \int_{0}^{\tau_0} \mathrm{d} s \int_{\mathbb{R}^2} \dfrac{U\left(\eta,s\right) }{ a\left(\tau_0 - s\right)^2} \Big|\nabla_\eta E_2 * U \hspace{1pt}\Big|\left(\eta,s\right)  \left|\hspace{1pt}\xi -  e^{-\frac{\tau_0 - s }{2}} \eta \hspace{1pt}\right| e^{ -\frac{\big| \xi - e^{-\frac{\tau_0 - s}{2}} \eta \hspace{1pt} \big|^2}{4a\left(\tau_0-s\right)}} \mathrm{d} \eta  \notag\\[2mm]
&\hspace{2pt}\lesssim \int_{0}^{\tau_0} \mathrm{d} s \int_{\mathbb{R}^2} \dfrac{U\left(\eta,s\right)}{ a\left(\tau_0 - s\right)^{\frac{1}{2}}}  \Big|\nabla_\eta E_2 * U \hspace{1pt}\Big|\left(\eta,s\right)  \mathrm{d} \eta \notag\\[2mm]
&\hspace{2pt}\lesssim\hspace{2pt} M^{\frac{3}{2}} [ \hspace{1pt}u \hspace{1pt}]_\infty^{\frac{1}{2}} \int_{0}^{\tau_0} \dfrac{\mathrm{d}s}{ a\left(\tau_0 - s\right)^{\frac{1}{2}}}   \hspace{2pt}\leq\hspace{2pt} M^{\frac{3}{2}} [ \hspace{1pt}u \hspace{1pt}]_\infty^{\frac{1}{2}} \int_0^{\log \left(1 + t_\epsilon\right)} \dfrac{\mathrm{d}s}{\sqrt{a(s)}} \hspace{2pt}=\hspace{2pt} \epsilon^2, \hspace{20pt}\text{if $\tau_0 \leq \log\left(1 + t_\epsilon\right)$.}
\end{align} 

Now we assume $\tau_0 > \log \left( 1 + t_\epsilon\right)$ and split the integral $I$ into \begin{align}\label{decomp of I}I = I_1 + I_2.
\end{align}Here, $I_1$ represents the following integral:
\begin{align*}
 \int_{B\mystrut^c_R} \mathrm{d} \xi \int_{\tau_0 - \log\left(1+t_\epsilon\right)}^{\tau_0} \dfrac{e^{-\frac{\tau_0 - s}{2}}}{ a\left(\tau_0 - s\right)^2}  \hspace{1pt}\mathrm{d} s \int_{\mathbb{R}^2} U\left(\eta,s\right) \Big|\nabla_\eta E_2 * U\hspace{1pt}\Big| \left(\eta,s\right)  \left|\hspace{1pt}\xi -  e^{- \frac{\tau_0 - s }{2}} \eta \hspace{1pt}\right| e^{ -\frac{\big| \xi - e^{-\frac{\tau_0 - s}{2}} \eta \hspace{1pt} \big|^2}{4a\left(\tau_0-s\right)}} \mathrm{d} \eta.
\end{align*}
By using the same derivations for (\ref{less than log 1 + tepsilon}), $I_1$ can be estimated by \begin{align}\label{est of I1}
I_1 &\hspace{2pt}\leq\hspace{2pt} \int_{\mathbb{R}^2} \mathrm{d} \xi \int_{\tau_0 - \log\left(1+t_\epsilon\right)}^{\tau_0} \mathrm{d} s\int_{\mathbb{R}^2} \dfrac{U\left(\eta,s\right) }{ a\left(\tau_0 - s\right)^2} \hspace{1pt}\Big|\nabla_\eta E_2 * U \hspace{1pt}\Big|\left(\eta,s\right)  \left|\hspace{1pt}\xi -  e^{-\frac{\tau_0 - s }{2}} \eta \hspace{1pt}\right| e^{ -\frac{\big| \xi - e^{-\frac{\tau_0 - s}{2}} \eta \hspace{1pt} \big|^2}{4a\left(\tau_0-s\right)}} \mathrm{d} \eta \notag\\[2mm]
&\hspace{2pt}\lesssim\hspace{2pt} \int_{\tau_0 - \log\left(1+t_\epsilon\right)}^{\tau_0} \mathrm{d} s \int_{\mathbb{R}^2} \dfrac{U\left(\eta,s\right)}{ a\left(\tau_0 - s\right)^{\frac{1}{2}}} \hspace{1pt}  \Big|\nabla_\eta E_2 * U \hspace{1pt}\Big| \left(\eta,s\right)  \hspace{2pt}\mathrm{d} \eta \notag\\[2mm]
&\hspace{2pt}\lesssim\hspace{2pt} M^{\frac{3}{2}} [ \hspace{1pt}u \hspace{1pt}]_\infty^{\frac{1}{2}} \int_{\tau_0 - \log\left(1+t_\epsilon\right)}^{\tau_0} \dfrac{\mathrm{d} s}{ a\left(\tau_0 - s\right)^{\frac{1}{2}}} \hspace{2pt}=\hspace{2pt} M^{\frac{3}{2}} [ \hspace{1pt}u \hspace{1pt}]_\infty^{\frac{1}{2}} \int_0^{\log \left(1 + t_\epsilon\right)}\dfrac{\mathrm{d}s}{\sqrt{a(s)}} \hspace{2pt}=\hspace{2pt} \epsilon^2.
\end{align}The last equality in (\ref{est of I1}) holds by (\ref{def of t epsilon}). \vspace{0.3pc}\\[2mm]
\textbf{Step 2.} As for the estimate of $I_2$ in (\ref{decomp of I}), we introduce another positive radius \begin{align}\label{def of R 1 epsilon}
R_{1, \epsilon} := M \hspace{1pt}[ \hspace{1pt}u \hspace{1pt}]_\infty^{-\frac{1}{2}} \hspace{1pt}\epsilon^{-\frac{1}{2}}.
\end{align}With $R_{1, \epsilon}$, we split the integral \begin{align*}
I_2 = \int_{B\mystrut^c_R} \mathrm{d} \xi \int^{\tau_0 - \log\left(1+t_\epsilon\right)}_0 \dfrac{e^{-\frac{\tau_0 - s}{2}}}{ a\left(\tau_0 - s\right)^2}  \hspace{1pt}\mathrm{d} s \int_{\mathbb{R}^2} U\left(\eta,s\right) \Big|\nabla_\eta E_2 * U\hspace{1pt}\Big| \left(\eta,s\right)  \left|\hspace{1pt}\xi -  e^{- \frac{\tau_0 - s }{2}} \eta \hspace{1pt}\right| e^{ -\frac{\big| \xi - e^{-\frac{\tau_0 - s}{2}} \eta \hspace{1pt} \big|^2}{4a\left(\tau_0-s\right)}} \mathrm{d} \eta
\end{align*}into \begin{align}\label{decomp of I2}I_2 = I_{2, 1} + I_{2, 2},
\end{align}where $I_{2, 1}$ represents the integral given as follows:
\begin{align*}
\int_{B\mystrut^c_R} \mathrm{d} \xi \int_0^{\tau_0 - \log\left(1+t_\epsilon\right)} \dfrac{e^{-\frac{\tau_0 - s}{2}}}{ a\left(\tau_0 - s\right)^2} \hspace{1pt}\mathrm{d} s\int_{B\mystrut^{c}_{R + R_{1, \epsilon}}}  U\left(\eta,s\right) \Big|\nabla_\eta E_2 * U\hspace{1pt}\Big|\left(\eta,s\right)  \left|\hspace{1pt}\xi -  e^{- \frac{\tau_0 - s }{2}} \eta \hspace{1pt}\right| e^{ -\frac{\big| \xi - e^{-\frac{\tau_0 - s}{2}} \eta \hspace{1pt} \big|^2}{4a\left(\tau_0-s\right)}} \mathrm{d} \eta.
\end{align*} By H\"{o}lder's inequality and (\ref{unif_bound_U})--(\ref{mass_conservation}),
\begin{align}\label{id_7_10}
\Big| \nabla_\eta E_2 * U\hspace{1pt}\Big|&\left(\eta,\tau\right) \hspace{2pt}\lesssim\hspace{2pt} \int_{B_{R_{1, \epsilon}}(\eta)} \dfrac{U\left( \xi, \tau \right)}{\big|\hspace{1pt} \eta - \xi \hspace{1pt}\big|} \hspace{2pt}  \mathrm{d}\xi + \int_{B\mystrut^c_{R_{1, \epsilon}}(\eta)} \dfrac{U\left( \xi, \tau \right)}{\big|\hspace{1pt} \eta - \xi \hspace{1pt}\big|} \hspace{2pt}  \mathrm{d}\xi \notag\\[2mm]
&\hspace{2pt}\lesssim\hspace{2pt} \big\|\hspace{1pt} U\left( \cdot, \tau \right) \big\|_{3; B_{R_{1, \epsilon}}(\eta)} \hspace{1pt} \Big\|\hspace{1pt} \big| \eta - \,\cdot\, \big|^{-1} \Big\|_{\frac{3}{2};B_{R_{1, \epsilon}}(\eta)}  +  \big\|\hspace{1pt} U\left( \cdot, \tau \right) \big\|_{\frac{3}{2};B\mystrut^c_{R_{1, \epsilon}}(\eta)} \hspace{1pt} \Big\|\hspace{1pt} \big| \eta - \,\cdot\, \big|^{-1} \Big\|_{3;B\mystrut^{c}_{R_{1, \epsilon}}(\eta)} \notag\\[2mm] 
&\hspace{2pt}\lesssim\hspace{2pt} R_{1, \epsilon}^\frac{1}{3} \hspace{1pt} \big\|\hspace{1pt} U\left( \cdot , \tau \right) \big\|_\infty^\frac{2}{3} \big\|\hspace{1pt} U\left( \cdot , \tau \right) \big\|_{1; B_{R_{1, \epsilon}}(\eta)}^\frac{1}{3} + R_{1, \epsilon}^{-\frac{1}{3}} \big\|\hspace{1pt} U\left( \cdot , \tau \right) \big\|_\infty^\frac{1}{3} \hspace{1pt}\big\|\hspace{1pt} U\left( \cdot , \tau \right) \big\|_1^\frac{2}{3} \notag\\[2mm]
&\hspace{2pt}\lesssim\hspace{2pt} R_{1, \epsilon}^\frac{1}{3} \hspace{1pt} [ \hspace{1pt}u \hspace{1pt}]_\infty^\frac{2}{3} \hspace{1pt} \big\|\hspace{1pt} U\left( \cdot , \tau \right) \big\|_{1; B_{R_{1, \epsilon}}(\eta)}^\frac{1}{3} + R_{1, \epsilon}^{-\frac{1}{3}} \hspace{1pt}M^\frac{2}{3} \hspace{1pt}[ \hspace{1pt}u \hspace{1pt}]_\infty^\frac{1}{3}. 
\end{align}Using triangle inequality, we have $B_{R_{1, \epsilon}}(\eta) \subset B\mynewstrut^c_R$, for all $\eta \in B\mynewstrut^c_{R + R_{1, \epsilon}}$. In light of the definition of $\tau_0$ in (\ref{def of tau 0}), it then follows
\begin{align*}
\big\|\hspace{1pt} U\left( \cdot , \tau \right) \big\|_{1; \hspace{0.5pt}B_{R_{1, \epsilon}}(\eta)} \hspace{2pt}\leq\hspace{2pt} \int_{B\mystrut^c_R} U\left(\xi,\tau\right) \mathrm{d}\xi \hspace{2pt}\leq\hspace{2pt} \epsilon, \hspace{15pt}\text{for any $\eta \in B\mynewstrut^c_{R + R_{1, \epsilon}}$ and $\tau \leq \tau_0$.}
\end{align*}Applying the last estimate to (\ref{id_7_10}) and recalling the definition of $R_{1, \epsilon}$ in (\ref{def of R 1 epsilon}), we obtain
\begin{align}\label{estimate of nabla V outside R + R 1 epsilon}
\Big| \nabla_\eta E_2 * U\hspace{1pt}\Big|\left(\eta,\tau\right)  \hspace{2pt}\lesssim\hspace{2pt} M^\frac{1}{3} \hspace{1pt} [ \hspace{1pt}u \hspace{1pt}]_\infty^{\frac{1}{2}} \hspace{1.5pt}\epsilon^{\frac{1}{6}},  \hspace{15pt}\text{for any $\eta \in B\mynewstrut^c_{R + R_{1, \epsilon}}$ and $\tau \leq \tau_0$.}
\end{align}
By Fubini's theorem and (\ref{estimate of nabla V outside R + R 1 epsilon}), the $I_{2, 1}$ given at the beginning of this step can be estimated as follows:
\begin{align}\label{id_7_11}
I_{2,1} &\hspace{2pt}\lesssim\hspace{2pt} \int_0^{\tau_0 - \log\left(1+t_\epsilon\right)} \dfrac{e^{-\frac{\tau_0 - s}{2}}}{ a\left(\tau_0 - s\right)^\frac{1}{2}}  \hspace{1pt}\mathrm{d} s\int_{B\mystrut^c_{R+R_{1, \epsilon}}} U\left(\eta,s\right) \Big|\nabla_\eta E_2 * U \Big|\left(\eta,s\right)  \hspace{2pt} \mathrm{d} \eta \notag\\[2mm]
&\hspace{2pt}\lesssim\hspace{2pt} M^\frac{1}{3} \hspace{1pt} [ \hspace{1pt}u \hspace{1pt}]_\infty^{\frac{1}{2}} \hspace{1pt} \epsilon^{\frac{1}{6}} \int_0^{\tau_0 - \log\left(1+t_\epsilon\right)}  \dfrac{e^{-\frac{\tau_0 - s}{2}}}{ a\left(\tau_0 - s\right)^\frac{1}{2}} \mathrm{d}s \int_{B\mystrut^{c}_{R+R_{1, \epsilon}}} U\left(\eta,s\right)  \hspace{2pt} \mathrm{d} \eta \notag\\[2mm]
&\hspace{2pt}\lesssim\hspace{2pt} M^\frac{1}{3} \hspace{1pt} [ \hspace{1pt}u \hspace{1pt}]_\infty^{\frac{1}{2}} \hspace{1pt} \epsilon^{\frac{7}{6}}.
\end{align}
In the last estimate of (\ref{id_7_11}), we also have used (\ref{def of tau 0}) so that \begin{align*}
\int_{B\mystrut^c_{R+R_{1, \epsilon}}} U\left(\eta,s\right) \mathrm{d} \eta \hspace{2pt}\leq\hspace{2pt} \int_{B\mystrut^c_R} U\left(\eta,s\right) \mathrm{d} \eta \hspace{2pt}\leq\hspace{2pt} \epsilon \qquad \mbox{for all } s \leq \tau_0.
\end{align*}
\textbf{Step 3.} Given $t_\epsilon$ and $R_{1, \epsilon}$ (see (\ref{def of t epsilon}) and (\ref{def of R 1 epsilon})), we can take $R_{2, \epsilon} > R_\epsilon$ sufficiently large such that
\begin{align}\label{R2 satisfied estimate}
\sigma_\epsilon := 1 - \big(1+t_\epsilon\big)^{-\frac{1}{2}} \left( 1 + \dfrac{R_{1, \epsilon}}{R_{2, \epsilon}}\right)\hspace{2pt}> \hspace{2pt}0 \quad \mbox{and} \quad \sigma_\epsilon R_{2, \epsilon} \hspace{2pt}\geq\hspace{2pt} \sqrt{2}.
\end{align}
Hence for any $\xi \in B\mynewstrut^c_R$, $\eta \in B_{R + R_{1, \epsilon}}$ and $s \leq \tau_0 - \log\left( 1 + t_\epsilon \right)$, it satisfies
\begin{align}\label{id_7_12}
\left|\hspace{0.5pt}\xi -  e^{- \frac{\tau_0 - s }{2}} \eta \hspace{0.5pt}\right| &\hspace{2pt}\geq\hspace{2pt}|\hspace{0.5pt}\xi\hspace{0.5pt}| - \big(1 + t_\epsilon\big)^{-\frac{1}{2}} | \hspace{0.5pt}\eta\hspace{0.5pt}| \hspace{2pt}\geq\hspace{2pt} |\hspace{0.5pt}\xi\hspace{0.5pt}| - \big(1 + t_\epsilon\big)^{-\frac{1}{2}} \big( R + R_{1, \epsilon}\big) \notag\\[2mm]
&\hspace{2pt}=\hspace{2pt}|\hspace{0.5pt}\xi\hspace{0.5pt}| - R\hspace{1pt}\big(1 + t_\epsilon\big)^{-\frac{1}{2}} \left( 1 + \dfrac{R_{1, \epsilon}}{R}\right) \notag\\[2mm]
&\hspace{2pt}\geq\hspace{2pt} |\hspace{0.5pt}\xi\hspace{0.5pt}| \left( 1 - \big(1+t_\epsilon\big)^{-\frac{1}{2}} \left( 1 + \dfrac{R_{1, \epsilon}}{R}\right)\right) \hspace{2pt}\geq\hspace{2pt} \sigma_\epsilon\hspace{0.5pt}|\hspace{0.5pt}\xi\hspace{0.5pt}| \hspace{2pt}\geq\hspace{2pt} \sigma_\epsilon\hspace{0.5pt}R_{2, \epsilon} \geq \sqrt{2}, \hspace{15pt}\text{for all $R \geq R_{2, \epsilon}$.}
\end{align}
Since the function $r e^{-\frac{r^2}{4}}$ is monotonically decreasing on $\left[ \sqrt{2} , \infty \right)$, (\ref{id_7_12}) then yields \begin{align*}&\left|\hspace{1pt}\xi -  e^{- \frac{\tau_0 - s }{2}} \eta \hspace{1pt}\right| e^{ -\frac{\big| \xi - e^{-\frac{\tau_0 - s}{2}} \eta \hspace{1pt} \big|^2}{4a\left(\tau_0-s\right)}} \hspace{2pt}\leq\hspace{2pt}\sigma_\epsilon\hspace{1pt}|\hspace{0.5pt}\xi\hspace{0.5pt}| \hspace{1pt} e^{- \frac{\sigma_\epsilon^2 \hspace{1pt}|\hspace{0.5pt}\xi\hspace{0.5pt}|^2}{4a\left(\tau_0 - s \right)}}, \\[2mm]
&\hspace{150pt}\text{for any $R \geq R_{2, \epsilon}$, $\xi \in B\mynewstrut^c_R$, $\eta \in B_{R + R_{1, \epsilon}}$ and $s \leq \tau_0 - \log\left( 1 + t_\epsilon \right)$.}
\end{align*}
By the last estimate, (\ref{unif_bound_U})--(\ref{mass_conservation}), (\ref{estimate_grad_v}) and Fubini's theorem,  for all $R \geq R_{2, \epsilon}$, the term $I_{2,2}$ in (\ref{decomp of I2}) can be estimated as follows:
\begin{align}\label{id_7_13}
I_{2,2} &\hspace{2pt}\lesssim\hspace{2pt}  \sigma_\epsilon\int_0^{\tau_0 - \log\left(1+t_\epsilon\right)} \dfrac{e^{-\frac{\tau_0 - s}{2}}}{ a\left(\tau_0 - s\right)^2}  \hspace{1pt}\mathrm{d} s \int_{B_{R+R_{1, \epsilon}}} U\left(\eta,s\right) \Big|\nabla_\eta E_2 * U \hspace{1pt}\Big|\left(\eta,s\right)  \hspace{2pt}\mathrm{d} \eta\int_{B\mystrut^c_R}  |\hspace{0.5pt}\xi \hspace{0.5pt}| \hspace{1pt} e^{-\frac{\sigma_\epsilon^2\hspace{0.5pt}|\hspace{0.5pt}\xi\hspace{0.5pt}|^2}{4a\left(\tau_0-s\right)}} \hspace{2pt}\mathrm{d}\xi \notag\\[2mm]
&\hspace{2pt}=\hspace{2pt} \sigma_\epsilon \int_0^{\tau_0 - \log\left(1+t_\epsilon\right)} \dfrac{e^{-\frac{\tau_0 - s}{2}}}{ a\left(\tau_0 - s\right)^{\frac{1}{2}}}  \hspace{1pt}\mathrm{d} s\int_{B_{R+R_{1, \epsilon}}} U\left(\eta,s\right) \Big|\nabla_\eta E_2 * U\hspace{1pt}\Big|\left(\eta,s\right) \hspace{2pt}\mathrm{d} \eta \int_{B\mystrut^c_{R a(\tau_0-s)^{ - \frac{1}{2} }}}  |\hspace{0.5pt}\xi\hspace{0.5pt}| \hspace{1pt} e^{-\frac{\sigma_\epsilon^2\hspace{0.5pt}|\hspace{0.5pt}\xi\hspace{0.5pt}|^2}{4}} \mathrm{d}\xi \notag\\[2mm]
&\hspace{2pt}\lesssim\hspace{2pt} \sigma_\epsilon \hspace{1pt} M^\frac{3}{2} [ \hspace{1pt}u \hspace{1pt}]_\infty^{\frac{1}{2}} \int_{B\mystrut^c_R} |\hspace{0.5pt}\xi\hspace{0.5pt}| \hspace{1pt} e^{-\frac{\sigma_\epsilon^2|\hspace{0.5pt}\xi\hspace{0.5pt}|^2}{4}} \mathrm{d}\xi \int_0^{\tau_0 - \log\left(1+t_\epsilon\right)} \dfrac{e^{-\frac{\tau_0 - s}{2}}}{ a\left(\tau_0 - s\right)^{\frac{1}{2}}} \hspace{2pt} \mathrm{d}s \notag\\[2mm]
&\hspace{2pt}\lesssim\hspace{2pt} \sigma_\epsilon\hspace{1pt} M^\frac{3}{2} [ \hspace{1pt}u \hspace{1pt}]_\infty^{\frac{1}{2}} \int_{B\mystrut^c_R} |\hspace{0.5pt}\xi\hspace{0.5pt}|\hspace{1pt} e^{-\frac{\sigma_\epsilon^2|\hspace{0.5pt}\xi\hspace{0.5pt}|^2}{4}} \mathrm{d}\xi \longrightarrow 0 \qquad\quad\qquad \mbox{as } R \rightarrow \infty.
\end{align}
\textbf{Step 4.} In light of (\ref{less than log 1 + tepsilon}), it follows $I \lesssim \epsilon^2$, provided that $\tau_0 \leq \log\left(1 + t_\epsilon\right)$. If  $\tau_0 > \log\left(1 + t_\epsilon\right)$, then by (\ref{est of I1}), (\ref{id_7_11}), (\ref{id_7_13}), (\ref{decomp of I}), (\ref{decomp of I2}) and (\ref{id_7_08}), there are $\epsilon$ sufficiently small and $r_\epsilon > R_\epsilon$ sufficiently large so that \begin{align*} \int_{B\mystrut^c_{r_\epsilon}} \big|\hspace{1pt}U_2\left(\xi, \tau_0\right)\big| \hspace{2pt}\leq\hspace{2pt}\dfrac{\epsilon}{4}.
\end{align*}
Here the smallness of $\epsilon$ depends on $M$ and $[\hspace{0.5pt}u\hspace{0.5pt}]_\infty$, while the largeness of $r_\epsilon$ depends on $M$, $[\hspace{0.5pt}u\hspace{0.5pt}]_\infty$ and $\epsilon$. Recalling  (\ref{id_7_04_1}), the decomposition (\ref{id_7_01}) and the last estimate for $U_2$, in any case, we have 
\begin{align*}
\int_{B\mystrut^c_{r_\epsilon}} U\left(\xi,\tau_0\right) \mathrm{d}\xi \hspace{2pt}<\hspace{2pt} \dfrac{3}{4} \epsilon.
\end{align*}However, the above estimate leads a contradiction to the definition of $\tau_0 = \tau_0(r_\epsilon)$. Therefore $\tau_0(r_\epsilon) = \infty$. (\ref{unif small outside a unif ball}) then follows. \\[2mm]
\textbf{Step 5.} Similarly as in the higher dimensional case, for any sequence $\big\{ \tau_k \big\}$ with $\tau_k \rightarrow \infty$ as $k \rightarrow \infty$, we can extract a subsequence, still denoted by $\big\{\tau_k\big\}$ so that (\ref{loc unif conv of U}) holds for $n = 2$. The strong $L^1$-compactness result in this lemma then follows by (\ref{loc unif conv of U}) and (\ref{unif small outside a unif ball}).  \end{proof}

\subsection{\normalsize Limiting flow}\label{sec_5_2}\vspace{0.5pc}

Let $\big\{ \tau_k \big\}$ be a fixed sequence which diverges to $\infty$ as $k \rightarrow \infty$. By the same derivations for (\ref{Holder norm est of U}), it holds \begin{align}\label{holder est for U at large tau} \big\|\hspace{1pt}U\hspace{1pt}\big\|_{C^{0, \frac{1}{4}}\left(P_{\sqrt{2} R} \big(0, \tau_k  + R^2\big)\right)} \hspace{2pt}\leq \hspace{2pt} C, \hspace{10pt}\text{provided that $k$ is large.}
\end{align}Here the constant $C$ depends on $R$, $M$ and $[\hspace{0.5pt}u\hspace{0.5pt}]_\infty$. If we define \begin{align}\label{def of Uk} U_k\left(\xi, \tau\right) := U\left(\xi, \tau_k + \tau\right),
\end{align}then the H\"{o}lder-norm boundedness in (\ref{holder est for U at large tau}) induces \begin{align} \big\|\hspace{1pt}U_k\hspace{1pt}\big\|_{C^{0, \frac{1}{4}}\left(Q_R\right)} \hspace{2pt}\leq \hspace{2pt} C, \hspace{10pt}\text{provided that $k$ is large.}
\end{align}By Arzel\` {a}-Ascoli theorem and a diagonal argument, we can extract a subsequence, still denoted by $\big\{\tau_k\big\}$, so that for some non-negative function $U_\star$ on $\mathbb{R}^{2 + 1}$, it holds \begin{align}\label{conv in spacetime of Uk} U_k \longrightarrow U_{\star} \hspace{15pt}\text{in $L^\infty_{\mathrm{loc}}\left(\mathbb{R}^{2 + 1}\right)$, as $k \rightarrow \infty$.}
\end{align}In light of the strong $L^1$-compactness in Lemma \ref{lem_5_2} and the uniform boundedness of $U$ in (\ref{unif_bound_U}), the convergence in (\ref{conv in spacetime of Uk}) infers \begin{align}\label{Lp conv of Uk} U_k\left(\cdot, \tau\right) \longrightarrow U_\star\left(\cdot, \tau\right) \hspace{20pt}\text{strongly in $L^p$, for all $\tau \in \mathbb{R}$ and $p \in [\hspace{1pt}1, \infty )$.} 
\end{align}Moreover, the limiting flow $U_\star$ satisfies
\begin{align}\label{id_7_34}
\big\| U_\star (\cdot,\tau) \big\|_1 = M \quad \mbox{and} \quad \big\| \hspace{1pt} U_\star (\cdot, \tau) \hspace{1pt} \big\|_{\infty} \leq [ \hspace{1pt}u \hspace{1pt}]_\infty, \quad \mbox{for all}\hspace{4pt} \tau \in \mathbb{R}.
\end{align}
As for the convergence of $\nabla_\xi E_2 * U_k$, by the standard potential estimate and (\ref{Lp conv of Uk}), we have \begin{align} \label{conv of Vk}
\nabla_\xi E_2 * U_k    \longrightarrow \nabla_\xi E_2 * U_\star, \hspace{15pt} \mbox{pointwisely on $\mathbb{R}^{2+1}$ as $k \rightarrow \infty$.}
\end{align} With the previous arguments, the following lemma holds for $U_\star$:
\begin{lem}\label{characterization of limiting flow}The limiting flow $U_\star$ satisfies the integral equation: \begin{align}\label{id_7_33}
U_\star\left(\xi,\tau\right) &\hspace{2pt}=\hspace{2pt} \dfrac{M}{4\pi} e^{-\frac{|\xi|^2}{4}} + \frac{1}{4\pi} \int_{-\infty}^{\tau}  \dfrac{\mathrm{d} s}{ a\left(\tau - s\right)}  \int_{\mathbb{R}^2}  U_\star \left(\eta, s\right) \nabla_\eta E_2 * U_\star \hspace{2pt}\bigg|_{\left(\eta,s\right)}  \cdot \nabla_\eta \hspace{1pt} e^{ -\frac{\big| \xi - e^{-\frac{\tau - s}{2}} \eta \hspace{1pt} \big|^2}{4a\left(\tau-s\right)}} \mathrm{d} \eta.
\end{align}\end{lem}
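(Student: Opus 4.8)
The plan is to start from the Duhamel representation of $U$ already used in the decomposition \eqref{id_7_01}--\eqref{def_U_2}, written with a shifted initial time. Concretely, for any fixed $\tau \in \mathbb{R}$ and any $T>0$, the mild formulation of \eqref{eqn_1} with $n=2$ (so $f_2\equiv 1$) applied on the time interval $[\tau_k - T,\,\tau_k+\tau]$ gives
\begin{align*}
U(\xi,\tau_k+\tau) &= \big(S_2(T+\tau)\big)\!\left[U(\cdot,\tau_k-T)\right]\!(\xi)\\
&\quad + \frac{1}{4\pi}\int_{-T}^{\tau}\frac{\mathrm{d}s}{a(\tau-s)}\int_{\mathbb{R}^2} U(\eta,\tau_k+s)\,\nabla_\eta E_2 * U\Big|_{(\eta,\tau_k+s)}\cdot\nabla_\eta e^{-\frac{|\xi - e^{-\frac{\tau-s}{2}}\eta|^2}{4a(\tau-s)}}\,\mathrm{d}\eta,
\end{align*}
where I have integrated by parts in $\eta$ to move the divergence onto the Gaussian (this is the same manipulation that produced \eqref{def_U_2}). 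First I would rewrite this in terms of the shifted functions $U_k(\cdot,\cdot)=U(\cdot,\tau_k+\cdot)$ from \eqref{def of Uk}, so the equation reads purely in $U_k$ and the linear semigroup term is $S_2(T+\tau)[U_k(\cdot,-T)]$.

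Next I would pass to the limit $k\to\infty$ with $T$ held fixed. For the nonlinear term, on the compact time interval $[-T,\tau]$ the integrand converges pointwise by \eqref{conv in spacetime of Uk} and \eqref{conv of Vk}, and it is dominated uniformly: by \eqref{unif_bound_U}, \eqref{mass_conservation}, Lemma \ref{lem_2_1} (the potential estimate \eqref{estimate_grad_v}) we control $\|U_k\|_\infty$ and $\|\nabla E_2 * U_k\|_\infty$ uniformly in $k$, and $U_k$ is uniformly bounded in $L^1$, so Fubini plus the integrable Gaussian factor $\int_{\mathbb{R}^2}|\nabla_\eta e^{-\frac{|\xi-e^{-(\tau-s)/2}\eta|^2}{4a(\tau-s)}}|\,\mathrm{d}\xi \lesssim a(\tau-s)^{-1/2}$ give an $L^1_s$ majorant on $[-T,\tau]$. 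Dominated convergence then yields
\begin{align*}
U_\star(\xi,\tau) &= \lim_{k\to\infty}S_2(T+\tau)\big[U_k(\cdot,-T)\big](\xi)\\
&\quad + \frac{1}{4\pi}\int_{-T}^{\tau}\frac{\mathrm{d}s}{a(\tau-s)}\int_{\mathbb{R}^2} U_\star(\eta,s)\,\nabla_\eta E_2*U_\star\Big|_{(\eta,s)}\cdot\nabla_\eta e^{-\frac{|\xi-e^{-\frac{\tau-s}{2}}\eta|^2}{4a(\tau-s)}}\,\mathrm{d}\eta.
\end{align*}
For the linear piece I would use the explicit kernel \eqref{semi group of U variable}: since $\|U_k(\cdot,-T)\|_1 = M$ for all $k$ (mass conservation \eqref{mass_conservation}), and the Gaussian kernel $a(T+\tau)^{-1}e^{-|\xi - e^{-(T+\tau)/2}\eta|^2/(4a(T+\tau))}$ concentrates near $\xi = e^{-(T+\tau)/2}\eta$ while $e^{-(T+\tau)/2}\to 0$ as $T\to\infty$, one shows $S_2(T+\tau)[U_k(\cdot,-T)]\to \frac{M}{4\pi}e^{-|\xi|^2/4}$ in a way uniform in $k$ — more precisely, split the mass using the uniform spatial localization \eqref{unif small outside a unif ball} from Lemma \ref{lem_5_2}, so the tail contribution is $\lesssim\epsilon$ uniformly, and the bulk contribution converges to $M\mathcal{G}_2$ as $T\to\infty$.

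Finally I would let $T\to\infty$ in the identity above. The linear term tends to $\frac{M}{4\pi}e^{-|\xi|^2/4}$ by the previous step. For the Duhamel integral, extending the lower limit from $-T$ to $-\infty$ is justified because the integrand is absolutely integrable on $(-\infty,\tau)$: the factor $U_\star(\eta,s)\,|\nabla_\eta E_2*U_\star|(\eta,s)$ is bounded in $L^1_\eta$ uniformly in $s$ by \eqref{id_7_34} and \eqref{estimate_grad_v}, and $\int_{\mathbb{R}^2}|\nabla_\eta e^{-\frac{|\xi-e^{-(\tau-s)/2}\eta|^2}{4a(\tau-s)}}|\,\mathrm{d}\xi\lesssim a(\tau-s)^{-1/2}$, which is integrable in $s$ near $s=\tau$; near $s=-\infty$, $a(\tau-s)\to 1$ and the Gaussian contributes an extra decay $e^{-(\tau-s)/2}$ from $\nabla_\eta$ hitting $e^{-\frac{s}{2}}\eta$ inside the exponent (equivalently, recall that in \eqref{conv of U2 in L1} this integral converges), so the tail is controlled. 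This produces exactly \eqref{id_7_33}. The main obstacle I anticipate is making the exchange of limits $k\to\infty$ and $T\to\infty$ rigorous in a single stroke — i.e., obtaining the uniform-in-$k$ control of the linear semigroup term $S_2(T+\tau)[U_k(\cdot,-T)]$ as $T\to\infty$ — and this is precisely where the uniform spatial localization estimate \eqref{unif small outside a unif ball} of Lemma \ref{lem_5_2} is essential; without it one cannot prevent mass from escaping to spatial infinity along the sequence.
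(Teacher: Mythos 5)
Your proposal is correct, but it organizes the limiting argument differently from the paper's, and the difference is worth noting. The paper keeps the global decomposition $U = U_1 + U_2$ of (\ref{id_7_01})--(\ref{def_U_2}) anchored at the fixed time $\tau=0$: the linear piece $U_1(\cdot,\tau_k+\tau) = S_2(\tau_k+\tau)\hspace{1pt}U_0$ evolves the \emph{fixed} datum $U_0$, so its limit $M\mathcal{G}_2$ as $k\to\infty$ is a one-line dominated convergence argument with no uniformity issue and no appeal to (\ref{unif small outside a unif ball}). The Duhamel piece $U_2(\cdot,\tau_k+\tau)$ is then split at the intermediate time $\tau_k+\tau-l$: the far part $\overline{U}_{2,1}$ is bounded by $\int_0^{e^{-l/2}}(1-t^2)^{-3/2}\hspace{1pt}\mathrm{d}t$ uniformly in $k$, and the near part $\overline{U}_{2,2}$ converges in $L^1$ to the corresponding $U_\star$ integral; taking $k\to\infty$ and then $l\to\infty$ closes the proof. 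You instead re-derive the mild formulation on the moving interval $[\tau_k-T,\tau_k+\tau]$, which folds the nonlinear history before $\tau_k-T$ into the linear term $S_2(T+\tau)\big[U_k(\cdot,-T)\big]$. Because that term now evolves a \emph{moving} snapshot, the $T\to\infty$ limit must be paid for with the uniform spatial localization of $U_\star$ deduced from (\ref{unif small outside a unif ball}) via Fatou's lemma. This works, but it is a heavier use of Lemma \ref{lem_5_2}: the paper needs only its $L^1$-compactness conclusion (to pass $k\to\infty$ on $\overline{U}_{2,2}$), whereas you additionally invoke its uniform tail bound to handle the linear term. A small presentational remark: you need not establish the linear limit uniformly in $k$. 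It is cleaner to observe that the two limits can be taken sequentially: first $k\to\infty$ gives $S_2(T+\tau)\big[U_\star(\cdot,-T)\big]$ by the $L^1$-convergence $U_k(\cdot,-T)\to U_\star(\cdot,-T)$, and then $T\to\infty$ uses only the localization of $U_\star$, so no genuine interchange of limits is required. Finally, your control of the Duhamel tail via the integrability of $e^{-(\tau-s)/2}a(\tau-s)^{-1/2}$ over $(-\infty,\tau)$ is exactly the paper's estimate of $\overline{U}_{2,1}$ in disguise.
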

\begin{proof}[\bf Proof] Using the decomposition of $U$ in (\ref{id_7_01}), for any $\tau \in \mathbb{R}$, it holds 
\begin{align}\label{decomp of Uk}
U_k\left(\xi, \tau\right) = U_1\left(\xi,\tau_k+\tau\right) + U_2\left(\xi, \tau_k+\tau\right).
\end{align}By Lebesgue's dominated convergence theorem,
\begin{align}\label{id_7_30}
U_1\left(\xi,\tau_k+\tau\right) \longrightarrow \dfrac{1}{4\pi} \int_{\mathbb{R}^2} U_0\left(\eta\right)e^{-\frac{|\xi|^2}{4}} \mathrm{d}\eta \hspace{2pt}=\hspace{2pt} \dfrac{M}{4\pi} \hspace{1pt} e^{-\frac{|\xi|^2}{4}} \qquad \mbox{as } k \rightarrow \infty.
\end{align}Now we fix $\big(\tau, l \big)  \in \mathbb{R} \times \mathbb{R}_+$ arbitrarily and decompose $U_2\left(\xi, \tau_k + \tau\right)$ into \begin{align}\label{decomp of U2 tauk} U_2\left(\xi, \tau_k + \tau\right) = \dfrac{1}{4\pi} \Big(\overline{U}_{2, 1}\left(\xi, \tau\right) + \overline{U}_{2, 2}\left(\xi,  \tau\right)\Big),
\end{align}where \begin{align*} \overline{U}_{2, 1} :=  \int_0^{\tau_k+\tau-l} \dfrac{\mathrm{d} s}{ a\left(\tau_k+\tau - s\right)}  \int_{\mathbb{R}^2}  U\left(\eta,s\right) \nabla_\eta E_2 * U\hspace{1pt}\Big|_{\left(\eta,s\right)} \cdot \nabla_\eta\hspace{1pt} e^{ -\frac{\big| \xi - e^{-\frac{\tau_k+\tau - s}{2}} \eta \hspace{1pt} \big|^2}{4a\left(\tau_k+\tau-s\right)}} \mathrm{d} \eta.
\end{align*}Utilizing (\ref{estimate_grad_v}) and (\ref{unif_bound_U})--(\ref{mass_conservation}), we can take $k$ sufficiently large and estimate $\overline{U}_{2,1}$ pointwisely as follows: \begin{align}\label{id_7_31}
\big|\hspace{1pt}\overline{U}_{2, 1}\left(\xi, \tau\right)\big| &\hspace{2pt}\lesssim\hspace{2pt} \int_0^{\tau_k+\tau-l} \dfrac{e^{-\frac{\tau_k+\tau - s}{2}}}{ a\left(\tau_k+\tau - s\right)^\frac{3}{2}} \hspace{2pt} \mathrm{d} s \int_{\mathbb{R}^2}  U\left(\eta,s\right) \Big| \nabla_\eta E_2 * U \hspace{1pt}\Big| \left(\eta,s\right)  \hspace{2pt} \mathrm{d} \eta \notag\\[2mm]
&\hspace{2pt}\lesssim_{\hspace{1pt}M, \hspace{1pt}[\hspace{1pt}u\hspace{1pt}]_\infty}\hspace{2pt} \int_0^{\tau_k+\tau-l} \dfrac{e^{-\frac{\tau_k+\tau - s}{2}}}{ a\left(\tau_k+\tau - s\right)^\frac{3}{2}} \mathrm{d}s \notag\\[1mm]
&\hspace{2pt}=\hspace{2pt} 2 \int_{e^{-\frac{\tau_k+\tau}{2}}}^{e^{-\frac{l}{2}}} \frac{\mathrm{d} t}{\left(1 - t^2\right)^{\frac{3}{2}}} \hspace{2pt}\lesssim\hspace{2pt}  \int_0^{e^{-\frac{l}{2}}} \frac{\mathrm{d} t}{\left(1 - t^2\right)^{\frac{3}{2}}}.
\end{align}As for $\overline{U}_{2, 2}$, it satisfies \begin{align*} \overline{U}_{2, 2} =  \int_{\tau-l}^{\tau} \dfrac{\mathrm{d} s}{ a\left(\tau - s\right)}  \int_{\mathbb{R}^2}  U_k\left(\eta,s\right) \nabla_\eta E_2 * U_k\hspace{1pt}\bigg|_{\left(\eta,s\right)} \cdot \nabla_\eta\hspace{1pt} e^{ -\frac{\big| \xi - e^{-\frac{\tau - s}{2}} \eta \hspace{1pt} \big|^2}{4a\left(\tau-s\right)}} \mathrm{d} \eta.
\end{align*}Fubini's theorem then induces \begin{align*} &\int_{\mathbb{R}^2} \left| \hspace{1pt}\overline{U}_{2, 2} -  \int_{\tau-l}^{\tau} \dfrac{\mathrm{d} s}{ a\left(\tau - s\right)}  \int_{\mathbb{R}^2}  U_\star\left(\eta,s\right) \nabla_\eta E_2 * U_\star\hspace{1pt}\bigg|_{ \left(\eta,s\right)} \cdot \nabla_\eta\hspace{1pt} e^{ -\frac{\big| \xi - e^{-\frac{\tau - s}{2}} \eta \hspace{1pt} \big|^2}{4a\left(\tau-s\right)}} \mathrm{d} \eta \hspace{1pt} \right| \hspace{2pt}\mathrm{d} \xi \notag\\[2mm]
&\hspace{15pt}\lesssim\hspace{2pt}\int_{\tau - l}^{\tau} \dfrac{e^{-\frac{ \tau - s}{2}}}{ a\left( \tau - s\right)^\frac{1}{2}} \hspace{2pt} \mathrm{d} s \int_{\mathbb{R}^2} \Big|\hspace{1pt}U_k\left(\cdot, s\right) - U_\star\left(\cdot, s \right) \Big| \hspace{2pt} \Big|\hspace{1pt}\nabla E_2 * U_k\hspace{1pt}\Big| \left(\cdot,s\right)  \notag\\[2mm]
&\hspace{15pt} + \hspace{2pt} \int_{\tau - l}^{\tau} \dfrac{e^{-\frac{ \tau - s}{2}}}{ a\left( \tau - s\right)^\frac{1}{2}} \hspace{2pt} \mathrm{d} s \int_{\mathbb{R}^2} \Big|\hspace{1pt}U_\star\left(\cdot, s \right) \Big| \hspace{2pt} \Big|\hspace{1pt}\nabla E_2 * U_k - \nabla E_2 * U_\star\hspace{1pt}\Big| \left(\cdot,s\right).
\end{align*}Utilizing (\ref{Lp conv of Uk})--(\ref{conv of Vk}), (\ref{estimate_grad_v}) and (\ref{unif_bound_U})--(\ref{mass_conservation}), we can apply Lebesgue's dominated convergence theorem to the right-hand side above and infer \begin{align*}
\int_{\mathbb{R}^2} \left| \hspace{1pt}\overline{U}_{2, 2} -  \int_{\tau-l}^{\tau} \dfrac{\mathrm{d} s}{ a\left(\tau - s\right)}  \int_{\mathbb{R}^2}  U_\star\left(\eta,s\right) \nabla_\eta E_2 * U_\star\hspace{1pt}\Big|_{ \left(\eta,s\right)} \cdot \nabla_\eta\hspace{1pt} e^{ -\frac{\big| \xi - e^{-\frac{\tau - s}{2}} \eta \hspace{1pt} \big|^2}{4a\left(\tau-s\right)}} \mathrm{d} \eta \hspace{1pt} \right| \hspace{2pt}\mathrm{d} \xi \longrightarrow 0, \hspace{10pt}\text{as $ k \rightarrow \infty$.}\end{align*}Recalling (\ref{decomp of U2 tauk}), (\ref{id_7_31}) and the last convergence, we can take $k \rightarrow \infty$ and $l \rightarrow \infty$ successively. Hence \begin{align*} U_2\left(\xi, \tau_k + \tau\right) \xrightarrow{k \hspace{1pt}\rightarrow \hspace{1pt}\infty} \dfrac{1}{4\pi}\int_{- \infty}^{\tau} \dfrac{\mathrm{d} s}{ a\left(\tau - s\right)}  \int_{\mathbb{R}^2}  U_\star\left(\eta,s\right)\nabla_\eta E_2 * U_\star\hspace{1pt}\Big|_{ \left(\eta,s\right)} \cdot \nabla_\eta\hspace{1pt} e^{ -\frac{\big| \xi - e^{-\frac{\tau - s}{2}} \eta \hspace{1pt} \big|^2}{4a\left(\tau-s\right)}} \mathrm{d} \eta,
\end{align*} for all $\tau \in \mathbb{R}$ and a.\hspace{0.5pt}e. $\xi \in \mathbb{R}^2$. In light of (\ref{id_7_30}), the last convergence and (\ref{conv in spacetime of Uk}), the proof is completed by taking $k \rightarrow \infty$ in (\ref{decomp of Uk}). 
\end{proof}

Associated with the flow $\big\{U_\star\left(\cdot, \tau\right)\big\}$, we define
\begin{align}\label{def_u_star}
u_\star(x,t) = \frac{1}{t} \hspace{2pt} U_\star \left( \dfrac{x}{\sqrt{t}}, \hspace{2pt} \log t \right), \hspace{15pt}\text{for any $x \in \mathbb{R}^2$ and $t > 0$.}
\end{align}
(\ref{id_7_34}) then implies
\begin{align}\label{id_7_36}
\big\| u_\star (\cdot,t) \big\|_1 = M \quad \mbox{and} \quad \big\| u_\star (\cdot,t) \big\|_{\infty} \leq \frac{[ \hspace{1pt}u \hspace{1pt}]_\infty}{t}, \quad \mbox{for any} \hspace{3pt}t > 0.
\end{align}
\begin{lem}
The $u_\star$ defined in (\ref{def_u_star}) is a global mild solution to (\ref{eqn_0_0_2}) on $\mathbb{R}^{2+1}_+$. Moreover, \begin{align}\label{delta initial data}u_\star(\cdot,t) \overset{*}{\longrightharpoonup} M\delta_0 \hspace{15pt}\text{ as $t \rightarrow 0^+$.}\end{align} Here $\delta_0$ is the Dirac measure concentrated at $0$.
\end{lem}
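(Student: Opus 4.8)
The plan is to verify the two assertions separately: first that $u_\star$ solves the equation \eqref{eqn_0_0_2} as a global mild solution, and then that $u_\star(\cdot,t)$ converges to $M\delta_0$ weak-$*$ as $t\to 0^+$. For the first part I would start from the integral equation \eqref{id_7_33} satisfied by $U_\star$, proved in Lemma \ref{characterization of limiting flow}, and undo the similarity change of variables \eqref{def_u_star}. Writing $u_\star(x,t)=t^{-1}U_\star(x/\sqrt t,\log t)$ and substituting into \eqref{id_7_33}, the linear term $\tfrac{M}{4\pi}e^{-|\xi|^2/4}$ becomes $M\,\Gamma_t(x)$, while the Duhamel integral, after the substitutions $s=\log\sigma$ (so $e^{-(\tau-s)/2}\eta = x'/\sqrt t$ with $x'$ the old spatial variable and $a(\tau-s)=1-\sigma/t=(t-\sigma)/t$), should transform exactly into the standard Duhamel formula
$$u_\star(x,t)=\int_{\mathbb R^2}u_\star(z,t)\big|_{t=0}\,\Gamma_2(x-z,t)\,\mathrm dz-\frac12\int_0^t\!\!\int_{\mathbb R^2}u_\star(z,s)\,\Gamma_2(x-z,t-s)\,\nabla E_2*u_\star\big|_{(z,s)}\cdot\frac{z-x}{t-s}\,\mathrm dz\,\mathrm ds,$$
with the ``initial data'' term being precisely $M\Gamma_t$. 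This is the bookkeeping content of the lemma: the change of variables is routine but must be carried out carefully, using $f_2\equiv 1$, the scaling of $E_2$ and its gradient, and the identity $\nabla_\eta e^{-|\xi-e^{-(\tau-s)/2}\eta|^2/4a(\tau-s)}$ corresponding under the scaling to $\Gamma_2(x-z,t-s)\cdot(z-x)/(t-s)$. Smoothness of $u_\star$ on $\mathbb R^{2+1}_+$ follows from the uniform bounds \eqref{id_7_36} together with the parabolic regularity estimates already used for \eqref{Holder norm est of U} and \eqref{holder est for U at large tau}, applied now to $U_\star$ on every compact set; alternatively one invokes bootstrapping directly on the Duhamel representation. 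Finiteness of $\|u_\star(\cdot,t)\|_1=M$ and $\|u_\star(\cdot,t)\|_\infty\le [u]_\infty/t$ is just \eqref{id_7_36}.

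For the initial trace \eqref{delta initial data}, I would test against an arbitrary $\varphi\in C_b(\mathbb R^2)$ (or $C_0$; either suffices for weak-$*$ convergence of bounded measures of fixed total mass together with tightness) and write $\int_{\mathbb R^2}u_\star(x,t)\varphi(x)\,\mathrm dx=\int_{\mathbb R^2}U_\star(\xi,\log t)\varphi(\sqrt t\,\xi)\,\mathrm d\xi$, using \eqref{def_u_star} and the substitution $x=\sqrt t\,\xi$. As $t\to 0^+$ we have $\log t\to-\infty$, so this is really a statement about the behavior of $U_\star(\cdot,\tau)$ as $\tau\to-\infty$: I need $U_\star(\cdot,\tau)\rightharpoonup M\delta_0$ weak-$*$ as $\tau\to-\infty$, after which $\varphi(\sqrt t\,\xi)\to\varphi(0)$ pointwise and dominated convergence (the dominating function is $\|\varphi\|_\infty U_\star(\cdot,\tau)$, which has mass $M$) gives $\int U_\star(\xi,\log t)\varphi(\sqrt t\xi)\,\mathrm d\xi\to M\varphi(0)$ — provided I can simultaneously control the ``escape to infinity'' as $\tau\to-\infty$. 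Concretely, in the integral equation \eqref{id_7_33} the linear contribution $\tfrac{M}{4\pi}e^{-|\xi|^2/4}$ is fixed; the Duhamel term, evaluated at a very negative $\tau$, is controlled by the same computation as in \eqref{id_7_31}: the relevant integral is $\int_0^{e^{(\tau-l)/2}}(1-t^2)^{-3/2}\,\mathrm dt$-type, which I would estimate using \eqref{estimate_grad_v} and \eqref{unif_bound_U}--\eqref{mass_conservation} to show the Duhamel part of $U_\star(\cdot,\tau)$ tends to $0$ in $L^1(\mathbb R^2)$ as $\tau\to-\infty$. Hence $U_\star(\cdot,\tau)\to \tfrac{M}{4\pi}e^{-|\xi|^2/4}$ in $L^1$ as $\tau\to-\infty$, i.e.\ $U_\star(\cdot,\tau)\to M\mathcal G_2$; since $\mathcal G_2$ is a probability density, rescaling by $\sqrt t$ collapses it to $M\delta_0$. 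Thus $\int u_\star(x,t)\varphi(x)\,\mathrm dx\to M\int \mathcal G_2(\xi)\varphi(0)\,\mathrm d\xi=M\varphi(0)$.

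I expect the main obstacle to be the rigorous justification that the Duhamel part of $U_\star(\cdot,\tau)$ vanishes in $L^1$ as $\tau\to-\infty$, uniformly enough to push through the weak-$*$ limit. The subtlety is that \eqref{id_7_33} integrates over the whole past $s\in(-\infty,\tau]$, so one cannot simply invoke \eqref{conv of U2 in L1} (which used $n\ge 3$); for $n=2$ one must instead split $(-\infty,\tau]$ at $\tau-l$ exactly as in the proof of Lemma \ref{characterization of limiting flow}, bound the far-past piece by the convergent-integral estimate \eqref{id_7_31}, and observe that as $\tau\to-\infty$ the whole interval $(-\infty,\tau]$ is ``far past'' — i.e.\ the analogue of \eqref{id_7_31} with $l=-\tau$ gives a bound of the form $\int_0^{e^{\tau/2}}(1-t^2)^{-3/2}\mathrm dt\lesssim e^{\tau/2}\to 0$. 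Combined with the $L^1$-tightness already built into \eqref{unif small outside a unif ball} and \eqref{id_7_34}, this yields the desired collapse. Once these estimates are in place, the two claims of the lemma follow; I would also note in passing that $u_\star$ is in fact a \emph{classical} solution on $\mathbb R^{2+1}_+$, a fact that is needed in Sect.\ 5.3 when comparing it with the Bedrossian--Masmoudi uniqueness class.
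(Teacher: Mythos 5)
Your treatment of the first claim --- undoing the similarity change of variables \eqref{def_u_star} in the integral equation \eqref{id_7_33} to obtain the mild-solution form \eqref{id_7_35} for $u_\star$ --- is the same bookkeeping as in the paper and is correct.

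The argument for the initial trace \eqref{delta initial data} has a genuine gap. You claim that the Duhamel part of $U_\star(\cdot,\tau)$ tends to $0$ in $L^1$ as $\tau\to-\infty$, so that $U_\star(\cdot,\tau)\to M\mathcal G_2$ strongly. This is false. The Duhamel integral in \eqref{id_7_33} runs over $s\in(-\infty,\tau]$, and Fubini together with \eqref{estimate_grad_v} and \eqref{id_7_34} bounds its $L^1$ norm by a constant times
\begin{align*}
\int_{-\infty}^{\tau}\frac{e^{-\frac{\tau-s}{2}}}{a(\tau-s)^{1/2}}\,\mathrm ds
=\int_0^\infty\frac{e^{-r/2}}{\sqrt{1-e^{-r}}}\,\mathrm dr=\pi,
\end{align*}
a fixed positive constant, independent of $\tau$. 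Splitting at $\tau-l$ controls only the far-past piece $s<\tau-l$; the near-past piece $s\in[\tau-l,\tau]$ contributes the same $\mathrm O(1)$ quantity no matter how negative $\tau$ is, and your substitution $l=-\tau$ does not circumvent this. Indeed the claim \emph{cannot} hold: Sect.\,5.3 identifies $U_\star(\cdot,\tau)\equiv G_M$, a $\tau$-independent stationary profile of \eqref{eqn_1_2d}, and $G_M\neq M\mathcal G_2$, so $U_\star(\cdot,\tau)$ does not converge to $M\mathcal G_2$ as $\tau\to-\infty$ in any topology.

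What the paper actually exploits is a null structure: since $\|u_\star(\cdot,t)\|_1=M$ for all $t>0$ and $M\Gamma_t$ also has mass $M$, the Duhamel term in \eqref{id_7_35} integrates to zero over $\mathbb R^2$ for every $t$. Hence when pairing it against a test function $\varphi$ one may replace $\varphi$ by $\varphi-\varphi(0)$, and the resulting quantity $II$ in \eqref{est of II} is driven to zero by combining the uniform tightness of $U_\star$ from \eqref{uniform small Ustar outside a ball} with the uniform smallness of $\varphi(\cdot)-\varphi(0)$ on the balls captured by those tightness radii. The Duhamel term is not small as a measure; only its pairing against $\varphi-\varphi(0)$ is. Your high-level strategy (change of variables plus concentration) can in fact be repaired without this null structure: writing $\int u_\star(x,t)\varphi(x)\,\mathrm dx=\int U_\star(\xi,\log t)\varphi(\sqrt t\,\xi)\,\mathrm d\xi$, the uniform mass bound and tightness in \eqref{id_7_34} and \eqref{uniform small Ustar outside a ball} already give $\to M\varphi(0)$, since $\varphi(\sqrt t\,\xi)\to\varphi(0)$ uniformly on each $B_{r_\epsilon}$. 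The error is solely the unnecessary --- and false --- intermediate assertion that $U_\star(\cdot,\tau)$ converges to $M\mathcal G_2$; uniform $L^1$-concentration is all that is needed, not convergence to any particular profile.
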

\begin{proof}[\bf Proof] By (\ref{def_u_star}), the equation  (\ref{id_7_33}) can be equivalently written as follows:
\begin{align}\label{id_7_35}
u_\star(x,t) = \frac{M}{4\pi t} \hspace{2pt} e^{-\frac{|x|^2}{4t}} + \frac{1}{4\pi} \int_0^t  \dfrac{\mathrm{d}s}{ t - s}   \int_{\mathbb{R}^2}u_\star (z,s) \hspace{2pt} \nabla E_2 * u_\star\hspace{1pt}\bigg|_{ \left(z,s\right)} \cdot \nabla_z \hspace{1pt} e^{-\frac{|x-z|^2}{4(t-s)}} \hspace{2pt}\mathrm{d}z. 
\end{align} Since for all $t > 0$, the total mass of $u_\star\left(\cdot, t\right)$ is $M$, this equality infers \begin{align*} \int_{\mathbb{R}^2}\mathrm{d} x\int_0^t  \dfrac{\mathrm{d}s}{ t - s}   \int_{\mathbb{R}^2}u_\star (z,s) \hspace{2pt} \nabla E_2 * u_\star\hspace{1pt}\bigg|_{ \left(z,s\right)} \cdot \nabla_z \hspace{1pt} e^{-\frac{|x-z|^2}{4(t-s)}} \hspace{2pt}\mathrm{d}z = 0, \hspace{15pt}\text{for all $t > 0$.}
\end{align*}Given $\varphi$ a smooth function compactly supported on $\mathbb{R}^2$, the last equality yields\begin{align} II &\hspace{2pt}:= \hspace{2pt}\int_{\mathbb{R}^2}\varphi\left(x\right)\mathrm{d} x\int_0^t  \dfrac{\mathrm{d}s}{ t - s}   \int_{\mathbb{R}^2}u_\star (z,s) \hspace{2pt} \nabla E_2 * u_\star\hspace{1pt}\bigg|_{ \left(z,s\right)} \cdot \nabla_z \hspace{1pt} e^{-\frac{|x-z|^2}{4(t-s)}} \hspace{2pt}\mathrm{d}z \notag\\[2mm]
&\hspace{5pt}=\hspace{2pt}  \int_{\mathbb{R}^2}\big(\varphi(x) - \varphi(0)\big) \hspace{2pt}\mathrm{d} x\int_0^t  \dfrac{\mathrm{d}s}{ t - s}   \int_{\mathbb{R}^2}u_\star (z,s) \hspace{2pt} \nabla E_2 * u_\star\hspace{1pt}\bigg|_{ \left(z,s\right)} \cdot \nabla_z \hspace{1pt} e^{-\frac{|x-z|^2}{4(t-s)}} \hspace{2pt}\mathrm{d}z. 
\end{align}Using (\ref{id_7_36}), (\ref{estimate_grad_v}) and change of variables, we have 
\begin{align}\label{est of II}
\big|\hspace{1pt}II\hspace{1pt}\big| &\hspace{2pt}\lesssim_{\hspace{1pt}M, \hspace{1pt}[\hspace{1pt}u\hspace{1pt}]_\infty}\hspace{2pt}\int_{\mathbb{R}^2}\big|\varphi(x) - \varphi(0)\big| \hspace{2pt}\mathrm{d} x\int_0^t  \dfrac{\mathrm{d}s}{ \left(t - s\right)^2 s^{\frac{1}{2}}}   \int_{\mathbb{R}^2}u_\star (z,s) \hspace{2pt}  \big| x - z \big| \hspace{1pt} e^{-\frac{|x-z|^2}{4(t-s)}} \hspace{2pt}\mathrm{d}z \\[2mm]
&\hspace{2pt}=\hspace{2pt}\int_{\mathbb{R}^2}\hspace{2pt}|\hspace{1pt}y\hspace{1pt}| \hspace{1pt} e^{-\frac{|\hspace{0.5pt} y \hspace{0.5pt}|^2}{4}} \hspace{2pt}\mathrm{d} y\int_0^t  \dfrac{\mathrm{d}s}{ \left(t - s\right)^{\frac{1}{2}} s^{\frac{1}{2}}}   \int_{\mathbb{R}^2}u_\star (z,s) \hspace{2pt} \left|\varphi\left(z + \big(t - s\big)^{\frac{1}{2}} y\right) - \varphi(0)\right|  \hspace{2pt}\mathrm{d}z  \notag\\[2mm]
&\hspace{2pt}=\hspace{2pt}\int_{\mathbb{R}^2}\hspace{2pt}|\hspace{1pt}y\hspace{1pt}| \hspace{1pt} e^{-\frac{|\hspace{0.5pt} y \hspace{0.5pt}|^2}{4}} \hspace{2pt}\mathrm{d} y\int_0^1  \dfrac{ \mathrm{d}\tau}{ \left(1 -  \tau\right)^{\frac{1}{2}}  \tau^{\frac{1}{2}}}   \int_{\mathbb{R}^2} \dfrac{1}{\tau} \hspace{1pt}U_\star \left( \dfrac{z'}{\sqrt{\tau}}, \hspace{1pt}\log\left(t\hspace{0.5pt}\tau\right) \right)\hspace{2pt} \left|\varphi\left(t^\frac{1}{2}z' + \big(t - t \hspace{0.5pt}\tau\big)^{\frac{1}{2}} y\right) - \varphi(0)\right|  \hspace{2pt}\mathrm{d}z' .  \notag \end{align}The last equality in (\ref{est of II}) has used (\ref{def_u_star}). In light of (\ref{unif small outside a unif ball}) in Lemma \ref{lem_5_2}, for any $\epsilon > 0$, there  is a $r_\epsilon$ sufficiently large so that \begin{align*}\int_{B\mystrut^c_{r_\epsilon}} U\left(\xi, \tau_k + \tau \right) \mathrm{d} \xi \hspace{2pt}\leq \hspace{2pt}\epsilon, \hspace{15pt}\text{for all $\tau \in \mathbb{R}$ and $k$ sufficiently large.}
\end{align*}Recall (\ref{def of Uk}) and (\ref{conv in spacetime of Uk}). We then can take $k \rightarrow \infty$ in the above estimate. By Fatou's lemma, it turns out\begin{align}\label{uniform small Ustar outside a ball}\int_{B\mystrut^c_{r_\epsilon}} U_\star\left(\xi,  \tau \right) \mathrm{d} \xi \hspace{2pt}\leq \hspace{2pt}\epsilon, \hspace{15pt}\text{for all $\tau \in \mathbb{R}$.}
\end{align}Meanwhile, we can also take $r_\epsilon$ sufficiently large so that \begin{align}\label{uniform small gaussian func outside a ball}\int_{B\mystrut^c_{r_\epsilon}}\hspace{2pt}|\hspace{1pt}y\hspace{1pt}| \hspace{1pt} e^{-\frac{|\hspace{0.5pt} y \hspace{0.5pt}|^2}{4}} \hspace{2pt}\mathrm{d} y \hspace{2pt}\leq \hspace{2pt}\epsilon.
\end{align}Now we start estimating the last integral on the right-hand side of (\ref{est of II}). Firstly, it holds by (\ref{uniform small gaussian func outside a ball}) that \begin{align}\label{est of II.1}&\int_{B\mystrut^c_{r_\epsilon}}\hspace{2pt}|\hspace{1pt}y\hspace{1pt}| \hspace{1pt} e^{-\frac{|\hspace{0.5pt} y \hspace{0.5pt}|^2}{4}} \hspace{2pt}\mathrm{d} y\int_0^1  \dfrac{ \mathrm{d}\tau}{ \left(1 -  \tau\right)^{\frac{1}{2}}  \tau^{\frac{1}{2}}}   \int_{\mathbb{R}^2} \dfrac{1}{\tau} \hspace{1pt}U_\star \left( \dfrac{z'}{\sqrt{\tau}}, \hspace{1pt}\log\left(t\hspace{0.5pt}\tau\right) \right)\hspace{2pt} \left|\varphi\left(t^\frac{1}{2}z' + \big(t - t \hspace{0.5pt}\tau\big)^{\frac{1}{2}} y\right) - \varphi(0)\right|  \hspace{2pt}\mathrm{d}z' \notag\\[2mm]
&\hspace{100pt}\lesssim\hspace{2pt}M \hspace{1pt}\| \hspace{1pt}\varphi\hspace{1pt}\|_{\infty}\hspace{1pt}\int_{B\mystrut^c_{r_\epsilon}}\hspace{2pt}|\hspace{1pt}y\hspace{1pt}| \hspace{1pt} e^{-\frac{|\hspace{0.5pt} y \hspace{0.5pt}|^2}{4}} \hspace{2pt}\mathrm{d} y \hspace{2pt}\leq M \hspace{1pt}\| \hspace{1pt}\varphi\hspace{1pt}\|_{\infty}\hspace{1pt}\epsilon.
\end{align}Moreover, by (\ref{uniform small Ustar outside a ball}), we can infer \begin{align}\label{est of II.2}&\int_{B_{r_\epsilon}}\hspace{2pt}|\hspace{1pt}y\hspace{1pt}| \hspace{1pt} e^{-\frac{|\hspace{0.5pt} y \hspace{0.5pt}|^2}{4}} \hspace{2pt}\mathrm{d} y\int_0^1  \dfrac{ \mathrm{d}\tau}{ \left(1 -  \tau\right)^{\frac{1}{2}}  \tau^{\frac{1}{2}}}   \int_{B\mystrut^c_{r_\epsilon}} \dfrac{1}{\tau} \hspace{1pt}U_\star \left( \dfrac{z'}{\sqrt{\tau}}, \hspace{1pt}\log\left(t\hspace{0.5pt}\tau\right) \right)\hspace{2pt} \left|\varphi\left(t^\frac{1}{2}z' + \big(t - t \hspace{0.5pt}\tau\big)^{\frac{1}{2}} y\right) - \varphi(0)\right|  \hspace{2pt}\mathrm{d}z' \notag\\[2mm]
&\hspace{50pt}\lesssim\hspace{2pt} \| \hspace{1pt}\varphi\hspace{1pt}\|_{\infty}\hspace{1pt}\sup_{\tau' \hspace{1pt}\in \hspace{1pt}\mathbb{R}} \int_0^1 \dfrac{ \mathrm{d}\tau}{ \left(1 -  \tau\right)^{\frac{1}{2}}  \tau^{\frac{1}{2}}}  \int_{B\mystrut^c_{r_\epsilon \hspace{0.5pt}\tau^{- 1/2}}}\hspace{2pt}U_\star\left(\cdot, \tau'\right) \notag\\[2mm]
&\hspace{50pt}\lesssim\hspace{2pt} \| \hspace{1pt}\varphi\hspace{1pt}\|_{\infty}\hspace{1pt}\sup_{\tau' \hspace{1pt}\in \hspace{1pt}\mathbb{R}}\int_{B\mystrut^c_{r_\epsilon}}\hspace{2pt}U_\star\left(\cdot, \tau'\right) \leq \hspace{2pt} \| \hspace{1pt}\varphi\hspace{1pt}\|_{\infty}\hspace{1pt}\epsilon.
\end{align}By the above arguments, we can localize the spatial integration domains in the last integral of (\ref{est of II}) on the ball $B_{r_\epsilon}$. By taking $t \rightarrow 0$, it satisfies \begin{align*}&\int_{B_{r_\epsilon}}\hspace{2pt}|\hspace{1pt}y\hspace{1pt}| \hspace{1pt} e^{-\frac{|\hspace{0.5pt} y \hspace{0.5pt}|^2}{4}} \hspace{2pt}\mathrm{d} y\int_0^1  \dfrac{ \mathrm{d}\tau}{ \left(1 -  \tau\right)^{\frac{1}{2}}  \tau^{\frac{1}{2}}}   \int_{B_{r_\epsilon}} \dfrac{1}{\tau} \hspace{1pt}U_\star \left( \dfrac{z'}{\sqrt{\tau}}, \hspace{1pt}\log\left(t\hspace{0.5pt}\tau\right) \right)\hspace{2pt} \left|\varphi\left(t^\frac{1}{2}z' + \big(t - t \hspace{0.5pt}\tau\big)^{\frac{1}{2}} y\right) - \varphi(0)\hspace{1pt}\right|  \hspace{2pt}\mathrm{d}z' \notag\\[2mm]
&\hspace{60pt}\lesssim\hspace{2pt} M\hspace{1pt}\int_0^1  \dfrac{ \mathrm{d}\tau}{ \left(1 -  \tau\right)^{\frac{1}{2}}  \tau^{\frac{1}{2}}} \hspace{3pt}\sup_{\left(y, \hspace{0.5pt}z'\right)  \hspace{1pt}\in \hspace{1pt}B_{r_\epsilon} \times B_{r_\epsilon}} \left|\varphi\left(t^\frac{1}{2}z' + \big(t - t \hspace{0.5pt}\tau\big)^{\frac{1}{2}} y\right) - \varphi(0)\hspace{1pt}\right| \longrightarrow 0, \hspace{15pt}\text{as $t \rightarrow 0$.}
\end{align*}Applying the last estimate in conjunction with (\ref{est of II.1})--(\ref{est of II.2}) to (\ref{est of II}) then yields \begin{align*}II = \int_{\mathbb{R}^2}\varphi\left(x\right)\mathrm{d} x\int_0^t  \dfrac{\mathrm{d}s}{ t - s}   \int_{\mathbb{R}^2}u_\star (z,s) \hspace{2pt} \nabla E_2 * u_\star\hspace{1pt}\bigg|_{ \left(z,s\right)} \cdot \nabla_z \hspace{1pt} e^{-\frac{|x-z|^2}{4(t-s)}} \hspace{2pt}\mathrm{d}z  \longrightarrow 0, \hspace{10pt}\text{as $t \rightarrow 0$.}\end{align*}  By (\ref{id_7_35}) and the above convergence, (\ref{delta initial data}) holds. 
\end{proof}

\subsection{\normalsize Proof of Proposition \ref{weight asymptotics of u L1}}\label{sec_5_3}\vspace{0.5pc}

In this section, we finish the proof of Proposition \ref{weight asymptotics of u L1}. Notice that Proposition \ref{weight asymptotics of u L1} contains two parts. The first part is that $M < 8 \pi$, where $M$ is the total mass of the global mild solution $u$. The second part is the long-time asymptotics of  $u$. We discuss these two parts seperately in the following two lemmas.
\begin{lem} \label{M less 8pi}The total mass $M$ of $u_\star$ satisfies $M < 8 \pi$.  Here $u_\star$ is the limiting flow obtained in Sect.\hspace{0.5pt}5.2.
\end{lem}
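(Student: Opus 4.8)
The plan is to rule out $M \geq 8\pi$ by a virial (second-moment) argument applied to the limiting flow $u_\star$ constructed in Sect.~5.2. Recall that $u_\star$ is a non-negative classical global mild solution of (\ref{eqn_0_0_2}) obeying the integral equation (\ref{id_7_35}), the mass--decay bounds (\ref{id_7_36}) and the weak-$*$ convergence $u_\star(\cdot,t)\to M\delta_0$ as $t\to 0^+$ in (\ref{delta initial data}). First I would show that $u_\star$ has a finite second moment for every $t>0$, together with the quantitative bound
\begin{align}\label{plan_virial_bound}
\int_{\mathbb{R}^2} u_\star(x,t)\,|x|^2\,\mathrm{d}x \;\lesssim\; \big(M+M^2\big)\,t, \hspace{15pt}\text{for all } t>0.
\end{align}
For this I repeat, in dimension $n=2$, the cut-off computation of Sect.~4.2.1: testing (\ref{eqn_0_0_2}) against $\eta_R(x)=|x|^2\eta(x/R)$ gives the identity (\ref{approx_2nd_moment}), and since $\big|\nabla\eta_R\big|$ has bounded second derivatives uniformly in $R$ while the double-integral kernel reduces (for $n=2$) to a bounded factor, one obtains, as in (\ref{approx_2nd_moment_time}), the differential inequality $\frac{\mathrm{d}}{\mathrm{d}t}\int u_\star\,\eta_R \lesssim M+\|u_\star(\cdot,t)\|_1^2 = M+M^2$, uniformly in $R$. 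Integrating from $0$ to $t$ as in (\ref{estimate_weighted_u}) and using that $\eta_R(0)=0$, so that the boundary value $\int u_\star(\cdot,s)\eta_R \to M\eta_R(0)=0$ as $s\to 0^+$ by (\ref{delta initial data}), yields $\int u_\star(\cdot,t)\eta_R \lesssim (M+M^2)t$ for every $R$; letting $R\to\infty$ gives (\ref{plan_virial_bound}).

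With the finiteness of the second moment established, the next step is to upgrade the differential inequality to the exact virial identity. Since $u_\star$ is smooth on $\mathbb{R}_+^{2+1}$ and (\ref{plan_virial_bound}) provides enough spatial decay, one may now pass to the limit $R\to\infty$ in the cut-off identity: the local term contributes $\int u_\star\,\Delta(|x|^2) = 4M$, and the non-local term, after symmetrizing $x\leftrightarrow y$ and using $\nabla E_2(x-y) = -\tfrac{1}{2\pi}\tfrac{x-y}{|x-y|^2}$, collapses to $-\tfrac{1}{2\pi}\cdot\tfrac12\big(\int u_\star\big)^2 = -\tfrac{M^2}{2\pi}$. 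Hence
\begin{align}\label{plan_virial_id}
\frac{\mathrm{d}}{\mathrm{d}t}\int_{\mathbb{R}^2} u_\star(x,t)\,|x|^2\,\mathrm{d}x \;=\; 4M\Big(1-\frac{M}{8\pi}\Big), \hspace{15pt}\text{for all } t>0.
\end{align}
Integrating (\ref{plan_virial_id}) from $0$ to $t$ and using (\ref{plan_virial_bound}) to conclude $\int u_\star(\cdot,s)|x|^2\to 0$ as $s\to 0^+$, we get $\int u_\star(\cdot,t)|x|^2 = 4M\big(1-\tfrac{M}{8\pi}\big)t$ for all $t>0$. If $M>8\pi$ the right-hand side is negative, contradicting $u_\star\geq 0$; if $M=8\pi$ it vanishes, forcing $u_\star\equiv 0$ on $\mathbb{R}_+^{2+1}$, contradicting $\|u_\star(\cdot,t)\|_1 = M = 8\pi>0$ from (\ref{id_7_36}). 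In either case we reach a contradiction, so $M<8\pi$. (Equivalently, since $u$ is a global mild solution one already has $M\leq 8\pi$ by the threshold result of \cite{W18}, and only the case $M=8\pi$ needs to be excluded.)

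The main obstacle is the rigorous justification of the exact identity (\ref{plan_virial_id}) together with the vanishing of the second moment as $t\to 0^+$. For (\ref{plan_virial_id}) one must control the cut-off error in the double integral $\iint\big(\nabla\eta_R(x)-\nabla\eta_R(y)\big)\cdot\tfrac{x-y}{|x-y|^2}\,u_\star(x,s)u_\star(y,s)$ and pass to the limit $R\to\infty$; this uses (\ref{plan_virial_bound}) --- the finiteness of the second moment, first obtained from the \emph{inequality} form above, so there is no circularity --- together with the potential bound (\ref{estimate_grad_v}) to handle the far-field contribution, exactly in the spirit of the estimates in Sect.~4.2.1 and Sect.~5.1. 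For the vanishing of the second moment, weak-$*$ convergence to $M\delta_0$ alone does not control $\int u_\star|x|^2$; one combines it with the uniform tail estimate (\ref{unif small outside a unif ball}) and its limiting version (\ref{uniform small Ustar outside a ball}), rescaled as in the proof of (\ref{delta initial data}), to localize the second moment onto a fixed ball on which the weak-$*$ statement and $\eta_R(0)=0$ apply. The remaining steps are routine adaptations of the higher-dimensional computations already carried out in the excerpt.
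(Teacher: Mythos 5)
Your proposal is correct and follows essentially the same route as the paper: test (\ref{eqn_0_0_2}) against the cut-off second moment $|x|^2\eta(x/R)$, use the uniform bound on the resulting kernel together with the weak-$*$ convergence $u_\star(\cdot,t)\overset{*}{\rightharpoonup} M\delta_0$ (and the fact that the cut-off weight vanishes at the origin) to get $\int_{\mathbb{R}^2}u_\star(x,t)|x|^2\,\mathrm{d}x\lesssim (M+M^2)\,t$, and then observe that for $M=8\pi$ the constancy of the second moment forces $u_\star\equiv 0$, contradicting $\|u_\star(\cdot,t)\|_1=M$. The only difference is cosmetic: you additionally derive the exact virial identity to exclude $M>8\pi$ directly, whereas the paper simply invokes \cite{W18} for $M\leq 8\pi$ --- a shortcut you yourself note.
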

\begin{proof}[\bf Proof] Since $u_\star$ is a global mild solution to (\ref{eqn_0_0_2}) with $\| u_\star(\cdot, t) \|_1 = M$, we have $M \leq 8\pi$. See \cite{W18}. In the remainder of the proof, we show $M \neq 8 \pi$.\vspace{0.3pc}

Let $\eta$ be a smooth radial function compactly supported on $B_2$. Moreover, $0 \leq \eta \leq 1$ on $\mathbb{R}^2$ and equivalently equals to $1$ on $B_1$. Fixing an arbitrary $R > 0$, we define \begin{align*}\eta_R(x) :=  \eta\left(\frac{x}{R}\right) \hspace{10pt}\text{and}\hspace{10pt}\eta^*_R(x) := \eta_R(x) \hspace{1pt} | x |^2, \qquad \text{ for any $x \in \mathbb{R}^2$.}\end{align*}
By multiplying $\eta^*_R$ on both sides of (\ref{eqn_0_0_2}) (here $u = u_\star$) and integrating over $\mathbb{R}^2$, it turns out
\begin{align}\label{id_7_38}
\dfrac{\mathrm{d}}{\mathrm{d} t} \int_{\mathbb{R}^2} u_\star \hspace{1.5pt}\eta^*_R = \int_{\mathbb{R}^2} u_\star \hspace{1.5pt} \Delta \eta^*_R - \frac{1}{4\pi} \int_{\mathbb{R}^2} \dfrac{\big( \nabla \eta^*_R(x) - \nabla \eta^*_R(y)\big) \cdot (x-y)}{|x-y|^2} \hspace{1.5pt}u_\star(x,t) \hspace{1.5pt} u_\star(y,t) \hspace{1.5pt}\mathrm{d}x \hspace{1.5pt}\mathrm{d}y.
\end{align}
The $L^\infty$\hspace{0.5pt}-\hspace{0.5pt}norms of the second-order partial derivatives of $\eta^*_R$ are uniformly bounded from above by constants  independent of $R$. Therefore, 
\begin{align*}
\dfrac{\mathrm{d}}{\mathrm{d}t} \int_{\mathbb{R}^2} u_\star \hspace{1.5pt}\eta^*_R \hspace{2pt}\lesssim\hspace{2pt} M + M^2.
\end{align*}
For any $0 < t_0 < t$, we integrate the above ODE inequality over the interval $[\hspace{0.5pt}t_0,t \hspace{0.5pt}]$. It follows
\begin{align*}
\int_{\mathbb{R}^2} u_\star(x,t)\hspace{1pt} \eta^*_R(x) \hspace{1pt} \mathrm{d}x \hspace{2pt}\leq\hspace{2pt} \int_{\mathbb{R}^2} u_\star(x,t_0) \hspace{1pt} \eta^*_R(x) \hspace{1pt} \mathrm{d}x + C\left( M + M^2 \right) t.
\end{align*}
Utilizing (\ref{delta initial data}), we can take $t_0 \rightarrow 0$ and $R \rightarrow \infty$ successively in the above estimate and obtain \begin{align}\label{second moment est of ustar}
\int_{\mathbb{R}^2} u_\star(x,t)\hspace{1pt} |\hspace{0.5pt}x\hspace{0.5pt}|^2 \hspace{1pt} \mathrm{d}x \hspace{2pt}\leq\hspace{2pt} C\left( M + M^2 \right) t.
\end{align}Here we have also used $\eta^*_R(0) = 0$. \vspace{0.3pc}

In light of (\ref{second moment est of ustar}), the second moment of $u_\star$ is finite for any $t > 0$. Suppose that $M = 8\pi$. Then the second moment of $u_\star(\cdot, t)$ must be a constant independent of $t > 0$. Therefore, the left-hand side of (\ref{second moment est of ustar}) is a constant. By taking $t \rightarrow 0$, the right-hand side of (\ref{second moment est of ustar}) converges to $0$. Hence, the second moment of $u_\star$ (also $u_\star$ itself) must be $0$ for all $t > 0$. But this is impossible since the total mass of $u_\star$ equals to $8 \pi$ at any $t > 0$. The proof is completed.
\end{proof}

Note that $u_\star$ is a global mild solution to (\ref{eqn_0_0_2}) with the initial data $M\delta_0$. In addition,  $M < 8\pi$ by Lemma \ref{M less 8pi}. Using the uniqueness result in Theorem 2 of \cite{BM14}, we conclude that
\begin{align*}
u_\star(x,t) = \frac{1}{t} \hspace{1pt} G_M \left( \dfrac{x}{\sqrt{t}} \right) \hspace{15pt}\text{on $\mathbb{R}_+^{2+1}$,}
\end{align*}
where $G_M$ is the unique stationary solution to (\ref{eqn_1_2d}) with the total mass $M$ and a finite free energy. Since the sequence $\big\{ \tau_k\big\}$ is arbitrary, the $\omega$-limit set of $\big\{ U\left(\cdot, \tau\right)\big\}$ contains only one element. More precisely, it equals to $\big\{ G_M\big\}$.  Therefore, the strong $L^1$-compactness result in Lemma \ref{lem_5_2} yields 
\begin{align*}
\lim_{\tau\rightarrow\infty} \big\| U(\cdot,\tau) - G_M \big\|_1 = 0.
\end{align*}This convergence shows the $p = 1$ case in (\ref{general p_L1_n=2}). We are left to prove \begin{lem}\label{conv of U in infty case}Suppose that $u$ satisfies the assumptions in Proposition \ref{weight asymptotics of u L1}. $U$ is the self-similar representation of $u$ given in (\ref{def_U}). Then we have
\begin{align*}
\lim_{k\rightarrow\infty} \big\|\hspace{1pt} U(\cdot,\tau) - G_M \hspace{1pt}\big\|_\infty = 0.
\end{align*}The last convergence and (\ref{def_U}) yield the $p = \infty$ case in (\ref{general p_L1_n=2}).
\end{lem}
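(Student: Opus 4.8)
The plan is to upgrade the $L^1$\hspace{0.5pt}-\hspace{0.5pt}convergence $\big\|\hspace{0.5pt}U(\cdot,\tau)-G_M\hspace{0.5pt}\big\|_1\to0$ just obtained to the uniform convergence $\big\|\hspace{0.5pt}U(\cdot,\tau)-G_M\hspace{0.5pt}\big\|_\infty\to0$ by running the Duhamel formula for \eqref{eqn_1_2d} over one unit of time. This deliberately bypasses the splitting $U=U_1+U_2$ of \eqref{id_7_01}: unlike in the higher-dimensional case, $\big\|\hspace{0.5pt}U_2(\cdot,\tau)\hspace{0.5pt}\big\|_\infty$ has no useful temporal decay when $n=2$, so a direct bound of $U_2$ in $L^\infty$ is not available.

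Write $V:=E_2*U$ and $\overline{V}:=E_2*G_M$. Since $U$ is a smooth classical solution of \eqref{eqn_1_2d} and $G_M$ is a stationary solution of the same equation, Duhamel's principle gives, for every $\tau\ge0$,
\begin{align}\label{id_duhamel_Ustar}
U(\cdot,\tau+1)-G_M = S_2(1)\big[\hspace{1pt}U(\cdot,\tau)-G_M\hspace{1pt}\big] - \int_0^1 S_2(1-s)\,\Big(\nabla_\xi\cdot\big[\hspace{1pt}U\nabla_\xi V - G_M\nabla_\xi\overline{V}\hspace{1pt}\big]\Big)(\cdot,\tau+s)\,\mathrm{d}s,
\end{align}
with $S_2$ and $a(\cdot)$ as in \eqref{semi group of U variable}. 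I would estimate the two terms on the right separately. For the first, the smoothing of $S_2(1)$ yields $\big\|\hspace{0.5pt}S_2(1)[U(\cdot,\tau)-G_M]\hspace{0.5pt}\big\|_\infty\lesssim a(1)^{-1}\big\|\hspace{0.5pt}U(\cdot,\tau)-G_M\hspace{0.5pt}\big\|_1\to0$. For the integral, put $F(\cdot,\sigma):=\big(U\nabla_\xi V - G_M\nabla_\xi\overline{V}\big)(\cdot,\sigma)$. Using \eqref{estimate_grad_v}, \eqref{unif_bound_U}, \eqref{mass_conservation}, the crude bound $\big\|\hspace{0.5pt}U-G_M\hspace{0.5pt}\big\|_\infty\le[\hspace{0.5pt}u\hspace{0.5pt}]_\infty+\big\|\hspace{0.5pt}G_M\hspace{0.5pt}\big\|_\infty$, and the fact that $G_M\in L^1_+\cap L^\infty$ is smooth, one checks that $\big\|\hspace{0.5pt}F(\cdot,\tau+s)\hspace{0.5pt}\big\|_\infty\le C_0$ uniformly, with $C_0$ depending only on $M$, $[\hspace{0.5pt}u\hspace{0.5pt}]_\infty$ and $G_M$, while
\begin{align*}
\sup_{s\in[0,1]}\big\|\hspace{0.5pt}F(\cdot,\tau+s)\hspace{0.5pt}\big\|_1 \hspace{2pt}\lesssim_{\,M,\,[\hspace{0.5pt}u\hspace{0.5pt}]_\infty,\,G_M}\hspace{2pt} \sup_{s\in[0,1]}\Big(\big\|\hspace{0.5pt}U(\cdot,\tau+s)-G_M\hspace{0.5pt}\big\|_1 + \big\|\hspace{0.5pt}U(\cdot,\tau+s)-G_M\hspace{0.5pt}\big\|_1^{1/2}\Big)\longrightarrow 0
\end{align*}
as $\tau\to\infty$, by the potential estimate \eqref{estimate_grad_v} and the $L^1$\hspace{0.5pt}-\hspace{0.5pt}convergence of $U$.

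The crucial ingredient is a pair of one-sided kernel bounds for $S_2(\sigma)\,\nabla_\xi\cdot$, obtained by differentiating the Gaussian kernel of \eqref{semi group of U variable} in the $\eta$\hspace{0.5pt}-\hspace{0.5pt}variable:
\begin{align*}
\big\|\hspace{0.5pt}S_2(\sigma)\,\nabla_\xi\cdot F\hspace{0.5pt}\big\|_\infty &\hspace{2pt}\lesssim\hspace{2pt} e^{-\sigma/2}\, a(\sigma)^{-3/2}\,\big\|\hspace{0.5pt}F\hspace{0.5pt}\big\|_1, \\
\big\|\hspace{0.5pt}S_2(\sigma)\,\nabla_\xi\cdot F\hspace{0.5pt}\big\|_\infty &\hspace{2pt}\lesssim\hspace{2pt} e^{\sigma/2}\, a(\sigma)^{-1/2}\,\big\|\hspace{0.5pt}F\hspace{0.5pt}\big\|_\infty.
\end{align*}
The first has a non-integrable weight as $\sigma\to0^+$ but is harmless away from $\sigma=0$; the second carries the integrable weight $a(\sigma)^{-1/2}$ (recall $a(\sigma)\sim\sigma$ near $\sigma=0$), so it is precisely what is usable near $\sigma=0$. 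Given $\delta\in(0,1)$, I would split $\int_0^1=\int_0^{1-\delta}+\int_{1-\delta}^1$ in \eqref{id_duhamel_Ustar}: on $[\hspace{0.5pt}0,1-\delta\hspace{0.5pt}]$ apply the first bound (with $a(1-s)\ge a(\delta)$) to get a contribution $\lesssim\delta^{-3/2}\sup_{s\in[0,1]}\big\|F(\cdot,\tau+s)\big\|_1$, which tends to $0$ as $\tau\to\infty$; on $[\hspace{0.5pt}1-\delta,1\hspace{0.5pt}]$ apply the second bound together with $\int_0^\delta a(\sigma)^{-1/2}\,\mathrm{d}\sigma\lesssim\sqrt\delta$ to get a contribution $\lesssim C_0\sqrt\delta$. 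Combining with the estimate of the linear term gives $\limsup_{\tau\to\infty}\big\|\hspace{0.5pt}U(\cdot,\tau+1)-G_M\hspace{0.5pt}\big\|_\infty\lesssim C_0\sqrt\delta$ for every $\delta\in(0,1)$; letting $\delta\to0$ yields $\big\|\hspace{0.5pt}U(\cdot,\tau)-G_M\hspace{0.5pt}\big\|_\infty\to0$, which is the assertion, and through the change of variables \eqref{def_U} it completes the $p=\infty$ case — hence, by interpolation with the already-established $p=1$ case, all $p\in[\hspace{0.5pt}1,\infty\hspace{0.5pt}]$ — of \eqref{general p_L1_n=2}.

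I expect the main obstacle to be the correct derivation and bookkeeping of the two one-sided kernel estimates above: one must resist bounding $\big\|\hspace{0.5pt}U_2(\cdot,\tau)\hspace{0.5pt}\big\|_\infty$ directly (impossible for $n=2$) and instead pair the genuinely small quantity $\big\|\hspace{0.5pt}F\hspace{0.5pt}\big\|_1$ against the bad weight $a(\sigma)^{-3/2}$ away from $\sigma=0$, while pairing the merely bounded quantity $\big\|\hspace{0.5pt}F\hspace{0.5pt}\big\|_\infty$ only against the integrable weight $a(\sigma)^{-1/2}$ near $\sigma=0$, the cut-off $\delta$ interpolating between the two regimes. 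A longer alternative would be to control the $L^\infty$\hspace{0.5pt}-\hspace{0.5pt}tail of $U_1$ exactly as in \eqref{id_7_24}, to re-run the pushing-and-stretching argument of Lemma~\ref{lem_5_2} for $\sup_{|\xi|>R}\big|U_2(\xi,\tau)\big|$ in place of $\int_{|\xi|>R}\big|U_2(\xi,\tau)\big|\,\mathrm{d}\xi$, and then to combine the resulting uniform tail smallness with the decay of $G_M$ at spatial infinity and the locally uniform convergence $U(\cdot,\tau)\to G_M$; this also works, but it is more delicate precisely because $\big\|\hspace{0.5pt}U_2(\cdot,\tau)\hspace{0.5pt}\big\|_\infty$ itself does not decay.
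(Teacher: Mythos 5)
Your argument is correct, but it is a genuinely different route from the paper's. The paper keeps the decomposition $U=U_1+U_2$ from (\ref{id_7_01}) and never tries to make $\big\|U_2(\cdot,\tau)\big\|_\infty$ decay in time; instead it proves a uniform-in-time spatial tail estimate, $\big\|U\big\|_{\infty;\hspace{1pt}B\mystrut^c_R\times[1,\infty)}\rightarrow0$ as $R\rightarrow\infty$, by splitting the Duhamel integral in time at $\tau-\sigma_\epsilon$ (the near-endpoint piece being $\lesssim(e^{\sigma_\epsilon}-1)^{1/2}$ uniformly in $\xi$), using the Gaussian factor $r\hspace{0.5pt}e^{-r^2/4}$ for $\eta\in B_{R/2}$ and $\xi\in B\mystrut^c_R$, and using the uniform $L^1$ tail smallness (\ref{unif small outside a unif ball}) for $\eta\in B\mystrut^c_{R/2}$; it then concludes via the locally uniform convergence (\ref{loc unif conv of U}). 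You instead run Duhamel on the unit interval $[\tau,\tau+1]$ for the difference $U-G_M$, use the $L^1\to L^\infty$ smoothing of $S_2(1)$ on the linear term, and handle the nonlinear term with the two one-sided kernel bounds (both of which check out from (\ref{semi group of U variable}) after integrating by parts in $\eta$: $|z|e^{-|z|^2/(4a)}\lesssim\sqrt a$ gives the $a^{-3/2}\|F\|_1$ bound, and $\int|z|e^{-|z|^2/(4a)}\mathrm{d}z\lesssim a^{3/2}$ gives the $a^{-1/2}\|F\|_\infty$ bound), with the $\delta$-split matching the small quantity $\|F\|_1$ against the non-integrable weight and the merely bounded $\|F\|_\infty$ against the integrable one; the $\|F\|_1$ smallness itself follows from $\|U-G_M\|_1\to0$ together with (\ref{estimate_grad_v}) exactly as you say, since $\|G_M\|_\infty\le[u]_\infty$ by (\ref{id_7_36}). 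What your route buys is a quantitative transfer (any $L^1$ rate converts into an $L^\infty$ rate, with a square-root loss through the potential term) and it bypasses the tail estimates for $U_1$ and $U_2$ at the $L^\infty$ level; what the paper's route buys is uniform-in-time spatial decay of $U$ itself and the fact that it only ever uses the mild representation from time $0$. Two small points you should make explicit if you write this up: the Duhamel identity on $[\tau,\tau+1]$, and the corresponding fixed-point identity for the stationary profile $G_M$, need a one-line justification (routine here, since $U$ is smooth with the uniform bounds (\ref{unif_bound_U})--(\ref{mass_conservation}) and $G_M\in L^1_+\cap L^\infty$ is a smooth classical stationary solution); and the final interpolation to $p\in(1,\infty)$ is, as you note, the same as in the paper.
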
\noindent By this lemma and an interpolation argument, (\ref{general p_L1_n=2}) holds for general $p \in (1, \infty)$.

\begin{proof}[\bf Proof of Lemma \ref{conv of U in infty case}]  We still use the decomposition $U = U_1 + U_2$ in  (\ref{id_7_01}). 
For any $\tau \geq 1$, $R > 0$ and $\xi \in B\mynewstrut^c_R$, the function $U_1$ can be estimated by
\begin{align*}
U_1(\xi,\tau) &\hspace{2pt}\lesssim\hspace{2pt} \int_{\mathbb{R}^2} U_0\left(\eta\right)e^{-\frac{\big|\hspace{0.5pt} \xi - e^{- \frac{\tau}{2}} \hspace{0.5pt} \eta \hspace{0.5pt}\big|^2}{4} } \mathrm{d}\eta \hspace{2pt}=\hspace{2pt} \int_{B_{R/2}} U_0\left(\eta\right)e^{-\frac{\big|\hspace{0.5pt} \xi - e^{- \frac{\tau}{2}} \hspace{0.5pt} \eta \hspace{0.5pt}\big|^2}{4} } \mathrm{d}\eta + \int_{B\mystrut^c_{R/2}} U_0\left(\eta\right)e^{-\frac{\big|\hspace{0.5pt} \xi - e^{- \frac{\tau}{2}} \hspace{0.5pt} \eta \hspace{0.5pt}\big|^2}{4} } \mathrm{d}\eta \\[2mm]
&\hspace{2pt}\leq \hspace{2pt}\int_{B_{R/2}} U_0\left(\eta\right)e^{-\frac{|\xi|^2}{16}} \mathrm{d}\eta + \int_{B\mystrut^c_{R/2}} U_0\left(\eta\right) \mathrm{d}\eta \hspace{2pt}\leq\hspace{2pt} M e^{-\frac{R^2}{16}} + \int_{B\mystrut^c_{R/2}} U_0\left(\eta\right) \mathrm{d}\eta.
\end{align*}
By taking supreme over $\xi \in  B\mynewstrut^c_R$ and $\tau \in [\hspace{0.5pt} 1, \infty )$, the last estimate infers
\begin{align}\label{id_7_40}
\big\|\hspace{1pt} U_1 \hspace{1pt}\big\|_{\infty; \hspace{1pt}B\mystrut^c_R \times [\hspace{0.5pt}1,\infty)} \longrightarrow 0 \qquad \mbox{as } R \rightarrow \infty.
\end{align} 

Now we estimate $U_2$. For any $\xi \in \mathbb{R}^2$, $\tau \geq 1$ and $0 < \sigma < \tau$, by (\ref{unif_bound_U})--(\ref{mass_conservation}) and (\ref{estimate_grad_v}), it holds
\begin{align*}
& \left|\hspace{1pt} \int_{\tau-\sigma}^{\tau} \int_{\mathbb{R}^2} \dfrac{e^{-\frac{\tau - s}{2}}}{ a\left(\tau - s\right)^2} \hspace{2pt} U\left(\eta,s\right)  \nabla_\eta E_2 *U\hspace{1pt}\bigg|_{ \left(\eta,s\right)} \cdot \left(\xi -  e^{- \frac{\tau - s }{2}} \eta \right) e^{ -\frac{\big| \xi - e^{-\frac{\tau - s}{2}} \eta \hspace{1pt} \big|^2}{4a\left(\tau-s\right)}} \mathrm{d} \eta \hspace{2pt} \mathrm{d}s \hspace{1pt}\right| \\[1mm]
&\hspace{130pt}\lesssim_{\hspace{1pt}M, \hspace{1pt}[ \hspace{1pt}u \hspace{1pt}]_\infty}  \int_{\tau-\sigma}^{\tau} \dfrac{e^{\frac{\tau - s}{2}}}{ a\left(\tau - s\right)^{\frac{1}{2}}} \hspace{2pt}\mathrm{d}s \hspace{2pt}=\hspace{2pt} 2\hspace{1.5pt}\big(e^\sigma - 1\big)^{\frac{1}{2}}.
\end{align*}
Hence, for any given $\epsilon > 0$, there is $\sigma_\epsilon > 0$ sufficiently small such that
\begin{align}\label{id_7_41}
\left\|\hspace{1pt} \int_{\tau-\sigma_\epsilon}^{\tau} \int_{\mathbb{R}^2} \dfrac{e^{-\frac{\tau - s}{2}}}{ a\left(\tau - s\right)^2} \hspace{2pt} U\left(\eta,s\right)  \nabla_\eta E_2 * U\hspace{1pt}\bigg|_{ \left(\eta,s\right)} \cdot \left(\xi -  e^{- \frac{\tau - s }{2}} \eta \right) e^{ -\frac{\big| \xi - e^{-\frac{\tau - s}{2}} \eta \hspace{1pt} \big|^2}{4a\left(\tau-s\right)}} \mathrm{d} \eta \hspace{2pt}\mathrm{d}s \hspace{1pt}\right\|_{\infty; \hspace{1pt}\mathbb{R}^2 \times [\hspace{0.5pt}1, \infty)} < \epsilon.
\end{align}
In the next, we assume that $R > 2\sqrt{2}$. For any $\xi \in B\mynewstrut^c_R$, $\eta \in B_{R/2}$ and $s < \tau$, it holds \begin{align*} \dfrac{\left|\hspace{1pt} \xi - e^{-\frac{\tau - s}{2}} \eta \hspace{1pt} \right|}{a(\tau - s)^{\frac{1}{2}}} \hspace{2pt}\geq\hspace{2pt}\dfrac{|\hspace{0.5pt}\xi\hspace{0.5pt}|}{2} \hspace{2pt}>\hspace{2pt}\sqrt{2}.
\end{align*} Since the function $r e^{-\frac{r^2}{4}}$ is monotonically decreasing on $\left[ \sqrt{2} , \infty \right)$, the last estimate yields
\begin{align}\label{id_7_42}
& \left|\hspace{1pt} \int_0^{\tau-\sigma_\epsilon} \int_{B_{R/2}} \dfrac{e^{-\frac{\tau - s}{2}}}{ a\left(\tau - s\right)^2} \hspace{2pt} U\left(\eta,s\right) \nabla_\eta E_2 * U\hspace{1pt}\bigg|_{ \left(\eta,s\right)} \cdot \left(\xi -  e^{- \frac{\tau - s }{2}} \eta \right) e^{ -\frac{\big| \xi - e^{-\frac{\tau - s}{2}} \eta \hspace{1pt} \big|^2}{4a\left(\tau-s\right)}} \mathrm{d} \eta \hspace{2pt}\mathrm{d}s \hspace{1pt}\right| \notag\\[2mm]
&\hspace{60pt}\leq\hspace{2pt} |\hspace{0.5pt}\xi\hspace{0.5pt}|\hspace{1pt} e^{-\frac{|\hspace{0.5pt}\xi\hspace{0.5pt}|^2}{16}} \int_0^{\tau-\sigma_\epsilon} \int_{B_{R/2}} \dfrac{e^{-\frac{\tau - s}{2}}}{ a\left(\tau - s\right)^\frac{3}{2}} \hspace{2pt} U\left(\eta,s\right) \Big|  \nabla_\eta E_2 * U\hspace{1pt}\Big|\left(\eta,s\right)  \mathrm{d} \eta \hspace{2pt}\mathrm{d}s \notag\\[2mm]
&\hspace{60pt}\lesssim\hspace{2pt} a(\sigma_\epsilon)^{-\frac{3}{2}} M^{\frac{3}{2}} [ \hspace{1pt}u \hspace{1pt}]_\infty^{\frac{1}{2}} \hspace{1pt}R \hspace{1pt}e^{-\frac{R^2}{16}} \hspace{2pt}<\hspace{2pt} \epsilon, \qquad \mbox{provided that $R$ is sufficiently large}.
\end{align}
Moreover, still using (\ref{estimate_grad_v}), we obtain
\begin{align*} 
&  \left|\hspace{1pt} \int_0^{\tau-\sigma_\epsilon} \int_{B\mystrut^c_{R/2}} \dfrac{e^{-\frac{\tau - s}{2}}}{ a\left(\tau - s\right)^2} \hspace{2pt} U\left(\eta,s\right) \nabla_\eta E_2 * U\hspace{1pt}\bigg|_{ \left(\eta,s\right)} \cdot \left(\xi -  e^{- \frac{\tau - s }{2}} \eta \right) e^{ -\frac{\big| \xi - e^{-\frac{\tau - s}{2}} \eta \hspace{1pt} \big|^2}{4a\left(\tau-s\right)}} \mathrm{d} \eta \hspace{2pt}\mathrm{d}s \hspace{1pt}\right| \notag\\[2mm]
&\hspace{60pt}\lesssim\hspace{2pt} \int_0^{\tau-\sigma_\epsilon} \int_{B\mystrut^c_{R/2}} \dfrac{e^{-\frac{\tau - s}{2}}}{ a\left(\tau - s\right)^{\frac{3}{2}}} \hspace{2pt} U\left(\eta,s\right) \Big|\hspace{1pt}\nabla_\eta E_2 * U\hspace{1pt}\Big| \left(\eta,s\right)  \mathrm{d} \eta \hspace{2pt}\mathrm{d}s \notag\\[2mm]
&\hspace{60pt}\lesssim \hspace{2pt} a(\sigma_\epsilon)^{-\frac{3}{2}} M^{\frac{1}{2}} [ \hspace{1pt}u \hspace{1pt}]_\infty^{\frac{1}{2}} \int_0^{\tau-\sigma_\epsilon}e^{-\frac{\tau - s}{2}}  \hspace{2pt}\mathrm{d} s \int_{B\mystrut^c_{R/2}} U\left(\eta,s\right) \mathrm{d} \eta.
\end{align*}If we take $R$ sufficiently large so that $R > 2 r_\epsilon$ (here $r_\epsilon$ is given in (\ref{unif small outside a unif ball})), then the last estimate can be reduced to
\begin{align*} 
&  \left|\hspace{1pt} \int_0^{\tau-\sigma_\epsilon} \int_{B\mystrut^c_{R/2}} \dfrac{e^{-\frac{\tau - s}{2}}}{ a\left(\tau - s\right)^2} \hspace{2pt} U\left(\eta,s\right) \nabla_\eta E_2 * U\hspace{1pt}\bigg|_{ \left(\eta,s\right)} \cdot \left(\xi -  e^{- \frac{\tau - s }{2}} \eta \right) e^{ -\frac{\big| \xi - e^{-\frac{\tau - s}{2}} \eta \hspace{1pt} \big|^2}{4a\left(\tau-s\right)}} \mathrm{d} \eta \hspace{1.5pt} \mathrm{d}s \hspace{1pt}\right|  \hspace{2pt}\lesssim\hspace{2pt} 
\left(\dfrac{M \hspace{1pt} [ \hspace{1pt}u \hspace{1pt}]_\infty}{a(\sigma_\epsilon)^{3}}\right)^{\frac{1}{2}} \hspace{2pt}\epsilon.\end{align*}
Combining (\ref{id_7_40}), (\ref{id_7_41}), (\ref{id_7_42}) and the last estimate, we get
\begin{align*}
\big\|\hspace{1pt} U \hspace{1pt}\big\|_{\infty; \hspace{1pt}B\mystrut^c_R \times [\hspace{0.5pt}1,\infty\hspace{0.5pt})} \longrightarrow 0, \qquad \mbox{as } R \rightarrow \infty.
\end{align*}The proof of this lemma is completed by the last convergence and (\ref{loc unif conv of U}). Notice that here for any $\big\{\tau_k\big\}$ which diverges to $\infty$ as $k \rightarrow \infty$, the sequence of functions $\big\{U\left(\cdot, \tau_k\right) \big\} $ converges locally uniformly to $G_M$ as $k \rightarrow \infty$.
\end{proof}


\vspace{1pc}

\noindent\textbf{Acknowledgement:} Y. Yu is partially supported by RGC grant of Hong Kong, Grant No. 14302718.

\vspace{2pc}
Department of Mathematics, The Chinese University of Hong Kong, Hong Kong \\[1mm]
\textit{E-mail address}: cyhsieh@math.cuhk.edu.hk\\[2mm]
Department of Mathematics, The Chinese University of Hong Kong, Hong Kong\\[1mm]
\textit{E-mail address}: yongyu@math.cuhk.edu.hk

\end{document}